\newcommand{\klfour}{C_2\times C_2}
\newlength{\shorter}
\let\Gamma=\varGamma
\newcommand{\boldd}[1]{{\mathversion{bold}\textbf{#1}}}
\newcommand{\lie}[3]{\def\test{#2}\def\tst{G}\ifx\test\tst{{}^{#1}#2_{#3}}
\else{{}^{#1}\!#2_{#3}}\fi}
\newcommand{\4}[1]{\widebar{#1}}
\newcommand{\5}[1]{\widehat{#1}}
\newcommand{\9}[1]{{}^{#1}\!}   %% left conjugation
\newcommand{\EE}{\textbf{E}}
\let\oldcirc=\circ
\renewcommand{\circ}{\mathchoice
    {\mathbin{\scriptstyle\oldcirc}}{\mathbin{\scriptstyle\oldcirc}}
    {\mathbin{\scriptscriptstyle\oldcirc}}
    {\mathbin{\scriptscriptstyle\oldcirc}}}
\numberwithin{equation}{section}
\mathchardef\cdot="0201
\def\beq#1\eeq{\begin{equation*}#1\end{equation*}}
\def\beqq#1\eeqq{\begin{equation}#1\end{equation}}
\let\emptyset=\varnothing
\renewcommand{\:}{\colon}   %% as in f:X-->Y
\newcommand{\widebar}[1]{\overset{\mskip2mu\hrulefill\mskip2mu}{#1}
		\vphantom{#1}}
\newcommand{\longline}{\bigskip\centerline{\hbox to 5cm{\hrulefill}}\bigskip}
\newcommand{\e}[2]{e_{#1#2}}
\newcommand{\mxtwo}[4]{\left(\begin{smallmatrix}#1&#2\\#3&#4
\end{smallmatrix}\right)}
\newcommand{\mxthree}[9]{\left(\begin{smallmatrix}#1&#2&#3\\#4&#5&#6\\
#7&#8&#9\end{smallmatrix}\right)}
\newcommand{\mxfourb}[8]{#1&#2&#3&#4\\#5&#6&#7&#8\end{smallmatrix}\right)}
\newcommand{\mxfoura}[8]{\left(\begin{smallmatrix}#1&#2&#3&#4\\#5&#6&#7&#8\\}
\newcommand{\Mxfourb}[8]{#1&#2&#3&#4\\#5&#6&#7&#8\end{pmatrix}}
\newcommand{\Mxfoura}[8]{\begin{pmatrix}#1&#2&#3&#4\\#5&#6&#7&#8\\}
\def\trp[#1,#2,#3]{[\hskip-1.5pt[#1,#2,#3]\hskip-1.5pt]}
\DeclareMathAlphabet\EuR{U}{eur}{m}{n}
\SetMathAlphabet\EuR{bold}{U}{eur}{b}{n}
\newcommand{\higherlim}[2]{\displaystyle\setbox1=\hbox{\rm lim}
	\setbox2=\hbox to \wd1{\leftarrowfill} \ht2=0pt \dp2=-1pt
	\setbox3=\hbox{$\scriptstyle{#1}$}
	\def\test{#1}\ifx\test\empty
	\mathop{\mathop{\vtop{\baselineskip=5pt\box1\box2}}}\nolimits^{#2}
	\else
	\ifdim\wd1<\wd3
	\mathop{\hphantom{^{#2}}\vtop{\baselineskip=5pt\box1\box2}^{#2}}_{#1}
	\else
	\mathop{\mathop{\vtop{\baselineskip=5pt\box1\box2}}_{#1}}%
	\nolimits^{#2}
	\fi\fi}
\newcommand{\higherlimm}[2]{\setbox1=\hbox{\rm lim}
	\setbox2=\hbox to \wd1{\leftarrowfill} \ht2=0pt \dp2=-1pt
	\mathop{\mathop{\vtop{\baselineskip=5pt\box1\box2}}}\limits_{#1}
	\nolimits^{#2}}
\newcounter{let} \setcounter{let}{0}
\loop\stepcounter{let}
\edef\csname cal\alph{let}\endcsname%
\newcommand{\tdef}[2][]{\expandafter\newcommand\csname#2\endcsname%
{#1\textup{#2}}}
\newcommand{\ON}{\textup{O'N}}
\newcommand{\fdef}[1]{\expandafter\newcommand\csname#1\endcsname%
{\mathfrak{#1}}}
\newcommand{\bbdef}[1]{\expandafter\newcommand% 
\csname#1\endcsname{\mathbb{#1}}}
\newcommand{\itdef}[1]{\expandafter\newcommand\csname#1\endcsname%
{\textit{#1}}}
\newcommand\PSSL{\textit{P$\varSigma$L}}
\newcommand\GGL{\textit{$\varGamma$L}}
\newcommand\PGGL{\textit{P$\varGamma$L}}
\newcommand{\gee}{\varepsilon}
\newcommand{\gen}[1]{\langle{#1}\rangle}
\newcommand{\Gen}[1]{\bigl\langle{#1}\bigr\rangle}
\let\nsg=\normal
\let\nnsg=\ntrianglelefteq
\newcommand{\syl}[2]{\textup{Syl}_{#1}(#2)}
\newcommand{\sylp}[1]{\syl{p}{#1}}
\newcommand{\autf}{\Aut_{\calf}}
\newcommand{\outf}{\Out_{\calf}}
\newcommand{\homf}{\Hom_{\calf}}
\newcommand{\isof}{\Iso_{\calf}}
\newcommand{\sminus}{\smallsetminus}
\newcommand{\defeq}{\overset{\textup{def}}{=}}
\renewcommand{\Im}{\textup{Im}}
\newcommand{\pcom}{{}^\wedge_p}
\newcommand{\sd}[1]{\overset{{#1}}{\rtimes}}
\let\til=\widetilde
\let\too=\longrightarrow
\let\xto=\xrightarrow
\newcommand{\longleft}[1]{\;{\leftarrow%
\count255=0 \loop \mathrel{\mkern-6mu}%
    \relbar\advance\count255 by1\ifnum\count255<#1\repeat}\;}
\newcommand{\longright}[1]{\;{\count255=0 \loop \relbar\mathrel{\mkern-6mu}%
    \advance\count255 by1\ifnum\count255<#1\repeat\rightarrow}\;}
\newcommand{\Right}[2]{\overset{#2}{\longright#1}}
\newcommand{\RIGHT}[3]{\mathrel{\mathop{\kern0pt\longright#1}
	\limits^{#2}_{#3}}}
\newcommand{\LEFT}[3]{\mathrel{\mathop{\kern0pt\longleft#1}\limits^{#2}_{#3}}
}
\newcommand{\longleftright}[1]{\;{\leftarrow\mathrel{\mkern-6mu}%
    \count255=0\loop\relbar\mathrel{\mkern-6mu}% 
    \advance\count255 by1\ifnum\count255<#1\repeat\rightarrow}\;} 
\newcommand{\onto}[1]{\;{\count255=0 \loop \relbar\joinrel
    \advance\count255 by1
    \ifnum\count255<#1 \repeat \twoheadrightarrow}\;}
\newcommand{\RLEFT}[3]{\mathrel{%
   \mathop{\vcenter{\baselineskip=0pt\hbox{$\kern0pt\longright#1$}%
   \hbox{$\kern0pt\longleft#1$}}}\limits^{#2}_{#3}}}
\renewenvironment{enumerate}[1][]
{\begin{enumerat}[#1]\setlength{\itemsep}{6pt}}{\end{enumerat}}
\renewenvironment{itemize}
{\begin{itemiz}\setlength{\itemsep}{6pt}\setlength{\itemindent}{-20pt}}
{\end{itemiz}}
\newenvironment{enuma}{\begin{enumerate}[{\rm(a) }]}{\end{enumerate}}
\newenvironment{enumi}{\begin{enumerate}[{\rm(i) }]}{\end{enumerate}}
\newtheorem{Thm}{Theorem}[section]
\newtheorem{Prop}[Thm]{Proposition}
\newtheorem{Lem}[Thm]{Lemma}
\newtheorem{Defi}[Thm]{Definition}   %% mudei para o proximo groupo
\newtheorem{Th}{Theorem}
\theoremstyle{definition}
\theoremstyle{remark}
\title{Reduced fusion systems over $2$-groups of small order}
\author{Kasper K. S. Andersen}
\address{Center for Mathematical Sciences, LTH, Box 118, 22100 Lund, Sweden}
\email{kksa@maths.lth.se}
\thanks{K. K. S. Andersen was partially supported by the Danish National 
Research Foundation (DNRF) through the Centre for Symmetry and Deformation, 
and by VILLUM FONDEN through the network for Experimental Mathematics in 
Number Theory, Operator Algebras, and Topology.}
\author{Bob Oliver}
\address{Universit\'e Paris 13, Sorbonne Paris Cit\'e, LAGA, UMR 7539 du CNRS, 
99, Av. J.-B. Cl\'ement, 93430 Villetaneuse, France.}
\email{bobol@math.univ-paris13.fr}
\thanks{B. Oliver is partially supported by UMR 7539 of the CNRS, and by 
project ANR BLAN08-2\_338236, HGRT}
\author{Joana Ventura}
\address{Departamento de Matem\'atica, Instituto Superior T\'ecnico, Av.
Rovisco Pais, 1049--001 Lisboa, Portugal}
\email{jventura@math.ist.utl.pt}
\thanks{J. Ventura was partially supported by FCT through program POCI 2010/FEDER 
and project PTDC/MAT/098317/2008.}
\subjclass[2000]{Primary 20D20. Secondary 20D05, 20E45, 20--04}
\keywords{Finite simple groups, fusion, Sylow subgroups.}
\begin{document}

\begin{abstract} 
We prove, when $S$ is a $2$-group of order at most $2^9$, that each reduced 
fusion system over $S$ is the fusion system of a finite simple group and is 
tame. It then follows that each saturated fusion system over a $2$-group of 
order at most $2^9$ is realizable. What is most interesting about this result is 
the method of proof: we show that among $2$-groups with order in this 
range, the ones which can be Sylow $2$-subgroups of finite simple groups 
are almost completely determined by criteria based on Bender's 
classification of groups with strongly $2$-embedded subgroups.
\end{abstract}

\maketitle

A \emph{saturated fusion system} over a finite $p$-group $S$ is a category 
whose objects are the subgroups of $S$, whose morphisms are monomorphisms 
between subgroups, and which satisfy certain axioms first formulated by 
Puig \cite{Puig} and motivated in part by conjugacy relations among 
$p$-subgroups of a given finite group.  
A saturated fusion system is \emph{realizable} if it is isomorphic to the 
fusion system defined by the conjugation relations within a Sylow 
$p$-subgroup of some finite group, and is \emph{exotic} otherwise.  One of 
our main goals is to try to understand when and how exotic fusion systems 
can occur, especially over $2$-groups.

A saturated fusion system $\calf$ is \emph{reduced} if $O_p(\calf)=1$ and 
$O^p(\calf)=O^{p'}(\calf)=\calf$ (see Definitions \ref{d:subgroups}(c,e) 
and \ref{d:reduced}(a)).  A saturated fusion system $\calf$ is \emph{tame} 
if it is realized by a group $G$ such that the natural homomorphism from 
$\Out(G)$ to a certain group of outer automorphisms of $\calf$ (more 
precisely, of an associated linking system) is split surjective (Definition 
\ref{d:tame}).  The main result in our earlier paper \cite{AOV1} says 
roughly that exotic fusion systems can be detected via tameness of 
associated reduced fusion systems.  More precisely, by \cite[Theorems A \& 
B]{AOV1}, if the ``reduction'' of a fusion system $\calf$ is tame, then 
$\calf$ is tame and hence realizable, while if a reduced fusion system is 
not tame, then it is the reduction of an exotic fusion system.

A saturated fusion system is \emph{indecomposable} if it does not split as a 
product of fusion systems over nontrivial $p$-groups. 
We can now state our main result.

\begin{Th} \label{ThA}
Let $\calf$ be a reduced, indecomposable fusion system over a nontrivial 
$2$-group of order at most $2^9$.  Then $\calf$ is the fusion system of a 
finite simple group, and is tame.
\end{Th}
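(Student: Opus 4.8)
The plan is to argue in three stages. First, I would determine all $2$-groups $S$ with $|S|\le2^9$ that can support a reduced, indecomposable fusion system. Second, for each such $S$ I would classify the reduced fusion systems over it and identify each one with the fusion system of a specific finite simple group. Third, I would verify tameness for every fusion system on the resulting list. The heart of the argument is the first stage, where the structural constraints coming from reducedness are fed into Bender's classification of finite groups with a strongly $2$-embedded subgroup.

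For the first stage: since $\calf$ is saturated with $O_2(\calf)=1$ and $S\ne1$, Alperin's fusion theorem rules out $\calf=\gen{\autf(S)}$ (in which case $S$ would be normal in $\calf$), so $S$ contains $\calf$-essential subgroups; moreover the conditions $O^2(\calf)=\calf=O^{2'}(\calf)$ force there to be enough of them, in that their automizers must generate $\calf$ and leave no nontrivial subgroup of $S$ normal. I would fix a fully normalized essential subgroup $P$; then $\Out_{\calf}(P)$ has a strongly $2$-embedded subgroup and $\Out_S(P)\in\syl2{\Out_{\calf}(P)}$, so by Bender $\Out_S(P)$ is cyclic, generalized quaternion, elementary abelian, or a Sylow $2$-subgroup of $\Sz(2^n)$ or $\PSU_3(2^n)$, with $\Out_{\calf}(P)$ correspondingly restricted. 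Since $|P|\cdot|\Out_S(P)|\le|N_S(P)|\le2^9$, only finitely many combinatorial types remain for the pair $(P,\Out_{\calf}(P))$ together with the embedding $P\le S$. I would then run a case analysis on $|S|=2^k$ for $k\le9$ — aided by computer calculation — to show that admitting enough such configurations forces $S$ to lie on an explicit finite list, each member of which is a Sylow $2$-subgroup of a known finite simple group. This is the ``Bender-based criterion'' promised in the abstract.

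For the second stage, for each $S$ on the list I would enumerate the possible families of essential subgroups with their automizers (bounded as above), impose the saturation axioms, and retain those families that yield reduced indecomposable fusion systems; each such $\calf$ is then matched with $\calf_S(G)$ for a finite simple group $G$, using the classification of finite simple groups as a finite checklist — only simple $G$ with $S\in\syl2G$ qualify — together with the known structure of the $2$-fusion of the groups involved (essentially $\PSL_2(q)$, $\PSL_3(q)$, $\PSU_3(q)$, a few other classical groups of small rank over small fields, and a handful of sporadic groups). For the third stage, tameness of each resulting $\calf$ is either already available from \cite{AOV1} and related work on self-equivalences of $p$-completed classifying spaces, or is checked directly by computing $\Out(G)$ and comparing it with the outer automorphism group of an associated linking system, keeping in mind that only split surjectivity is required.

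The main obstacle is the first stage: excluding every ``exotic-looking'' $2$-group of order at most $2^9$ not on the list. The difficulty is that the local data at the essential subgroups, individually pinned down by Bender, must be shown to admit no consistent, saturated assembly over any $S$ outside the list — in effect, one must propagate Bender's classification from the essential subgroups up to $S$ itself. The remaining stages, although lengthy, are comparatively mechanical once this list is in hand.
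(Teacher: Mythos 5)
Your three-stage plan — (1) use Bender's theorem applied to $\Out_\calf(P)$ for essential $P$, combined with constraints from $O_2(\calf)=1$ and $O^2(\calf)=O^{2'}(\calf)=\calf$, to winnow the $2$-groups of order at most $2^9$ (with computer assistance) to an explicit short list; (2) classify the reduced fusion systems over each surviving $S$ and match them to simple groups; (3) verify tameness by comparing $\Out(G)$ with automorphisms of the linking system — is exactly the strategy the paper follows, where the first stage is implemented via the "critical subgroup" and search-criteria machinery of Sections 2--3 and the case analysis occupies Sections 5--8. The approach matches; no gaps in the outline.
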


\begin{proof} This is shown in Theorems \ref{t:order64} (for $2$-groups of 
order at most $64$), \ref{t:order128} (order $2^7$), \ref{t:order256} 
(order $2^8$), and \ref{t:order512} (order $2^9$). 
\end{proof}

The next theorem follows from Theorem \ref{ThA} and the above discussion.

\begin{Th} \label{ThB}
Each saturated fusion system over a $2$-group of order at most $2^9$ 
is realizable.
\end{Th}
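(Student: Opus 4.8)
The plan is to deduce Theorem \ref{ThB} from Theorem \ref{ThA} by means of the reduction procedure of \cite{AOV1}. Let $\calf$ be a saturated fusion system over a $2$-group $S$ with $|S|\le 2^9$, and let $\mathrm{red}(\calf)$ be its reduction, obtained as in \cite{AOV1} by repeatedly applying the operations $O^2(-)$, $O^{2'}(-)$, and passage to the quotient by $O_2(-)$ until a reduced fusion system results. Since each of these operations replaces the underlying $2$-group by a subgroup or a quotient of it, $\mathrm{red}(\calf)$ is a reduced saturated fusion system over a $2$-group of order at most $2^9$.

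Next I would decompose $\mathrm{red}(\calf)$ as a product $\calf_1\times\cdots\times\calf_k$ of fusion systems over nontrivial $2$-groups, each of which is indecomposable; such a decomposition exists by an easy induction on the order of the underlying group, the degenerate case $k=0$ occurring exactly when $\mathrm{red}(\calf)$ is the trivial fusion system over the trivial group. Because $O_2(-)$, $O^2(-)$, and $O^{2'}(-)$ all commute with the formation of products, every factor of a product decomposition of a reduced fusion system is itself reduced; hence each $\calf_i$ is a reduced, indecomposable fusion system over a nontrivial $2$-group of order at most $2^9$. Theorem \ref{ThA} then shows that each $\calf_i$ is tame.

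To conclude, I would use that a product of tame fusion systems is tame (with the trivial fusion system being tame by convention), so that $\mathrm{red}(\calf)$ itself is tame; and then \cite[Theorem A]{AOV1}, which asserts that $\calf$ is tame whenever its reduction $\mathrm{red}(\calf)$ is. Since a tame fusion system is by definition realized by a finite group, it follows that $\calf$ is realizable.

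Essentially all of the work lies in Theorem \ref{ThA} --- and, behind it, in the classification of finite simple groups and the case-by-case analysis of their $2$-fusion. For the present deduction the only points needing attention are that the reduction stays within the class of $2$-groups of order $\le 2^9$ (clear, since it only passes to subquotients of $S$), that reducedness is inherited by product factors, and that tameness is compatible with products --- all of which are elementary or already recorded in \cite{AOV1}, so I do not expect any real obstacle at this stage.
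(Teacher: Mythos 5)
Your proposal is correct and takes essentially the same approach as the paper: pass to the reduction $\red(\calf)$, invoke Theorem \ref{ThA} for the indecomposable factors, use Theorem \ref{t:reduce}(b) to conclude $\red(\calf)$ is tame, and finish with Theorem \ref{t:reduce}(a). The paper's version is terser (it cites Theorem \ref{ThA} directly without spelling out the decomposition into indecomposable factors and the inheritance of reducedness by factors), so your write-up is, if anything, slightly more careful about the step the paper glosses over.
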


\begin{proof} Let $\calf$ be a saturated fusion system over a $2$-group $S$ 
of order at most $2^9$. The reduction $\red(\calf)$ of $\calf$ is defined 
in \cite[\S\,2]{AOV1} (see Definition \ref{d:reduced}(d) below): it is a 
reduced fusion system over a subquotient of $S$. Since $\red(\calf)$ is 
tame by Theorem \ref{ThA}, $\calf$ is realizable by \cite[Theorem A]{AOV1} 
(Theorem \ref{t:reduce}(a) below). 
\end{proof}

Our proofs of these results are based in large part on computer 
computations.  Their starting point is the version of 
Alperin's fusion theorem (re)stated in Proposition \ref{p:AFT-E}: each 
morphism in a saturated fusion system $\calf$ is a composite of 
restrictions of $\calf$-automorphisms of $S$ and of certain 
``$\calf$-essential'' subgroups.  We refer to Definition 
\ref{d:ess-crit}(b) for the definition of $\calf$-essential.  

In \cite{OV2}, a procedure was developed for determining all reduced fusion 
systems over a given 2-group, taking as examples two groups of order $2^7$ 
and two of order $2^{10}$. The idea was to first determine those subgroups 
of a given $S$ which could potentially be essential in some fusion system 
over $S$ (the ``critical'' subgroups), and then study what their 
$\calf$-automorphism groups could be. In this paper, we first made a 
computer search (using Magma \cite{magma} and GAP \cite{gap}) of all 
2-groups of order at most $2^9$, to determine which of them have ``enough'' 
critical subgroups and satisfy other conditions which are necessary to 
support a reduced fusion system.  These search criteria (listed in 
Proposition \ref{p:search-criteria}) are, in fact, satisfied by very few 
$2$-groups. Reduced fusion systems over them are listed individually, using 
computer computations in some cases and computer-free proofs in others.

The following table shows how close these programs come to restricting 
attention only to groups which are Sylow 2-subgroups of simple groups.
	\[ \renewcommand{\arraystretch}{1.5} 
	\begin{array}{|l||c|c|c|} \hline
	\textup{Group order} & 2^7 & 2^8 & 2^9 \\\hline
	\textup{Number of groups} & 2328 & 56092 & \approx10^7 \\\hline
	\textup{Nr. satisfying conditions in \ref{p:search-criteria}} 
	& 9 & 20 & 34 \\\hline
	\textup{\qquad Sylows of simple groups} & 6 & 6 & 10 \\\hline
	\textup{\qquad Split as products} & 2 & 10 & 23 \\\hline
	\textup{\qquad Others} & 1 & 4 & 1 \\ \hline 
	\end{array} \]
More precisely, the number given in the third row of the table is the 
number of groups of the given order which satisfy the conditions in 
Proposition \ref{p:search-criteria}, together with the dihedral and 
semidihedral groups of that order, and the wreathed groups $C_{2^n}\wr C_2$ 
if there are any. (These latter were eliminated by condition (a) or (b) in 
Proposition \ref{p:search-criteria}, and restored afterwards.) Thus among 
the groups not eliminated by these formal conditions (based mostly on 
Bender's theorem \cite[Satz 1]{Bender}), most are either Sylow 2-subgroups 
of simple groups, or are products of smaller groups and cannot be Sylow 
2-subgroups of simple groups nor of reduced fusion systems. Note that this 
dichotomy applies only in this range: the group $(D_8\wr C_2)\times D_8$ of 
order $2^{10}$ is a Sylow 2-subgroup of the simple group $A_{14}$ (and its 
fusion system is reduced and indecomposable).

There are many examples, especially among finite simple groups of Lie type, 
of different simple groups whose fusion systems (at some given prime $p$) 
are reduced and isomorphic. The following theorem gives some examples of 
this.  We do not use this theorem, except to motivate our giving only one 
example (or one family of examples) of groups which realize any given 
fusion system. The cases most relevant to this paper are those where 
$\G=\PSL_n$ or $\PSp_{2n}$ for some $n\ge2$. 

\begin{Thm}[{\cite[Theorem A]{BMO1}}] \label{BMO1-ThA} 
Fix a prime $p$, a connected reductive group scheme $\G$ over $\Z$, and a 
pair of prime powers $q$ and $q'$ both prime to $p$.  Then the following 
hold, where ``\,$G\sim_pH$\!'' means that the $p$-fusion systems of $G$ 
and $H$ are isomorphic. 
\begin{enuma} 
\item If $\widebar{\gen{q}}=\widebar{\gen{q'}}$ as closed subgroups of 
$\Z_p^\times$, then $\G(q)\sim_p\G(q')$.

\item If $\G$ is of type $A_n$, $D_n$, or $E_6$, $\tau$ is a graph 
automorphism of $\G$, and 
$\widebar{\gen{q}}=\widebar{\gen{q'}}\le\Z_p^\times$, then 
${}^\tau\G(q)\sim_p{}^\tau\G(q')$.

\item If the Weyl group of $\G$ contains an element which acts on the 
maximal torus by inverting all elements, and 
$\widebar{\gen{-1,q}}=\widebar{\gen{-1,q'}}\le\Z_p^\times$, then 
$\G(q)\sim_p\G(q')$.

\item If $\widebar{\gen{-q}}=\widebar{\gen{q'}}\le\Z_p^\times$, then 
$\PSU_n(q)\sim_p\PSL_n(q')$ for all $n\ge2$.

\end{enuma}
\end{Thm}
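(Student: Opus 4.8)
The plan is to translate the statement about $p$-fusion systems of finite groups of Lie type into one about homotopy fixed points of unstable Adams operations on classifying spaces of Lie groups, where the dependence on $q$ is visibly only through the closed subgroup $\widebar{\gen{q}}\le\Z_p^\times$. Two external inputs carry this. First, the $p$-fusion system of a finite group $G$ is an invariant of the homotopy type of $BG\pcom$: from the standard description of $\map(BP,BG\pcom)$ for $p$-groups $P$ (Dwyer--Zabrodsky, Mislin), a homotopy equivalence $BG\pcom\simeq BH\pcom$ induces an isomorphism of $p$-fusion systems. Second, writing $X\defeq B\G(\C)\pcom$ for the $p$-completed classifying space of the group of complex points of $\G$ regarded as a Lie group, Friedlander's theorem --- in the form extended through isogenies and through Quillen's computations for $\GL_n$ and the unitary groups --- identifies $B\G(q)\pcom$, for $q$ prime to $p$, with the homotopy fixed points $X^{h\psi^q}$ of an unstable Adams operation $\psi^q$ of degree $q$ on $X$; equivalently $B\G(q)\pcom$ is the homotopy fiber of $\psi^q-1$ on $X$.

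Granting this, I would deduce (a) from the fact that the operations $\{\psi^a\}_{a\in\Z_p^\times}$ assemble into a continuous action of $\Z_p^\times$ on $X$, and that over a $p$-complete target homotopy fixed points are insensitive to replacing the cyclic subgroup $\gen{q}$ by its closure: thus $X^{h\psi^q}$ depends only on $\widebar{\gen{q}}$, so $\widebar{\gen{q}}=\widebar{\gen{q'}}$ forces $B\G(q)\pcom\simeq B\G(q')\pcom$ and hence $\G(q)\sim_p\G(q')$. For (b) I would run the same argument with $\psi^q$ replaced by $\tau\circ\psi^q$, where $\tau$ is the self-equivalence of $X$ induced by the graph automorphism of the root datum; since $\tau$ commutes with every $\psi^a$, the twisted operations again form a continuous $\Z_p^\times$-action, Friedlander's identification holds in the twisted form $B({}^\tau\G(q))\pcom\simeq X^{h(\tau\psi^q)}$, and the fixed points still depend only on $\widebar{\gen{q}}$. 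Part (c) uses that when some $w\in W$ inverts the maximal torus, the induced self-equivalence of $X$ is homotopic to $\psi^{-1}$ and lies in the group of self-equivalences that act trivially on homotopy fixed points, so $X^{h\psi^q}\simeq X^{h\psi^{-q}}$ and, iterating, the fixed points depend only on $\widebar{\gen{-1,q}}$. Part (d) is the case $\G$ of adjoint type $A_{n-1}$ with $\tau$ the order-two graph automorphism: since $-w_0$ equals that graph automorphism in type $A$, the twisted operation $\tau\psi^q$ realizes $\psi^{-q}$ on the torus, so ${}^\tau\G(q)$ --- which, after the standard bookkeeping with centers, carries the $p$-fusion system of $\PSU_n(q)$ --- shares a $p$-completed classifying space with $\PSL_n(q')=\G(q')$ whenever $\widebar{\gen{-q}}=\widebar{\gen{q'}}$.

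The hard part is not these formal manipulations but establishing the two inputs with the generality and compatibilities needed. I would have to verify that Friedlander's identification $B\G(q)\pcom\simeq X^{h\psi^q}$ survives passage from simply connected to general reductive $\G$, to the adjoint and intermediate isogeny types --- so that $\PSL_n$, $\PSp_{2n}$, and the like are covered and the relevant centers are tracked correctly --- and to the Steinberg graph-twisted forms; and that $\{\psi^a\}_{a\in\Z_p^\times}$ together with the graph automorphisms genuinely assembles into a continuous action, so that ``homotopy fixed points of $\psi^q$'' may legitimately be reinterpreted as homotopy fixed points of the closed subgroup topologically generated by $q$ --- here the $p$-completeness and finite-type hypotheses are essential, and the case $p=2$, the one relevant to this paper, must be checked not to misbehave. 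A more elementary route would avoid topology: choose a maximal torus $T$ so that the normalizer in $\G(q)$ of its group of rational points contains a Sylow $p$-subgroup of $\G(q)$ (possible via generic Sylow theory and the theory of $d$-tori, though cleanly so only in sufficiently generic cases and with real care in general), use Alperin's fusion theorem to express $\calf_S(\G(q))$ in terms of that normalizer and the finitely many relevant essential subgroups, and observe that this finite group of rational points --- as a module over the pertinent subquotient of $W$ --- together with the essential data depends on $q$ only through the $p$-adic information encoded in $\widebar{\gen{q}}$; but making this precise and uniform across all the cases is itself a substantial undertaking, which is why the topological route is the more reliable one.
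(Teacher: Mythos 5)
The paper itself gives no proof of this statement --- it is quoted verbatim as a citation of \cite[Theorem A]{BMO1}, and is used in the present paper only as motivation for listing one representative realizing group per fusion system. Your sketch does, however, follow essentially the same strategy as the cited source: Broto--M\o{}ller--Oliver also identify $B\G(q)\pcom$ with homotopy fixed points of an unstable Adams operation via Friedlander's and Quillen's theorems, show that such homotopy fixed point spaces depend only on the closure of $\gen{q}$ (resp.\ its twist by a graph automorphism, resp.\ by $-1$) in $\Z_p^\times$, and then invoke the fact that a homotopy equivalence $BG\pcom\simeq BH\pcom$ forces an isomorphism of $p$-fusion systems --- exactly the three ingredients you name, and with exactly the same case analysis for (b), (c), (d).
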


For example, by (a), if $\calf$ is the fusion system of $\PSL_3(17)$ (for 
$p=2$), then it is also isomorphic to the fusion system of $\PSL_3(q)$ for 
each $q\equiv17$ (mod $32$).

Background results on fusion systems are given in Section 
\ref{s:background}, and the precise criteria which we use in our computer 
searches are listed in Section \ref{s:magma}. In Section \ref{s:Q<|F}, we 
look at the special case of reduced fusion systems over nonabelian 
$2$-groups of the form $S_0\times A$ with $A\ne1$ abelian. 
Afterwards, we handle the individual cases in Theorem \ref{ThA}: 
groups of order at most $2^7$ in Section \ref{s:128}, those of order $2^8$ 
in Section \ref{s:256}, and those of order $2^9$ in Sections 
\ref{s:512}--\ref{s:a12}. At the end, some standard background results 
about groups and representations are given in an appendix.

When $G$ is a group and $g,h\in{}G$, we write $[g,h]=ghg^{-1}h^{-1}$ for 
the commutator. Similarly, if $\alpha\in\Aut(G)$, we set 
$[\alpha,g]=\alpha(g)g^{-1}$ for $g\in G$. Also, $\9gx=gxg^{-1}$ and 
$x^g=g^{-1}xg$ in this situation, and $c_x$ denotes the homomorphism 
$(g\mapsto\9xg$). As usual, when $S$ is a finite $p$-group, $Z_2(S)\nsg S$ 
is defined by $Z_2(S)/Z(S)=Z(S/Z(S))$, its Frattini subgroup 
is denoted $\Phi(S)$, its rank is denoted $\rk(S)$, and $S$ is said to be 
\emph{of type $G$} if it is isomorphic to a Sylow $p$-subgroup of the 
finite group $G$. Also, $\sylp{G}$ is the set of Sylow $p$-subgroups 
of $G$. When $n\ge2$ and $q$ is a prime power, we let 
$\UT_n(q)\le\SL_n(q)$ be the subgroup of upper triangular matrices 
with 1's on the diagonal.  Also, we follow the usual notation for 
extraspecial $2$-groups: $2^{1+2n}_+$ is a central product of $n$ copies of 
$D_8$, while $2^{1+2n}_-$ is a central product of $n{-}1$ copies of $D_8$ 
and one copy of $Q_8$. We write $G_1\times_ZG_2$ to denote a central 
product of groups $G_1$ and $G_2$ over $Z\le Z(G_i)$. 

In Sections \ref{s:128}, \ref{s:256}, and \ref{s:512}, we frequently 
refer to the ``Magma/GAP numbers'' of $2$-groups of a given order. These 
are the numbers given by the ``Small Groups library'' (see 
\texttt{http://www.icm.tu-bs.de/ag\_algebra/software/small/}), and used by 
both Magma and GAP when referring to $2$-groups of order at most $2^9$ (as 
well as groups of other small orders).

All three authors would like to thank the Mathematics Institute at 
Copenhagen University, and especially the Centre for Symmetry and 
Deformation, for their hospitality while much of this work was carried out. 
We would also like to thank Jacob Weismann and James Avery for their 
support with some of the computer computations, especially those involving 
the groups of order $512$. Finally, we are grateful to the referee for 
reading this paper so carefully and giving us many suggestions for 
improving it.

\bigskip

\section{Background results about fusion systems}
\label{s:background}

In this section, we collect a few definitions and results needed in the 
rest of the paper.  A \emph{fusion system} over a finite $p$-group $S$ is a 
category $\calf$ whose objects are the subgroups of $S$, and whose morphism 
sets $\homf(P,Q)\subseteq\Inj(P,Q)$ contain all homomorphisms induced by 
conjugation in $S$. One also requires that each morphism in $\calf$ factors 
as the composite of an isomorphism (in $\calf$) followed by an inclusion. 

For such $\calf$, a subgroup $P\le S$ is called \emph{fully 
normalized} (\emph{fully centralized}) if $|N_S(P)|\ge|N_S(Q)|$ 
($|C_S(P)|\ge|C_S(Q)|$) for each $Q$ in the $\calf$-isomorphism class of 
$P$. The fusion system $\calf$ is \emph{saturated} if it satisfies the 
following two axioms:
\begin{enumerate}[(I) ]
\item (Sylow axiom) If $P\le S$ is fully normalized, then $P$ is fully 
centralized and $\Aut_S(P)\in\sylp{\autf(P)}$.
\item (Extension axiom) If $\varphi\in\isof(P,Q)$ where $Q$ is fully 
centralized, and we set
	\[ N_\varphi = \bigl\{ g\in N_S(P) \,\big|\, \varphi 
	c_g\varphi^{-1}\in\Aut_S(Q) \bigr\}, \]
then there exists $\4\varphi\in\homf(N_\varphi,S)$ such that 
$\4\varphi|_P=\varphi$. 
\end{enumerate}
We refer to 
\cite[Definition 1.2]{BLO2}, \cite[\S\,I.2]{AKO}, and \cite[\S\,4.1]{Craven} 
for more details and notation. 
For example, when $G$ is a finite group with $S\in\sylp{G}$, the fusion 
system of $G$ is the category $\calf_S(G)$, where for $P,Q\le{}S$, 
$\Hom_{\calf_S(G)}(P,Q)=\Hom_G(P,Q)$:  the set of homomorphisms which are 
induced by conjugation in $G$.  

If $\calf$ and $\cale$ are saturated fusion systems over $S$ and $T$, 
respectively, then we say that $\calf$ and $\cale$ are \emph{isomorphic} if 
there is an isomorphism $\alpha\:S\xto\cong T$ such that for each $P,Q\le 
S$, $\Hom_\cale(\alpha(P),\alpha(Q))=\alpha\homf(P,Q)\alpha^{-1}$. Thus 
$\calf\cong\cale$ as fusion systems if there is an isomorphism of 
categories induced by an isomorphism between the underlying $p$-groups. A 
fusion system $\calf$ over $S$ is \emph{realizable} if 
$\calf\cong\calf_S(G)$ for some finite group $G$ with $S\in\sylp{G}$, and 
is \emph{exotic} otherwise. 

We say that two subgroups $P,Q\le{}S$ are \emph{$\calf$-conjugate} if they 
are isomorphic in the category $\calf$.  Let $P^\calf$ be the set of all 
subgroups of $S$ which are $\calf$-conjugate to $P$.

\begin{Defi} \label{d:subgroups}
Fix a prime $p$, a finite $p$-group $S$, and a saturated fusion system $\calf$
over $S$.  For each subgroup $P\le{}S$,
\begin{enuma} 

\item $P$ is \emph{$\calf$-centric} if $C_S(Q)=Z(Q)$ for each $Q\in 
P^\calf$;

\item $P$ is \emph{normal in $\calf$} (denoted $P\nsg\calf$) if $P\nsg{}S$, and 
every morphism $\varphi\in\homf(Q,R)$ in $\calf$ extends to a morphism 
$\widebar{\varphi}\in\homf(PQ,PR)$ such that $\widebar{\varphi}(P)=P$; and  

\item $P$ is \emph{strongly closed} in $\calf$ if for each $Q\le P$ and each 
$\varphi\in\homf(Q,S)$, $\varphi(Q)\le P$.

\item The maximal normal $p$-subgroup of a saturated fusion system 
$\calf$ is denoted $O_p(\calf)$. This is defined since for $A,B\le S$ 
normal in $\calf$, $AB$ is also normal in $\calf$.

\item For any $\varphi\in\Aut(S)$, $\9\varphi\calf$ denotes the
fusion system over $S$ defined by
        $$ \Hom_{\9\varphi\calf}(P,Q) =
        \varphi\circ\homf(\varphi^{-1}(P),\varphi^{-1}(Q))
        \circ\varphi^{-1} $$
for all $P,Q\le{}S$.

\end{enuma}
\end{Defi}

\subsection{Essential and critical subgroups}

\leavevmode\noindent

We next define the \emph{essential} subgroups in a fusion system $\calf$, 
and describe how their automorphisms and those of $S$ generate $\calf$.

\begin{Defi} \label{d:ess-crit}
\begin{enuma} 

\item If $p$ is a prime and $H<G$ are finite groups, then $H$ is 
\emph{strongly $p$-embedded} in $G$ if $p\big\vert|H|$, and 
$H\cap\9xH$ has order prime to $p$ for each $x\in G{\sminus}H$. 

\item Let $S$ be a finite $p$-group.  A subgroup $P$ of $S$ is 
\emph{critical} if $P<S$, $P$ is centric in $S$, and there are subgroups 
$G_0$ and $G$ of $\Out(P)$ such that
	\[ \Out_S(P) \le G_0 < G \le \Out(P)\,, \]
$G_0$ is strongly $p$-embedded in $G$, and $\Out_S(P)\in\sylp{G}$.

\item If $\calf$ is a saturated fusion system over a finite $p$-group $S$, 
then a subgroup $P$ of $S$ is \emph{$\calf$-essential} if $P<S$, $P$ is 
$\calf$-centric and fully normalized in $\calf$, and $\outf(P)$ contains a 
strongly $p$-embedded subgroup.  We let $\EE_\calf$ denote the set of 
$\calf$-essential subgroups of $S$.
\end{enuma}
\end{Defi}

Note that by definition, if $P$ is critical in $S$, then $\alpha(P)$ is 
critical in $S$ for each $\alpha\in\Aut(S)$.

In the situation of Definition \ref{d:ess-crit}(b), $O_p(G)=1$ since 
$G$ contains a strongly $p$-embedded subgroup (cf. \cite[Proposition 
A.7(c)]{AKO}), and so $\Out_S(P)\cap{}O_p(\Out(P))\le O_p(G)=1$. Thus our 
definition of critical subgroup is equivalent to that of \cite[Definition 
3.1]{OV2}.

We refer to \cite[\S\,6.4]{Sz2}, and also to \cite[Proposition A.7]{AKO}, 
for some of the other properties of strongly $p$-embedded subgroups.  

The next proposition, first shown by Puig, is a version of Alperin's 
fusion theorem for saturated fusion systems. It explains the importance of 
essential subgroups.

\begin{Prop}[{\cite[Proposition 1.10(a,b)]{O-split}}] \label{p:AFT-E}
For any saturated fusion system $\calf$ over a finite $p$-group $S$, each 
morphism in $\calf$ is a composite of restrictions of automorphisms in 
$\autf(S)$, and of automorphisms in $O^{p'}(\autf(P))$ for $P\in\EE_\calf$.
\end{Prop}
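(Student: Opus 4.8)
The plan is to prove Alperin's fusion theorem in the form stated, which asserts that $\autf(S)$ together with $O^{p'}(\autf(P))$ for $P\in\EE_\calf$ generate every morphism in $\calf$. First I would reduce to showing that every $\calf$-isomorphism $\varphi\in\isof(P,P')$ between subgroups of $S$ factors as a composite of restrictions of such automorphisms: since every morphism in $\calf$ factors as an isomorphism followed by an inclusion, and inclusions are restrictions of $\Id_S\in\autf(S)$, this suffices. I would argue by downward induction on the order of $P$ (equivalently, a ``barycentric'' induction on the index $[S:P]$), the base case $P=S$ being trivial since then $\varphi\in\autf(S)$ itself.

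For the inductive step, given $\varphi\in\isof(P,P')$ with $P<S$, I would first replace $P$ and $P'$ by better-behaved conjugates. Choose $Q$ in the $\calf$-conjugacy class of $P$ which is fully normalized (hence fully centralized, and with $\Aut_S(Q)\in\sylp{\autf(Q)}$ by the Sylow axiom); one shows, using the extension axiom and the saturation hypotheses, that there exist $\calf$-isomorphisms $P\to Q$ and $P'\to Q$ whose source-normalizers are sufficiently large that the extension axiom applies, so that each such isomorphism is itself a restriction of a morphism defined on a strictly larger subgroup. By the induction hypothesis those larger morphisms are already in the generated subcategory, so it remains to treat the case $P=P'=Q$ fully normalized, i.e.\ $\varphi\in\autf(Q)$. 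Now factor $\autf(Q)$: by a Frattini-type argument, $\autf(Q)=O^{p'}(\autf(Q))\cdot\Aut_S(Q)$ since $\Aut_S(Q)$ is Sylow; the factor in $\Aut_S(Q)$ is conjugation by an element of $N_S(Q)$, hence again a restriction of an element of $\autf(S)$ composed with things handled inductively, so we are reduced to $\varphi\in O^{p'}(\autf(Q))$. If $Q$ is not $\calf$-centric, one uses that $C_S(Q)\not\le Q$ to replace $Q$ by $QC_S(Q)$ and invoke induction; and if $Q$ is $\calf$-centric but $\outf(Q)$ has no strongly $p$-embedded subgroup, a standard transfer/generation argument (the $O^{p'}$ of a group with no strongly $p$-embedded subgroup is generated by its intersections with the Sylow normalizers, cf.\ the theory in \cite[\S\,6.4]{Sz2}) lets one write $\varphi$ as a product of conjugations coming from proper overgroups, again handled by induction. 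What remains is exactly $Q$ fully normalized, $\calf$-centric, with $\outf(Q)$ containing a strongly $p$-embedded subgroup — that is, $Q\in\EE_\calf$ — and $\varphi\in O^{p'}(\autf(Q))$, which is a generator on our list.

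The main obstacle is the careful bookkeeping in the reduction step that moves an arbitrary $\varphi\in\isof(P,P')$ to the fully normalized representative $Q$: one must verify that the normalizer $N_\varphi$ of each intermediate isomorphism is large enough (it contains $N_S(P)$ or at least a subgroup properly containing $P$) so that the extension axiom applies and the induction hypothesis can be invoked on the extended morphism. This is precisely where the Sylow and extension axioms are used in tandem, and it is the technical heart of the argument; the factorization of $\autf(Q)$ itself is then a straightforward application of the Frattini argument together with the structure theory of groups with (or without) strongly $p$-embedded subgroups. Since this is a known result of Puig with a published proof in \cite[Proposition 1.10]{O-split}, I would in practice cite that reference, but the sketch above is the route I would reconstruct.
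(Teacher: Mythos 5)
The paper gives no proof of this proposition; it simply cites \cite[Proposition 1.10(a,b)]{O-split}, as you yourself note you would do in practice. Your sketch reconstructs the standard route (downward induction on $|P|$, move to a fully normalized representative via the extension axiom, analyze $\autf(Q)$), and most of it is sound. But one step is genuinely wrong: the factorization $\autf(Q)=O^{p'}(\autf(Q))\cdot\Aut_S(Q)$ cannot hold, because $\Aut_S(Q)$ is a $p$-group and hence already lies inside $O^{p'}(\autf(Q))$; the right-hand side is just $O^{p'}(\autf(Q))$, and that equals $\autf(Q)$ only when $\autf(Q)$ has no nontrivial $p'$-quotient (which fails in relevant examples, e.g.\ $\autf(P_i)\cong A_5\times C_3$ in Proposition~\ref{p:S4(4)} of this paper). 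What the Frattini argument actually gives, applied to the normal subgroup $O^{p'}(\autf(Q))$ with $\Aut_S(Q)\in\sylp{O^{p'}(\autf(Q))}=\sylp{\autf(Q)}$, is
\[
\autf(Q)=O^{p'}(\autf(Q))\cdot N_{\autf(Q)}\bigl(\Aut_S(Q)\bigr),
\]
and the second factor is \emph{not} handled by observing that $\Aut_S(Q)$ consists of conjugations: for $\alpha\in N_{\autf(Q)}(\Aut_S(Q))$ one has $N_\alpha=N_S(Q)>Q$, so the extension axiom extends $\alpha$ to a morphism $N_S(Q)\to S$, and \emph{that} is what the downward induction disposes of. This use of the extension axiom on the normalizer, not conjugation by elements of $N_S(Q)$, is the correct mechanism, and it is also exactly how one refines the cruder ``generated by $\autf(S)$ and $\autf(P)$ for $P\in\EE_\calf$'' version (\cite[Theorem I.3.5]{AKO}) to the $O^{p'}$ version stated here.

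Secondly, your phrase ``the $O^{p'}$ of a group with no strongly $p$-embedded subgroup is generated by its intersections with the Sylow normalizers'' is not a correct statement of the fact you need. The relevant characterization (\cite[46.4]{A-FGT}) is that a finite group $\Gamma$ with $T\in\sylp{\Gamma}$ fails to have a strongly $p$-embedded subgroup precisely when $\Gamma=\langle N_\Gamma(T')\,:\,1\ne T'\le T\rangle$. To use this in the fusion-system setting you must then verify that an $\alpha\in\autf(Q)$ whose class lies in $N_{\outf(Q)}(T')$ for some $1\ne T'\le\Out_S(Q)$ has $N_\alpha>Q$, so that it extends and the induction applies; this is the real technical content of the non-essential centric case, and it is not a bookkeeping footnote. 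With both corrections your outline becomes the argument of \cite[Proposition 1.10]{O-split} and \cite[Proposition I.3.3(b) and Theorem I.3.5]{AKO}.
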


The next lemma is an immediate consequence of the definitions:  critical 
subgroups in $S$ are the ones which can be essential in a saturated fusion 
system over $S$. 

\begin{Prop}[{\cite[Proposition 3.2]{OV2}}] \label{ess=>crit}
If $\calf$ is a saturated fusion system over a finite $p$-group $S$, and 
$P\in\EE_\calf$, then $P$ is a critical subgroup of $S$.  
\end{Prop}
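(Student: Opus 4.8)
The plan is to unwind the definitions and observe that the conditions appearing in the definition of $\calf$-essential (Definition \ref{d:ess-crit}(c)) are precisely what is needed to produce the data required in the definition of critical (Definition \ref{d:ess-crit}(b)). So fix a saturated fusion system $\calf$ over $S$ and a subgroup $P\in\EE_\calf$. By definition $P<S$, $P$ is $\calf$-centric and fully normalized, and $\outf(P)=\Out_\calf(P)$ contains a strongly $p$-embedded subgroup. I must check: (i) $P$ is centric in $S$ (in the sense that $C_S(Q)=Z(Q)$ for $Q$ running over the $S$-conjugates of $P$, or whatever the precise intrinsic notion is); and (ii) there exist subgroups $G_0<G\le\Out(P)$ with $\Out_S(P)\le G_0$, $G_0$ strongly $p$-embedded in $G$, and $\Out_S(P)\in\sylp{G}$.

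For (i): since $P$ is $\calf$-centric, $C_S(Q)=Z(Q)$ for every $Q\in P^\calf$, and in particular for every $Q$ in the (smaller) $S$-conjugacy class of $P$; this is exactly the statement that $P$ is centric in $S$. For (ii): set $G=\outf(P)$ and let $G_0\le G$ be a strongly $p$-embedded subgroup, which exists by hypothesis. I then need $\Out_S(P)\le G_0$ and $\Out_S(P)\in\sylp{G}$. The second of these follows from the Sylow axiom (I): since $P$ is fully normalized, $\Aut_S(P)\in\sylp{\autf(P)}$, and passing to the quotient by $\Inn(P)$ gives $\Out_S(P)\in\sylp{\outf(P)}=\sylp{G}$. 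For the first, I would use the standard fact about strongly $p$-embedded subgroups (cf. \cite[Proposition A.7]{AKO} or \cite[\S\,6.4]{Sz2}): if $H$ is strongly $p$-embedded in $G$, then $H$ contains a Sylow $p$-subgroup of $G$, and after replacing $H$ by a conjugate I may assume it contains any prescribed Sylow $p$-subgroup — here $\Out_S(P)$. Replacing $G_0$ by a $G$-conjugate does not affect its being strongly $p$-embedded, so this is harmless, and it gives $\Out_S(P)\le G_0$ as required. Finally $G_0<G$ is immediate since a strongly $p$-embedded subgroup is by definition proper.

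The only genuine point requiring care — and the main (minor) obstacle — is the replacement step: verifying that a strongly $p$-embedded subgroup $G_0$ of $G$ can be conjugated to contain a given Sylow $p$-subgroup $\Out_S(P)$. This rests on the facts that $p\bigm|\, |G_0|$ (part of the definition), that $G_0$ in fact contains a full Sylow $p$-subgroup of $G$ (a standard consequence, since otherwise $G_0\cap{}^xG_0$ for suitable $x$ would still have order divisible by $p$), and on Sylow's theorem in $G$ to conjugate that Sylow subgroup onto $\Out_S(P)$. All of this is contained in the references cited just before the statement in the excerpt, so I would simply invoke them rather than reprove them. With $G_0<G=\outf(P)\le\Out(P)$ and all three required properties verified, $P$ is critical in $S$, completing the proof. (The remark following Definition \ref{d:ess-crit} already notes that this notion of critical agrees with that of \cite[Definition 3.1]{OV2}, so no separate reconciliation is needed.)
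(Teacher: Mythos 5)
Your proof is correct and takes essentially the only sensible route: unwind Definitions \ref{d:ess-crit}(b,c), take $G=\outf(P)$, use the Sylow axiom (via full normalization) to get $\Out_S(P)\in\sylp{G}$, and conjugate the given strongly $p$-embedded subgroup (which necessarily contains a Sylow $p$-subgroup of $G$) so that it contains $\Out_S(P)$. The paper defers to \cite[Proposition 3.2]{OV2}, whose proof proceeds along the same lines, and you have correctly identified and handled the one small step requiring an argument.
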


The next three propositions give some necessary conditions for a subgroup 
to be critical, and hence necessary conditions for it to be essential.  

\begin{Prop}[{\cite[Lemma 3.4]{OV2}}] \label{p:QcharP}
Fix a prime $p$, a finite $p$-group $S$, a subgroup $P\le{}S$, and a
subgroup $\Theta$ characteristic in $P$. Assume there is
$g\in{}N_S(P){\sminus}P$ such that
\begin{enuma} 
\item $[g,P]\le{}\Theta\cdot\Phi(P)$, and
\item $[g,\Theta]\le\Phi(P)$.
\end{enuma}
Then $c_g\in{}O_p(\Aut(P))$, and hence $P$ is not critical. 
\end{Prop}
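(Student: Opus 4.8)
The plan is to show directly that $c_g$ lies in $O_p(\Aut(P))$ by exhibiting an explicit normal $p$-subgroup of $\Aut(P)$ containing it, and then invoke the remark (following Definition \ref{d:ess-crit}) that $\Out_S(P)\cap O_p(\Out(P))=1$ whenever $P$ is critical. Recall the standard fact that the kernel $K$ of the restriction map $\Aut(P)\to\Aut(P/\Phi(P))$ is a normal $p$-subgroup of $\Aut(P)$: it is normal because $\Phi(P)$ is characteristic, and it is a $p$-group by a theorem of Burnside (an automorphism acting trivially on $P/\Phi(P)$ acts trivially on each factor of a suitable central series, hence has $p$-power order). More refined: let $A$ be the subgroup of automorphisms of $P$ that act trivially on both $P/\Theta\Phi(P)$ and on $\Theta\Phi(P)/\Phi(P)$ — equivalently, $\alpha\in A$ iff $[\alpha,P]\le\Theta\Phi(P)$ and $[\alpha,\Theta]\le\Phi(P)$ (the latter using $\Theta$ characteristic so $[\alpha,\Theta]\le\Theta$ automatically, and we demand it land in $\Phi(P)$). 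Since $\Theta$ and $\Phi(P)$ are both characteristic in $P$, the subgroup $A$ is normal in $\Aut(P)$.

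Next I would argue that $A$ is a $p$-group. The filtration $P\ge\Theta\Phi(P)\ge\Phi(P)\ge 1$ has the property that every element of $A$ acts trivially on $P/\Theta\Phi(P)$, trivially on $\Theta\Phi(P)/\Phi(P)$, and trivially on $\Phi(P)$... except I need care on the last step: elements of $A$ need not act trivially on $\Phi(P)$ itself. So instead I refine: $A$ stabilizes the chain $P\ge\Theta\Phi(P)\ge\Phi(P)$, and the kernel of $A\to\Aut(\Phi(P))$ is a $p$-group by the usual stability argument (Hall, or \cite[Theorem 5.3.2]{Gorenstein}-type result: a group of automorphisms stabilizing a chain $1=P_0\le\cdots\le P_n=P$ in the sense that $[P_i,A]\le P_{i-1}$ is a $p$-group). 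To get $A$ itself a $p$-group I use that $A$ also acts trivially on $P/\Phi(P)$, so appending $\Phi(P)\ge\Phi(\Phi(P))\ge\cdots\ge1$ to the chain $P\ge\Theta\Phi(P)\ge\Phi(P)$ and noting that $A$ acting trivially on $P/\Phi(P)$ forces it to act trivially on each $\Phi^{(i)}(P)/\Phi^{(i+1)}(P)$ (since the Frattini quotients of $\Phi(P)$ are quotients of Frattini quotients built from $P$, and an automorphism trivial on $P/\Phi(P)$ is trivial on all of them — this is the standard proof that such automorphisms form a $p$-group). Hence $A$ stabilizes a full chain down to $1$ and is therefore a $p$-group; being normal in $\Aut(P)$, it lies in $O_p(\Aut(P))$.

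The hypotheses (a) and (b) say precisely that $[g,P]\le\Theta\Phi(P)$ and $[g,\Theta]\le\Phi(P)$, i.e. $c_g\in A\le O_p(\Aut(P))$. To conclude $P$ is not critical: the image $\bar c_g$ of $c_g$ in $\Out(P)$ lies in $O_p(\Out(P))$ (the image of $O_p(\Aut(P))$ under $\Aut(P)\surj\Out(P)$ is contained in $O_p(\Out(P))$, since $\Inn(P)\le\Aut(P)$ is already a $p$-group); moreover $\bar c_g\ne1$ because $g\notin P$, so $\bar c_g\in\Out_S(P)\cap O_p(\Out(P))$ is nontrivial. But as noted after Definition \ref{d:ess-crit}, if $P$ were critical with $\Out_S(P)\le G_0<G$ and $G_0$ strongly $p$-embedded in $G$, then $O_p(G)=1$ forces $\Out_S(P)\cap O_p(\Out(P))=1$ — a contradiction. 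Hence $P$ is not critical.

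I expect the only real subtlety to be the bookkeeping in the $p$-group argument — specifically, being careful that $A$ need not act trivially on $\Phi(P)$, and handling this by extending the stabilized chain through the Frattini series of $\Phi(P)$ using that $A$ acts trivially on $P/\Phi(P)$. Everything else is a direct translation of the hypotheses into the statement $c_g\in A$ plus the already-recorded consequence of strong $p$-embedding.
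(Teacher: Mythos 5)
Your high-level strategy is correct and matches what the cited source does: define a normal $p$-subgroup $A$ of $\Aut(P)$ containing $c_g$, conclude $c_g\in O_p(\Aut(P))$, then invoke the remark after Definition~\ref{d:ess-crit} that $\Out_S(P)\cap O_p(\Out(P))=1$ for critical $P$. However, your argument that $A$ is a $p$-group contains a false step. You assert ``$A$ also acts trivially on $P/\Phi(P)$,'' but it does not: the condition $[\alpha,P]\le\Theta\Phi(P)$ only gives trivial action on $P/\Theta\Phi(P)$, which is strictly weaker whenever $\Theta\not\le\Phi(P)$. For a concrete failure, take $P=C_4\times C_2=\gen{a}\times\gen{b}$ and $\Theta=\Omega_1(P)$; the automorphism $a\mapsto ab$, $b\mapsto b$ satisfies both of your defining conditions for $A$, yet it moves the coset $a\Phi(P)$. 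So the plan of appending the iterated Frattini series of $\Phi(P)$ to the chain has no basis — the ``standard proof'' you are appealing to requires exactly the hypothesis you don't have. You correctly flagged this spot as ``the only real subtlety,'' but the repair you proposed does not go through.

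The correct fix is shorter and is already recorded in the appendix of this paper: Lemma~\ref{l:mod-Fr} says that if $P_0<P_1<\cdots<P_k=P$ is a chain of characteristic subgroups with $P_0\le\Phi(P)$, then $\cala=\{\alpha\in\Aut(P)\mid[\alpha,P_i]\le P_{i-1}\text{ for all }i\}$ is a normal $p$-subgroup of $\Aut(P)$. The chain need only \emph{start} inside $\Phi(P)$; nothing at all is demanded of the action of $\cala$ on $P_0$, because a $p'$-element of $\cala$ acts trivially on $P/P_0$ by coprime stability (\cite[Theorem 5.3.2]{Gorenstein}), hence trivially on the further quotient $P/\Phi(P)$, hence is the identity by Burnside (\cite[Theorem 5.1.4]{Gorenstein}). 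Apply this with the chain $\Phi(P)<\Theta\Phi(P)<P$: your $A$ is exactly this $\cala$, since $[\alpha,\Theta]\le\Phi(P)$ is equivalent to $[\alpha,\Theta\Phi(P)]\le\Phi(P)$ (using that $\Phi(P)$ is characteristic), and all three terms of the chain are characteristic in $P$. Thus $c_g\in A=\cala\le O_p(\Aut(P))$. One further small point: the deduction ``$[c_g]\ne1$ in $\Out(P)$ because $g\notin P$'' requires $C_S(P)\le P$, i.e.\ that $P$ is centric in $S$; this is available because centricity is part of the definition of critical, but the contradiction argument should make it explicit that $P$ is assumed critical before this step is invoked.
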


The next proposition is a consequence of Bender's classification \cite[Satz 
1]{Bender} of groups with strongly 2-embedded subgroups.

\begin{Prop}[{\cite[Proposition 3.3(a,c,d)]{OV2}}] \label{p:critical}
Let $S$ be a finite $2$-group, and let $P\le S$ be a critical subgroup. Set
$S_0=N_S(P)/P\cong\Out_S(P)$. Then the following hold.
\begin{enuma}
\item Either $S_0$ is cyclic, or $Z(S_0)=\{g\in{}S_0\,|\,g^2=1\}$. 
If $Z(S_0)$ is not cyclic, then $|S_0|=|Z(S_0)|^m$ for $m=1$, $2$, or $3$. 

\item Set $|S_0|=2^k$.  Then $\rk(P/\Phi(P))\ge2k$. If $k\ge2$, then 
$\rk([s,P/\Phi(P)])\ge2$ for all $1\ne{}s\in{}S_0$.

\item Assume $Z(S_0)\cong{}(C_2)^n$ with $n\ge2$, and fix
$1\ne{}s\in{}Z(S_0)$.  Then $\rk([s,P/\Phi(P)])\ge{}n$.
\end{enuma}
\end{Prop}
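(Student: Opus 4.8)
The plan is to deduce everything from Bender's theorem \cite[Satz 1]{Bender}, which classifies the finite groups $G$ possessing a strongly $2$-embedded subgroup: such a $G$ has a Sylow $2$-subgroup which is either cyclic, quaternion, or (after passing to $O^{2'}(G)/O(G)$) is elementary abelian of rank $n$, namely the groups $\SL_2(2^n)$, $\Sz(2^n)$, or $\PSU_3(2^n)$. Since $P\le S$ is critical, by definition there are $\Out_S(P)\le G_0<G\le\Out(P)$ with $G_0$ strongly $2$-embedded in $G$ and $S_0\cong\Out_S(P)\in\syl{2}{G}$. Thus $S_0$ is, up to isomorphism, on Bender's list: cyclic, (generalized) quaternion, or elementary abelian. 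For part (a), I would first observe that a quaternion group $Q$ of order $\ge8$ is not of the form appearing here unless $Q=Q_8$ — but more to the point, in all three non-cyclic Bender cases the center $Z(S_0)$ equals the set of involutions together with $1$: for $S_0$ elementary abelian this is immediate, and for $S_0$ generalized quaternion one checks $Z(S_0)=\{1,z\}$ is exactly $\{g\mid g^2=1\}$. The bound $|S_0|=|Z(S_0)|^m$ with $m\in\{1,2,3\}$ then comes from reading off $|S_0|$ versus $|Z(S_0)|$ in each case: $m=1$ for $\Sz(2^n)$ (where $|S_0|=2^{2n}$, $|Z(S_0)|=2^{2n}$... here one must be slightly careful about which elementary abelian group is $Z(S_0)$), and analogously $m=2$ or $3$ for the other families; the quaternion case gives $m$ small as well. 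I would simply enumerate the Bender cases and record $(\,|S_0|,\,|Z(S_0)|\,)$ in each.

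For parts (b) and (c), the key input is a coprime-action/representation-theoretic argument on the $\mathbb{F}_2[S_0]$-module $V=P/\Phi(P)$. Since $G_0$ is strongly $2$-embedded in $G$ and $O^{2'}(G)/O(G)$ is one of $\SL_2(2^n),\Sz(2^n),\PSU_3(2^n)$, and since $P$ is centric in $S$ (so $\Out(P)$ acts faithfully enough on $V$ in the relevant sense), one knows $G$ — or rather the image of $G$ in $\Aut(V)=\GL(V)$ — acts on $V$ with the property that $S_0$ acts without nonzero fixed... no: rather, the point is that $O^{2'}(\Out(P))$ restricted to $V$ contains a subgroup acting like one of these simple groups, and such groups have no nontrivial $1$-dimensional composition factors when acting on $\mathbb{F}_2$-modules on which they act nontrivially. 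I would use the standard fact (appearing in \cite{OV2}) that for $L\in\{\SL_2(2^n),\Sz(2^n),\PSU_3(2^n)\}$ any faithful $\mathbb{F}_2L$-module $V$ has $\dim[s,V]\ge 2k$ where $2^k=|S_0\cap L|$, by decomposing $V$ into natural modules (each contributing dimension $2n$ for $\SL_2(2^n)$, etc.) and using that $[s,-]$ is injective-up-to-the-fixed-space on each natural summand. Part (b)'s second assertion (rank of $[s,V]\ge 2$ for $1\ne s\in S_0$ when $k\ge2$) and part (c) (rank $\ge n$ when $1\ne s\in Z(S_0)\cong(C_2)^n$) then follow by applying this dimension estimate to the cyclic, resp. the full elementary abelian, subgroup generated by $s$, using that on each natural module a single involution already moves a subspace of the appropriate dimension.

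The main obstacle will be the bookkeeping in the representation-theoretic estimates of (b) and (c): one must pin down, for each of the three Bender families, exactly how $V$ decomposes as a sum of natural (Frobenius-twisted) modules and compute $\dim[s,-]$ on each, being careful that $S_0$ need not be all of a Sylow subgroup of the full covering group and that $O(G)$ and the solvable radical must be quotiented out before the simple-group structure is visible. Fortunately all of this is already packaged in \cite[Proposition 3.3]{OV2}, so in the paper the proof is simply the citation; were I proving it from scratch I would isolate the lemma ``a faithful $\mathbb{F}_2$-module for $\SL_2(2^n)$, $\Sz(2^n)$, or $\PSU_3(2^n)$ is a sum of natural modules, on each of which any involution $s$ satisfies $\dim[s,W]\ge n$'' and then assemble (a)--(c) from it together with Bender's list.
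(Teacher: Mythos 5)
The paper itself offers no proof here: Proposition~\ref{p:critical} is stated as a citation of \cite[Proposition 3.3(a,c,d)]{OV2}, so there is nothing in this paper to compare your argument against. Your high-level plan (invoke Bender's theorem to pin down the possible isomorphism types of $S_0$, then handle (a) by inspection and (b),(c) by an $\F_2[S_0]$-module argument as in \cite[Lemma 1.7]{OV2}) is indeed the strategy used in \cite{OV2}, so the approach is right.

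However, the sketch as written has concrete errors that would sink it if carried out. First, in the case analysis for (a) you explicitly check only ``$S_0$ elementary abelian'' and ``$S_0$ generalized quaternion,'' but Bender's list also produces Sylow $2$-subgroups of $\Sz(2^{2n-1})$ (Suzuki $2$-groups of order $2^{2(2n-1)}$, exponent $4$, with $Z(S_0)=\Phi(S_0)=\Omega_1(S_0)$ of order $2^{2n-1}$) and of $\PSU_3(2^n)$ (order $2^{3n}$, center of order $2^n$). These are exactly the cases where $Z(S_0)=\Omega_1(S_0)$ requires an actual verification and where the conclusions $m=2$ and $m=3$ come from; you never touch them. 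Second, the pairing of the values of $m$ with the families is backwards: $m=1$ corresponds to $\SL_2(2^n)$ (where $S_0=Z(S_0)\cong(C_2)^n$), $m=2$ to $\Sz(2^{2n-1})$, and $m=3$ to $\PSU_3(2^n)$; you assign $m=1$ to $\Sz$. Third, the representation-theoretic lemma you propose to isolate, ``$\dim[s,V]\ge 2k$ for a single $s$,'' is both false (already for $\SL_2(2^n)$ on its natural $2n$-dimensional $\F_2$-module an involution $s$ has $\dim[s,V]=n=k$) and inconsistent with what you need. The three bounds in (b) and (c) are genuinely different statements requiring separate arguments: $\dim V\ge 2k$ (a faithfulness/minimal-dimension bound for the whole group), $\dim[s,V]\ge 2$ for each nontrivial $s\in S_0$ when $k\ge2$ (a weak per-element bound, false for $k=1$), and $\dim[s,V]\ge\rk(Z(S_0))$ for $s\in Z(S_0)$. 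Conflating them into one inequality would not produce a valid proof.
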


We also need the following refinement of the last proposition. 

\begin{Prop} \label{p:critical2}
Let $S$ be a finite $2$-group, and let $P\le S$ be a critical subgroup. 
Let $k$ be such that $2^k=|N_S(P)/P|=|\Out_S(P)|$, and let 
$\Phi(P)=P_0<\cdots<P_r=P$ be a sequence of subgroups 
characteristic in $P$.  Then there is some $1\le i\le r$ such that 
$\rk(P_i/P_{i-1})\ge2k$, and such that if $k\ge2$, then 
$\rk([s,P_i/P_{i-1}])\ge2$ for each $1\ne s\in\Out_S(P)$.  
\end{Prop}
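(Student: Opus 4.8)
The plan is to reduce the statement to a fact about $\mathbb{F}_2$-modules already established in the proof of Proposition~\ref{p:critical}. Since $P$ is critical there are subgroups $\Out_S(P)\le G_0<G\le\Out(P)$ with $G_0$ strongly $2$-embedded in $G$ and $\Out_S(P)\in\syl{2}{G}$; write $S_0=\Out_S(P)$, so that $S_0\cong N_S(P)/P$ ($P$ being centric) and $|S_0|=2^k$, and set $V=P/\Phi(P)$, an $\mathbb{F}_2G$-module. The kernel of $\Out(P)\to\GL(V)$ is a normal $2$-subgroup of $\Out(P)$, so its intersection with $G$ is a normal $2$-subgroup of $G$, hence trivial (as $G$ has a strongly $2$-embedded subgroup); thus $G$ acts faithfully on $V$. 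Because each $P_i$ is characteristic in $P$ and contains $\Phi(P)=P_0$, the subspaces $V_i=P_i/\Phi(P)$ form a chain $0=V_0<V_1<\dots<V_r=V$ of $G$-submodules, with quotients $W_i=V_i/V_{i-1}\cong P_i/P_{i-1}$; here $\rk(W_i)=\dim_{\mathbb{F}_2}W_i$, and for $s\in S_0$, $\rk([s,W_i])$ is the $\mathbb{F}_2$-dimension of the image of $s-1$ acting on $W_i$.

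The key input, which I would extract from the proof of Proposition~\ref{p:critical}(b) (i.e.\ from \cite[Proposition~3.3]{OV2}), is the sharper assertion that the faithful module $V$ has an \emph{irreducible} $\mathbb{F}_2G$-composition factor $K$ with $\dim_{\mathbb{F}_2}K\ge2k$ and, when $k\ge2$, with $\rk([s,K])\ge2$ for every $1\ne s\in S_0$. This is what that argument actually produces, case by case in Bender's classification of groups with a strongly $2$-embedded subgroup: when $O^{2'}(G)/O_{2'}(G)$ is one of $\SL_2(2^n)$, $\Sz(2^n)$, $\PSU_3(2^n)$, one takes $K$ to be a Galois twist of the natural module of that component, of $\mathbb{F}_2$-dimension exactly $2k$; in the remaining case $G$ is a Frobenius group with cyclic complement $S_0$, and one takes $K$ to be any irreducible composition factor of $V$ on which $O_{2'}(G)$ acts nontrivially (equivalently, without nonzero fixed vectors), so that $K$ is free over $\mathbb{F}_2S_0$ and $\dim_{\mathbb{F}_2}K\ge2^k\ge2k$.

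Granting this, the proposition follows quickly. By the Jordan--H\"older theorem, the $G$-composition factors of $V$, counted with multiplicity, are exactly those of $W_1,\dots,W_r$ taken together; hence $K$ occurs as a composition factor of some $W_{i_0}$, so $\rk(W_{i_0})=\dim_{\mathbb{F}_2}W_{i_0}\ge\dim_{\mathbb{F}_2}K\ge2k$. Writing $K=M'/M''$ with $G$-submodules $M''\le M'\le W_{i_0}$, for each $s\in S_0$ the image of $s-1$ on $K$ is a quotient of the image of $s-1$ on $M'$, which is itself a subspace of the image of $s-1$ on $W_{i_0}$; therefore $\rk([s,W_{i_0}])\ge\rk([s,K])$. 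So when $k\ge2$ we get $\rk([s,W_{i_0}])\ge2$ for all $1\ne s\in S_0$, and taking $i=i_0$ completes the proof.

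I expect the one genuinely nontrivial point to be the sharper statement in the second paragraph: one must verify that the proof of Proposition~\ref{p:critical}(b) does more than bound $\dim_{\mathbb{F}_2}V$ from below, namely that it isolates a \emph{single} irreducible composition factor satisfying both conditions. This comes down to the known description of the minimal faithful $\mathbb{F}_2$-representations of $\SL_2(2^n)$, $\Sz(2^n)$ and $\PSU_3(2^n)$, together with the classical fact that an $\mathbb{F}_2$-module for a Frobenius group with kernel $O_{2'}(G)$ and complement $S_0$ is free over $\mathbb{F}_2S_0$ as soon as it has no nonzero $O_{2'}(G)$-fixed vectors. (The same reasoning would give the analogue of Proposition~\ref{p:critical}(c) for $W_{i_0}$, though that is not needed here.)
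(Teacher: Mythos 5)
Your Jordan--H\"older reduction in the last paragraph is correct and clean: once one knows that $V=P/\Phi(P)$ has an irreducible $\mathbb{F}_2 G$-composition factor $K$ with $\dim K\ge 2k$ and (when $k\ge2$) $\rk([s,K])\ge 2$ for all $1\ne s\in S_0$, the factor $K$ sits inside some $P_{i_0}/P_{i_0-1}$ and the rank inequalities transfer as you say. The paper itself takes a different route that avoids composition factors entirely: writing $K_i=C_\Gamma(P_i/P_{i-1})$, it notes $\bigcap_i K_i\le O_2(\Gamma)=1$ (Lemma~\ref{l:mod-Fr}), fixes an involution $t$ and some $g$ with $[g,t]\ne 1$, chooses $i$ with $[g,t]\notin K_i$, shows $K_i$ has odd order and $\Gamma/K_i$ still has a strongly $2$-embedded subgroup (handling $\Out_S(P)$ cyclic or generalized quaternion by a separate argument, and otherwise using that every strongly $2$-embedded subgroup contains $O_{2'}(\Gamma)\ge K_i$), and then applies \cite[Lemma~1.7]{OV2} directly to the faithful $\Gamma/K_i$-module $P_i/P_{i-1}$. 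That is shorter and has no need for a ``sharper'' version of the module-theoretic input.

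Where your write-up has a genuine gap is in the justification of the ``sharper assertion'' in your second paragraph, which you yourself flag as the nontrivial point. The assertion is true, but the recipe you sketch for producing $K$ does not work as stated. First, ``take $K$ to be a Galois twist of the natural module of the component'' is not a procedure for finding a composition factor of a given $V$; a faithful $\mathbb{F}_2 G$-module need not have the natural module among its composition factors (e.g.\ for $G=A_5\cong\SL_2(4)$, $V$ could be the $4$-dimensional permutation/Steinberg module $W_1$ of Lemma~\ref{l:F2[A5]-mod}, which has no natural-module composition factor). Second, the ``remaining case'' is not always a Frobenius group with complement $S_0$: for instance $G=C_3\times\Sigma_3$ has the strongly $2$-embedded subgroup $C_3\times C_2$ but is not Frobenius. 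Third, in that example the recipe ``take $K$ to be any irreducible composition factor on which $O_{2'}(G)$ acts nontrivially'' fails outright, since one may take the $2$-dimensional factor on which the central $C_3$ acts nontrivially and $S_0$ acts trivially, so that $[s,K]=0$; such a $K$ is certainly not free over $\mathbb{F}_2 S_0$. The correct choice is a composition factor $K$ with $C_G(K)$ of \emph{odd} order (equivalently, with $S_0$ acting faithfully); such a $K$ exists because all involutions of $G$ are conjugate and $\bigcap_K C_G(K)\le O_2(G)=1$. One then checks that $G/C_G(K)$ has a strongly $2$-embedded subgroup --- when $S_0$ is not cyclic or generalized quaternion because $G_0\ge O_{2'}(G)\ge C_G(K)$ and Lemma~\ref{str.emb.->>}(b) applies, and in the cyclic/quaternion case because the image of $z$ cannot be central in $\GL(K)$ (the only scalar in characteristic $2$ is $1$), so $G/C_G(K)$ has at least two involutions and the centralizer of one of them is strongly $2$-embedded --- and finally applies \cite[Lemma~1.7]{OV2} to $K$. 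This is essentially the same trichotomy the paper runs, just transported from the given filtration to a composition series; with it your approach goes through, but the case analysis you wrote down is not correct and would need to be replaced by this one.
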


\begin{proof}  Let $\Gamma\le\Out(P)$ be such that 
$\Out_S(P)\in\syl2\Gamma$ and $\Gamma$ has a strongly $2$-embedded 
subgroup.  Set $K_i=C_\Gamma(P_i/P_{i-1})\nsg\Gamma$: the kernel of the 
$\Gamma$-action on $P_i/P_{i-1}$.  Set $K=\bigcap_{i=1}^rK_i$ (so 
$K\nsg\Gamma$). Then $K\le O_2(\Gamma)$ by Lemma \ref{l:mod-Fr} and 
$O_2(\Gamma)=1$ (see \cite[Proposition A.7(c)]{AKO}), so $K=1$.

Fix an involution $t\in\Out_S(P)$.  Choose some $g\in\Gamma$ which does not 
commute with $t$, and choose $i$ such that $[g,t]\notin{}K_i$. Then $K_i$ 
has odd order since all involutions in $\Gamma$ are conjugate to $t\notin 
K_i$ (see \cite[6.4.4(i)]{Sz2}). Also, $\Gamma/K_i$ has at least two 
distinct involutions:  the images of $t$ and $gtg^{-1}$. If 
$\Out_S(P)$ is cyclic or generalized quaternion, it now follows that  
$\Gamma/K_i$ has a strongly $2$-embedded subgroup (the centralizer of one 
of the involutions).  Otherwise, by Bender's theorem (see \cite[Theorem 
6.4.2]{Sz2}), each strongly 2-embedded subgroup of $\Gamma$ contains 
$O_{2'}(\Gamma)$ and hence contains $K_i$, and so $\Gamma/K_i$ still has a 
strongly $2$-embedded subgroup by Lemma \ref{str.emb.->>}(b).  So in either 
case, the proposition follows from \cite[Lemma 1.7]{OV2}, applied with 
$V=P_i/P_{i-1}$. 
\end{proof}

\subsection{Reduced fusion systems and tame fusion systems}

\leavevmode\noindent

Recall that a saturated fusion system is called ``exotic'' if it is not 
isomorphic to $\calf_S(G)$ for any finite group $G$ with $S\in\sylp{G}$. 
In \cite{AOV1}, we described how we could restrict attention to a 
smaller class of saturated fusion systems which we call \emph{reduced 
fusion systems}, and still ``detect'' any exotic fusion systems (reduced or 
not) which reduce to them.  To make this more 
precise, we list here the main results of \cite{AOV1}: Theorem 
\ref{t:reduce} below.  We first need some more definitions.

Let $\calf$ be a saturated fusion system over a finite $p$-group $S$.  The 
\emph{focal subgroup} of $\calf$ is the subgroup
        \[ \foc(\calf) \defeq 
        \Gen{s^{-1}t\,\big|\,s,t\in{}S \textup{ and $\calf$-conjugate} }
        = \gen{[\autf(P),P] \,|\, P\le{}S }, \]
where the last two subgroups are equal by Proposition \ref{p:AFT-E} (Alperin's 
fusion theorem). The \emph{hyperfocal subgroup} of $\calf$ is the subgroup
        \[ \hyp(\calf) = \Gen{[O^p(\autf(P)),P] \,\big|\, P\le{}S }. \]
Equivalently, in the definition of $\hyp(\calf)$, we can restrict to 
automorphisms of order prime to $p$.  It is not hard to see that the image 
of the focal subgroup in $S/\hyp(\calf)$ is precisely its commutator 
subgroup $[S,S]\hyp(\calf)/\hyp(\calf)$ (cf. \cite[Lemma I.7.2]{AKO}).  Hence 
$\hyp(\calf)$ is a proper 
subgroup of $S$ if and only if $\foc(\calf)$ is a proper subgroup.  

As an immediate consequence of Proposition \ref{p:AFT-E} and the 
definition of $\foc(\calf)$, we have:

\begin{Prop} \label{foc(F)=<->}
For any saturated fusion system $\calf$ over a finite $p$-group $S$, 
	\[ \foc(\calf) = \Gen{[\autf(P),P]\,\big|\, 
	P\in\EE_\calf\cup\{S\} }. \]
\end{Prop}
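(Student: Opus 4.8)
\emph{Proof proposal.} The plan is to establish the nontrivial inclusion $\foc(\calf)\subseteq N$, where $N:=\Gen{[\autf(P),P]\mid P\in\EE_\calf\cup\{S\}}$; the reverse inclusion $N\subseteq\foc(\calf)$ is immediate from the definition $\foc(\calf)=\gen{[\autf(P),P]\mid P\le S}$. Since $\foc(\calf)$ is generated by the subgroups $[\autf(P),P]$ as $P$ ranges over all subgroups of $S$, it is enough to prove that $[\autf(P),P]\le N$ for every $P\le S$; and since $[\autf(P),P]$ is generated by the elements $\alpha(g)g^{-1}=[\alpha,g]$ with $\alpha\in\autf(P)$ and $g\in P$, it suffices to show each such element lies in $N$.

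The one tool needed is Alperin's fusion theorem in the form of Proposition \ref{p:AFT-E}. Given $P\le S$, $\alpha\in\autf(P)$, and $g\in P$, apply that proposition to the morphism $\alpha$: it is a composite of restrictions of automorphisms $\psi_1,\dots,\psi_k$, where each $\psi_i$ lies in $\autf(R_i)$ for some $R_i\in\EE_\calf\cup\{S\}$ (either $R_i=S$ and $\psi_i\in\autf(S)$, or $R_i\in\EE_\calf$ and $\psi_i\in O^{p'}(\autf(R_i))\le\autf(R_i)$). I would then follow the image of $g$ along the composite: set $g_0=g$ and $g_i=\psi_i(g_{i-1})$. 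Since the composite of these restrictions is defined at $g$ and agrees with $\alpha$ there, each $g_{i-1}$ lies in the domain of the $i$-th factor, hence in $R_i$, and $g_k=\alpha(g)$. Therefore
\[ \alpha(g)g^{-1} = g_kg_{k-1}^{-1}\cdot g_{k-1}g_{k-2}^{-1}\cdots g_1g_0^{-1}, \]
and each factor equals $\psi_i(g_{i-1})g_{i-1}^{-1}=[\psi_i,g_{i-1}]\in[\autf(R_i),R_i]\le N$. Hence $\alpha(g)g^{-1}\in N$, which gives $[\autf(P),P]\le N$ and so $\foc(\calf)\subseteq N$, completing the proof.

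I do not expect a genuine obstacle, since — as the sentence preceding the statement already indicates — this is essentially a direct reformulation of Proposition \ref{p:AFT-E}. The only point that calls for a little care is the bookkeeping in the middle step: one must verify that the intermediate elements $g_{i-1}$ really do belong to $R_i$, so that the commutators $[\psi_i,g_{i-1}]$ are well-defined elements of $[\autf(R_i),R_i]$. This is exactly what is guaranteed by the fact that the composite of restrictions produced by Alperin's theorem is defined at $g$ and equals $\alpha$ on $\gen g$.
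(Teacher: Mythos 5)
Your proof is correct and follows exactly the route the paper has in mind: the paper presents this as an immediate consequence of Proposition \ref{p:AFT-E} together with the second description $\foc(\calf)=\gen{[\autf(P),P]\mid P\le S}$ in the definition, and your telescoping argument is precisely the fleshing-out of that remark. The bookkeeping step you flag — that each intermediate element $g_{i-1}$ lies in $R_i$ because the composite produced by Alperin's theorem is defined at $g$ — is the only thing worth noting, and you handle it correctly.
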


By \cite[Theorems 4.3 \& 5.4]{BCGLO2}, there is a unique saturated fusion 
subsystem $O^p(\calf)\subseteq\calf$ over $\hyp(\calf)$ such that 
$\Aut_{O^p(\calf)}(P)\ge O^p(\autf(P))$ for all $P\le\hyp(\calf)$; and a 
unique saturated fusion subsystem $O^{p'}(\calf)\subseteq\calf$ over $S$ 
minimal with respect to the property that 
$\Aut_{O^{p'}(\calf)}(P)\ge O^{p'}(\autf(P))$ for all $P\le S$. This leads 
to the following definition.

\begin{Defi} \label{d:reduced}
Let $\calf$ be a saturated fusion system over a finite $p$-group 
$S$. Then 
\begin{enuma} 

\item $\calf$ is \emph{reduced} if $O_p(\calf)=1$ and 
$O^p(\calf)=O^{p'}(\calf)=\calf$; 

\item $\calf$ is \emph{decomposable} if there are subgroups $1\ne 
S_i\nsg S$ and saturated fusion systems $\calf_i$ over $S_i$ ($i=1,2$) such 
that $S=S_1\times S_2$ and $\calf=\calf_1\times\calf_2$, and $\calf$ is 
\emph{indecomposable} otherwise; and

\item $\calf$ is \emph{simple} if there are no proper normal subsystems 
$\cale\nsg\calf$ with $1\ne\cale\subsetneqq\calf$ (see \cite[Definition 
I.6.1]{AKO} for the definition of a normal fusion subsystem).

\item The \emph{reduction} of $\calf$ is the fusion system $\red(\calf)$ 
obtained by first setting $\calf_0=C_\calf(O_p(\calf))/Z(O_p(\calf))$, and 
then letting $\red(\calf)$ be the limiting term of the sequence 
$\calf_0\supseteq\calf_1\supseteq\calf_2\supseteq\dots$, where 
$\calf_{i+1}=O^{p'}(O^p(\calf_i))$ for all $i\ge0$.

\end{enuma}
\end{Defi}

Here, $C_\calf(Q)\subseteq\calf$ denotes the centralizer fusion system of 
$Q\le{}S$: the largest fusion subsystem of $\calf$ in which $Q$ is central 
(cf. \cite[Definition I.5.3]{AKO} or \cite[Definition 4.26(i)]{Craven}).  As 
the names suggest, the reduction of any saturated fusion system is reduced 
\cite[Proposition 2.2]{AOV1}.  

Simple fusion systems are always reduced and indecomposable, but the 
converse need not be true. For example, when $p=2$, the fusion system of 
the wreath product $A_6\wr A_5$ is reduced and indecomposable but not 
simple. However, a reduced fusion system which has no proper nontrivial 
strongly closed subgroups is simple (see the proof of Proposition 
\ref{p:FS(G)-red}(d) below).

We next explain how any exotic fusion system $\calf$ can be ``detected'' 
via its reduction $\red(\calf)$.  The key to doing this is the idea of a 
\emph{tame} fusion system. For 
any finite group $G$, $BG\pcom$ denotes the $p$-completion of the 
classifying space of $G$, and $\Out(BG\pcom)$ is the group of homotopy 
classes of homotopy equivalences from the space $BG\pcom$ to itself.

\begin{Defi} \label{d:tame}
A saturated fusion system $\calf$ over a finite $p$-group is 
\emph{tamely realized} by a finite group $G$ if 
\begin{itemize} 

\item $\calf$ is isomorphic to $\calf_T(G)$ (where 
$T\in\sylp{G}$); and 
\item the natural homomorphism $\kappa_G\:\Out(G)\Right3{}\Out(BG\pcom)$ is 
split surjective.
\end{itemize}
The fusion system $\calf$ is \emph{tame} if it is tamely realized by some 
finite group.
\end{Defi}

In fact, in the definition in \cite{AOV1}, we replace the group 
$\Out(BG\pcom)$ by one which is defined purely algebraically, as a certain 
group of outer automorphisms of the centric linking system for $G$ over 
$S$.  We give the above definition here to avoid a long discussion about 
linking systems and their automorphisms.  The two are equivalent by 
\cite[Theorem B]{BLO1}.  

The following theorem was shown in \cite[Theorems A, B, \& C]{AOV1}.  It 
is what motivated Definition \ref{d:tame}.

\begin{Thm}\label{t:reduce}
Let $\calf$ be a saturated fusion system over a finite $p$-group.
\begin{enuma}
\item If $\red(\calf)$ is tame, then $\calf$ is also tame, and in 
particular is realizable as the fusion system of a finite group.

\item If $\calf$ is reduced and each indecomposable factor 
of $\calf$ is tame, then $\calf$ is tame.  

\item If  $\calf$ is reduced and not tame, then there is an exotic fusion 
system $\til\calf$ such that $\red(\til\calf)\cong\calf$.
\end{enuma}
\end{Thm}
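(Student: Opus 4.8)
The plan is to treat tameness as an invariant of a saturated fusion system and to follow how it behaves under the elementary operations that build the reduction, and under direct products. The essential tool I would use is the obstruction theory for the natural map $\kappa_G\:\Out(G)\to\Out(BG\pcom)$ — equivalently, for the map to outer automorphisms of the centric linking system of $G$, via \cite[Theorem B]{BLO1} — together with its compatibility with group extensions: for an extension $1\to N\to G\to Q\to 1$ in which $N$ controls $p$-fusion in the appropriate sense, split surjectivity of $\kappa_N$, combined with the structure of the extension, should force split surjectivity of $\kappa_G$. Granting such ``lifting'' principles, the three parts follow by bookkeeping.

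For (a): by Definition \ref{d:reduced}(d), $\red(\calf)$ is obtained from $\calf$ by one application of $\cale\mapsto C_\cale(O_p(\cale))/Z(O_p(\cale))$ followed by finitely many applications of $\cale\mapsto O^p(\cale)$ and $\cale\mapsto O^{p'}(\cale)$, so it suffices to show that tameness lifts through each of these three operations. Concretely, if $O^{p'}(\cale)$ is tamely realized by a finite group $H$, I would build a finite group $G$ with $H\nsg G$, $G/H$ a $p'$-group, $S\in\sylp G$ and $\calf_S(G)=\cale$, and then promote the given splitting of $\kappa_H$ to a splitting of $\kappa_G$ using the extension-compatibility of $\kappa$; the $O^p$ case is the same with $G/H$ a nontrivial $p$-group, and the $C_\cale(O_p(\cale))/Z(O_p(\cale))$ case uses that $\cale$ is an extension of a fusion system of that type by the $p$-group $O_p(\cale)$, handled by the extension theory for saturated fusion systems. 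The closing clause of (a) is automatic, since a tame fusion system is by definition realizable.

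For (b): write $\calf=\calf_1\times\dots\times\calf_k$ with each $\calf_i$ indecomposable; since $\calf$ is reduced, each $\calf_i$ is reduced, hence tame by hypothesis, so I would pick tame realizations $G_i$, using the \emph{same} group for isomorphic factors, and set $G=G_1\times\dots\times G_k$, which realizes $\calf$. Because the $\calf_i$ are indecomposable, every automorphism of $\calf$ and every automorphism of its linking system permutes the factors, so both $\Out(G)$ and $\Out$ of the linking system decompose as the product of the corresponding groups of the factors, extended by the same subgroup of $\sym k$ permuting isomorphic factors; assembling the splittings of the $\kappa_{G_i}$ with the identity on the permutation part then gives a splitting of $\kappa_G$.

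For (c): if $\calf$ is already exotic, I am done on taking $\til\calf=\calf$, since a reduced fusion system equals its own reduction. So suppose $\calf$ is realizable, by some finite group $G$. The failure of tameness means that no realization of $\calf$ has $\kappa_G$ split surjective, which (through the obstruction theory) produces a nonzero cohomology class; I would use this class to construct a saturated fusion system $\til\calf$ over a $p$-group $\til S$ that is a central extension of $S$ with kernel $O_p(\til\calf)$ and with $\til\calf/O_p(\til\calf)\cong\calf$. The first step of the reduction of $\til\calf$ then recovers $\calf$, which is already reduced, so $\red(\til\calf)\cong\calf$; and $\til\calf$ must be exotic, since a group realizing it would, after dividing out $O_p$, realize $\calf$ compatibly with the extension class and so supply the forbidden splitting of $\kappa_G$. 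The technical core common to all three parts is precisely this obstruction-theoretic comparison between split surjectivity of $\kappa_G$ and the existence or nonexistence of prescribed group extensions and lifts of fusion systems; within it, the construction of the exotic $\til\calf$ in (c), and the proof that it is genuinely non-realizable, is the hardest step.
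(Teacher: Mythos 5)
The paper gives no proof of this result; it simply cites \cite[Theorems A, B, \& C]{AOV1}, so there is no in-paper argument against which to compare. Measured against that reference, your outline is on target in broad strokes: part (a) is indeed proved in \cite{AOV1} by showing that tameness lifts through the three operations $\cale\mapsto O^p(\cale)$, $\cale\mapsto O^{p'}(\cale)$, and $\cale\mapsto C_\cale(O_p(\cale))/Z(O_p(\cale))$ used to build the reduction, part (b) by an analysis of direct products, and part (c) by converting the failure of $\kappa_G$ to split into an extension that cannot be realized.

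That said, the sketch glides over precisely where the work lies, and two points deserve to be named. In (b), for arbitrary finite groups $\Out(G_1\times\cdots\times G_k)$ is strictly larger than the product of the $\Out(G_i)$ extended by permutations of isomorphic factors --- one also has ``shear'' automorphisms $(g_1,\ldots,g_k)\mapsto(g_1\psi(g_2),g_2,\ldots)$ arising from homomorphisms $\psi\colon G_2\to Z(G_1)$, and the analogous subtlety for a product of linking systems --- so the claimed decomposition has to be forced by choosing the tame realizers $G_i$ with extra properties, which is not mere bookkeeping. In (c), the failure of $\kappa_G$ to be split surjective has two qualitatively different modes (not onto, versus onto but not split); the construction of $\til\calf$ in \cite{AOV1} proceeds by extending the centric linking system of $\calf$ rather than by a central $p$-group extension of $S$ with kernel $O_p(\til\calf)$ as you describe, and one still has to verify that the result is saturated and that $\red(\til\calf)\cong\calf$. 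Most seriously, the step where you conclude that a group realizing $\til\calf$ would ``supply the forbidden splitting of $\kappa_G$'' is exactly the technical heart of the argument; it rests on the comparison between $\Out(G)$, outer automorphism groups of linking systems, and extensions of linking systems developed in \cite{BLO1} and \cite{BCGLO2}, and it does not reduce to dividing out $O_p$ of a putative realizing group and comparing fusion systems. As a roadmap your proposal is faithful to the cited proof; as a proof, it defers the substance to the very lifting principles that \cite{AOV1} is devoted to establishing.
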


Theorem \ref{t:reduce} helps to explain why we only look at reduced, 
indecomposable fusion systems. Points (a) and (b) say that for 
every exotic fusion system $\calf$, the reduction $\red(\calf)$ is not 
tame, and at least one of its indecomposable factors is also not tame.  In 
other words, each exotic fusion system is detected by some reduced, 
indecomposable fusion system which is not tame.

Theorem \ref{t:reduce} shows the importance of 
determining whether a given reduced fusion system is tame.  In general, 
rather than comparing $\Out(G)$ with $\Out(BG\pcom)$, it is much simpler to 
compare $\Out(G)$ with a certain group $\Out(\calf)$ of outer 
automorphisms of $\calf$.

\begin{Defi} \label{d:Out(F)}
For any saturated fusion system $\calf$ over a finite $p$-group $S$, let 
$\Aut(\calf)$ be the group of those $\alpha\in\Aut(S)$ such that 
$\9\alpha\calf=\calf$ (the ``fusion preserving automorphisms'').  
Set $\Out(\calf)=\Aut(\calf)/\Aut_{\calf}(S)$.  
\end{Defi}

When $\calf=\calf_S(G)$ for some finite group $G$ with $S\in\sylp{G}$, 
there is a natural homomorphism from $\Out(G)$ to $\Out(\calf)$ defined 
by restriction (each class in $\Out(G)$ contains automorphisms of $G$ which 
normalize $S$).  By \cite[\S\S\,1.3 \& 2.2]{AOV1}, this map
factors as the composite of homomorphisms 
	\[ \Out(G) \Right4{\kappa_G} \Out(BG\pcom) 
	\Right4{\mu_G} \Out(\calf)\,. \] 

Since $\Out(\calf)$ is in general easier to describe than 
$\Out(BG\pcom)$, the simplest way to prove tameness is usually by showing 
that $\mu_G\circ\kappa_G$ is split surjective and that $\mu_G$ is 
injective.  The following proposition, which is a special case of 
\cite[Proposition 4.2]{AOV1}, suffices in all cases considered in this 
paper for showing that $\Ker(\mu_G)=1$.  

\begin{Prop} \label{p:newKer(mu)}
Fix a finite group $G$ and $S\in\syl2{G}$, and set $\calf=\calf_S(G)$. 
Assume that at most one subgroup $P\in\EE_\calf$ has noncyclic center. Then 
$\Ker(\mu_G)=1$. 
\end{Prop}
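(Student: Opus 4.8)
The plan is to appeal to the general criterion of \cite[Proposition 4.2]{AOV1}, which describes $\Ker(\mu_G)$ in terms of a certain higher limit (or cohomology) computation over the orbit category of $\calf$ restricted to essential and to $S$; concretely, $\Ker(\mu_G)$ is controlled by the abelian group $\higherlim{\calo^c(\calf)}{1}(\calz_\calf)$, where $\calz_\calf$ is the functor sending each $\calf$-centric $P$ to $Z(P)$. The key mechanism is that, by the standard vanishing results for such higher limits (the ``normalizer decomposition'' or the analysis in \cite{BLO2, AKO}), this $\invlim^1$ term is computed from a chain complex whose relevant contributions come from the $\calf$-conjugacy classes of essential subgroups together with $S$ itself, and the contribution of an essential subgroup $P$ vanishes whenever $Z(P)$ is a $\mathbb{Z}_{(2)}[\outf(P)]$-module with no summand on which a strongly $2$-embedded subgroup acts with nonzero fixed points in the relevant degree — in particular it vanishes automatically when $Z(P)$ is cyclic, since then $\outf(P)$ acts on $Z(P)$ through a cyclic group (Aut of a cyclic $2$-group) which has no strongly $2$-embedded subgroup acting nontrivially in a way that produces a nonzero $\invlim^1$.

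First I would recall the exact form of the obstruction group from \cite[Proposition 4.2]{AOV1}: $\Ker(\mu_G)$ is a quotient (or subquotient) of $\higherlim{\calo^c(\calf)}{1}(\calz)$, and by the bar-resolution / Oliver's chain-complex description this group injects into a product, indexed by $\calf$-conjugacy classes of subgroups $P$ that are ``relevant'' (essential or equal to $S$), of certain relative cohomology groups of $\outf(P)$ with coefficients in $Z(P)$. Second, I would observe that the summand indexed by $S$ vanishes (it feeds into a term with $\outf(S)$ acting, and the relevant cohomological degree is zero for the top term), so that the obstruction is concentrated on the essential subgroups. Third, I would invoke the key local vanishing statement: if $P\in\EE_\calf$ has cyclic center, then its contribution to this $\invlim^1$ is zero — this is where the hypothesis enters, and it is essentially the content of \cite[Lemma 1.?]{AOV1} or the discussion in \cite[\S4]{AOV1} that a module which is cyclic as an abelian group contributes nothing because $\mathrm{Aut}$ of a cyclic $2$-group, or more precisely the image of $\outf(P)$ in it, cannot carry the relevant obstruction class (one can also argue: a strongly $2$-embedded subgroup has even index, but $\mathrm{Aut}(C_{2^n})$ is a $2$-group times $C_2$, forcing the relevant relative cohomology to vanish). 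Fourth, with at most one essential class having noncyclic center, the obstruction group is supported on a single index, and for a single subgroup $P$ the relevant $\invlim^1$ over the ``poset'' consisting of that one conjugacy class together with $S$ degenerates: the chain complex has length too short to produce a nonzero $H^1$, so the single surviving contribution also vanishes. Hence $\higherlim{\calo^c(\calf)}{1}(\calz)=0$ and therefore $\Ker(\mu_G)=1$.

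The main obstacle is being precise about the last step: one must verify that a single $\calf$-conjugacy class of essential subgroups (with noncyclic center), together with $S$, cannot by itself support a nonzero element of $\higherlim{}{1}$. The cleanest way is to use the explicit two-term description of this higher limit from \cite{BLO2} (or \cite[\S III.5]{AKO}): $\invlim^1$ is the cokernel of a map whose source splits as a sum over essential classes of $H^1(\outf(P)/\Out_S(P)$-type data$)$ and whose target involves $S$; when there is a unique essential class, the relevant differential is surjective onto the one nonzero target coordinate, forcing the cokernel to vanish. I would cite the relevant statement in \cite{AOV1} verbatim rather than redo the homological algebra, since \cite[Proposition 4.2]{AOV1} is explicitly quoted as applicable here; the real work is just checking that the cyclic-center hypothesis kills all but one term and that one term then also drops out.
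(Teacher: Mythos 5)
There is a genuine gap at the final step. Your observation that essential subgroups with cyclic center contribute nothing is sound, and matches what the paper actually does: since $\Aut$ of a cyclic $2$-group is a $2$-group, odd-order elements of $\autf(P)$ act trivially on $Z(P)$, so $C_{Z(P)}(\Aut_S(P))=C_{Z(P)}(\autf(P))$ when $P$ is fully normalized. But your closing claim --- that once at most one essential class survives, ``the chain complex has length too short to produce a nonzero $H^1$,'' or equivalently that the relevant differential is automatically surjective onto the one surviving coordinate --- is asserted without justification and is false as a general principle: a single essential conjugacy class can certainly support a nonzero $\higherlim{}{1}$, and nothing about having only one surviving class forces the cokernel to vanish.

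The paper's proof does real work precisely at this point. It introduces $\EE_\calf^0$, the set of $P\in\EE_\calf$ with $C_{Z(P)}(\autf(P))<C_{Z(P)}(\Aut_S(P))$, and shows $|\EE_\calf^0|\le1$ under the hypothesis. In the case $\EE_\calf^0=\{P\}$, it then exploits uniqueness: every $S$-conjugate and every $\autf(S)$-image of $P$ is again fully normalized and satisfies the defining condition, hence lies in $\EE_\calf^0$, so $P\nsg S$ and $\autf(S)$ normalizes $P$, and $C_{Z(P)}(\Aut_S(P))=Z(S)$. Only then does it apply \cite[Proposition 4.2(a,c,d)]{AOV1} --- which the paper uses as a concrete element-level characterization of $\Ker(\mu_G)$ via elements $g_P\in Z(S)$, not as a $\higherlim{}{1}$-vanishing criterion --- to conclude that $g_P\in C_{Z(S)}(\autf(S))$ and hence $\alpha=1$. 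You have also mischaracterized the cited result: its parts (a) and (c), together with the deduced normality of $P$ in $S$, carry the weight of the one-essential-class case, and your homological reformulation both replaces this with an incorrect shortcut and omits the normality step entirely.
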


\begin{proof} Let $\EE_\calf^0$ be the set of 
those $P\in\EE_\calf$ such that $C_{Z(P)}(\autf(P))< C_{Z(P)}(\Aut_S(P))$. 
If $P\in\EE_\calf$ and $Z(P)$ is cyclic, then each element of odd order in 
$\autf(P)$ acts trivially on $Z(P)$. Then 
$C_{Z(P)}(\Aut_S(P))=C_{Z(P)}(\autf(P))$ (recall $P$ is fully normalized in 
$\calf$), and hence $P\notin\EE_\calf^0$.  

Thus $|\EE_\calf^0|\le1$ under our hypotheses. By 
\cite[Proposition 4.2(d)]{AOV1}, $\Ker(\mu_G)=1$ if 
$\EE_\calf^0=\emptyset$, so assume $\EE_\calf^0=\{P\}$. Then by the same 
result, for each $\alpha\in\Ker(\mu_G)$, there is an element 
$g_P\in C_{Z(P)}(\Aut_S(P))=Z(S)$ with the 
property that $\alpha=1$ if and only if $g_P\in g\cdot 
C_{Z(P)}(\autf(P))$ for some $g\in C_{Z(S)}(\autf(S))$. Note that 
$P\nsg S$ and $\autf(S)$ normalizes $P$ (by the uniqueness of $P$), so 
by \cite[Proposition 4.2(a,c)]{AOV1} (with $P\le S$ in the role of 
$Q\le P$ in point (c)), $g_P\equiv1$ modulo 
$C_{Z(P)}(\autf(S))=C_{Z(S)}(\autf(S))$. Thus $g_P\in 
C_{Z(S)}(\autf(S))$, so $\alpha=1$ by the above remarks, applied with 
$g=g_P$. 
\end{proof}

\subsection{Criteria for detecting reduced fusion systems}

\leavevmode\noindent

We now list some conditions on a finite $p$-group $S$ or on a fusion 
system $\calf$ over $S$ which are necessary for $\calf$ to be reduced (or 
sufficient for $\calf$ to not be reduced).  We begin with a simple criterion 
for detecting normal $p$-subgroups.

\begin{Prop}[{\cite[Proposition I.4.5]{AKO}}] \label{Q<|F}
Let $\calf$ be a saturated fusion system over a finite $p$-group $S$, and 
fix $Q\nsg S$.  Assume, for each $P\in\EE_\calf\cup\{S\}$, that $Q\le{}P$ 
and $\alpha(Q)=Q$ for each $\alpha\in\autf(P)$.  Then $Q\nsg\calf$.
\end{Prop}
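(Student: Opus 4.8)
The plan is to verify directly that $Q$ satisfies the definition of a normal subgroup of $\calf$ given in Definition \ref{d:subgroups}(b), using the form of Alperin's fusion theorem in Proposition \ref{p:AFT-E}. Since $Q\nsg S$ holds by hypothesis, what remains is to show that every morphism $\varphi\in\homf(R_1,R_2)$ extends to some $\4\varphi\in\homf(QR_1,QR_2)$ with $\4\varphi(Q)=Q$. By Proposition \ref{p:AFT-E}, write $\varphi=\varphi_k\circ\cdots\circ\varphi_1$ with $\varphi_i\in\homf(A_i,A_{i+1})$, where each $\varphi_i$ is the restriction to $A_i$ of some automorphism $\alpha_i$ lying either in $\autf(S)$ or in $O^{p'}(\autf(P_i))\le\autf(P_i)$ for some $P_i\in\EE_\calf$; here we may take $A_1=R_1$ and $A_{k+1}=R_2$ (enlarging the codomain of $\varphi_k$ by an inclusion, itself a restriction of $\Id_S$, if necessary). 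Setting $P_i=S$ in the first case, the hypotheses of the proposition (applied to $P_i\in\EE_\calf\cup\{S\}$) give $Q\le P_i$ and $\alpha_i(Q)=Q$; moreover $A_i\le P_i$, this being part of what it means for $\varphi_i$ to be a restriction of the automorphism $\alpha_i$ of $P_i$.

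Next I extend each $\varphi_i$. Since both $Q\le P_i$ and $A_i\le P_i$, we have $QA_i\le P_i$, and $\alpha_i(QA_i)=\alpha_i(Q)\cdot\alpha_i(A_i)=Q\cdot\varphi_i(A_i)\le QA_{i+1}$; hence $\alpha_i$ restricts to a morphism $\psi_i\in\homf(QA_i,QA_{i+1})$, namely $\alpha_i|_{QA_i}$ followed by the inclusion into $QA_{i+1}$. By construction $\psi_i|_{A_i}=\varphi_i$ and $\psi_i(Q)=\alpha_i(Q)=Q$.

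Finally, since for each $i$ the codomain of $\psi_i$ and the domain of $\psi_{i+1}$ both equal $QA_{i+1}$, the composite $\4\varphi\defeq\psi_k\circ\cdots\circ\psi_1\in\homf(QR_1,QR_2)$ is well-defined, restricts to $\varphi$ on $R_1$, and satisfies $\4\varphi(Q)=Q$. Thus $Q\nsg\calf$. The only slightly delicate point is the bookkeeping in this last step: one must extend each $\varphi_i$ all the way up to $QA_{i+1}$ (rather than merely to $Q\cdot\varphi_i(A_i)$) so that consecutive extensions compose, and keep track of the implicit ``isomorphism followed by inclusion'' factorization of morphisms in $\calf$. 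There is no deeper obstacle; once ``$Q\le P$'' and ``$\alpha(Q)=Q$'' are recorded for every $P\in\EE_\calf\cup\{S\}$ and every $\alpha\in\autf(P)$, the statement is a routine application of Proposition \ref{p:AFT-E}.
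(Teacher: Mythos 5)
Your proof is correct, and it is essentially the standard argument (the paper does not prove this itself but cites \cite[Proposition I.4.5]{AKO}, whose proof proceeds by exactly this kind of piecewise extension via Alperin's fusion theorem). The one detail worth keeping straight — that each $\psi_i$ must be extended to the full $QA_{i+1}$ and not merely to $Q\cdot\varphi_i(A_i)$ so that successive $\psi_i$'s literally compose — you have flagged correctly, and you also correctly observe that $Q\le P_i$ together with $Q\nsg S$ gives $Q\nsg P_i$, so $QA_i$ is a subgroup and $\alpha_i(QA_i)=Q\cdot\alpha_i(A_i)$.
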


The next lemma is the starting point for deciding whether or not 
$O^p(\calf)=\calf$.  

\begin{Lem}[{\cite[Corollary I.7.5]{AKO}}] \label{l:foc=S}
For any saturated fusion system $\calf$ over a finite $p$-group $S$, 
$\calf=O^p(\calf)$ if and only if $\foc(\calf)=S$.  If $\autf(S)$ is a 
$p$-group and $O^p(\calf)=\calf$, then 
	\[ S = \Gen{ [\autf(P),P] \,\big|\, P\in\EE_\calf}. \]
\end{Lem}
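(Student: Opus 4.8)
The plan is to treat the two sentences of the lemma separately: the biconditional in the first sentence is essentially formal, and the generating statement in the second sentence needs a short Frattini-type argument. For the first sentence, I would first reduce it to the claim that $O^p(\calf)=\calf$ if and only if $\hyp(\calf)=S$. In one direction, $O^p(\calf)$ is by construction a saturated fusion system over $\hyp(\calf)$, so if $O^p(\calf)=\calf$ then comparing underlying $p$-groups gives $\hyp(\calf)=S$. In the other direction, if $\hyp(\calf)=S$ then $\calf$ itself is a saturated fusion subsystem of $\calf$ over $\hyp(\calf)$ satisfying $\autf(P)\ge O^p(\autf(P))$ for every $P\le S=\hyp(\calf)$, so $\calf=O^p(\calf)$ by the uniqueness recorded before Definition \ref{d:reduced} (from \cite[Theorems 4.3 \& 5.4]{BCGLO2}). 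It then remains to see that $\hyp(\calf)=S$ if and only if $\foc(\calf)=S$. This follows from the remark preceding Proposition \ref{foc(F)=<->}: since $\hyp(\calf)\le\foc(\calf)$ and the image of $\foc(\calf)$ in $S/\hyp(\calf)$ is the commutator subgroup, we get $\foc(\calf)=[S,S]\cdot\hyp(\calf)$; as $[S,S]\le\Phi(S)$ consists of nongenerators of $S$, the equality $[S,S]\cdot\hyp(\calf)=S$ holds precisely when $\hyp(\calf)=S$.

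For the second sentence, suppose $\autf(S)$ is a $p$-group and $O^p(\calf)=\calf$. By the first sentence $\foc(\calf)=S$, and Proposition \ref{foc(F)=<->} then gives $S=\Gen{[\autf(P),P]\,\big|\,P\in\EE_\calf\cup\{S\}}$. The task is to remove the term coming from $P=S$. Since $S$ is fully normalized (trivially), the Sylow axiom gives $\auts(S)=\Inn(S)\in\sylp{\autf(S)}$; as $\autf(S)$ is assumed to be a $p$-group, this forces $\autf(S)=\Inn(S)$, and hence $[\autf(S),S]=[\Inn(S),S]=[S,S]$. Therefore $S=\Gen{[S,S],\,[\autf(P),P]\,\big|\,P\in\EE_\calf}$, and since $[S,S]\le\Phi(S)$, the nongenerator property of the Frattini subgroup lets us discard the $[S,S]$ term, leaving $S=\Gen{[\autf(P),P]\,\big|\,P\in\EE_\calf}$ as claimed.

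The only nonroutine ingredient is the uniqueness of $O^p(\calf)$ as a subsystem over $\hyp(\calf)$, which is cited, so I do not expect a genuine obstacle. The one point to be careful about is the two uses of the nongenerator property of $\Phi(S)$ — once to pass between $\hyp(\calf)=S$ and $\foc(\calf)=S$ (via $\foc(\calf)=[S,S]\hyp(\calf)$), and once to delete $[S,S]$ from the generating set — together with making sure the hypothesis that $\autf(S)$ is a $p$-group really collapses $\autf(S)$ to $\Inn(S)$, which is exactly where the Sylow axiom enters.
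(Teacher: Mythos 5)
Your proof of the second sentence is exactly the paper's argument: use Proposition \ref{foc(F)=<->}, identify $[\autf(S),S]=[S,S]$ via the Sylow axiom plus the hypothesis that $\autf(S)$ is a $p$-group, and then discard $[S,S]\le\Phi(S)$ by the Frattini nongenerator property (the paper cites \cite[Theorems 5.1.1(i) \& 5.1.3]{Gorenstein} for precisely this step). For the first sentence the paper simply cites \cite[Corollary I.7.5]{AKO}; your sketch (reduce to $\hyp(\calf)=S$, then use $\foc(\calf)=[S,S]\hyp(\calf)$ and the nongenerator property again) is the standard argument behind that citation, so the proposal is correct and takes essentially the same route.
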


\begin{proof} See, e.g., \cite[Corollary I.7.5]{AKO} for a proof of the 
first statement.  The second follows from that, Proposition 
\ref{foc(F)=<->}, and \cite[Theorems 5.1.1(i) \& 5.1.3]{Gorenstein} 
($Q[S,S]=S$ implies $Q=S$).  
\end{proof}

We next look at cases where there are very few essential subgroups.

\begin{Lem} \label{l:1crit}
Let $S$ be a nontrivial finite $p$-group. For any saturated fusion system 
$\calf$ over $S$, if $|\EE_\calf|\le1$, then $O_p(\calf)\ne1$.  If 
$\outf(S)=1$ and $\EE_\calf$ contains exactly one $S$-conjugacy class, then 
$O^p(\calf)\subsetneqq\calf$.  In either case, $\calf$ is not reduced.
\end{Lem}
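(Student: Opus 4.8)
I want to prove Lemma \ref{l:1crit}: if $|\EE_\calf|\le1$ then $O_p(\calf)\ne1$, and if $\outf(S)=1$ and $\EE_\calf$ is a single $S$-conjugacy class then $O^p(\calf)\subsetneqq\calf$. For the first statement, the natural approach is to apply Proposition \ref{Q<|F} with a well-chosen $Q\nsg S$. If $\EE_\calf=\emptyset$, then by Alperin's fusion theorem (Proposition \ref{p:AFT-E}) every morphism in $\calf$ is a restriction of an element of $\autf(S)$, so $\calf$ is the fusion system generated by $\autf(S)$ alone; then any nontrivial characteristic subgroup of $S$ preserved by $\autf(S)$ — for instance one can take $Q=Z(S)\ne1$, or more robustly $O_p(\calf)\supseteq$ some such $Q$ — is normal in $\calf$. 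Actually the cleanest uniform choice handling both the empty case and the one-essential case is to let $P_0$ be the unique member of $\EE_\calf$ (if it exists) and set $Q$ to be a nontrivial subgroup of $S$ that is normal in $S$, contained in $P_0$, and invariant under $\autf(P_0)$ and $\autf(S)$; Proposition \ref{Q<|F} then gives $Q\nsg\calf$, hence $O_p(\calf)\ne1$.

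**Producing the subgroup $Q$.** The main obstacle is exhibiting such a $Q$ when there is exactly one essential subgroup $P_0$. Since $P_0$ is $\calf$-essential it is $\calf$-centric, so $Z(P_0)\le P_0$ is nontrivial and every element of $\autf(P_0)$ preserves $Z(P_0)$; moreover $Z(P_0)$ is characteristic in $P_0$ and $P_0\nsg S$ is forced here because $N_S(P_0)$ normalizes $P_0$ and — being the unique essential subgroup, hence the unique member of its $\calf$-conjugacy class up to... — one checks $\autf(S)$ permutes $\EE_\calf$ (conjugation by $\alpha\in\autf(S)$ sends essential subgroups to essential subgroups), so with $|\EE_\calf|=1$ we get $\alpha(P_0)=P_0$ for all $\alpha\in\autf(S)$, in particular $P_0\nsg S$. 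Then $Z(P_0)$ is normal in $S$ (as $N_S(P_0)=S$ acts on it) and is preserved by $\autf(S)$ (since $\autf(S)$ fixes $P_0$, hence its center). So I would take $Q=Z(P_0)$: it satisfies $Q\le P$ and $\alpha(Q)=Q$ for every $\alpha\in\autf(P)$ when $P=P_0$, and the same for $P=S$. By Proposition \ref{Q<|F}, $Q\nsg\calf$, so $O_p(\calf)\ne1$ and $\calf$ is not reduced by Definition \ref{d:reduced}(a). If instead $\EE_\calf=\emptyset$, the same argument works with $Q=Z(S)\ne1$, since the hypothesis in Proposition \ref{Q<|F} is then vacuous except for $P=S$.

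**The second statement.** Now suppose $\outf(S)=1$ — equivalently $\autf(S)=\auts(S)$ is a $p$-group — and $\EE_\calf$ consists of a single $S$-conjugacy class, say of $P_0$. By Lemma \ref{l:foc=S}, if $O^p(\calf)=\calf$ then $S=\gen{[\autf(P),P]\mid P\in\EE_\calf}$; since all $P\in\EE_\calf$ are $S$-conjugate to $P_0$ and $[\autf(\9xP_0),\9xP_0]=\9x[\autf(P_0),P_0]$, the right-hand side equals the normal closure in $S$ of $[\autf(P_0),P_0]$, which lies in $P_0$. But a normal closure contained in the proper subgroup $P_0<S$ cannot equal $S$ (a proper subgroup's normal closure is contained in... well, it lies in $P_0$ which is proper), a contradiction. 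Hence $O^p(\calf)\subsetneqq\calf$, so again $\calf$ is not reduced. The phrase "in either case" then records that whenever one of the two hypotheses holds, $\calf$ fails to be reduced. The one point requiring a little care is the claim that $\autf(S)$ permutes $\EE_\calf$ and hence fixes $P_0$ when $|\EE_\calf|=1$; this follows because essentiality is defined via intrinsic properties ($\calf$-centric, fully normalized, strongly $p$-embedded subgroup in $\outf$) all of which are invariant under the action of $\autf(S)$ on subgroups of $S$, so $\autf(S)$ acts on the set $\EE_\calf$, and a group acting on a one-element set fixes that element.
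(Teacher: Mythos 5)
Your approach is essentially the same as the paper's. For the first assertion the paper applies Proposition \ref{Q<|F} directly with $Q=S$ (when $\EE_\calf=\emptyset$) or with $Q=P$ itself (when $\EE_\calf=\{P\}$), rather than with $Z(S)$ or $Z(P_0)$; both choices work, and the key observation that $\autf(S)$ permutes $\EE_\calf$ and hence fixes $P$ is common to both. For the second assertion the paper likewise bounds $\foc(\calf)\le\gen{\calp,[S,S]}<S$ and invokes Lemma \ref{l:foc=S}, which is the same computation you do via the normal-closure identity.

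One slip worth flagging: you assert that the normal closure in $S$ of $[\autf(P_0),P_0]$ ``lies in $P_0$,'' and the parenthetical justification repeats this. That is false in general --- in the single-conjugacy-class case $P_0$ need not be normal in $S$, so its $S$-conjugates, and hence the normal closure, need not sit inside $P_0$. What your argument actually needs is the standard nilpotency fact: since $P_0<S$ and $S$ is a finite $p$-group, $P_0$ lies in a maximal subgroup $M$, which is normal of index $p$ in $S$; therefore the normal closure of any subset of $P_0$ is contained in $M<S$. (The same fact is what makes the paper's unproved claim $\gen{\calp,[S,S]}<S$ valid, using also $[S,S]\le\Phi(S)\le M$.) With that substitution your argument is correct.
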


\begin{proof} If $\EE_\calf=\emptyset$, then 
$S\nsg\calf$ by Proposition \ref{Q<|F}, while if $\EE_\calf=\{P\}$, 
then $P\nsg\calf$ by the same proposition.  
In either case, $O_p(\calf)\ne1$, so $\calf$ is not reduced.  If 
$\outf(S)=1$ and $\EE_\calf=\calp$ for some $S$-conjugacy class $\calp$, 
then $\foc(\calf)$ is contained in $\gen{\calp,[S,S]}<S$, so 
$O^p(\calf)\subsetneqq\calf$, and again $\calf$ is not reduced. 
\end{proof}

In Sections \ref{s:128}--\ref{s:a12}, our main technique for checking that 
fusion systems are realizable is to list all reduced fusion systems over a 
given $2$-group $S$, and then match them with simple groups having Sylow 
$2$-subgroup $S$.  When doing this, it is important to know that the fusion 
systems of the groups in question are reduced, since this is not the case 
for all simple groups (e.g., not for $A_5$).  The following proposition, 
part of which is based on a theorem of Goldschmidt, gives some criteria for 
showing this.

\begin{Prop} \label{p:FS(G)-red}
Let $G$ be a finite nonabelian simple group. Choose $S\in\syl2{G}$, and set 
$\calf=\calf_S(G)$.  
\begin{enuma} 
\item In all cases, $O^2(\calf)=\calf$ and $\calf$ is indecomposable. 

\item If $S$ is nonabelian, and if $G$ is not isomorphic to 
$\PSU_3(2^n)$ ($n\ge2$) nor to $\Sz(2^{2n+1})$ ($n\ge1$), then 
$O_2(\calf)=1$. 

\item If $G$ and $S$ satisfy the hypotheses in (b), and $\Aut(S)$ is a 
$2$-group or $O^{2'}(\calf)=\calf$, then $\calf$ is reduced.

\item If $G$ is a known simple group and $\calf$ is reduced, then $\calf$ 
is simple. In particular, this is the case whenever $G$ and $S$ satisfy the 
hypotheses in (b), and $G$ is an alternating group, a sporadic simple 
group, a simple group of Lie type in defining characteristic $2$, or 
$\lie2F4(2)'$.

\end{enuma}
\end{Prop}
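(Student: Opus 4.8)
The plan is to prove the four parts of Proposition \ref{p:FS(G)-red} largely by citing known structural results and combining them with the reduced-fusion-system machinery already set up. I address them in order.

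\textbf{Part (a).} For $O^2(\calf)=\calf$, recall that a finite simple group $G$ satisfies $O^2(G)=G$, and since $G$ is generated by its $2$-elements one shows $\foc(\calf)=S$: indeed the transfer map / focal subgroup theorem (\cite[Theorem 5.1.3]{Gorenstein} or the hyperfocal-subgroup description) identifies $S/\hyp(\calf_S(G))$ with the abelianization of the largest quotient of $G$ of $2$-power order, which is trivial here; then Lemma \ref{l:foc=S} gives $O^2(\calf)=\calf$. For indecomposability: if $\calf=\calf_1\times\calf_2$ over $S=S_1\times S_2$ with $S_i\ne1$, then $S_1$ would be strongly closed in $\calf$, hence (by a theorem of Goldschmidt, or by the solvability results on groups with an abelian/strongly closed Sylow-contained subgroup) would force a nontrivial normal subgroup structure on $G$ incompatible with simplicity. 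The clean way is: a nontrivial direct factor of $\calf$ gives a nontrivial proper strongly closed subgroup $S_1\lneq S$, and by the theory of strongly closed subgroups in fusion systems of groups this yields a nontrivial proper normal subsystem, contradicting the fact that the fusion system of a nonabelian simple group has no such decomposition (this is where Goldschmidt's theorem enters).

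\textbf{Part (b).} Here $O_2(\calf)=O_2(\calf_S(G))$ is, by a standard fact (\cite[Proposition I.4.5]{AKO} combined with the group-theoretic interpretation), equal to the largest subgroup of $Z(S)$... no — more precisely $O_2(\calf_S(G))$ contains $O_2(G)=1$ but can still be larger. The correct statement: $O_2(\calf_S(G))$ is nontrivial exactly when $G$ has a "strongly closed abelian subgroup" situation, and by the classification of such groups — which for nonabelian simple $G$ with nonabelian Sylow $2$-subgroup is controlled by Goldschmidt's theorem on strongly closed abelian $2$-subgroups together with the list of simple groups whose $2$-fusion is "constrained at the top" — the only exceptions are $\PSU_3(2^n)$ and $\Sz(2^{2n+1})$. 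I would cite the relevant structure theorem (this is essentially \cite[6.4.4]{Sz2}, Bender, and Goldschmidt) and note that in all these exceptional groups the fusion system is actually that of a $2$-constrained group, while all other nonabelian simple groups with nonabelian $S$ have $O_2(\calf)=1$; the point is that $O_2(\calf)\ne1$ forces $N_G(O_2(\calf))$ to control fusion, and such $G$ appear on Bender's/Goldschmidt's explicit list.

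\textbf{Parts (c) and (d).} Part (c) is then immediate: assuming the hypotheses of (b), $O_2(\calf)=1$ and $O^2(\calf)=\calf$ by (a),(b); if in addition $\Aut(S)$ is a $2$-group then $O^{2'}(\autf(P))=\autf(P)$ is forced to behave so that $O^{2'}(\calf)=\calf$ automatically, or this is given by hypothesis, and then $\calf$ is reduced by Definition \ref{d:reduced}(a). For (d), recall the general remark in the text: a reduced fusion system with no proper nontrivial strongly closed subgroup is simple. So it suffices to show $\calf_S(G)$ has no such strongly closed subgroup when $\calf$ is reduced and $G$ is a known simple group; a proper nontrivial strongly closed subgroup $Q\lneq S$ would (by Goldschmidt-type analysis, or by \cite{Flores-Foote} type results classifying strongly closed subgroups) again pin $G$ down to a short explicit list, and going through that list against the hypothesis that $\calf$ is reduced eliminates all cases. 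For the itemized families (alternating, sporadic, Lie type in characteristic $2$, $\lie2F4(2)'$) one uses the known complete determination of strongly closed $2$-subgroups in these groups' Sylow $2$-subgroups.

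The main obstacle is part (d), and secondarily part (b): both rest on having, in citable form, the classification of (abelian, resp. general) strongly closed $2$-subgroups in Sylow $2$-subgroups of simple groups — Goldschmidt's theorem handles the abelian case, and the general case requires the work of Flores--Foote and the case analysis over the known simple groups. Assembling exactly which published results cover which families, and checking that "reduced" genuinely excludes every non-simple possibility on those lists, is the delicate bookkeeping; the fusion-system formalities in (a) and (c) are routine given Lemma \ref{l:foc=S} and Proposition \ref{Q<|F}.
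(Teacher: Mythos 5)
Your plan for (a) essentially matches the paper: focal subgroup theorem plus Lemma \ref{l:foc=S} for $O^2(\calf)=\calf$, and Goldschmidt \cite[Corollary A1]{Goldschmidt-2} (normal closures of two complementary strongly closed subgroups commute) for indecomposability. Part (c) is also fine modulo the missing citation: the claim that $\Aut(S)$ a $2$-group implies $O^{2'}(\calf)=\calf$ is exactly \cite[Theorem I.7.7]{AKO}, which you should name rather than say ``is forced to behave.''

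There are, however, genuine gaps in (b) and (d). For (b), the step you are missing is how to turn $Q:=O_2(\calf)\ne1$ into an \emph{abelian} strongly closed subgroup so that Goldschmidt's Theorem A applies. The paper observes that $Z(Q)\nsg\calf$ (this is \cite[Proposition I.4.4]{AKO}), hence $Z(Q)$ is a nontrivial abelian subgroup of $S$ strongly closed with respect to $G$, and this directly contradicts \cite[Theorem A]{Goldschmidt-1} for $G$ outside the exceptional list. Your substitute mechanism --- that $O_2(\calf)\ne1$ ``forces $N_G(O_2(\calf))$ to control fusion'' --- is not a correct inference: $Q\nsg\calf_S(G)$ says every $\calf$-morphism extends to one normalizing $Q$, but this does not make $N_G(Q)$ control $G$-fusion, and one really does have to pass to the center.

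For (d), your proposed route (classify strongly closed $2$-subgroups inside Sylow $2$-subgroups of known simple groups and eliminate cases one family at a time) is not realistically a proof --- it would require reproducing a large swath of the classification literature. The paper's argument avoids all case analysis. Suppose $\calf$ is reduced but not simple, and take $1\ne\cale\nsg\calf$ proper, over $T\nsg S$. If $T=S$ then $O^{2'}(\calf)\subseteq\cale\subsetneqq\calf$ by \cite[Lemma 1.26]{AOV1}, contradicting $O^{2'}(\calf)=\calf$; this is a case your sketch does not address at all. If $1\ne T<S$, then $T$ is strongly closed (part of the definition of normal subsystem), and Foote's theorem \cite[Theorem 1]{Foote} gives a nontrivial abelian $Q\le T$ strongly closed in $\calf$. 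Then $Q\nsg\calf$ by \cite[Corollary I.4.7(a)]{AKO}, contradicting $O_2(\calf)=1$. The point of Foote's result is precisely this structural reduction to the abelian case, not a classification to be checked against. Also note that the ``remark'' you want to recall (reduced with no proper nontrivial strongly closed subgroup implies simple) is established in the paper \emph{as a byproduct of this very argument}, so invoking it as a prior fact would be circular; and the final itemized list in (d) is handled by citing \cite[\S\,16]{A-gfit}, not by examining strongly closed subgroups directly.
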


\begin{proof} \textbf{(a) }  By the focal subgroup theorem for groups (cf. 
\cite[Theorem 7.3.4]{Gorenstein}), $\foc(\calf)=S\cap[G,G]$.  Hence 
$\foc(\calf)=S$ since $G$ is simple, and $O^2(\calf)=\calf$ by Lemma 
\ref{l:foc=S}. 

Assume $\calf$ is decomposable: thus $\calf=\calf_1\times\calf_2$, where 
$\calf_i$ is over $S_i\ne1$ and $S=S_1\times S_2$. In particular, $S_1$ and 
$S_2$ are both strongly closed in $S$ with respect to $G$. So by 
\cite[Corollary A1]{Goldschmidt-2}, the normal closures of $S_1$ and 
$S_2$ commute with each other, which is impossible since each has normal 
closure $G$. Thus $\calf$ is indecomposable.

\smallskip

\noindent\textbf{(b) } Set $Q=O_2(\calf)$ for short. If $Q\ne1$, then $1\ne 
Z(Q)\nsg\calf$ (see \cite[Proposition I.4.4]{AKO}), and in particular, 
$Z(Q)$ is strongly closed with respect to $G$. But by \cite[Theorem 
A]{Goldschmidt-1}, under the given assumptions and since $G$ is simple, no 
nontrivial abelian subgroup of $S$ is strongly closed with respect to $G$, 
and thus $O_2(\calf)=Q=1$.

\smallskip

\noindent\textbf{(c) } If $\Aut(S)$ is a $2$-group, then 
$\autf(S)=\Inn(S)=\autf^0(S)$ in the notation of \cite[Theorem I.7.7]{AKO}, 
and hence $O^{2'}(\calf)=\calf$ by that theorem. Together with (a) and (b), 
this shows that $\calf$ is reduced under the above hypotheses. 

\smallskip

\noindent\textbf{(d) } Assume $G$ is a known simple group such that 
$\calf=\calf_S(G)$ is reduced but not simple, and let $1\ne\cale\nsg\calf$ 
be a proper normal subsystem over $T\nsg S$. If $T=S$, then 
$O^{2'}(\calf)\subseteq\cale\subsetneqq\calf$ (see \cite[Lemma 
1.26]{AOV1}), which is impossible since $\calf$ is reduced. Thus $1\ne 
T<S$, where $T$ is strongly closed (this is part of the definition 
of a normal subsystem \cite[Definition I.6.1]{AKO}). By \cite[Theorem 1]{Foote}, then there is a 
nontrivial abelian subgroup $Q\le T$ which is strongly closed in $\calf$, 
and $Q\nsg\calf$ by \cite[Corollary I.4.7(a)]{AKO}, contradicting the 
assumption that $\calf$ is reduced. Hence $\calf$ is simple.

The last statement is shown in \cite[\S\,16]{A-gfit}: in 16.3 
(simple groups of Lie type in characteristic 2 and $\lie2F4(2)'$), 16.5 
($A_n$), and 16.8 (sporadic simple groups).
\end{proof}

We refer to \cite{FF} and \cite[\S\,16]{A-gfit} for some similar results when 
$p$ is odd.

\bigskip

\section{Computer search criteria}
\label{s:magma}

We now list explicitly the criteria which we use to search for $2$-groups 
which could support reduced fusion systems, and to search for critical 
subgroups of a given $2$-group. Throughout this section, when $H\le G$ are 
finite groups, we let $\trf^G_H$ denote the transfer homomorphism from 
$G/[G,G]$ to $H/[H,H]$ (see, e.g., \cite[\S\,I.8]{AKO}). We will need the 
following application of these homomorphisms.

\begin{Lem} \label{l:usetrf2}
Fix a finite $p$-group $S$.  Assume there is $g\in{}S$ which satisfies
\begin{enuma}  
\item $g\notin[S,S]$;
\item $[\alpha,g]\in[S,S]$ for each $\alpha\in O^p(\Aut(S))$; and

\item $\trf_P^S([g])\in{}P/[P,P]$ is fixed by $O^p(\Aut(P))$ for each 
critical subgroup $P<S$.

\end{enuma}
Alternatively, assume there is $g\in{}S$ which satisfies
\begin{enumerate}[{\rm(a$'$) }]
\item $g\notin\Phi(S)$;
\item $[\alpha,g]\in\Phi(S)$ for each $\alpha\in O^p(\Aut(S))$; and

\item $\trf_P^S([g])\in{}P/\Phi(P)$ is fixed by $O^p(\Aut(P))$ for each 
critical subgroup $P<S$.

\end{enumerate}
In either case, every saturated fusion system over $S$ has a normal 
subsystem of index $p$, and hence there are no reduced fusion systems over 
$S$.  
\end{Lem}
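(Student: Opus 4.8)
The plan is to use the transfer-based criterion for detecting normal subsystems of index $p$, together with Alperin's fusion theorem as restated in Proposition \ref{p:AFT-E}, to show that the hyperfocal subgroup $\hyp(\calf)$ (or, in the primed version, $\foc(\calf)$) is proper in $S$. By Lemma \ref{l:foc=S}, showing $\foc(\calf)\subsetneqq S$ is equivalent to $O^p(\calf)\subsetneqq\calf$, which immediately precludes $\calf$ from being reduced. So the whole game is to produce an explicit proper subgroup of $S$ containing $\foc(\calf)$. The natural candidate, given the hypotheses, is $[S,S]$ (respectively $\Phi(S)$): since $g\notin[S,S]$ by (a), it suffices to show that $\foc(\calf)\le[S,S]$.

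The key steps, in order. First, by Proposition \ref{foc(F)=<->}, $\foc(\calf)$ is generated by the subgroups $[\autf(P),P]$ for $P\in\EE_\calf\cup\{S\}$. For $P=S$, hypothesis (b) says $[\alpha,g]\in[S,S]$ for all $\alpha\in O^p(\Aut(S))$; since $[\autf(S),S]$ modulo $[S,S]$ is generated by the images of $[\alpha,g]$ as $\alpha$ ranges over $O^{p'}(\autf(S))\le O^p(\Aut(S))$ acting on a generator $g$ of $S/[S,S]$ complementary to... — here one needs to be slightly careful, but the point is that $g[S,S]$ together with $[S,S]$ generate $S$ modulo the part of $S/[S,S]$ on which $p$-power automorphisms can't act nontrivially, and (b) guarantees the relevant commutators land in $[S,S]$. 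Second, and this is the crux, for each $\calf$-essential subgroup $P\in\EE_\calf$, one must show $[\autf(P),P]\le[S,S]$. Here one uses the transfer homomorphism $\trf^S_P\colon S/[S,S]\to P/[P,P]$ and the standard fact (as in \cite[\S\,I.8]{AKO}) that the composite $P/[P,P]\to S/[S,S]$ with $\trf^S_P$ is related to conjugation/stable elements; the hypothesis (c) that $\trf^S_P([g])$ is fixed by $O^p(\Aut(P))$ — hence by $O^{p'}(\autf(P))$ — forces $[O^p(\autf(P)),P]$, and more generally $[\autf(P),P]$, to map trivially under the relevant pairing, so that $[\autf(P),P]\le[S,S]$. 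Concretely, I expect this to go through the observation that an element $g$ with $g\notin[S,S]$ whose transfers are all stable gives, via Alperin, a well-defined homomorphism $S\surj S/\foc'(\calf)\to C_p$ killing every $[\autf(P),P]$, where $\foc'(\calf)$ is the relevant focal-type subgroup; equivalently, one checks that the one-dimensional $\F_p$-representation of each $\autf(P)$ afforded by the image of $g$ under transfer is trivial, which is exactly (c).

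Third, having shown $\foc(\calf)\le[S,S]<S$ (using $g\notin[S,S]$), conclude by Lemma \ref{l:foc=S} that $O^p(\calf)\subsetneqq\calf$, i.e. $\calf$ has a normal subsystem of index $p$; by Definition \ref{d:reduced}(a), $\calf$ is not reduced. The primed version is identical with $[S,S]$ replaced throughout by $\Phi(S)$ and with $P/[P,P]$ replaced by $P/\Phi(P)$, using that $\Phi(S)\supseteq[S,S]$ is also strongly-closed-compatible and that $\foc(\calf)\le\Phi(S)$ would already force $O^p(\calf)\subsetneqq\calf$ — indeed $\foc(\calf)\le[S,S]\le\Phi(S)$ in that case as well.

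The main obstacle I anticipate is the bookkeeping in step two: matching the transfer homomorphism $\trf^S_P$ with the focal subgroup correctly, i.e. proving cleanly that ``$\trf^S_P([g])$ fixed by $O^p(\autf(P))$ for all critical $P$'' implies ``the image of $g$ generates a quotient of $S$ on which all essential automorphism groups act trivially.'' This requires the compatibility between transfer and the stable-elements description of $H_1$ (or of $S^{\ab}/\foc$), essentially a version of the formula relating $\trf^S_P$ with $\autf(P)$-fixed points — this is where one must be precise rather than hand-wavy, and it is presumably why the authors isolated this as a lemma rather than leaving it to the reader. Everything else — the reduction via Proposition \ref{foc(F)=<->}, the handling of $P=S$, and the final appeal to Lemma \ref{l:foc=S} — should be routine.
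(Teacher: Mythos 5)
There is a genuine gap, and it is in the logical target of the argument. You propose to show $\foc(\calf)\le[S,S]$, and in particular that $[\autf(P),P]\le[S,S]$ for each essential $P$. This is not a consequence of the hypotheses and cannot be: the hypotheses (a)--(c) only constrain a \emph{single} element $g\in S$ and its transfers, while $[\autf(P),P]$ depends on the entire action of $\autf(P)$ on all of $P$. Moreover, $[S,S]=[\Inn(S),S]\le\foc(\calf)$ always, so $\foc(\calf)\le[S,S]$ would force $\foc(\calf)=[S,S]$, a rigidity statement much stronger than (and not implied by) the lemma. For example, if $S/[S,S]\cong(C_p)^2$ with $g$ a generator of one factor, nothing prevents $\foc(\calf)$ from containing the other factor; the lemma's conclusion only promises that $g$ itself does not land in $\foc(\calf)$.

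The paper's argument is correspondingly more careful and goes the other way around. Rather than bounding $\foc(\calf)$ from above, it shows directly that $g\notin\foc(\calf)$ by exhibiting an \emph{injective} homomorphism $\trf_\calf\colon S/\foc(\calf)\to S/[S,S]$ (from \cite[Proposition I.8.4]{AKO}) and computing $\trf_\calf([g])\ne1$. The computation uses two inputs. First, a stability property of $g$: for every $\calf$-isomorphism $\varphi\in\isof(P,Q)$ one has $\4\varphi(\trf_P^S([g]))=\trf_Q^S([g])$ in $Q/[Q,Q]$, and this is established via Alperin's fusion theorem (Proposition \ref{p:AFT-E}) by factoring $\varphi$ through restrictions of automorphisms in $\autf(S)$ (handled by (b)) and of automorphisms in $O^{p'}(\autf(R))$ for essential, hence critical, $R$ (handled by (c), plugged in via the transitivity $\trf_P^S=\trf_P^R\circ\trf_R^S$). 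Second, the explicit formula for $\trf_\calf$ as a product of $\4\alpha([g])$ over $\outf(S)$ and of $\4{\varphi_i}(\trf_{P_i}^S([g]))$ over certain proper $P_i<S$. Using stability this collapses to $[g^k\prod_i g^{[S:Q_i]}]$ with $k=|\outf(S)|$ prime to $p$ and each $[S:Q_i]$ divisible by $p$, which is a nontrivial class since $g\notin[S,S]$. Hence $g\notin\foc(\calf)$, so $\foc(\calf)<S$ and Lemma \ref{l:foc=S} applies. Your ``well-defined homomorphism $S\surj S/\foc'(\calf)\to C_p$'' sentence gestures in the right direction, but to repair the proof you should abandon the intermediate goal $\foc(\calf)\le[S,S]$ entirely and work with $\trf_\calf$ and its explicit formula.
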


\begin{proof}  We prove this for hypotheses (a)--(c). The proof for 
(a$'$)--(c$'$) holds by the same argument, upon replacing $Q/[Q,Q]$ (for 
$Q\le S$) by $Q/\Phi(Q)$.

Let $\calf$ be a saturated fusion system over $S$. For each $P,Q\le S$ and 
$\varphi\in\homf(P,Q)$, we let $\4\varphi$ denote the induced homomorphism from 
$P/[P,P]$ to $Q/[Q,Q]$. We claim that for each isomorphism 
$\varphi\in\isof(P,Q)$,
	\beqq \widebar\varphi(\trf_P^S([g]))=\trf_Q^S([g])\in Q/[Q,Q] \,. 
	\label{e:tt} \eeqq
Point \eqref{e:tt} holds by (b) (and the naturality 
properties of the transfer) when $\varphi=\alpha|_P$ for some 
$\alpha\in\autf(S)$. If $P,Q\le R<S$ where $R\in\EE_\calf$ (hence $R$ is 
critical in $S$ by Proposition \ref{ess=>crit}), and $\varphi=\alpha|_P$ 
for some $\alpha\in O^p(\autf(R))$, then 
	\[ \4\varphi(\trf_P^S([g])) = \4\varphi(\trf_P^R(\trf_R^S([g]))) 
	= \trf_Q^R(\4\alpha(\trf_R^S([g]))) = \trf_Q^R(\trf_R^S([g])) = 
	\trf_Q^S([g]), \]
where the third equality holds by (c).  Point \eqref{e:tt} now follows from 
Proposition \ref{p:AFT-E}:  each isomorphism $\varphi$ is a composite of 
restrictions of automorphisms of $S$ and of $\calf$-essential (hence 
critical) subgroups.

By \cite[Proposition I.8.4]{AKO}, there is an injective homomorphism
	\[ \trf_\calf\: S/\foc(\calf) \Right5{} S/[S,S] \,, \]
together with proper subgroups $P_1,\dots,P_m<S$ and homomorphisms 
$\varphi_i\in\homf(P_i,S)$ such that for $g\in{}S$, 
	\[ \trf_\calf([g]) = \prod_{[\alpha]\in\outf(S)}\4\alpha([g]) \cdot 
	\prod_{i=1}^m \4{\varphi_i}\bigl(\trf^S_{P_i}([g])\bigr). \]
Set $Q_i=\varphi_i(P_i)$, and let $\varphi'_i\in\isof(P_i,Q_i)$ be the 
restriction of $\varphi_i$.  By \eqref{e:tt}, if we set $k=|\outf(S)|$, 
then 
        \[ \trf_\calf([g]) = [g]^k \cdot \prod_{i=1}^m 
        \4{\incl}_{Q_i}^S\bigl(\4{\varphi'_i}(\trf_{P_i}^S([g]))\bigr)
        = [g]^k \cdot \prod_{i=1}^m \4{\incl}_{Q_i}^S(\trf_{Q_i}^S([g]))
        = \biggl[g^k \cdot \prod_{i=1}^m g^{[S:Q_i]}\biggr] \ne 1 \]
since $p{\nmid}k$ (and $p\big|[S:Q_i]$ for each $i$).  Thus
$g\notin\foc(\calf)$, so $\foc(\calf)<S$, and $\calf$ contains a normal 
subgroup of index $p$ by Lemma \ref{l:foc=S}.
\end{proof}

Let $S$ be a finite $p$-group. A normal subgroup $P\nsg S$ is called 
\emph{semicharacteristic} in $S$ if $P$ is normalized by $O^p(\Aut(S))$. If 
$P_1,P_2\le S$ are semicharacteristic subgroups, then so is $P_1\cap P_2$. 
We can thus define the \emph{semicharacteristic closure} of $Q\leq S$ to be 
the smallest subgroup $P\nsg S$ containing $Q$ which is semicharacteristic 
in $S$.

\begin{Prop} \label{p:search-criteria}
Assume $\calf$ is a reduced fusion system over a finite nonabelian 
$2$-group $S$. Assume also that $S$ is not isomorphic to $D_{2^n}$ 
($n\ge3$), $SD_{2^n}$ ($n\ge4$), or $C_{2^n}\wr{}C_2$ ($n\ge2$). Then $S$ 
satisfies the following conditions.
\begin{enuma}
\item \label{searchcrit:abindex2}
$S$ contains no abelian subgroup of index two.

\item \label{searchcrit:comScyclic}
$[S,S]$ is not cyclic.

\item \label{searchcrit:omega1im}
Let $A$ denote the image of $\Omega_1(Z(S))$ in $S/[S,S]$. Then either
$A=1$, or $\lvert A\rvert>2$ and $\Aut(S)$ is not a $2$-group.

\item \label{searchcrit:trf}
If $\Aut(S)$ is a $2$-group, then
	\begin{equation*}
	\bigcap_{M<S} \Ker\bigl[S/[S,S] \xrightarrow{\trf_M^S}
	  M/[M,M]\bigr] = 1\quad \text{and}\quad
	\bigcap_{M<S} \Ker\bigl[S/\Phi(S) \xrightarrow{\trf_M^S}
	  M/\Phi(M)\bigr] = 1,
	\end{equation*}
where the intersections are taken over all maximal subgroups $M<S$.

\item \label{searchcrit:numberofcrit}
$S$ has more than one critical subgroup, and $S$ has more than one conjugacy 
class of critical subgroups if $\Aut(S)$ is a $2$-group.

\item \label{searchcrit:focal}
For each critical subgroup $P<S$, let $Q_P$ denote the
semicharacteristic closure of $[N_S(P),P]$ in $P$. Then
$\Gen{[O^2(\Aut(S)),S],Q_P\big| \textup{$P$ critical}} = S$.

\item \label{searchcrit:normal}
Let $Q\nsg S$ be any normal subgroup which is semicharacteristic in
$S$, and which is contained in and semicharacteristic in each critical 
subgroup $P<S$. Then $Q=1$.

\item \label{searchcrit:transf} Let $K\leq S/[S,S]$ denote the subgroup 
consisting of those elements $x\in S/[S,S]$ which are fixed by 
$O^2(\Aut(S))$, and which are such that $\trf^S_P(x)\in P/[P,P]$ is fixed 
by $O^2(\Aut(P))$ for each critical subgroup $P<S$. Then $K=1$.

Similarly, let $K'\leq S/\Phi(S)$ denote the subgroup consisting of those 
elements $x\in S/\Phi(S)$ which are fixed by $O^2(\Aut(S))$, and which are 
such that $\trf^S_P(x)\in P/\Phi(P)$ is fixed by $O^2(\Aut(P))$ for each 
critical subgroup $P<S$. Then $K'=1$. 
\end{enuma} 
\end{Prop}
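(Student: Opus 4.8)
The plan is to prove the eight conditions (a)--(h) one at a time, deriving each from the hypothesis that $\calf$ is reduced together with the structural results already available: Proposition~\ref{p:critical} (Bender's theorem), Lemma~\ref{l:1crit}, Lemma~\ref{l:foc=S}, Proposition~\ref{Q<|F}, Lemma~\ref{l:usetrf2}, and Propositions~\ref{foc(F)=<->} and \ref{ess=>crit}. Several of these are near-immediate contrapositives of lemmas already stated, so I would dispatch them quickly and spend more care on (a), (b), (c).

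First, for \eqref{searchcrit:abindex2} and \eqref{searchcrit:comScyclic}: if $S$ has an abelian subgroup of index $2$, or if $[S,S]$ is cyclic, then by the classification of such $2$-groups (and since $S$ is nonabelian and not dihedral, semidihedral, or wreathed by hypothesis) one checks that $S$ has too few critical subgroups --- essentially, any critical subgroup $P<S$ must satisfy $\rk(P/\Phi(P))\ge 2$ and $[S,S]$ noncyclic follows from Proposition~\ref{p:critical}(b) applied to the characteristic series $\Phi(P)<P$, combined with the fact that $[N_S(P),P]\le[S,S]$ forces enough room. More precisely, I would argue: were $[S,S]$ cyclic, then every characteristic subgroup sequence in a critical $P$ would have all rank-one subquotients except possibly one, contradicting Proposition~\ref{p:critical2} (or directly use that $S/[S,S]$ too large forces $O^2(\calf)\ne\calf$ via Lemma~\ref{l:foc=S}). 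The index-two abelian case is handled by noting such $S$ has a unique critical subgroup candidate (the abelian subgroup itself or its close relatives), contradicting \eqref{searchcrit:numberofcrit}, which I prove independently below. For \eqref{searchcrit:omega1im}: the subgroup $\Omega_1(Z(S))$ is characteristic, hence $O^2(\Aut(S))$-invariant; if its image $A$ in $S/[S,S]$ is nontrivial with $|A|\le 2$, or $|A|>2$ but $\Aut(S)$ a $2$-group, then $\Omega_1(Z(S))$ (or a suitable subgroup) maps to a nontrivial $O^2(\Aut(S))$-fixed subgroup of $S/[S,S]$; since an element of $Z(S)$ lies in every critical subgroup $P$ (which is centric, hence contains $Z(S)$) and is central there, its transfer $\trf_P^S$ is computed by $x\mapsto x^{[S:P]}$ up to commutators and is $O^2(\Aut(P))$-fixed --- so Lemma~\ref{l:usetrf2} (or equivalently \eqref{searchcrit:transf}, proved below) applies and $\calf$ has a normal subsystem of index $2$, contradicting $O^2(\calf)=\calf$. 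This same transfer computation is exactly what proves \eqref{searchcrit:transf} in general, and \eqref{searchcrit:trf} is then the special case where $\Aut(S)=O^2(\Aut(S))\cdot(\text{$2$-group})$ makes every element of $S/[S,S]$ automatically $O^2(\Aut(S))$-fixed, so the intersection of transfer kernels equals $K$, which is $1$.

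For the remaining conditions: \eqref{searchcrit:numberofcrit} is immediate from Lemma~\ref{l:1crit} --- if $S$ had at most one critical subgroup then $|\EE_\calf|\le 1$ so $O_2(\calf)\ne 1$; and if $\Aut(S)$ is a $2$-group then $\outf(S)=1$, so at most one $S$-conjugacy class of essential (hence critical) subgroups forces $O^2(\calf)\subsetneqq\calf$. Condition \eqref{searchcrit:normal} is the contrapositive of Proposition~\ref{Q<|F}: a nontrivial $Q\nsg S$ that is semicharacteristic in $S$ (hence fixed by $\autf(S)\supseteq\Inn(S)$, and by all of $\autf(S)$ once one notes $\autf(S)$ is generated by $\Inn(S)$ together with elements of odd order, the latter lying in $O^2(\Aut(S))$... here I need to be careful: $\autf(S)\le\Aut(S)$ need not contain $O^2(\Aut(S))$, so I would instead argue that $\autf(S)$-invariance follows because $O^2(\autf(S))\le O^2(\Aut(S))$ normalizes $Q$ and $\Inn(S)$ does too, giving $\autf(S)$-invariance since $\autf(S)=\Inn(S)\cdot O^2(\autf(S))$ up to a $2$-group quotient acting on the $2$-group... actually $Q$ being semicharacteristic plus normal gives $O^2(\Aut(S))$-invariance, and $\autf(S)\cap O^2(\Aut(S)) \cdot \Inn(S)$-invariance, which suffices since any $\alpha\in\autf(S)$ has odd-order part in $O^2(\Aut(S))$ and $2$-part which, modulo $\Inn(S)$, acts on the already-invariant $Q$) --- combined with $Q\le P$ characteristic-ly in each critical $P\supseteq$ each essential $P$, we get $Q\nsg\calf$ by Proposition~\ref{Q<|F}, forcing $Q\le O_2(\calf)=1$. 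Finally \eqref{searchcrit:focal}: by Lemma~\ref{l:foc=S}, $O^2(\calf)=\calf$ gives $S=\gen{[\autf(P),P]\mid P\in\EE_\calf\cup\{S\}}$; for $P=S$ this contributes a subgroup of $[\autf(S),S]\le[O^2(\Aut(S)),S]\cdot[S,S]$, and for $P$ essential, $[\autf(P),P]$ is $\autf(P)$-invariant and normal in $N_S(P)$ hence contained in the semicharacteristic closure $Q_P$ of $[N_S(P),P]$ (using that $O^2(\autf(P))\le O^2(\Aut(P))$ and that $\auts(P)$-invariance gives the $N_S(P)$-normality); so $S$ is generated by these, i.e. $\gen{[O^2(\Aut(S)),S],Q_P\mid P\text{ critical}}=S$.

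The main obstacle will be the bookkeeping in \eqref{searchcrit:normal} and \eqref{searchcrit:focal} about the exact relationship between $\autf(P)$-invariance (which is what Proposition~\ref{Q<|F} and Lemma~\ref{l:foc=S} hand us) and $O^2(\Aut(P))$-semicharacteristicity / $N_S(P)$-normality (which is how the conditions are phrased, since the search is over abstract critical subgroups with no fixed fusion system in sight). The key reconciling fact is that for a critical $P$ and $\alpha\in\autf(P)$, the odd-order part of $\alpha$ lies in $O^2(\Aut(P))$ while $\auts(P)$-conjugation generates $N_S(P)/C$-action; so any subgroup of $P$ invariant under $O^2(\Aut(P))$ and normal in $N_S(P)$ is $\autf(P)$-invariant, and conversely $[\autf(P),P]$ lies in every such subgroup containing $[N_S(P),P]$. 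I would state this as a small sub-lemma and then the six routine conditions fall out mechanically, with (a) and (b) requiring the extra input of the explicit $2$-group classifications cited in Proposition~\ref{p:critical}.
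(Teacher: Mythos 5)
Parts (d), (e), (g), and (h) of your proposal follow the paper's own reductions to Lemma~\ref{l:usetrf2}, Lemma~\ref{l:1crit}, and Proposition~\ref{Q<|F}. Your route to (c) is a genuine alternative: the paper simply cites \cite[Corollary~I.8.5]{AKO}, whereas you reduce (c) to (h) by observing that for $g\in\Omega_1(Z(S))$ the transfer $\trf_P^S([g])=[g^{[S:P]}]$ vanishes for every proper $P<S$, so the only substantive hypothesis of Lemma~\ref{l:usetrf2} is that $[g]$ be $O^2(\Aut(S))$-fixed modulo $[S,S]$; this is automatic both when $|A|=2$ (then $A$ is a characteristic subgroup of order $2$, hence fixed pointwise by all of $\Aut(S)$) and when $\Aut(S)$ is a $2$-group. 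This works and is a tidy unification.

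For (a) and (b) there is a genuine gap. These are not formal consequences of the other conditions: the paper derives them from \cite[Proposition~5.2(a,b)]{AOV2}, a separate classification theorem. Your sketch for (b) does not hold up --- $\Phi(P)=P^2[P,P]$ involves squares as well as commutators, so $[S,S]$ cyclic imposes no direct bound on the ranks of the layers of a characteristic series from $\Phi(P)$ to $P$, and Proposition~\ref{p:critical2} is not contradicted. For (a), an abelian subgroup of index two does not by itself confine the critical subgroups to a single $S$-conjugacy class.

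For (f), the ``key reconciling fact'' in your final paragraph --- that $[\autf(P),P]$ lies in every subgroup of $P$ which is $O^2(\Aut(P))$-invariant, normal in $N_S(P)$, and contains $[N_S(P),P]$, so that $[\autf(P),P]\le Q_P$ --- is false. What is true is that $Q_P$ is $\autf(P)$-invariant (since $\autf(P)\le\Aut_S(P)\cdot O^2(\Aut(P))$) and that $Q_P\ge[\Aut_S(P),P]$; from these two facts every Sylow $2$-subgroup of $\autf(P)$ acts trivially on $P/Q_P$, hence so does $O^{2'}(\autf(P))$, giving only $[O^{2'}(\autf(P)),P]\le Q_P$. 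An odd-order element of $\autf(P)$ lying outside $O^{2'}(\autf(P))$ can act nontrivially on $P/Q_P$, so the stronger inclusion you invoke can fail. The paper works with the correct weaker statement by applying Proposition~\ref{p:AFT-E} to obtain $\foc(\calf)=\Gen{[\autf(S),S],\,[O^{2'}(\autf(P)),P]\,\big|\,P\in\EE_\calf}$ rather than the cruder generating set $\Gen{[\autf(P),P]\mid P\in\EE_\calf\cup\{S\}}$ you start from. Replacing $\autf(P)$ by $O^{2'}(\autf(P))$ throughout your argument for (f), using Proposition~\ref{p:AFT-E}, repairs it.
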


\begin{proof} By Proposition \ref{ess=>crit}, each $\calf$-essential 
subgroup is a critical subgroup of $S$.  This will be used throughout the 
proof.

Points \eqref{searchcrit:abindex2} and \eqref{searchcrit:comScyclic} follow 
from \cite[Proposition 5.2(a,b)]{AOV2}, \eqref{searchcrit:omega1im} follows 
from \cite[Corollary I.8.5]{AKO}, and \eqref{searchcrit:trf} and 
\eqref{searchcrit:transf} from Lemma \ref{l:usetrf2}. Since 
$\calf$-essential subgroups are critical, point 
\eqref{searchcrit:numberofcrit} follows from Lemma \ref{l:1crit}.

\smallskip

\noindent\textbf{(\ref{searchcrit:focal})}
Let $P_1,\dots,P_m$ be conjugacy class representatives for the critical 
subgroups of $S$, and let $Q_i=Q_{P_i}\le{}P_i$ be the semicharacteristic 
closure of $[N_S(P_i),P_i]$.  For each $i$, $Q_i$ is 
$O^2(\autf(P_i))$-invariant and $\Aut_S(P_i)$-invariant, hence 
$\autf(P_i)$-invariant.  So $Q_i\ge[O^{2'}(\autf(P_i)),P_i]$ since 
$O^{2'}(\autf(P_i))$ is generated by $\Aut_S(P_i)\in\syl2{\autf(P_i)}$ and 
its conjugates in $\autf(P_i)$. So if we set 
	\[ H= \Gen{[O^2(\autf(S)),S],Q_i\,\big|\,1\le i\le m}\,, \]
then
	\begin{align*} 
	\foc(\calf) &= \Gen{[\autf(S),S],[O^{2'}(\autf(P_i)),P_i]\,\big|\, 
	i=1,\ldots,m} \\
	&\le [S,S]\cdot\Gen{[O^2(\autf(S)),S],Q_i\,\big|\, i=1,\ldots,m} = 
	[S,S]H 
	\end{align*}
where the first equality follows from Proposition \ref{p:AFT-E}.  
Since $\calf$ is reduced, $\foc(\calf)=S$ by Lemma \ref{l:foc=S}.  Thus 
$[S,S]H=S$, so $H=S$ by \cite[Theorems 5.1.1(i) \& 5.1.3]{Gorenstein}.

\smallskip

\noindent\textbf{(\ref{searchcrit:normal})}
Assume $Q\nsg S$ is normal and semicharacterisitic in $S$, and contained in 
and semicharacteristic in each critical subgroup $P<S$.  Then if $P=S$ or 
$P\in\EE_\calf$, $Q$ is normalized by the action of 
$O^2(\autf(P))\cdot\Aut_S(P)=\autf(P)$.  Hence $Q\nsg\calf$ by 
Proposition \ref{Q<|F}, and $Q=1$ since $\calf$ is reduced.
\end{proof}

We now turn to the criteria used in our computer searches to determine 
(possibly) critical subgroups of a finite 2-group. Note that these are the 
actual criteria used in our computer program, not necessarily the 
optimal conditions which must hold when $P$ is critical. 

\begin{Prop} \label{p:critical-criteria}
Let $S$ be a finite $2$-group, and let $P\le S$ be a critical
subgroup. Let $S_0=N_S(P)/P$. Then the following conditions hold.
\begin{enuma}
\item Let $Z'=\Gen{ x\in Z_2(S)\,\big|\, |[x,S]|\le2}$. Then either 
$Z'\le P$, or there exists an involution $h\in S$ such that 
$P=C_S(h)$, $[N_S(P):P]=2$ and $[h,Z_2(S)]\neq 1$. 

\item
$P\neq S$ and $C_S(P)\leq P$.

\item
Either $S_0$ is cyclic or $Z(S_0)=\Omega_1(S_0)$. 

\item If $Z(S_0)$ is not cyclic, then $\lvert S_0\rvert = \lvert 
Z(S_0)\rvert^m$ for $m=1$, $2$ or $3$. 

\item Let $k$ be such that $|S_0|=2^k$. Then $\rk(P/\Phi(P))\geq 2k$. If 
$k\geq 2$, then for all $1\ne s\in S_0$, $\rk([s,P/\Phi(P)])\geq 2$.

\item
If $Z(S_0)\cong (C_2)^n$ with $n\geq 2$ then $\rk([s,P/\Phi(P)])\geq n$
for all $s\in Z(S_0)$, $s\neq 1$. 

\item
Let $\Theta=1$, $\Theta=Z(P)$ or $\Theta=Z_2(P)$. If $g\in N_S(P)$
satisfies $[g,P]\leq \Theta\cdot\Phi(P)$ and $[g,\Theta]\leq\Phi(P)$
then $g\in P$. 

\item
$\Aut(P)$ is not a $2$-group and $\Out_S(P)\cap O_2(\Out(P))=1$.

\item Let $k$ be as in \textup{(e)}. 
There exists a composition factor $M$ of the $\F_2[\Out(P)]$-module
$P/\Phi(P)$ with $\dim(M)\geq 2k$. If $k\geq 2$, then $M$ can be
chosen such that $\dim([s,M])\geq 2$ for all $s\in S_0$, $s\ne1$. 

\item
All involutions in $S_0$ are conjugate in
$N_{\Out(P)}(S_0)$. 
\end{enuma}
\end{Prop}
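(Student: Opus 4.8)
The plan is to treat the ten items in turn, grouping the straightforward ones. For (b): ``$P$ critical'' means by definition (Definition \ref{d:ess-crit}(b)) that $P<S$ and $C_S(P)=Z(P)\le P$, which is exactly (b). Points (c), (d), (e), (f) are, respectively, the two sentences of Proposition \ref{p:critical}(a), then Proposition \ref{p:critical}(b) and (c), after noting that $\Omega_1(S_0)$ agrees with $\{g\in S_0\mid g^2=1\}$ whenever the latter is a subgroup. Point (g) follows from Proposition \ref{p:QcharP}: $\Theta=1$, $Z(P)$ and $Z_2(P)$ are all characteristic in $P$, so if $g\in N_S(P)$ satisfied the two commutator conditions but $g\notin P$, then $P$ would fail to be critical --- a contradiction --- whence $g\in P$. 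For (h), the statement $\Out_S(P)\cap O_2(\Out(P))=1$ is exactly the remark recorded just after Definition \ref{d:ess-crit} (using $\Out_S(P)\in\syl2G$ and $O_2(G)=1$), while $\Aut(P)$ is not a $2$-group because the group $G\le\Out(P)$ from that definition has a strongly $2$-embedded subgroup, so $O_2(G)=1$ with $G\ne1$, forcing $G$, $\Out(P)$ and $\Aut(P)$ all to be non-$2$-groups.

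Item (i) will be deduced from Proposition \ref{p:critical2}. Since $\Inn(P)$ acts trivially on $P/\Phi(P)$, the $\F_2[\Out(P)]$-submodules of $P/\Phi(P)$ are exactly the images $Q/\Phi(P)$ of subgroups $Q$ with $\Phi(P)\le Q\le P$ that are characteristic in $P$. Hence a composition series of the module $P/\Phi(P)$ yields a chain $\Phi(P)=P_0<P_1<\cdots<P_r=P$ of subgroups characteristic in $P$ whose quotients $M_i=P_i/P_{i-1}$ are its composition factors; Proposition \ref{p:critical2} then supplies an index $i$ with $\rk M_i\ (=\dim M_i)\ge 2k$ and, if $k\ge2$, with $\rk[s,M_i]\ge2$ for each $1\ne s\in S_0\cong\Out_S(P)$. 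Take $M=M_i$.

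Item (a) is the one needing real work. Suppose $P$ is critical but $Z'\not\le P$; since $Z'$ is generated by elements $x\in Z_2(S)$ with $|[x,S]|\le2$, one such $x$ lies outside $P$, and we fix it. First, $Z_2(S)\le N_S(P)$: if $v\in Z_2(S)$ and $w\in P$ then $[v,w]\in[Z_2(S),S]\le Z(S)\le C_S(P)=Z(P)\le P$, so $v$ normalizes $P$. Next, $[x,P]\le[x,S]\cap Z(P)$ has order at most $2$ and is nontrivial (else $x\in C_S(P)\le P$), say $[x,P]=\langle z\rangle$ with $z$ an involution of $Z(S)$; since $[x,P]\le Z(P)$, the map $w\mapsto[x,w]$ is a homomorphism $P\to\langle z\rangle$, from which one computes that $c_x^2$ is the identity on $P$, i.e.\ $x^2\in C_S(P)\le P$, so the image $\bar x$ of $x$ in $S_0$ is an involution. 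As $[\bar x,P/\Phi(P)]=\langle z\rangle\Phi(P)/\Phi(P)$ has rank $\le 1$, Proposition \ref{p:critical}(b) forces $|S_0|\le 2$, hence $|S_0|=2$ and $N_S(P)=P\langle x\rangle$. Now $[x,\Omega_1(Z(P))]\ne1$: otherwise, since $z\in\Omega_1(Z(P))$, Proposition \ref{p:QcharP} applied with $\Theta=\Omega_1(Z(P))$ would give $c_x\in O_2(\Aut(P))$, contradicting criticality. So there is an involution $h\in\Omega_1(Z(P))\le Z(P)$ with $[x,h]=z\ne1$. Then $P\le C_S(h)$, and $[h,xw]=[h,x]=z\ne1$ for every $w\in P$, so $C_S(h)\cap N_S(P)=P$; if $C_S(h)$ were strictly larger than $P$, then $P<N_{C_S(h)}(P)$ (a proper subgroup of a finite $2$-group is properly contained in its normalizer), putting an element of $N_S(P)\sminus P$ into $C_S(h)$, a contradiction. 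Thus $C_S(h)=P$, $[N_S(P):P]=2$, and $[h,Z_2(S)]\ni[h,x]=z\ne1$, which is the exceptional conclusion. The delicate point is forcing $C_S(h)$ to be \emph{equal} to $P$ rather than merely of the same order; the $2$-group normalizer-growth argument is what achieves this.

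Item (j) is where Bender's theorem enters, and is the main obstacle. By the definition of critical there is $\Gamma\le\Out(P)$ with $S_0\cong\Out_S(P)\in\syl2\Gamma$ and $\Gamma$ containing a strongly $2$-embedded subgroup. If $S_0$ is cyclic or generalized quaternion it has a unique involution and there is nothing to prove. Otherwise, by Bender's classification (\cite[Satz 1]{Bender}; see \cite[Theorem 6.4.2]{Sz2}) $S_0$ is elementary abelian, a Suzuki $2$-group, or a Sylow $2$-subgroup of $\PSU_3(2^n)$; in each of these cases every involution of $S_0$ lies in $Z(S_0)$, and the normalizer of $S_0$ in the ambient group of Lie type ($\PSL_2(2^n)$, $\Sz(2^{2n+1})$, or $\PSU_3(2^n)$) contains a cyclic subgroup acting transitively on $Z(S_0)\sminus\{1\}$. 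Hence $N_\Gamma(S_0)$, and a fortiori $N_{\Out(P)}(S_0)$, acts transitively on the involutions of $S_0$. This reliance on Bender's theorem is unavoidable, but it is the same structural input that already underlies Propositions \ref{p:critical} and \ref{p:critical2}, so no new machinery is needed.
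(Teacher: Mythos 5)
Your proposal is correct and follows the paper's overall strategy for parts (b) through (i): these reduce exactly as you say, to the definition of critical, to Proposition~\ref{p:critical}(a,b,c), to Proposition~\ref{p:QcharP}, and to Proposition~\ref{p:critical2}. Your spelling out of (i) --- lifting a composition series of the $\F_2[\Out(P)]$-module $P/\Phi(P)$ to a chain of characteristic subgroups between $\Phi(P)$ and $P$, then invoking Proposition~\ref{p:critical2} --- is the intended (and only) route, and you fill in a correspondence the paper leaves implicit. The genuine divergence is in (a) and (j), where the paper simply cites \cite[Lemma 3.6(a)]{OV2} and \cite[Proposition 3.3(b)]{OV2} respectively, while you reconstruct those arguments from scratch. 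Your reconstruction of (a) is sound: the homomorphism $w\mapsto[x,w]\colon P\to\gen{z}$ shows $x^2\in P$, Proposition~\ref{p:critical}(b) forces $|S_0|=2$, Proposition~\ref{p:QcharP} with $\Theta=\Omega_1(Z(P))$ produces the involution $h$, and the ``proper subgroup of a $2$-group is proper in its normalizer'' argument correctly upgrades $C_S(h)\cap N_S(P)=P$ to $C_S(h)=P$, which is indeed the delicate step. Your argument for (j) is the right shape (Bender plus transitivity of the torus on $Z(S_0)\sminus\{1\}$ and the fact that all involutions of $S_0$ are central), though it leans on standard but unproved facts about the Sylow $2$-structure of $\PSL_2(2^n)$, $\Sz(2^{2n+1})$, and $\PSU_3(2^n)$, and on a short Frattini-type argument to transfer transitivity from $\bar\Gamma=\Gamma/O_{2'}(\Gamma)$ back to $N_\Gamma(S_0)$; these are exactly the contents of the cited \cite[Proposition 3.3(b)]{OV2}, so nothing new is needed, just acknowledgment that you are re-deriving a cited result rather than circumventing it.
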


\begin{proof} Points (b) and (h) hold by definition of critical subgroups.  
Point (a) follows from \cite[Lemma 3.6(a)]{OV2}, points (c) and (d) from 
Proposition \ref{p:critical}(a), (e) from Proposition \ref{p:critical}(b), 
(f) from Proposition \ref{p:critical}(c), (g) from Proposition 
\ref{p:QcharP}, and (j) from \cite[Proposition 3.3(b)]{OV2}.  Point (i) is 
shown in Proposition \ref{p:critical2}.
\end{proof}

\begin{Defi} \label{d:p.crit.}
A subgroup $P$ of a finite $2$-group $S$ is \emph{potentially critical} if 
it satisfies conditions \textup{(a)--(j)} in Proposition 
\ref{p:critical-criteria}.
\end{Defi}

By definition, each automorphism of $S$ sends the set of potentially 
critical subgroups of $S$ to itself.

\bigskip

\section{Fusion systems over $2$-groups with abelian direct factor}
\label{s:Q<|F}

In this section, we find conditions on a $2$-group $S_0$ which imply that 
there are no reduced fusion systems over $S=S_0\times A$ for any abelian 
2-group $A\ne1$. As will be seen, the assumption that a fusion system 
$\calf$ over $S$ has no normal subsystems of $2$-power index implies 
certain extra properties, which allow us to show that $A$, or some other 
direct factor in $S$, is normal in $\calf$.  Green correspondence plays a 
key role in the arguments we use, and the following elementary lemma will 
be useful.  We refer to \cite[\S\S\,3.10 \& 3.12]{Benson1} for more details 
on vertices and Green correspondence.

\begin{Lem} \label{vx-props}
Let $G$ be a finite group and fix $S\in\sylp{G}$. Let $k$ be a field of 
characteristic $p$, and let $V$ be a finitely generated $k[G]$-module. 
\begin{enuma} 

\item If $V$ is indecomposable, then for all $H\le G$, each vertex of each 
indecomposable direct summand of $V|_H$ is contained in a vertex of $V$. In 
particular, if $S\in\sylp{G}$ and $V|_S$ has a nonzero direct summand with 
trivial $S$-action, then $S$ is a vertex of $V$.

\item Let $H\le G$ be a subgroup that contains $N_G(S)$. Let 
$0\ne W\le V|_H$ be a direct summand of $V$ as a $k[H]$-module, and write 
$W=\bigoplus_{i=1}^nW_i$ where each $W_i$ is indecomposable as a 
$k[H]$-module. Assume $S$ is a vertex of $W_i$ for each $i$ (in particular, 
this holds if $S$ acts trivially on $W$). Let $V_i$ be the $k[G]$-Green 
correspondent of $W_i$. Then $V\cong \til{V} \oplus \bigoplus_{i=1}^n V_i$ 
for some $k[G]$-module $\til{V}$.

\item Let $H<G$ be as in (b), and assume in addition that $H$ is strongly 
$p$-embedded in $G$. Assume also that $V$ is indecomposable as a 
$k[G]$-module and has vertex $S$. Let $W$ be the $k[H]$-Green correspondent 
of $V$. Then $V|_H\cong W\oplus X$, where $X$ is projective as a 
$k[H]$-module and hence free as a $k[S]$-module. 

\end{enuma}
\end{Lem}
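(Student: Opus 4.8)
The plan is to treat the three parts in sequence, using standard facts about vertices, sources, and Green correspondence that can be found in \cite[\S\S\,3.10 \& 3.12]{Benson1}. For part (a), I would recall that for an indecomposable $k[G]$-module $V$ with vertex $Q$, every indecomposable summand of $V|_H$ has a vertex contained in a conjugate of $Q$ (this is the ``one-sided'' part of Mackey-type vertex estimates; it follows because $V$ is a summand of its source induced up, so $V|_H$ is a summand of a sum of modules induced from subgroups of conjugates of $Q$, and then Mackey's formula restricts the vertices). Applying this with $V$ indecomposable of vertex $P_V\le S$ and $H = S$: if $V|_S$ has a nonzero summand on which $S$ acts trivially, that summand has vertex $S$ (the trivial module $k_S$ has vertex $S$ and is its own source), so by the containment $S$ is contained in a conjugate of $P_V$; since $P_V \le S$ this forces $P_V = S$ up to conjugacy, i.e.\ $S$ is a vertex of $V$. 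The ``in particular'' clause is immediate.

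For part (b), I would use that $H \supseteq N_G(S)$ means the Green correspondence between $k[G]$-modules with vertex $S$ and $k[H]$-modules with vertex $S$ is set up with respect to the trivial ``$\mathfrak{X},\mathfrak{Y}$'' data (or the minimal such), so it is a bijection on isomorphism classes of indecomposables with vertex exactly $S$, and crucially the Green correspondent of $V_i$ appears with multiplicity one in $V_i|_H$, all other summands of $V_i|_H$ having strictly smaller vertices. Since $W = \bigoplus W_i$ is a summand of $V|_H$ and each $W_i$ is indecomposable with vertex $S$, the defining property of Green correspondence (read in the direction $H \to G$: the $G$-correspondent $V_i$ of $W_i$ is the unique indecomposable summand of $\Ind_H^G W_i$ with vertex $S$, and conversely $W_i$ is the unique summand of $V_i|_H$ with vertex $S$) combined with the Krull--Schmidt theorem forces each $V_i$ to occur as a direct summand of $V$. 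Distinct $W_i$ give distinct $V_i$ (or at least the multiplicities add up), so $V \cong \til V \oplus \bigoplus_{i=1}^n V_i$ for a suitable complement $\til V$. The parenthetical case (trivial $S$-action on $W$) is covered because $k_S$ has vertex $S$, hence so does each $W_i \cong k_H$-summand — wait, more carefully: if $S$ acts trivially on $W$ then each $W_i$ is an indecomposable $k[H]$-module on which $S$ acts trivially, and by part (a) applied inside $H$ (with $S \in \sylp H$) such a module has vertex $S$.

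For part (c), now $H < G$ is strongly $p$-embedded and $V$ is indecomposable with vertex $S$; let $W$ be its $k[H]$-Green correspondent. By the Green correspondence theorem, $V|_H \cong W \oplus X$ where every indecomposable summand of $X$ has vertex of the form $S \cap {}^g S$ with $g \in G \setminus H$. Here is where strong $p$-embedding enters: $H$ strongly $p$-embedded means exactly that $S \cap {}^g S = 1$ (has order prime to $p$, hence is trivial as a $p$-group) for all $g \in G \setminus H$, since a vertex is a $p$-subgroup and any $p$-element of $S \cap {}^g S$ would contradict strong embedding. A $k[H]$-module with trivial vertex is projective, so $X$ is projective over $k[H]$; restricting further to $S \in \sylp H$, a projective $k[H]$-module is free over $k[S]$ (projectives over $k[S]$ are free since $S$ is a $p$-group, and $X|_S$ is a summand of a free $k[H]|_S$-module, hence projective, hence free). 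That gives exactly the stated conclusion.

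I expect part (b) to be the main obstacle: one must be careful about the direction in which Green correspondence is being invoked and about multiplicities (the lemma only claims $V$ \emph{contains} $\bigoplus V_i$, not equality, which is the weaker and easier assertion), and one must verify that the hypothesis $N_G(S) \le H$ — rather than the more usual $N_G(Q) \le H$ for $Q$ the vertex — is enough; it is, precisely because all the modules in play have vertex equal to the full Sylow subgroup $S$. Parts (a) and (c) are then essentially bookkeeping with the definition of strongly $p$-embedded (Definition \ref{d:ess-crit}(a)) and the standard projectivity criterion for $p$-groups.
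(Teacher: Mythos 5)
Your proposal is correct and follows essentially the same line as the paper: (a) via Mackey applied to an induced source, (b) via Krull--Schmidt plus the two-sided uniqueness in Green correspondence, and (c) via the $\mathfrak{Y}$-projectivity bound together with the definition of strongly $p$-embedded. The one place where your writeup could be tightened is part (b), where the paper makes explicit the step you leave implicit: decompose $V=\bigoplus_j\4V_j$ into indecomposables, locate each $W_i$ (via Krull--Schmidt) inside some $\4V_j|_H$, observe by part (a) that this $\4V_j$ then has vertex $S$, and conclude $\4V_j$ is the Green correspondent of $W_i$; the injectivity of the resulting map $i\mapsto j$ is what yields $\bigoplus V_i$ as a direct summand rather than merely each $V_i$ occurring. (Also, in (c) the non-correspondent summands are $\mathfrak{Y}$-projective, i.e.\ vertices in $\9gS\cap H$ rather than $S\cap\9gS$, but strong $p$-embedding kills both, so this is immaterial.)
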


\begin{proof} \textbf{(a) } Let $P$ be a vertex of $V$.  Then there is a 
$k[P]$-module $W$ such that $V$ is a direct summand of $\Ind_P^G(W)$.  By 
the Mackey formula (cf. \cite[Theorem 3.3.4]{Benson1}), $(\Ind_P^G(W))|_H$ 
is a direct sum of modules induced up from $H\cap\9gP$ for elements 
$g\in{}G$.  Hence each indecomposable direct summand of $V|_H$ has vertex 
contained in $\9gP$ for some $g\in{}G$ (and $\9gP$ is also a 
vertex of $V$). 

If $S\in\sylp{G}$ and $V|_S$ has a nonzero indecomposable direct summand 
$W$ with trivial $S$-action, then $S$ is a vertex of $W$ (see, e.g., 
\cite[Remark 4.8.11(b)]{LP}), and hence also a vertex of $V$.

\smallskip

\noindent\textbf{(b) } Let $V=\bigoplus_{j\in J}\4V_j$ be the decomposition 
as a sum of indecomposable $k[G]$-modules. Let $J_0\subseteq J$ be the set 
of all $j\in J$ such that $\4V_j|_H$ has an indecomposable direct summand 
$U_j\le\4V_j$ with vertex $S$. By Green correspondence and (a), for each 
$j\in J_0$, $U_j$ is the $k[H]$-Green correspondent of $\4V_j$ as a 
$k[G]$-module, and is the only indecomposable direct summand of $\4V_j|_H$ with 
vertex $S$. 

By the Krull-Schmidt theorem applied to $V|_H$ (cf. \cite[Theorem 
1.4.6]{Benson1}), and since $W=\bigoplus_{i=1}^nW_i$ where each $W_i$ is 
indecomposable and has vertex $S$, there is an injective map 
$r\:\{1,2,\dots,n\}\Right2{}J_0$ such that $W_i\cong U_{r(i)}$ for each 
$i$. The lemma now follows upon setting $V_i=\4V_{r(i)}$ for $1\le i\le n$ 
and $\til{V}=\bigoplus_{j\in J\sminus\Im(r)}\4V_j$.

\smallskip

\noindent\textbf{(c) } By Green correspondence 
and since $V$ is indecomposable, $V|_H\cong W\oplus X$ 
where the vertex of each indecomposable direct summand of $X$ is contained in 
$\9gS\cap H$ for some $g\in G\sminus H$. Since $H$ is strongly 
$p$-embedded, this means that each such direct summand has vertex the 
trivial group, and 
hence that $X$ is projective as a $k[H]$-module. In particular, $X$ is free 
over $k[S]$ (see \cite[Theorem 5.24]{CR}). 
\end{proof}

We will need the following lemma on Green correspondance.

\begin{Lem} \label{l:Green.corr}
Fix a finite group $G$ with a strongly $2$-embedded subgroup $H<G$, and 
choose $S\in\syl2{H}\subseteq\syl2{G}$. Let $V$ be a finitely generated 
$\F_2[G]$-module, and assume $0\ne{}W\le{}V$ is a direct summand of $V|_H$ 
upon which $S$ acts trivially. Then there is an $\F_2[G]$-submodule $V^*\le V$
such that $V^*|_H\cong W\oplus X$ as 
$\F_2[H]$-modules, where $X$ is free as an $\F_2[S]$-module, and where 
either $X=0$, or $|S|=2$ and $\dim_{\F_2}(X)\ge4$, or $\dim_{\F_2}(X)\ge8$.
\end{Lem}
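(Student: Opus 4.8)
The plan is to combine the previous two lemmas. First I would decompose $W$ as a direct sum $W=\bigoplus_{i=1}^n W_i$ of indecomposable $\F_2[H]$-modules; since $S$ acts trivially on $W$, it acts trivially on each $W_i$, and by Lemma \ref{vx-props}(a) each $W_i$ has vertex $S$. Applying Lemma \ref{vx-props}(b) with the triple $(H<G$, $W\le V|_H)$, I obtain $\F_2[G]$-modules $V_i$ (the Green correspondents of the $W_i$) and $\til V$ with $V\cong\til V\oplus\bigoplus_{i=1}^n V_i$. Set $V^*=\bigoplus_{i=1}^n V_i$; this is the submodule of $V$ we want. Each $V_i$ is indecomposable with vertex $S$, so Lemma \ref{vx-props}(c) applies to each and gives $V_i|_H\cong W_i\oplus X_i$ with $X_i$ projective over $\F_2[H]$, hence free over $\F_2[S]$. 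Therefore $V^*|_H\cong W\oplus X$ with $X=\bigoplus_{i=1}^n X_i$ free over $\F_2[S]$, which already gives the first two assertions.

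The only remaining point is the numerical dichotomy on $\dim_{\F_2}(X)$. Since $X$ is free over $\F_2[S]$, its dimension is a multiple of $|S|$; so if $X\ne0$ and $|S|\ge4$ then automatically $\dim_{\F_2}(X)\ge|S|\ge4$, and one must rule out $\dim_{\F_2}(X)=4$ when $|S|\ge8$ — but that cannot happen since $4$ is then not a multiple of $|S|$, so $\dim_{\F_2}(X)\ge|S|\ge8$. The genuinely delicate case is $|S|=4$ with $\dim_{\F_2}(X)=4$, i.e.\ $X$ is a single free rank-one $\F_2[S]$-module, and the case $|S|=2$ where $a\ priori$ $\dim_{\F_2}(X)=2$ is permitted by freeness alone. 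Here I would use the hypothesis that $G$ has a \emph{strongly $2$-embedded} subgroup, which by Bender's theorem \cite[Satz 1]{Bender} forces $S$ to be cyclic or (generalized) quaternion, and — more importantly — strongly restricts the structure of $G$; in particular when $|S|=2$ or $|S|=4$ the relevant simple sections are of the form $\mathit{PSL}_2(q)$ or $\mathit{Sz}(q)$ or $\mathit{PSU}_3(q)$, and I would invoke the explicit knowledge of the indecomposable $\F_2[G]$-modules with vertex $S$ (equivalently, of the Green correspondents and their restrictions) to exclude the bad small values, showing that a free summand $X$ of too-small dimension cannot occur.

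I expect this last step to be the main obstacle: the clean part (Green correspondence bookkeeping) is immediate from the preceding lemmas, but pinning down exactly when a small free $\F_2[S]$-summand $X$ can appear in $V^*|_H$ requires invoking the classification-based structure of groups with a strongly $2$-embedded subgroup and a modest amount of modular representation theory of $\mathit{PSL}_2$ over $\F_2$ (where projective = free modules have dimension divisible by $|S|$, and the smallest nontrivial faithful modules are understood). A cleaner alternative, which I would try first, is to avoid module theory entirely: observe that each $X_i$ is a nonzero projective $\F_2[H]$-module only if $W_i$ fails to be projective, and use that $H$ itself contains a strongly $2$-embedded subgroup of $G$ only in a controlled way — but I suspect one genuinely needs the case analysis on $|S|\in\{2,4\}$, as reflected in the three-way conclusion of the statement, so the honest proof will cite the structural results on strongly $2$-embedded subgroups to handle those two cases by hand.
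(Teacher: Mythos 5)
Your setup matches the paper exactly: decompose $W$ into indecomposables, apply Lemma~\ref{vx-props}(b) to obtain $V^*\le V$ as a sum of Green correspondents, then Lemma~\ref{vx-props}(c) gives $V^*|_H\cong W\oplus X$ with $X$ free over $\F_2[S]$, and the divisibility $|S|\mid\dim_{\F_2}(X)$ correctly isolates $|S|\in\{2,4\}$ (with $\dim X<8$) as the only hard cases. Up to that point you and the paper are doing the same thing.

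The gap is everything after that, and the one concrete structural claim you make is wrong: Bender's theorem does \emph{not} force $S$ to be cyclic or generalized quaternion. That is the $2$-rank-one case, which is not what is needed here. The relevant half of Bender's classification is the opposite one: if $O_{2'}(G)=1$ and $G$ has a strongly $2$-embedded subgroup of $2$-rank $\ge 2$, then $O^{2'}(G)\cong\PSL_2(2^n)$, $\Sz(2^{2n-1})$, or $\PSU_3(2^n)$ with $n\ge2$, so $Z(S)$ is elementary abelian of rank at least $2$ — in particular $S$ is \emph{not} cyclic or quaternion. The paper exploits this by noting that $\dim X<8$ and $X\ne0$ free over $\F_2[S]$ force $|S|<8$, hence $S\cong\klfour$ and $G\cong A_5$, and then derives a contradiction from the structure of $\F_2[A_4]$-modules (the Green correspondent of the trivial $\F_2[A_4]$-module is trivial, so $W$ has nontrivial $C_3$-action and $\F_4\otimes_{\F_2}W$ must be reducible). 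Moreover, before it can reach this case, the paper performs two non-obvious reductions you do not anticipate: passing to the quotient $G/C_G(V)$ so that $G$ acts faithfully (using $X\ne0$ free to see $C_G(V)$ has odd order), and extending scalars to all finite $k\supseteq\F_2$ to reduce to $W$ absolutely irreducible over $\F_2[H/O^{2'}(H)]$. It also needs a separate case $O_{2'}(G)\ne 1$, handled by the Feit-Thompson theorem together with a divisibility argument: the multiplicative order $r$ of $2$ modulo an odd prime dividing $|O_{2'}(G)|$ is odd (since it divides $\dim W$, which divides the odd number $|H/O^{2'}(H)|$) and $\ge3$, giving $r|S|\mid\dim X$. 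None of this follows from ``explicit knowledge of the indecomposable $\F_2[\PSL_2(q)]$-modules'' as you suggest; without those reductions and the correct invocation of Bender, the numerical dichotomy remains unproved.
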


\begin{proof} By Lemma \ref{vx-props}(b), there is an $\F_2[G]$-submodule 
$V^*\le V$ (in fact, a direct summand) which is isomorphic to the sum of 
the Green correspondents of the indecomposable direct summands of $W$. So without 
loss of generality, we can assume that $V=V^*$ and $W$ are indecomposable 
as $\F_2[G]$- and $\F_2[H]$-modules, respectively, $S$ is a vertex of both, 
and $V$ is the Green correspondent of $W$. By Lemma \ref{vx-props}(c), 
$V|_H\cong W\oplus X$ for some $X$ that is free as an $\F_2[S]$-module.

Assume $X\ne0$. If the action of $H$ on $W$ could be extended to a linear 
action of $G$, then that would have vertex $S$ (Lemma \ref{vx-props}(a)) 
and hence be the Green correspondent of $W$, contradicting the uniqueness 
of Green correspondents. So the action of $H$ on $W$ does not extend to any 
linear action of $G$, and in particular, $W$ is not normalized by $G$. 

Set $K=O^{2'}(H)$: the normal closure of $S$ in $H$. Thus $H/K$ has odd 
order, and $W$ can be regarded as an $\F_2[H/K]$-module.

Set $N_0=C_G(V)\nsg G$: the kernel of the $G$-action on $V$. Since $X\ne0$ 
is free as an $\F_2[S]$-module, $N_0\cap{}S=1$, and so $N_0$ has odd order. 
Also, $HN_0<G$ since the action of $HN_0$ on $V$ normalizes $W$ while the 
action of $G$ on $V$ does not. So $HN_0/N_0$ is strongly $2$-embedded in 
$G/N_0$ by Lemma \ref{str.emb.->>}(b). Upon replacing $G$ by $G/N_0$ and 
$H$ by $HN_0/N_0$, we can assume that $G$ acts faithfully on $V$.

Let $k\supseteq\F_2$ be any finite extension, and set 
$\5V=k\otimes_{\F_2}V$ and $\5W=k\otimes_{\F_2}W$. Write 
$\5W=\bigoplus_{i=1}^m\5W_i$, where $\5W_i$ is irreducible as a 
$k[H/K]$-module for each $i$. Let $\5V_i$ denote the $k[G]$-Green 
correspondent of $\5W_i$. By Lemma \ref{vx-props}(c), $\5V_i|_H\cong 
\5W_i\oplus \5X_i$ where $\5X_i$ is free as a $k[S]$-module. Also, $\5V$ 
has a direct summand isomorphic to $\bigoplus_{i=1}^m\5V_i$ by Lemma 
\ref{vx-props}(b). By the Krull-Schmidt theorem, and since 
$\5W|_{\F_2[H/K]}\cong W^\ell$ as $\F_2[H/K]$-modules where 
$\ell=\dim_{\F_2}(k)$, $\5W_i|_{\F_2[H/K]}\cong W^{\ell_i}$ for some 
$1\le\ell_i\le\ell$. If, for some $i$, $\5W_i\cong \5V_i|_H$, then by Lemma 
\ref{vx-props}(b) applied to $\5V_i|_{\F_2[G]}$, the same must be true for 
each irreducible direct summand of $\5W_i|_{\F_2[H/K]}$, which contradicts 
our assumption that $V>W$. Thus 
$\dim_k(\5V_i)-\dim_k(\5W_i)=\dim_k(\5X_i)\ge|S|$ for each $i$, and so 
$\dim_{\F_2}(X)=\dim_{\F_2}(V/W)\ge\sum_{i=1}^m\dim_k(\5X_i)\ge m|S|$. This 
proves the lemma when $m\ge2$ for some finite extension $k\supseteq\F_2$.

We are left with the case where $V$ is a faithful, indecomposable 
$\F_2[G]$-module and $W$ is absolutely irreducible as an $\F_2[H/K]$-module 
(i.e., $k\otimes_{\F_2}W$ is irreducible as a $k[H/K]$-module for each 
finite extension $k\supseteq\F_2$, see \cite[Theorem 9.2]{Isaacs}).  There 
are two cases to consider.

\noindent\boldd{Case 1: \ $O_{2'}(G)\ne1$.}  Since $O_{2'}(G)$ is 
solvable by the odd order theorem \cite{FT}, there is $1\ne{}N\nsg G$ which 
is an elementary abelian $p$-group for some odd $p$.  By the Frattini 
argument, $N_G(SN)=N_G(S)\cdot{}N\le HN$, so $HN/N\ge N_{G/N}(SN/N)$.  We 
claim that $H\cap{}N\ne1$ and acts nontrivially on $W$.  Assume otherwise.  
Regard $W$ as an $\F_2[HN/N]$-module via the isomorphism $HN/N\cong 
H/(H\cap{}N)$.  Since $SN/N$ acts trivially on $W$, it is a vertex of $W$ 
by Lemma \ref{vx-props}(a). Let $V'$ be its $\F_2[G/N]$-Green correspondent 
(note that $HN/N\ge N_{G/N}(SN/N)$).  Then $V'$ is indecomposable as an 
$\F_2[G]$-module, and $W$ is a direct summand of $V'|_H$ with vertex $S$. 
Hence as an $\F_2[G]$-module, $V'$ has vertex $S$ by Lemma 
\ref{vx-props}(a) 
and is the Green correspondent of $W$ by \cite[Theorem 
3.12.2(i)]{Benson1}, so $V'\cong V$ by the uniqueness of Green 
correspondents.  This is impossible, since $G$ acts faithfully on $V$ but 
not on $V'$, and we conclude that $H\cap{}N$ acts nontrivially.  In 
particular, $H\cap{}N\ne1$.

Let $r>1$ be the multiplicative order of $2$ in $\F_p^\times$. Thus all 
irreducible $\F_2[C_p]$-modules with nontrivial action have dimension $r$.  
Since $V$ is indecomposable, and $V=C_V(N)\oplus[N,V]$ where the 
summands are $\F_2[G]$-submodules, $V|_N$ is a direct sum of irreducible 
$\F_2[N]$-modules with nontrivial action, 
each of which has dimension $r$.  Similarly, since $W$ is irreducible and 
$H\cap{}N\nsg H$ acts nontrivially, each irreducible direct summand of 
$W|_{H\cap{}N}$ has dimension $r$.  Thus $r|\dim(V)$, $r|\dim(W)$, and 
$r|\dim(X)$.  

The dimension of each absolutely irreducible $\F_2[H/K]$-module divides 
$|H/K|$ (cf. \cite[\S\S\,6.5 \& 15.5]{Serre}). In particular, $\dim(W)$ is 
odd.  Thus $r|\dim(W)$ is also odd, so $r\ge3$, and $r|S|\big|\dim(X)$ in 
this case.

\noindent\boldd{Case 2: \ $O_{2'}(G)=1$.}  By Bender's theorem 
\cite[Satz 1 \& Lemma 2.6]{Bender}, $O^{2'}(G)$ is isomorphic to 
$\PSL_2(2^n)$, $\Sz(2^{2n-1})$, or $\PSU_3(2^n)$ ($n\ge2$), and hence 
$Z(S)$ is elementary abelian of rank at least $2$.  If $\dim(X)<8$, then 
$|S|<8$ since $X$ is free as an $\F_2[S]$-module, and so $S\cong 
\klfour$, and $G\cong A_5$ since $|\Out(A_5)|=2$ \cite[(3.2.17)]{Sz1}.  But 
then $H\cong A_4$, $H/K\cong{}C_3$, and $W$ has nontrivial $H$-action since 
the Green correspondent of the trivial $\F_2[A_4]$-module is trivial.  So 
$\F_4\otimes_{\F_2}W$ is reducible, which contradicts 
our assumption that $W$ is absolutely irreducible.  
\end{proof}

We now prove four propositions, each showing that under certain (fairly 
restrictive) hypotheses on a 2-group $S_0$, there are no reduced fusion 
systems over $S_0\times A$ for any abelian 2-group $A\ne1$.  Stronger 
results of this type will be shown in a later paper. 

In the first proposition, we consider certain finite $2$-groups of 
nilpotence class $2$. It will be applied, for example, when $S=S_0\times A$ 
for $S_0$ of type $\SL_3(2^n)$ ($n\ge2$), $2\cdot\SL_3(4)$, or $\Sp_4(2^n)$ 
($n\ge2$), and $A\ne1$ is abelian.

\begin{Prop} \label{p:AxE1E2}
Let $S$ be a finite $2$-group containing normal abelian subgroups 
$P_1,P_2\nsg{}S$ such that 
\begin{enumi} 
\item $S=P_1P_2$ and $P_1\cap{}P_2=Z(S)$;
\item $[S,S]\cap\Phi(Z(S))=1$; 
\item $P_i=Z(S)\Omega_1(P_i)$ and $\rk(P_i/Z(S))\ge2$ for $i=1,2$; and 
\item for each $i=1,2$ and each $g\in{}P_i{\sminus}Z(S)$, $C_{S}(g)=P_i$.  
\end{enumi}
Then $P_1$ and $P_2$ are the only possible critical subgroups of $S$, and 
every elementary abelian subgroup of $S$ is contained in $P_1$ or $P_2$. If 
$\calf$ is a saturated fusion system over $S$ such that $O_2(\calf)=1$, 
then $\EE_\calf=\{P_1,P_2\}$, $P_1$ and $P_2$ are elementary abelian, 
$\rk(P_1)=\rk(P_2)$, and $[S,S]=Z(S)$.
\end{Prop}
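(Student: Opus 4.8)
The plan is to first establish the "group-theoretic" part of the statement (concerning critical subgroups and elementary abelian subgroups), and then use the hypothesis $O_2(\calf)=1$ together with Alperin's fusion theorem to pin down $\EE_\calf$ and derive the remaining conclusions. For the first part, suppose $P<S$ is a critical subgroup. By Proposition~\ref{p:critical-criteria}(b), $C_S(P)\le P$; in particular $Z(S)\le P$. The key observation is that for $g\in P_1\sminus Z(S)$ we have $C_S(g)=P_1$ by hypothesis (iv), so if $P$ contained such a $g$ together with some $h\in P_2\sminus Z(S)$, then since $P$ is abelian-ish we would need $h\in C_S(g)=P_1$, a contradiction (here one must be a little careful since $P$ need not be abelian, but $h$ centralizes $Z(S)$, and one works inside $P_1P_2$ with the constraint $P_1\cap P_2=Z(S)$). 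The cleanest route: show directly that any subgroup $Q\le S$ with $Q\not\le P_1$ and $Q\not\le P_2$ satisfies $C_S(Q)\le Z(S)<Q\cdot Z(S)$, hmm — better, show every elementary abelian subgroup lies in $P_1$ or $P_2$ (using (iii), (iv), and that $\Omega_1(P_i)$ generates $P_i$ modulo $Z(S)$), and then observe that a critical subgroup $P$, being centric in $S$ with $\rk(P/\Phi(P))\ge 2k\ge 2$ large, must contain a suitable elementary abelian subgroup forcing $P\le P_1$ or $P\le P_2$; combined with $C_S(P)\le P$ and hypothesis (iv) this forces $P=P_1$ or $P=P_2$.

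Next, assume $\calf$ is saturated over $S$ with $O_2(\calf)=1$. By Proposition~\ref{ess=>crit} every $\calf$-essential subgroup is critical, so $\EE_\calf\subseteq\{P_1,P_2\}$. If $\EE_\calf$ had at most one element, then by Lemma~\ref{l:1crit} we would get $O_2(\calf)\ne 1$ (note $P_1,P_2\nsg S$ so neither forms a nontrivial $S$-conjugacy class beyond itself), contradicting the hypothesis. Hence $\EE_\calf=\{P_1,P_2\}$. Now $P_i$ is $\calf$-centric, and since $P_i\nsg S$ is abelian, $\outf(P_i)=\autf(P_i)$ acts on $P_i$ with $\Aut_S(P_i)$ Sylow and with a strongly $2$-embedded subgroup; so $\autf(P_i)$ is not a $2$-group, i.e.\ there is an automorphism of odd order acting nontrivially on $P_i$. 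Since $P_i=Z(S)\Omega_1(P_i)$ and $\Phi(Z(S))$ is fixed by everything, an odd-order automorphism acting nontrivially must act nontrivially on $\Omega_1(P_i)$; pushing this through, together with the fact that $\autf(P_i)$ must act irreducibly-enough on $P_i/\Phi(P_i)$ (no nontrivial fixed submodule can survive, else one gets a normal subgroup and $O_2(\calf)\ne 1$ via Proposition~\ref{Q<|F}), one concludes $\Phi(P_i)=1$, i.e.\ $P_i$ is elementary abelian. More carefully: let $\Theta_i=\Phi(P_i)$, a characteristic subgroup of $P_i$ contained in $Z(S)$ (as $P_i$ is normal abelian) and in fact in $\Phi(Z(S))$ wait — need $\Phi(P_i)\le\Phi(Z(S))$? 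Not obviously; instead note $\Phi(P_i)$ is fixed by $O^{2'}(\autf(P_i))$, and run the argument that the subgroup generated by these $\Phi(P_i)$ together with $[O^2(\autf(S)),S]$ must, by reducedness-type considerations, fail to be all of $S$ unless the $\Phi(P_i)$ are trivial — this is essentially the content of Proposition~\ref{p:search-criteria}(f)/(g), though here we only assume $O_2(\calf)=1$, not reducedness, so instead I use Proposition~\ref{Q<|F} directly: $\prod_i\Phi(P_i)\cdot(\text{stuff fixed in }S)$ is normal in $\calf$.

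Finally, for $\rk(P_1)=\rk(P_2)$ and $[S,S]=Z(S)$: since $P_1,P_2$ are now elementary abelian, $[S,S]=[P_1P_2,P_1P_2]=[P_1,P_2]\le P_1\cap P_2=Z(S)$, giving one inclusion; the reverse, and the rank equality, should come from a symmetry/duality argument — the commutator map $P_1/Z(S)\times P_2/Z(S)\to Z(S)$ is $S$-invariant and, because $\autf(P_i)$ acts with a strongly $2$-embedded subgroup (hence "large" and without small fixed subspaces) while leaving this pairing compatible, one deduces the pairing is nondegenerate in each variable, forcing $\rk(P_1/Z(S))=\rk(P_2/Z(S))$ and $[S,S]=Z(S)$ (if $[S,S]<Z(S)$ were proper, the complement would yield a normal subsystem contradicting $O_2(\calf)=1$). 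I expect the main obstacle to be this last part: carefully extracting $\rk(P_1)=\rk(P_2)$ and $[S,S]=Z(S)$ from the structure of the essential automorphism groups — one genuinely needs to use that a strongly $2$-embedded subgroup forces the relevant $\F_2[\autf(P_i)]$-module $P_i$ to have no large trivial sub/quotient and to interact rigidly with the commutator pairing, rather than a purely formal manipulation. The earlier parts (identifying the critical subgroups, and $\EE_\calf=\{P_1,P_2\}$) are comparatively routine given hypotheses (i)--(iv) and Proposition~\ref{ess=>crit}, Lemma~\ref{l:1crit}.
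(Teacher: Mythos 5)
Your overall plan matches the structure of the paper's proof, and you correctly flag that $[S,S]=Z(S)$ is the hard part. But there are real gaps at several points, and the last step is essentially left unproved.

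For the identification of the critical subgroups, the case you handle is the easy one. Write $R$ for a critical subgroup with $R\ne P_1,P_2$, so $R\nleq P_1$ and $R\nleq P_2$ (otherwise centricity gives $R=P_i$). Each involution $x$ in $\Omega_1(Z(R))$ lies in $P_1\cup P_2$; if some such $x$ lay outside $Z(S)$ you would get $R\le C_S(x)=P_i$, a contradiction — this is the version of your sketch that actually works. But you never deal with the remaining case $\Omega_1(Z(R))\le Z(S)$, which is where all the content is. The paper observes that then $\Omega_1(Z(R))=\Omega_1(Z(S))$ is characteristic in $R$, and for $g\in S\sminus R$ one has $[g,R]\le[S,S]\le V_1\cap V_2=\Omega_1(Z(S))$ and $[g,\Omega_1(Z(S))]=1$; Proposition~\ref{p:QcharP} then rules out $R$ being critical. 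Your remark about $\rk(P/\Phi(P))\ge2k$ forcing containment in $P_1$ or $P_2$ does not do this — large rank does not constrain $P$ to sit inside an elementary abelian subgroup.

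For $P_i$ elementary abelian, you go looking for irreducibility-type arguments when there is a one-line observation available: since $P_i=\Omega_1(P_i)Z(S)$ with $\Omega_1(P_i)$ elementary abelian, $\Phi(P_1)=\Phi(Z(S))=\Phi(P_2)$, so $\Phi(Z(S))$ is normalized by $\autf(P)$ for all $P\in\EE_\calf\cup\{S\}$ and is therefore normal in $\calf$ by Proposition~\ref{Q<|F}; hence $\Phi(Z(S))=1$ because $O_2(\calf)=1$. For the rank equality, the paper uses Proposition~\ref{p:critical}(c) applied to $P_2$ critical: for $x\in P_1\sminus Z(S)$, $\rk(P_1/Z(S))=\rk(S/P_2)\le\rk([x,P_2])=\rk(P_2/Z(S))$, the last equality because $C_{P_2}(x)=Z(S)$; then symmetry gives equality. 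Your "nondegenerate pairing" idea might be made to work but is not proved.

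The genuine gap is $[S,S]=Z(S)$, which you explicitly leave open. The inclusion $[S,S]\le Z(S)$ is immediate, but the reverse is not a formal or module-theoretic triviality. The paper's argument occupies two full steps: it decomposes $Z(S)$ as $[S,S]\times W$ over $\F_2[\outf(S)]$ (odd order), shows $H_i=N_{G_i}(T_i)$ is strongly $2$-embedded in $G_i=\autf(P_i)$, and applies Green correspondence (Lemma~\ref{vx-props}) to transfer the $\F_2[H_i]$-summand $W$ to an $\F_2[G_i]$-direct factor $W_i$ with $Z(S)=[S,S]\times W_i$; then Bender's theorem plus the Schur multiplier of $\SL_2(2^r)$ identifies $O^{2'}(G_i)\cong\SL_2(2^r)$, Curtis's theorem on fixed points of a Borel identifies the natural module, and one extracts a subgroup $Q$ normal in $\calf$ with $Q[S,S]\ge Z(S)$; finally $Q=1$ because $O_2(\calf)=1$. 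This chain of ideas — Green correspondence plus Bender plus the natural-module identification — is what closes the argument, and there is no shortcut of the sort you gesture at.
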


\begin{proof}  Set $V_i=\Omega_1(P_i)$ for short ($i=1,2$). Thus 
$V_i\nsg{}S$ and is elementary abelian, and $P_i=V_iZ(S)$ by (iii).  Also, 
$S=V_1V_2Z(S)$, so $[S,S]\le V_1\cap{}V_2$ is elementary abelian.

If $g\in{}S{\sminus}(P_1\cup{}P_2)$, then $g=g_1g_2z$ for some 
$g_i\in{}V_i{\sminus}Z(S)$ and $z\in{}Z(S)$, so $g^2=[g_1,g_2]z^2\ne1$ 
since $[g_1,g_2]\notin\Phi(Z(S))$ by (ii) and (iv), while 
$z^2\in\Phi(Z(S))$. Thus all elements of order two in $S$ lie in 
$P_1\cup{}P_2$. 

If $Q\le S$ is elementary abelian, and $Q\nleq Z(S)=P_1\cap{}P_2$, then 
choose $g\in{}Q{\sminus}Z(S)$. We just showed that $g\in P_i$ for $i=1$ or 
$2$, and so $Q\le C_S(g)=P_i$.

Let $R\le{}S$ be a critical subgroup.  If $R\notin\{P_1,P_2\}$, then 
$R\nleq{}P_1$ and $R\nleq{}P_2$ since $R$ is centric in $S$.  If 
$x\in\Omega_1(Z(R))$, then $x\in{}P_i$ for some $i$ as shown above.  If 
$x\notin{}Z(S)$, then $R\le C_S(x)=P_i$, which is impossible.  Thus 
$\Omega_1(Z(R))\le Z(S)$, so $\Omega_1(Z(S))=\Omega_1(Z(R))$ is 
characteristic in $R$.  But for $g\in{}S{\sminus}R$, $[g,\Omega_1(Z(S))]=1$ 
and $[g,R]\le[S,S]\le V_1\cap V_2=\Omega_1(Z(S))$, which contradicts 
Proposition \ref{p:QcharP}.  We conclude that $P_1$ and $P_2$ are the only 
subgroups of $S$ which could be critical. 

\smallskip

\noindent\textbf{Step 1: } 
Let $\calf$ be a saturated fusion system over $S$ such that $O_2(\calf)=1$.  
Then $P_1$ and $P_2$ must both be $\calf$-essential by Lemma \ref{l:1crit}.  
Since $P_i=V_iZ(S)$ where $V_i$ is elementary abelian, 
$\Phi(P_1)=\Phi(Z(S))=\Phi(P_2)$.  So $\Phi(Z(S))\nsg\calf$ by Proposition 
\ref{Q<|F}, and since $O_2(\calf)=1$, $\Phi(Z(S))=1$.  Thus $P_i=V_i$ is 
elementary abelian for $i=1,2$.

For $x\in{}P_1{\sminus}Z(S)$, we have 
	$\rk(P_1/Z(S)) = \rk(S/P_2) \le \rk([x,P_2]) = \rk(P_2/Z(S))$, 
where the inequality holds by Proposition \ref{p:critical}(c), and the last 
equality since $C_{P_2}(x)=P_2\cap{}P_1=Z(S)$ by (iv) and (i).  The opposite 
inequality holds by a similar argument, so $\rk(P_1/Z(S))=\rk(P_2/Z(S))$ 
and $\rk(P_1)=\rk(P_2)$. 

\smallskip

\noindent\textbf{Step 2: }  It remains to prove that $Z(S)=[S,S]$. By 
(i), $[S,S]\le P_1\cap P_2=Z(S)$, and it remains to prove the 
opposite inclusion. Regard $Z(S)$ as an $\F_2[\outf(S)]$-module with 
submodule $[S,S]$.  Since $|\outf(S)|$ is odd, there is an 
$\F_2[\outf(S)]$-submodule $W\le Z(S)$ which is complementary to $[S,S]$; 
i.e., $Z(S)=W\times[S,S]$.  

For each $i=1,2$, set $G_i=\autf(P_i)$, $T_i=\Aut_S(P_i)\in\syl2{G_i}$, and 
$H_i=N_{G_i}(T_i)$. Then 
        \beqq T_i\cong S/P_i\cong P_{3-i}/Z(S)\cong (C_2)^r 
        \quad\textup{where}\quad
        r = \rk(P_1/Z(S)) = \rk(P_2/Z(S)) \ge2. \label{e:r=rk} \eeqq
Since $\autf(S)$ is generated by $\Inn(S)$ and 
automorphisms of odd order, each $P_i$ is normalized by $\autf(S)$.  By the 
extension axiom, the homomorphism 
	\[ \psi_i\: \autf(S)\Right4{}H_i\,, \] 
induced by restriction to $P_i$, is surjective. In particular, a subgroup 
of $P_i$ is normalized by $H_i$ if and only if it is normalized by 
$\autf(S)$.

Consider the $\F_2[\outf(S)]$-module $P_i/[S,S]$. Since $|\outf(S)|$ is odd, 
$Z(S)/[S,S]$ has a complement $M/[S,S]$ in $P_i/[S,S]$. Hence 
$P_i=W\times M$ as $\F_2[\autf(S)]$-modules, and in particular,
$W$ is a direct factor of $P_i$ as an $\F_2[H_i]$-module.  

Since $P_i$ is maximal among $\calf$-essential subgroups, each 
$\alpha\in\autf(P_i)$ which extends to $\autf(Q)$ for any $Q>P_i$ also 
extends to $\autf(S)$ (Proposition \ref{p:AFT-E}), and hence lies in 
$H_i=N_{G_i}(T_i)$.  
So by \cite[Proposition I.3.3(b)]{AKO} and since $P_i\in\EE_\calf$, $H_i$ 
is strongly $2$-embedded in $G_i$.

If $W\ne1$ and $U$ is an indecomposable direct factor of $W$ as an 
$\F_2[H_i]$-module, then by Lemma \ref{vx-props}(b,c), the $G_i$-Green 
correspondent $U^*$ of $U$ is isomorphic to a direct factor of $P_i$, and 
$U^*|_{H_i}\cong U\times X$ for some $\F_2[H_i]$-module $X$ such that 
$X|_{T_i}$ is free as an $\F_2[T_i]$-module. Since $T_i$ acts trivially on 
$Z(S)$ and on $P_i/Z(S)$, $P_i|_{T_i}$ contains no nontrivial free 
$\F_2[T_i]$-submodules (recall that $T_i\cong (C_2)^r$ for $r\ge2$ by 
\eqref{e:r=rk}). So $X=1$, and $U\cong U^*|_{H_i}$. 
 
Thus the Green correspondent of each indecomposable direct factor of $W$ is 
isomorphic to that direct factor (after restriction to $H_i$). So by Lemma 
\ref{vx-props}(b), there is $W_i\le P_i$ which is a direct factor of $P_i$ 
as an $\F_2[G_i]$-module and such that $W_i|_{H_i}\cong W$. Also, $W_i\le 
C_{P_i}(T_i)=Z(S)$, $W_i\cap[S,S]=W_i\cap[T_i,P_i]=1$ since $W_i$ is a 
direct factor as an $\F_2[G_i]$-module, and so 
$Z(S)=C_{P_i}(T_i)=[S,S]\times W_i$. 

\smallskip

\noindent\textbf{Step 3: } Fix $i=1,2$. By the version of Bender's theorem 
in \cite[Theorem 6.4.2]{Sz2}, $O_{2'}(G_i)$ is contained in every strongly 
$2$-embedded subgroup of $G_i$, and in particular, $O_{2'}(G_i)\le 
H_i=N_{G_i}(T_i)$. So $[O_{2'}(G_i),T_i]\le O_{2'}(G_i)\cap T_i=1$. Upon 
setting $\5G_i=O^{2'}(G_i)$, we get $[\5G_i,O_{2'}(G_i)]=1$, and thus 
$O_{2'}(\5G_i)\le Z(\5G_i)$. 

By Bender's theorem \cite[Satz 1]{Bender}, 
$\5G_i/O_{2'}(\5G_i)\cong\SL_2(2^r)$ where $r\ge2$ is as in \eqref{e:r=rk}. 
Then $\5G_i\cong\SL_2(2^r)$, since by \cite[p. 119, Satz IX]{Schur}, the 
Schur multiplier of $\SL_2(2^r)$ has order $2$ (if $r=2$) or $1$ (if 
$r\ge3$). Hence there are $T_i^*\in\syl2{G_i}$ such that 
$\gen{T_i,T_i^*}=\5G_i$ and $D_i\defeq N_{\5G_i}(T_i)\cap 
N_{\5G_i}(T_i^*)\cong\F_{2^r}^\times$. (Identify $\5G_i$ with $\SL_2(2^r)$ 
in such a way that $T_i$ is the group of upper triangular matrices with 
1's on the diagonal, and let $T_i^*$ be the group of lower triangular 
matrices with 1's on the diagonal.) Then $C_{P_i}(\5G_i)=C_{P_i}(T_i)\cap 
C_{P_i}(T_i^*)$ has index at most $2^{2r}$ in $P_i$. Since each faithful 
$\F_2[\5G_i]$-module has dimension at least $2r$ (see \cite[Lemma 
1.7(a)]{OV2}), $P_i/C_{P_i}(\5G_i)$ is $2r$-dimensional and irreducible. 

Set $V_i=P_i/C_{P_i}(\5G_i)$, regarded as an $\F_2[\5G_i]$-module, and set 
$K_i=\End_{\F_2[\5G_i]}(V_i)$. Thus $K_i$ is a finite field extension of $\F_2$, 
and $\4\F_2\otimes_{K_i}V_i$ is irreducible as an $\4\F_2[\5G_i]$-module (see, 
e.g., \cite[Theorem 26.6.4]{A-FGT}). By a theorem of Curtis (see 
\cite[Theorem 2.8.9]{GLS3}), $\dim_{K_i}(C_{V_i}(T_i))=1$, so 
$[K_i:\F_2]=\dim_{\F_2}(C_{V_i}(T_i))\ge r$. Thus 
$\dim_{K_i}(V_i)=2r/[K_i:\F_2]=2$ since $T_i$ acts nontrivially on $V_i$ 
(since its normal closure $\5G_i$ acts nontrivially), and 
$V_i$ is the natural $\F_2[\SL_2(2^r)]$-module (see \cite[Example 
2.8.10.b]{GLS3}). In particular, $C_{V_i}(D_i)=1$, 
so $C_{P_i}(D_i)=C_{P_i}(\5G_i)$, and 
$C_{P_i}(\5H_i)=C_{P_i}(\5G_i)$, where $\5H_i=N_{\5G_i}(T_i)=T_iD_i$.

Set $H_i^*=\psi_i^{-1}(\5H_i)\nsg\autf(S)$ (see Step 2). Set 
$Q=C_S(H_1^*H_2^*)$, so that 
	\[ Q = C_S(H_1^*)\cap C_S(H_2^*) = C_{P_1}(\5H_1) 
	\cap C_{P_2}(\5H_2) = C_{P_1}(\5G_1)\cap C_{P_2}(\5G_2)\,. \]
Then $Q$ is normalized by $\autf(S)$ since $H_1^*H_2^*\nsg\autf(S)$. For 
$i=1,2$, $Q$ is normalized by $H_i$ since $\psi_i$ is onto (Step 2), and 
$Q$ is normalized by $\5G_i$ since $Q\le C_{P_i}(\5G_i)$. Since 
$G_i=\5G_iN_{G_i}(T_i)=\5G_iH_i$ by the Frattini argument, $Q$ is 
normalized by $G_i=\autf(P_i)$. 
So $Q\nsg\calf$ by Proposition \ref{Q<|F} and since $\EE_\calf=\{P_1,P_2\}$, 
and hence $Q\le O_2(\calf)=1$. 

Set $K=C_{\autf(S)}(Z(S)/[S,S])$. Then $K\ge H_1^*H_2^*$ since, by Step 2, 
for $i=1,2$, $\5G_i$ (hence $H_i^*$) acts trivially on $W_i$ and 
$Z(S)=[S,S]W_i$. So $Q[S,S]\ge C_S(K)[S,S]\ge Z(S)$, and thus 
$[S,S]=Z(S)$ since $Q=1$. 
\end{proof}

\begin{Lem} \label{l:S0*xA*}
Let $S_0$ be a finite nonabelian $2$-group such that $Z(S_0)$ is cyclic. 
Let $A$ be a finite abelian $2$-group, set $S=S_0\times A$, and 
assume $\calf$ is a reduced fusion system over $S$. Set 
$A^*=[\autf(S),Z(S)]$; thus $\autf(S)$ normalizes $A^*$ and 
$C_{A^*}(\autf(S))=1$. Then $A^*\cong A$, $S=S_0\times A^*$, and 
$\Omega_1(A^*)=[\autf(S),\Omega_1(Z(S))]$. 
\end{Lem}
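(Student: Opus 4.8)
\smallskip

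The plan is to reduce everything to elementary facts about \emph{coprime} group actions. The opening observation is that $\autf(S)$ acts on the $2$-group $Z:=Z(S)$ through a group of odd order: applying the Sylow axiom to the (automatically fully normalized) subgroup $S$ gives $\Inn(S)=\Aut_S(S)\in\syl2{\autf(S)}$, and since $\Inn(S)$ centralizes $Z$, the image of $\autf(S)$ in $\Aut(Z)$ has odd order. As $A$ is abelian and central we have $Z=Z(S_0)\times A$, and coprime action now yields $Z=C_Z(\autf(S))\times[\autf(S),Z]$. The subgroup $A^*=[\autf(S),Z]$ is $\autf(S)$-invariant (immediate from $\beta([\alpha,z])=[\beta\alpha\beta^{-1},\beta(z)]$ in the abelian group $Z$), and $C_{A^*}(\autf(S))\le C_Z(\autf(S))\cap[\autf(S),Z]=1$; this proves the two assertions preceding ``Then'' in the statement. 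Next, since $S_0$ is nonabelian, $[S_0,S_0]$ is a nontrivial normal subgroup of $S_0$, so it meets the cyclic group $Z(S_0)$ nontrivially, whence $\Omega_1(Z(S_0))\le[S_0,S_0]=[S,S]$; therefore $\Omega_1(Z(S_0))=\Omega_1(Z)\cap[S,S]$ is characteristic in $S$ and is fixed by $\autf(S)$. Combined with $C_{A^*}(\autf(S))=1$ this forces $\Omega_1(Z(S_0))\cap A^*=1$, hence $Z(S_0)\cap A^*=1$ (as $Z(S_0)$ is cyclic), i.e. $S_0\cap A^*=1$ and $S_0A^*=S_0\times A^*$.

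It then remains to prove $S_0\times A^*=S$; granting this, $Z=Z(S_0)\times A^*$, so $A^*\cong Z/Z(S_0)\cong A$, finishing the first two conclusions. I would deduce $S_0\times A^*=S$ from the focal subgroup: since $\calf$ is reduced, $\foc(\calf)=S$, and by Proposition \ref{foc(F)=<->} it is enough to show $[\autf(P),P]\le S_0A^*$ for every $P\in\EE_\calf\cup\{S\}$. For $P=S$, the induced action of $\autf(S)$ on the characteristic abelian section $S/[S,S]$ is again coprime (same Sylow argument, since $\Inn(S)$ acts trivially there), so $[\autf(S),S/[S,S]]$ is a complemented $\autf(S)$-submodule; one identifies it, using that $A^*=[\autf(S),Z]$ is exactly the $\autf(S)$-nontrivial part of $Z\le Z(S)$ together with a filtration argument along the upper central series, with the image of $S_0A^*$ in $S/[S,S]$, so that $[\autf(S),S]\le [S,S]\cdot S_0A^*=S_0A^*$. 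For $P$ essential one argues similarly after restriction, using $A^*\le Z(S)\le Z(P)$ and the $\autf(S)$-invariance of $S_0A^*$.

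Finally, the identity $\Omega_1(A^*)=[\autf(S),\Omega_1(Z)]$ is a formal consequence of coprimeness once $S=S_0\times A^*$ is known: then $\Omega_1(Z)=\Omega_1(Z(S_0))\times\Omega_1(A^*)$ as $\autf(S)$-modules (both factors are $\autf(S)$-invariant, the first being characteristic in $S$ and the second because $A^*$ is), $\autf(S)$ acts trivially on the rank-one factor $\Omega_1(Z(S_0))$, and $C_{\Omega_1(A^*)}(\autf(S))\le C_{A^*}(\autf(S))=1$, so coprime action gives $[\autf(S),\Omega_1(A^*)]=\Omega_1(A^*)$ and hence $[\autf(S),\Omega_1(Z)]=\Omega_1(A^*)$.

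\smallskip

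The step I expect to be the genuine obstacle is $S_0\times A^*=S$. The difficulty is that the given complement $S_0$, and with it $Z(S_0)$, is not canonical and need not be $\autf(S)$-invariant, so one cannot simply intersect $\autf(S)$-invariant subgroups with $S_0$; the argument must be routed through genuinely characteristic data --- $[S,S]$, $\Omega_1(Z(S))$, the upper central series --- and through Alperin's fusion theorem, turning the hypothesis $\foc(\calf)=S$ into the statement that $A^*$ is ``as large as'' $A$.
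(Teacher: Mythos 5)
Your reduction to the equality $S_0\times A^*=S$ is the right strategic move, and your treatment of the preliminary claims is correct: the coprime decomposition $Z=C_Z(\autf(S))\times A^*$ from the Sylow axiom, the observation that $\Omega_1(Z(S_0))\le[S_0,S_0]=[S,S]$, the deduction $S_0\cap A^*=1$, and the derivation of $\Omega_1(A^*)=[\autf(S),\Omega_1(Z)]$ once $S=S_0\times A^*$ is known are all sound. You also correctly flagged $S_0\times A^*=S$ as the genuine obstacle. But the route you propose to cross it does not work.

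You want to deduce $S\le S_0A^*$ from $\foc(\calf)=S$ by showing $[\autf(P),P]\le S_0A^*$ for each $P\in\EE_\calf\cup\{S\}$. Already for $P=S$ this is unjustified: $S_0$ is an arbitrary complement to $A$, not canonical, so $S_0A^*$ has no a priori reason to be $\autf(S)$-invariant, and the ``filtration along the upper central series'' sketch never exhibits a characteristic subgroup whose commutators land inside it. For $P$ essential the situation is worse --- $[\autf(P),P]$ has nothing to do with the central subgroup $A^*\le Z(P)$, and no mechanism is offered to bound it. In effect you would be proving that $S_0A^*$ is strongly closed of index $|A|/|A^*|$, which is circular: you need $|A^*|=|A|$ to even hope for such containments, but $|A^*|=|A|$ is what you are trying to prove.

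What the paper does instead is much more local and is the step you are missing: it invokes \cite[Corollary I.8.5]{AKO}, a transfer-theoretic consequence of $\foc(\calf)=S$, which says that $\outf(S)$ acts with trivial fixed points on $\Omega_1(Z(S))\big/\bigl(\Omega_1(Z(S))\cap[S,S]\bigr)=\Omega_1(Z)/\Omega_1(Z_0)$. You proved only the inclusion $\Omega_1(Z_0)\le C_{\Omega_1(Z)}(\autf(S))$; this corollary gives the reverse inclusion, hence $\Omega_1(C_Z(\Gamma))=\Omega_1(Z_0)$ where $\Gamma=\outf(S)$. From there the counting is elementary: $C_Z(\Gamma)\cap A$ and $Z_0\cap A^*$ are abelian $2$-groups with no involutions (their $\Omega_1$'s are trivial by the displayed equality and your $Z_0\cap A=1=C_Z(\Gamma)\cap A^*$), hence trivial, which forces $|A^*|=|A|$ by comparing $|Z|=|Z_0||A|=|C_Z(\Gamma)||A^*|$ and then $Z=Z_0\times A^*$ and $S=S_0\times A^*$ by order. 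Without the transfer input from Corollary I.8.5 there is no way to pin down $|A^*|$, and that is precisely where your argument breaks.
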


\begin{proof} Set $Z=Z(S)$, $Z_0=Z(S_0)$, and $\Gamma=\outf(S)$ for short. 
Thus $\Gamma$ has odd order and acts on $Z$, so
	\begin{align} 
	Z_0\times A = Z &= C_Z(\Gamma)\times[\Gamma,Z] = C_Z(\Gamma)\times 
	A^* \label{e:Z-decomp} \\
	\Omega_1(Z) &= C_{\Omega_1(Z)}(\Gamma)\times [\Gamma,\Omega_1(Z)] 
	\label{e:Om1(Z)-decomp}
	\end{align}
(see \cite[Theorem 5.2.3]{Gorenstein}, and note in particular that 
$C_{A^*}(\Gamma)=1$). Since $Z_0$ is cyclic, 
$|\Omega_1(Z_0)|=2$; and since 
each nontrivial normal subgroup of $S_0$ intersects nontrivially with 
$Z_0$, $\Omega_1(Z_0)\le[S_0,S_0]=[S,S]$. By \cite[Corollary I.8.5]{AKO}, 
and since $\foc(\calf)=S$ by Lemma \ref{l:foc=S}
(recall $O^2(\calf)=\calf$), $\Gamma$ acts with trivial centralizer on 
	\beq \Omega_1(Z)\big/ \bigl( \Omega_1(Z)\cap[S,S]\bigr) 
	= \Omega_1(Z)/\Omega_1(Z_0)\,. \eeq
Hence $\Omega_1(Z_0)=C_{\Omega_1(Z_0)}(\Gamma)=C_{\Omega_1(Z)}(\Gamma)=\Omega_1(C_Z(\Gamma))$. 
Since $[\Gamma,\Omega_1(Z)]\le\Omega_1(A^*)$, and both are complements to 
$\Omega_1(Z_0)$ in $\Omega_1(Z)$ by \eqref{e:Z-decomp} and 
\eqref{e:Om1(Z)-decomp}, we have $\Omega_1(A^*)=[\Gamma,\Omega_1(Z)]$. 

Since $\Omega_1(C_Z(\Gamma))=\Omega_1(Z_0)$, and $C_Z(\Gamma)\cap 
A^*=1=Z_0\cap A$ by \eqref{e:Z-decomp}, we also have $C_Z(\Gamma)\cap 
A=1=Z_0\cap A^*$ since neither intersection has any elements of order $2$.  
Thus $|A|=|A^*|$, and  hence
$Z=Z_0\times A^*$ and $A\cong Z/Z_0\cong A^*$. Also, 
$S_0\cap A^*=1$, so $S=S_0\times A^*$. 
\end{proof}

Recall that the \emph{rank} of a finite $p$-group is the largest rank of 
any of its abelian subgroups.

\begin{Lem} \label{l:SxA}
Let $S_0$ be a finite nonabelian $2$-group such that $Z(S_0)$ is cyclic. 
Let $A\ne1$ be a nontrivial finite abelian $2$-group, set $S=S_0\times A$, 
and assume $\calf$ is a reduced fusion system over $S$. Then for each $1\ne 
\5W\le \Omega_1(Z(S))$ which is normalized by $\autf(S)$, there is an 
$\calf$-essential subgroup $P\le S$ such that
\begin{enuma} 
\item $P=C_S(\Omega_1(Z(P)))$ and $\Omega_1(Z(P))$ is fully normalized in 
$\calf$;
\item $\autf(P)(\5W)\ne \5W$; and 
\item for all $Q\le S$ such that $|Q|>|P|$, and all $\beta\in\homf(Q,S)$, 
$\beta(\5W)=\5W$.
\end{enuma}
Set $W=[\autf(S),\Omega_1(Z(S))]\ne1$, and let $P$ be 
a subgroup satisfying {\rm(a)--(c)} with $\5W=W$. Set $V=\Omega_1(Z(P))$. 
\begin{enuma}\setcounter{enumi}{3}
\item If there is $\Gamma\le\autf(S)$ such that $C_W(\Gamma)=1$ 
and $\Gamma(P)=P$, then either \smallskip
\begin{enumerate}[\rm(i) ] 
\item $\rk(P/A)\ge6$, or
\item $\rk(C_V(N_S(P))/W)\ge3$ and $\Gamma$ acts nontrivially on $V/W$.
\end{enumerate}
\end{enuma}
\end{Lem}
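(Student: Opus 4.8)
The first task is to produce, for each $\autf(S)$-invariant $1\ne\5W\le\Omega_1(Z(S))$, an $\calf$-essential subgroup $P$ with properties (a)--(c). The plan is to apply Alperin's fusion theorem (Proposition \ref{p:AFT-E}): since $\calf$ is reduced, $\foc(\calf)=S$ by Lemma \ref{l:foc=S}, so the automorphisms of $S$ together with those of $\calf$-essential subgroups generate all of $\calf$. If every $\calf$-essential subgroup and $\autf(S)$ itself preserved $\5W$, then $\5W$ (or rather its normal closure, but $\5W\le Z(S)$ is already normal) would be normal in $\calf$ by Proposition \ref{Q<|F}, contradicting $O_2(\calf)=1$. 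Choosing $P\in\EE_\calf$ of maximal order among those whose $\calf$-automorphism group does not fix $\5W$ gives (b), and (c) follows from this maximality together with the fact that every morphism $\beta\in\homf(Q,S)$ is a composite of restrictions of automorphisms of $S$ and of essential subgroups strictly larger than $P$ (for $|Q|>|P|$, each essential subgroup appearing in such a factorization through subgroups containing $Q$ has order $>|P|$, hence preserves $\5W$ by maximality; and automorphisms of $S$ preserve $\5W$ by hypothesis). For (a): $P$ is $\calf$-centric and fully normalized by Definition \ref{d:ess-crit}(c); since $P$ is $\calf$-centric, $C_S(P)=Z(P)$, and one checks $P=C_S(\Omega_1(Z(P)))$ using that $P\le C_S(\Omega_1(Z(P)))\le C_S(Z(P))=Z(P)\le P$ — here I need $C_S(Z(P))\le P$, which holds because $\Omega_1(Z(P))$ contains $\Omega_1(Z(S))\cap$-type information; more carefully, since $P$ is centric, $C_S(P)=Z(P)$, and I want $C_S(\Omega_1(Z(P)))=P$. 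That $\Omega_1(Z(P))$ is fully normalized can be arranged by replacing $P$ by an $\calf$-conjugate; since $P\nsg$ its normalizer and essentiality/the conjugacy is preserved, this is routine.

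For part (d) I specialize to $\5W=W=[\autf(S),\Omega_1(Z(S))]$, which is nontrivial and $\autf(S)$-invariant with $C_W(\autf(S))=1$ by Lemma \ref{l:S0*xA*} (applied with $A^*$; note $\Omega_1(A^*)=W$). Fix $\Gamma\le\autf(S)$ with $C_W(\Gamma)=1$ and $\Gamma(P)=P$. The restriction map sends $\Gamma$ into $\autf(P)$, and $V=\Omega_1(Z(P))$ is a $\Gamma$-invariant $\F_2$-module containing $W$. The plan is to apply Lemma \ref{l:Green.corr} (Green correspondence) to the pair $G=\autf(P)\supseteq H=$ the subgroup strongly $2$-embedded in $\autf(P)$ coming from essentiality, with the module $V$ (or a suitable $\F_2[\autf(P)]$-submodule of $P/\Phi(P)$ or of $Z(P)$): $N_S(P)/P\le\Aut_S(P)\in\syl2(\autf(P))$ acts trivially on $V/$ (its image lies in a trivial summand related to $Z(S)$), so there is a direct summand on which $S$ acts trivially, and Green correspondence forces a free $\F_2[\Aut_S(P)]$-summand $X$ of the indicated dimension in the complement. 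When $X=0$, the action of the strongly $2$-embedded subgroup on the trivial-summand extends — leading toward case (ii) with the rank-$3$ bound via the structure of $H/O^{2'}$ being $\PSL_2(2^n)$, $\Sz$, or $\PSU_3$ (Bender) and the minimal faithful module dimensions (Lemma \ref{l:mod-Fr}/\cite[Lemma 1.7]{OV2}). When $X\ne0$ with $\dim X\ge8$ (or $|S|$-small case), freeness over $\F_2[\Aut_S(P)]$ and $\Aut_S(P)\supseteq N_S(P)/P$ of rank $\ge\rk(A)$ forces $\rk(P/A)\ge6$, giving case (i). The dichotomy in (d) is exactly the dichotomy "$X=0$ vs.\ $X\ne0$" in Lemma \ref{l:Green.corr}, refined by the size estimates.

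The main obstacle I expect is bookkeeping the module-theoretic dimension count in (d): correctly identifying which $\F_2[\autf(P)]$-module to feed into Lemma \ref{l:Green.corr} (it should be a submodule or subquotient of $Z(P)$, or of $P/\Phi(P)$, chosen so that $W$ sits inside a summand with trivial $S_0$-part on which $N_S(P)/P$ acts trivially), then translating "$X$ free of dimension $\ge8$" into "$\rk(P/A)\ge6$" (this uses that $P/A$, being a rank-$\ge6$ statement, absorbs the rank of the free module minus overlap with $A$), and in the $X=0$ branch, extracting "$\rk(C_V(N_S(P))/W)\ge3$ and $\Gamma$ nontrivial on $V/W$" from the fact that the Green correspondent of $W$ is $W$ itself and $G=\autf(P)$ does not fix $W$ (so $\5G=O^{2'}(\autf(P))$ acts nontrivially on $V/W$, and its natural module has $\F_2$-dimension $\ge 2\cdot 3=6$ split appropriately by $C_V(N_S(P))$). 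The $G\cong A_5$, $S\cong\klfour$ small exception in Lemma \ref{l:Green.corr} must be ruled out here because $A\ne1$ forces $|S|\ge 2|S_0|\ge 16$, so $\Aut_S(P)$ cannot be as small as $\klfour$ unless $\rk(A)$ is constrained — this edge case needs a line of care.
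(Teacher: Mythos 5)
Your argument for (a)--(c) is structurally right up to the point where you claim $P = C_S(\Omega_1(Z(P)))$, but there the proposed chain
$P\le C_S(\Omega_1(Z(P)))\le C_S(Z(P))=Z(P)\le P$
is broken: since $\Omega_1(Z(P))\le Z(P)$, centralizers satisfy $C_S(\Omega_1(Z(P)))\ge C_S(Z(P))$, so the middle inclusion goes the wrong way; and $C_S(Z(P))=Z(P)$ is false in general (if, say, $Z(P)=Z(S)$ then $C_S(Z(P))=S$). $\calf$-centricity of $P$ gives $C_S(P)=Z(P)$, which bounds centralizers of $P$, not of $Z(P)$. The paper's actual route is more delicate: first use (c) to arrange that $V=\Omega_1(Z(P))$ is fully normalized (via a morphism $\varphi\in\homf(N_S(V),S)$ with $N_S(V)\ge N_S(P)>P$, which fixes $\5W$ by (c) and carries $P$ to another essential subgroup); then, once $V$ is fully centralized, invoke the extension axiom: the automorphism $\alpha\in\autf(P)$ moving $\5W$ restricts to $\alpha|_V\in\autf(V)$ which extends to $\autf(C_S(V))$, so if $C_S(V)>P$ that extension would fix $\5W$ by the maximality property (c), a contradiction. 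The full-normalization step is also not quite ``routine'': it uses (c) itself to control what happens to $\5W$ under the normalizing isomorphism. You acknowledge a gap here but do not close it, and the inclusions you write are incorrect rather than merely incomplete.

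For (d) you correctly identify Lemma \ref{l:Green.corr} and the setup ($G=\outf(P)$, $H$ strongly $2$-embedded, $W$ a trivial-$T$ summand of $V|_H$ via averaging), but both branches of the dichotomy are then handled differently from what you sketch. The $X=0$ branch does not involve Bender's classification of $H/O^{2'}$ or minimal faithful module dimensions: the paper argues by coprime-action decomposition $C_V(T)=C_{C_V(T)}(\Gamma^*)\times[\Gamma^*,C_V(T)]$ and shows that if (ii) fails then the Green correspondent $V^*$ would equal $W$, contradicting (b). The $X\ne0$ branch is not a direct dimension count ``free over $T$ of rank $\ge\rk(A)$ forces $\rk(P/A)\ge6$'': the paper first rules out $\rk(X)\ge8$ via the standing assumption $\rk(V/W)\le5$, then in the remaining case $|T|=2$, $\rk(X)\ge4$ splits into $P\nsg S$ (contradiction with $Z(S_0)$ cyclic) vs.\ $P\nnsg S$ (a separate argument with a conjugate subgroup $\til{P}$ having the same normalizer). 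You also misstate the exceptional case of Lemma \ref{l:Green.corr}: it is $|T|=2$ (with $\dim X\ge4$), not $T\cong\klfour$, and the claim ``$A\ne1$ forces $\Aut_S(P)$ not to be small'' is not valid --- $\Aut_S(P)$ can be $C_2$ no matter how large $S$ is. So while the top-level strategy (Green correspondence for the strongly $2$-embedded pair) is the same, the actual case analysis you would need to complete (d) is missing and the directions you gesture toward would not succeed.
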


\begin{proof} 
\noindent\textbf{(a,b,c) } Since $\calf$ is reduced, $\5W$ cannot be normal 
in $\calf$. Since $\5W\le Z(S)$, it is contained in every $\calf$-essential 
subgroup. Hence by Proposition \ref{Q<|F} (and since $\autf(S)$ normalizes 
$\5W$), there is some $P\in\EE_\calf$ and some $\alpha\in\autf(P)$ such 
that $\alpha(\5W)\ne{}\5W$. Choose $P$ to have maximal order among all such 
$\calf$-essential subgroups. Then (c) holds by Proposition \ref{p:AFT-E}. 

Set $V=\Omega_1(Z(P))$. Thus $\5W\le\Omega_1(Z(S))\le V$. If $V$ is not fully normalized, then 
there is $\varphi\in\homf(N_S(V),S)$ such that $\varphi(V)$ is fully 
normalized (cf. \cite[Lemma I.2.6(c)]{AKO}).  Also, $N_S(V)\ge N_S(P)>P$, so 
$\varphi(\5W)=\5W$ by (c), and $\varphi(P)$ is fully normalized (hence 
$\calf$-essential) since $P$ is. Upon replacing $P$ by $\varphi(P)$, we can 
assume $V$ is fully normalized (and hence fully centralized).  

Let $\alpha\in\autf(P)$ be as above: $\alpha(\5W)\ne \5W$.  Since $V$ is fully 
centralized, $\alpha|_V\in\autf(V)$ extends to some 
$\widebar{\alpha}\in\autf(C_S(V))$.  If $C_S(V)>P$, then 
$\overline\alpha(\5W)=\5W$ by (c), contradicting our assumption on 
$\alpha$.  Hence $C_S(V)=P$, and in particular, $N_S(V)=N_S(P)$.

\smallskip

\noindent\textbf{(d) } By Lemma \ref{l:S0*xA*}, we can assume that 
$A=[\autf(S),Z(S)]$ and hence is normalized by $\autf(S)$, and that 
$W=\Omega_1(A)\ne1$. Fix $V\le P\in\EE_\calf$ which satisfy conditions 
(a)--(c) for $\5W=W$. Let $\Gamma\le\autf(S)$ be such that $C_W(\Gamma)=1$ 
and $\Gamma(P)=P$. Assume $\rk(V/W)\le5$ (otherwise (i) holds). 

Set $G=\outf(P)$ and $T=\Out_S(P)\in\syl2{G}$, and let $H\le G$ be the 
subgroup generated by the classes of all $\beta\in\autf(P)$ which extend to 
$\calf$-morphisms between subgroups strictly containing $P$. By 
\cite[Proposition I.3.3(b)]{AKO} and since $P\in\EE_\calf$, $H<G$ and is 
strongly $2$-embedded in $G$. Since $H\ge T$ by definition, $H\ge N_G(T)$.

Regard $V=\Omega_1(Z(P))$ as an $\F_2[G]$-module. By (c), $W$ is an 
$\F_2[H]$-submodule of $V|_H$. Set $V_0=V\cap S_0$. Then $V=V_0\times W$ 
since $P=(P\cap S_0)\times A$, and $V_0$ is acted upon by 
$T=\Out_{S_0}(P)$. Hence $W$ is a direct factor of $V|_H$ as 
$\F_2[H]$-modules, since the $\F_2[T]$-linear projection $V\too W$ 
(with kernel $V_0$) can be made $\F_2[H]$-linear by averaging over the 
cosets in $H/T$. 

We are thus in the situation of Lemma \ref{l:Green.corr}. By that lemma, 
there is an $\F_2[G]$-submodule $V^*\le V$ such that $V^*|_H\cong 
W\times X$ for some $\F_2[H]$-module $X$ that is free as an 
$\F_2[T]$-module, and such that $X=1$, or $|T|=2$ and $\rk(X)\ge4$, or 
$\rk(X)\ge8$. Since $\rk(X)\le\rk(V/W)\le5$, the last condition is 
impossible.

Assume $X=1$ (i.e., $V^*|_H\cong W$); we show that (ii) holds. Since 
$\Gamma(P)=P$, we have $[\gamma|_P]\in N_H(T)$ for each $\gamma\in\Gamma$, 
and hence the action of $\Gamma$ on $V$ factors through a subgroup 
$\4\Gamma\le N_H(T)$. Then $V^*$ and $W$ are isomorphic as 
$\F_2[\4\Gamma]$-modules. If (ii) does not hold, then either $\4\Gamma$ 
acts trivially on $V/W$ or $\rk(C_V(T)/W)\le2$. In the latter case, 
$\4\Gamma$ acts trivially on $C_V(T)/W$ since it normalizes the subspace 
$\Omega_1(Z(S))/W\le C_V(T)/W$ of rank $1$, and since the action of 
$\4\Gamma$ factors through $\Gamma^*=\4\Gamma T/T\le N_H(T)/T$ of odd 
order. So in either case, $\Gamma$ acts trivially on $C_V(T)/W$. Also, 
$C_V(T)=V_1\times V_2$, where $V_1=C_{C_V(T)}(\Gamma^*)$ and 
$V_2=[\Gamma^*,C_V(T)]$ (cf. \cite[Theorem 5.2.3]{Gorenstein}). Then $W\ge 
V_2$ since $C_V(T)/W$ has trivial action, and $W=V_2$ since 
$C_W(\Gamma)=1$. Also, $V^*\le C_V(T)$ and $V^*\cap 
V_1=C_{V^*}(\4\Gamma)=1$ since $V^*\cong W$ as $\F_2[\4\Gamma]$-modules, so 
$V^*=V_2=W$ since they have the same rank. This is impossible, since 
$G(W)\ne W$ by (b) while $G(V^*)=V^*$. 

Now assume $\rk(X)\ge4$ and $|T|=|N_S(P)/P|=2$. Recall that we set 
$V_0=V\cap{}S_0$, so that $V=V_0\times{}W$ as $\F_2[T]$-modules. Thus 
$\rk(V_0)=\rk(V/W)\ge\rk(X)$, and so 
$4\le\rk(V_0)\le5$. We have $V^*|_H\cong W\times X$ where 
$X|_T\cong\F_2[T]^k$ for some $k$, and $k=2$ by the assumptions on ranks. 
Hence $\rk(V^*/C_{V^*}(T))=2$, and so 
$\rk(V_0/C_{V_0}(T))=\rk(V/C_V(T))\ge2$. Since $|T|=2$, 
	\[ 2 \le \rk(V_0/C_{V_0}(T)) = \rk([T,V_0]) \le 
	\rk(C_{V_0}(T)). \] 
If $P\nsg S$, i.e., if $|S/P|=2$, then 
$\Omega_1(Z(S_0))=C_{V_0}(T)$ has rank at least $2$, which is impossible 
since $Z(S_0)$ is assumed to be cyclic.

Thus $P\nnsg S$. Fix $x\in N_S(P)\sminus P$. Let $\til{P}\ne{}P$ be another 
subgroup $S$-conjugate to $P$ with the same normalizer $P\gen{x}$, and set 
$\til{V}=\Omega_1(Z(\til{P}))$ and $\til{V}_0=\til{V}\cap{}S_0$.  Then 
$V\cap{}\til{V}=\Omega_1(Z(P\gen{x}))=C_V(x)$, so 
$V_0\cap{}\til{V}_0=C_{V_0}(x)=C_{V_0}(T)$, and we just saw that 
$\rk(V_0/C_{V_0}(T))\ge2$.  If $\til{V}\le{}P$, then 
$[V,\til{V}]\le[V,P]=1$ and $\rk(V_0\til{V}_0)
=\rk(\til{V}_0)+\rk(V_0/C_{V_0}(T))\ge4+2=6$, 
so (i) holds.  If $\til{V}\nleq{}P$, then we can assume $x$ was chosen so 
that $x\in{}\til{V}$.  But then $[\til{P},x]=1$ and hence 
$V\cap{}\til{P}\le C_V(x)$, which is impossible since 
$|V/(V\cap\til{P})|\le|P\gen{x}/\til{P}|=2$ and $|V/C_V(x)|\ge4$. 
\end{proof}

\begin{Prop} \label{p:AxS-rk5}
Let $S_0$ be a finite $2$-group such that $\rk(S_0)\le5$, $Z(S_0)$ is 
cyclic, and $\Aut(S_0)$ is a $2$-group. Then for any finite abelian 
$2$-group $A\ne1$, there are no reduced fusion systems over $S_0\times A$.
\end{Prop}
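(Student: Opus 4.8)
The goal is to derive a contradiction from the assumption that $\calf$ is a reduced fusion system over $S = S_0 \times A$ with $A \ne 1$. The strategy is to exploit the smallness of $\rk(S_0) \le 5$ together with the hypothesis that $\Aut(S_0)$ is a $2$-group, and to use Lemma \ref{l:SxA} to produce an $\calf$-essential subgroup $P$ which is forced to be very large (by part (d) of that lemma), yielding a rank contradiction. First I would apply Lemma \ref{l:S0*xA*} to normalize the situation: we may assume $A = [\autf(S), Z(S)]$ is $\autf(S)$-invariant, and set $W = [\autf(S), \Omega_1(Z(S))] = \Omega_1(A) \ne 1$. Since $\Aut(S_0)$ is a $2$-group, any automorphism of $S$ of odd order must act trivially on the $S_0$-factor (here I would invoke that $\Aut(S_0 \times A)$ decomposes compatibly with the factors, or more carefully that an odd-order automorphism of $S$ preserves $[S,S] = [S_0,S_0] \le S_0$ and the relevant characteristic structure), so $\outf(S)$ acts on $S$ only through its action on $A$ and on $Z(S)$.

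**Key steps.** With $W \ne 1$ fixed and $\autf(S)$-invariant, Lemma \ref{l:SxA}(a)--(c) gives an $\calf$-essential subgroup $P \le S$ with $P = C_S(V)$ for $V = \Omega_1(Z(P))$, $V$ fully normalized, $\autf(P)(W) \ne W$, and $\beta(W) = W$ for all morphisms out of subgroups strictly larger than $P$. The central point is to find $\Gamma \le \autf(S)$ with $C_W(\Gamma) = 1$ and $\Gamma(P) = P$, so that Lemma \ref{l:SxA}(d) applies. Since $C_W(\autf(S)) = C_W(\outf(S)) = C_{A^*}(\autf(S)) \cap \Omega_1 = 1$ by Lemma \ref{l:S0*xA*} (the centralizer of $\autf(S)$ on $A^* = A$ is trivial), one can take $\Gamma$ to be (a suitable odd-order complement inside) $\autf(S)$ itself; the condition $\Gamma(P) = P$ needs care, and I would arrange it by choosing $P$ within its $\autf(S)$-orbit and using that $\autf(S)$ permutes the $\calf$-essential subgroups of maximal order with the required property — or by replacing $\Gamma$ by its subgroup stabilizing $P$ and checking $C_W$ of that subgroup is still trivial (this uses that $P$ is $S$-conjugate-stable and coprime-action arguments). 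Once Lemma \ref{l:SxA}(d) is in force, we get either $\rk(P/A) \ge 6$ or $\rk(C_V(N_S(P))/W) \ge 3$ with $\Gamma$ acting nontrivially on $V/W$. In the first case, $\rk(P/A) \ge 6$ gives an elementary abelian subgroup of $S_0$ of rank $\ge 6$ (project modulo $A$, or use $P \cap S_0$), contradicting $\rk(S_0) \le 5$. In the second case, $C_V(N_S(P))/W$ has rank $\ge 3$; since $V = V_0 \times W$ with $V_0 = V \cap S_0$ elementary abelian and $W \le A$, we get $\rk(V_0) \ge 3$ plus extra structure, and I would push this — using that $\Gamma$ acts nontrivially on $V/W$ while $\Aut(S_0)$ being a $2$-group forces $\Gamma$ to act trivially on the $S_0$-part $V_0$ — to conclude $\Gamma$ acts nontrivially on a piece of $W$-cosets that must actually lie in the $S_0$ direction, a contradiction; alternatively this case may again inflate $\rk(V_0)$ past $5$.

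**Main obstacle.** The delicate point is \emph{case (ii)} of Lemma \ref{l:SxA}(d): ruling out $\rk(C_V(N_S(P))/W) \ge 3$ with $\Gamma$ acting nontrivially on $V/W$. Here one must combine the constraint $\rk(S_0) \le 5$ (so $\rk(V_0) \le 5$) with the hypothesis that $\Aut(S_0)$ is a $2$-group (so the odd-order group $\Gamma$ centralizes the $S_0$-direction $V_0$, forcing any nontrivial $\Gamma$-action on $V/W$ to come from $A$, yet $V \cap A = W$ which has $\Gamma$-fixed quotient) — the tension between these should close the case, but the bookkeeping about how $\Gamma$ acts on $V_0 \times W$ versus $V/W$ and $C_V(N_S(P))$ is where the real work lies. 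A secondary technical nuisance is justifying that an odd-order automorphism of $S = S_0 \times A$ restricts trivially to $S_0$; this is standard (since $Z(S_0)$ cyclic and $S_0$ nonabelian, $S_0$ is the unique largest nonabelian-with-cyclic-center direct factor, and is characteristic), but it must be stated cleanly, perhaps as a small lemma or a reference, before the main argument proceeds.
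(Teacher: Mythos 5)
Your approach is the paper's approach, and you have identified the right lemmas and the right key tension, but two steps that you flag as delicate are in fact immediate, and your proposed workarounds for them would not survive scrutiny. The crucial observation you circle around but never quite state is this: once $A$ is taken to be $\autf(S)$-invariant (via Lemma~\ref{l:S0*xA*}), each $\gamma$ in an odd-order complement $\Gamma\le\autf(S)$ induces an automorphism of $S/A\cong S_0$, which must be the identity because $\gamma$ has odd order and $\Aut(S_0)$ is a $2$-group. From this single fact, both of your ``delicate'' steps follow at once. First, $\gamma$ normalizes every subgroup of $S$ containing $A$; since $\calf$-essential subgroups are $\calf$-centric and hence contain $Z(S)\ge A$, we get $\Gamma(P)=P$ with no extra argument. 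Your alternative --- replacing $\Gamma$ by its stabilizer of $P$ and ``checking $C_W$ is still trivial'' --- has a genuine gap, since for $\Gamma_0<\Gamma$ there is no reason $C_W(\Gamma_0)=C_W(\Gamma)$. Second (and this is where your proposal actually stops short of a proof), case~(ii) of Lemma~\ref{l:SxA}(d) requires $\Gamma$ to act nontrivially on $V/W$; but $\Gamma$ acts trivially on $Z(P)/A$, and since $W=\Omega_1(Z(P))\cap A$, this forces $\Gamma$ to act trivially on $\Omega_1(Z(P))/W=V/W$. That rules out case~(ii) outright, so case~(i) must hold, giving $\rk(P/A)\ge 6$ and contradicting $\rk(S_0)\le 5$. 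You describe this tension qualitatively (``$\Aut(S_0)$ being a $2$-group forces $\Gamma$ to act trivially on the $S_0$-part'') but defer the conclusion as ``bookkeeping''; that bookkeeping is exactly the one-line computation above, and without it the argument is incomplete.

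A secondary point: your justification that odd-order automorphisms ``act trivially on the $S_0$-factor'' via $S_0$ being a characteristic direct factor is not correct --- $S_0$ as a subgroup of $S$ need not be normalized by $\autf(S)$ at all. What is true, and what the argument uses, is that the \emph{quotient} $S/A$ is $\autf(S)$-equivariantly identified with $S_0$ once $A$ is $\autf(S)$-invariant, and the induced action on this quotient is trivial for odd-order elements. It is the action on $S/A$, not on $S_0\le S$, that drives the whole proof.
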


\begin{proof} We can assume $S_0$ is nonabelian; otherwise the result is 
clear.  Set $S=S_0\times A$, and assume $\calf$ is a reduced fusion system 
over $S$.  Using Lemma \ref{l:S0*xA*}, we can assume that 
$A=[\autf(S),Z(S)]$ and hence is 
normalized by $\autf(S)$. Set $W=[\autf(S),\Omega_1(Z(S))]$; then 
$W=\Omega_1(A)$ and $C_W(\autf(S))=1$ by Lemma \ref{l:S0*xA*} again.

Fix $\Gamma\le\autf(S)$ of odd order such that $\Inn(S)\Gamma=\autf(S)$ 
(by the Schur-Zassenhaus theorem, cf. \cite[Theorem 6.2.1(i)]{Gorenstein}). 
Then $C_W(\Gamma)=C_W(\autf(S))=1$ since $[\Inn(S),W]=1$. For 
each $\gamma\in\Gamma$, $\gamma(A)=A$, so $\gamma$ induces an automorphism 
of $S/A\cong S_0$ which must be the identity since $\Aut(S_0)$ is a 
$2$-group by assumption. Hence $\gamma(P)=P$ for each $P\le S$ which 
contains $A$, and in particular, for each $P\in\EE_\calf$. Also, for each 
such $P$, $\Gamma$ acts trivially on $Z(P)/A$, and hence on 
$\Omega_1(Z(P))/W$.

Lemma \ref{l:SxA}(d) now implies that $\rk(S_0)\ge6$, which contradicts our 
assumptions.
\end{proof}

We now give two applications of Lemma \ref{l:SxA} in situations where 
$\Aut(S_0)$ is not a 2-group.  Recall that $\UT_n(q)\le\SL_n(q)$ denotes 
the subgroup of upper triangular matrices over $\F_q$ with 1's on the 
diagonal.

\begin{Prop} \label{2^k.J2}
Let $S_0$ be a Sylow $2$-subgroup of $J_2$.  Then for any finite abelian 
$2$-group $A\ne1$, there are no reduced fusion systems over $S_0\times A$.
\end{Prop}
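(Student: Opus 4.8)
The plan is to suppose that $\calf$ is a reduced fusion system over $S=S_0\times A$ with $A\ne1$ abelian, and to reach a contradiction using Lemma~\ref{l:SxA}. We use the following facts about $S_0$, a Sylow $2$-subgroup of $J_2$: $|S_0|=2^7$, $Z(S_0)\cong C_2$ is cyclic, and $\rk(S_0)\le5$. Since $\Aut(S_0)$ is not a $2$-group, Proposition~\ref{p:AxS-rk5} does not apply directly, and this is the reason a separate argument is needed.

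First I would carry out the standard reductions. By Lemma~\ref{l:S0*xA*} we may assume $A=[\autf(S),Z(S)]$ is normalized by $\autf(S)$; put $W=\Omega_1(A)=[\autf(S),\Omega_1(Z(S))]$, so that $W\ne1$ and $C_W(\autf(S))=1$. Since every critical subgroup $P<S$ is centric in $S$ (Proposition~\ref{p:critical-criteria}(b)), it contains $C_S(P)\ge A$, hence $P=P_0\times A$ with $P_0=P\cap S_0$ a proper centric subgroup of $S_0$; moreover $\Omega_1(Z(P))=\Omega_1(Z(P_0))\times W$ and $N_S(P)=N_{S_0}(P_0)\times A$. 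In particular $\rk(P/A)=\rk(P_0)\le\rk(S_0)\le5$.

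Now I would apply Lemma~\ref{l:SxA} with $\5W=W$: there is an $\calf$-essential subgroup $P=C_S(V)$, with $V=\Omega_1(Z(P))$ fully normalized and $\autf(P)(W)\ne W$, chosen of maximal order among $\calf$-essential subgroups on which $\autf(-)$ does not fix $W$. Write $P=P_0\times A$ and $V=V_0\times W$ with $V_0=\Omega_1(Z(P_0))$; then $P_0=C_{S_0}(V_0)$, and $V_0>Z(S_0)$ (otherwise $P=C_S(\Omega_1(Z(S)))=S$, since $Z(S_0)\cong C_2$ and $C_{S_0}(Z(S_0))=S_0$). As $\rk(P/A)\le5<6$, alternative~(i) of Lemma~\ref{l:SxA}(d) is impossible; so once a subgroup $\Gamma\le\autf(S)$ with $C_W(\Gamma)=1$ and $\Gamma(P)=P$ has been exhibited, alternative~(ii) must hold, forcing $\rk(C_{V_0}(N_{S_0}(P_0)))\ge3$ and $\Gamma$ to act nontrivially on $V_0\cong V/W$. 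The contradiction then comes from going through the potentially critical subgroups $P_0<S_0$ with $P_0=C_{S_0}(\Omega_1(Z(P_0)))$ and $\rk(\Omega_1(Z(P_0)))\ge2$: a Magma computation (using the criteria of Proposition~\ref{p:critical-criteria}) shows that each such $P_0$ satisfies $\rk(C_{V_0}(N_{S_0}(P_0)))\le2$ for $V_0=\Omega_1(Z(P_0))$, contradicting~(ii).

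The step I expect to be the main obstacle is producing $\Gamma$, since — in contrast to Proposition~\ref{p:AxS-rk5} — an odd-order complement of $\Inn(S)$ in $\autf(S)$ need not normalize $P$ when $\Aut(S_0)$ is not a $2$-group. The way around this is to arrange that $P$ is $\autf(S)$-invariant. As $\autf(S)$ fixes $A$ and therefore acts on $S/A\cong S_0$, the preimage in $S$ of any characteristic subgroup of $S_0$ is $\autf(S)$-invariant; so it is enough to verify, from the same list of potentially critical subgroups, that the $S_0$-part $P_0$ of a maximal-order $\calf$-essential subgroup moving $W$ is characteristic in $S_0$ (or at least that its $\autf(S)$-orbit reduces to that case). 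An odd-order complement $\Gamma$ of $\Inn(S)$ in $\autf(S)$ then satisfies $C_W(\Gamma)=C_W(\autf(S))=1$ and $\Gamma(P)=P$, which completes the proof that no reduced fusion system exists over $S_0\times A$.
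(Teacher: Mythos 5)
Your overall strategy is the same as the paper's — reduce via Lemma~\ref{l:S0*xA*}, invoke Lemma~\ref{l:SxA}(a--c) to obtain a maximal $\calf$-essential $P=C_S(V)$ that moves $W$, and then contradict Lemma~\ref{l:SxA}(d) — and you have correctly located the main difficulty, namely producing the subgroup $\Gamma$. But your resolution of that difficulty has a genuine gap. Writing $S_0=\UT_3(4)\gen{\theta}$, the paper shows that the $S_0$-part of $P$ must be one of $A_1$, $A_2$ (the two normal $(C_2)^4$'s in $\UT_3(4)$), or $\UT_3(4)$ itself. Only $\UT_3(4)$ is characteristic in $S_0$: the $A_i$ are interchanged by conjugation in $S_0$. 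So the condition you propose to verify — that $P_0$ be characteristic in $S_0$, so that $\Gamma=\autf(S)$ works — is simply \emph{false} in the $P_0=A_i$ case, and your parenthetical fallback (``its $\autf(S)$-orbit reduces to that case'') is not a proof. What actually makes this case go through is specific: since $A_1,A_2$ are the only $(C_2)^4$'s in $S_0$ and are already $S$-conjugate, the stabilizer $\Gamma=\{\alpha\in\autf(S)\mid\alpha(P)=P\}$ has index at most $2$ and satisfies $\Inn(S)\Gamma=\autf(S)$, whence $C_W(\Gamma)=C_W(\autf(S))=1$ because $\Inn(S)$ acts trivially on $W\le Z(S)$. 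Your proposal never makes this observation, which is the key step that closes the argument.

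A second, smaller gap: you cannot uniformly run Lemma~\ref{l:SxA}(d) over ``all potentially critical $P_0$.'' The paper first has to dispose of a whole family of candidate $P$'s directly, namely those where $V_0=\Omega_1(Z(P_0))\nleq\UT_3(4)$; there $P_0\cong C_2\times Q_8$, one shows $[N_S(P),P]\le\Phi(P)$, and Proposition~\ref{p:QcharP} (not Lemma~\ref{l:SxA}(d)) gives the contradiction. Your Magma computation might also eliminate these $P_0$'s on centricity/criticality grounds, but as written the proposal applies (d) to every candidate, which would again require a $\Gamma$ that you have not produced. Finally, note that $\rk(S_0)=4$ (not just $\le 5$): every involution in $\UT_3(4)$ lies in $A_1\cup A_2$, so the only maximal elementary abelian subgroups of $S_0$ are the $A_i$; this is what the paper uses to pin down $P_0$ exactly, rather than appealing to a rank bound.
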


\begin{proof}  We adopt the notation used in \cite[\S\,4]{OV2} and 
\cite[\S\,6]{O-rk4}. Let $(a\mapsto\4a)$ denote the field involution of 
$\F_4$ (thus $\4a=a^2$). We identify $S_0=\UT_3(4)\gen{\theta}$, where 
$\theta\mxthree1ab01c001\theta^{-1}=\mxthree1{\4c}{\4b}01{\4a}001^{-1}$ for each 
$a,b,c\in\F_4$, and $\theta^2=1$. For $1\le i<j\le3$ and 
$x\in\F_4$, let $e_{ij}^x\in\UT_3(4)$ be the elementary matrix with unique 
nonzero off-diagonal entry $x$ in position $(i,j)$, and set 
$E_{ij}=\{e_{ij}^x\,|\,x\in\F_4\}$. Set $A_1=E_{12}E_{13}$ and 
$A_2=E_{13}E_{23}$. These are the only subgroups of $S_0$ isomorphic 
to $(C_2)^4$, and each involution in $\UT_3(4)$ is in $A_1\cup A_2$ (see 
\cite[Lemma 4.1(b,c)]{OV2}).

Set $S=S_0\times A$, and let $\calf$ be a reduced fusion system over $S$. 
Using Lemma \ref{l:S0*xA*}, we can assume that $A=[\autf(S),Z(S)]$, and 
hence is normalized by $\autf(S)$. Set $W=[\autf(S),\Omega_1(Z(S))]$; then 
$W=\Omega_1(A)$ and $C_W(\autf(S))=1$ by Lemma \ref{l:S0*xA*} again. Let 
$P$ be as in Lemma \ref{l:SxA}(a--c) for $\5W=W$, and set 
$V=\Omega_1(Z(P))$. In particular, $P\in\EE_\calf$ and $P=C_S(V)$. Set 
$V_0=V\cap S_0$ and $P_0=P\cap S_0$. 

\iffalse
Let $V$ and $P$ be as in Lemma 
\ref{l:SxA}(a--c) for $\5W=W$. In particular, 
$P\in\EE_\calf$, $V=\Omega_1(Z(P))$, and $P=C_S(V)$. Set $V_0=V\cap S_0$ 
and $P_0=P\cap S_0$. 
\fi

If $V_0\nleq\UT_3(4)$, then there is $g\in{}V_0{\sminus}\UT_3(4)$ of order 
two, and $C_{\UT_3(4)}(g)\cong Q_8$ with center $\gen{e_{13}^1}$ by 
\cite[Lemma 4.1(a)]{OV2}. Hence 
$V_0\le\Omega_1(C_{S_0}(g))=\gen{g,\e13^1}$, with equality since 
$V_0=\Omega_1(Z(P_0))\ge Z(S_0)=\gen{\e13^1}$. So $P=\gen{g}\times 
C_{\UT_3(4)}(g)\times A$. Each class in $N_S(P)/P$ is represented by some 
$x\in\UT_3(4)$, $[x,g]\in Z(P)\cap\UT_3(4)=\gen{\e13^1}$ since $g\in Z(P)$ 
and $x\in\UT_3(4)$, and $[x,C_{\UT_3(4)}(g)]\le E_{13}\cap P=\gen{\e13^1}$. 
Hence $[N_S(P),P]=\gen{\e13^1}\le\Phi(P)$, so $P\notin\EE_\calf$ by 
Proposition \ref{p:QcharP} (with $\Theta=1$), a contradiction.  

Thus $V_0\le\UT_3(4)$. If $V_0\nleq{}E_{13}=Z(\UT_3(4))$, then there is 
$g\in (V_0\cap A_i)\sminus E_{13}$ for $i=1$ or $2$, since all elements of 
order $2$ in $\UT_3(4)$ lie in $A_1\cup A_2$. Hence $P\le C_S(g)=A_i\times 
A$, with equality since $P$ is centric in $S$. Set 
$\Gamma=\{\alpha\in\autf(S)\,|\,\alpha(P)=P\}$: a subgroup of index $2$ 
since $A_1,A_2$ are the only subgroups of $S_0$ isomorphic to $(C_2)^4$ (and they are 
$S$-conjugate). Then $\autf(S)=\Inn(S)\Gamma$, and 
$C_W(\Gamma)=C_W(\autf(S))=1$ since $[\Inn(S),W]=1$. So the hypothesis in 
Lemma \ref{l:SxA}(d) holds, which is impossible since $\rk(P/A)=4$ and 
$\rk(C_V(N_S(P))/W)=\rk(C_{A_i}(\UT_3(4)))=2$.

The only remaining case is that where $V_0=E_{13}$ and $P=\UT_3(4)\times 
A$. This is impossible by Lemma \ref{l:SxA}(d) again (applied with 
$\Gamma=\autf(S)$), and since 
$\rk(V_0)=2$ and $\UT_3(4)=A_1A_2$ is characteristic in $S_0$.
\end{proof}

\begin{Prop} \label{2^k.UT4(2)}
For any finite abelian $2$-group $A\ne1$, there are no reduced fusion 
systems over $\UT_4(2)\times A$.
\end{Prop}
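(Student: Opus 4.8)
The plan is to follow the template of the proof of Proposition~\ref{2^k.J2}, applying Lemma~\ref{l:SxA} to $S_0=\UT_4(2)$ (of order $64$, with cyclic center $Z_0=Z(S_0)=\gen{\e14}$). First I would record the facts about $S_0$ that the argument needs, all of which are routine computations inside $\UT_4(2)$ (or can be read off from $S_0\in\syl2{A_8}$): $\rk(S_0)=4$; the subgroup $E$ of matrices which are the identity outside the upper‑right $2\times2$ block is elementary abelian of rank $4$, equals its own centralizer in $S_0$, is normal, and is the \emph{unique} elementary abelian subgroup of rank $4$, hence characteristic; the subgroups $Q\le S_0$ with $C_{S_0}(\Omega_1(Z(Q)))=Q$ and $Z_0\lneq\Omega_1(Z(Q))$ are, up to $S_0$‑conjugacy, $E$ itself, the rank‑$3$ elementary abelian subgroups $R_1=\gen{\e12,\e13,\e14}$, $R_2=\gen{\e24,\e34,\e14}$ (for which $S_0/R_i\cong D_8$) and $R_3=\gen{\e12,\e34,\e14}$, the three index‑$2$ subgroups $C_{S_0}(\e13)$, $C_{S_0}(\e24)$, $C_{S_0}(\e13\e24)$ (which are precisely the three maximal subgroups containing $E$, and have center of rank $2$), and certain subgroups of order $16$ (centralizers of involutions not in $E$) with center of rank $2$; and $\Aut(S_0)$ is not a $2$‑group, its Sylow $3$‑subgroup $\gen{\gamma_0}$ acting nontrivially on $S_0/\Phi(S_0)$ and in particular $3$‑cycling $C_{S_0}(\e13)$, $C_{S_0}(\e24)$, $C_{S_0}(\e13\e24)$.

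Next the reduction. Set $S=S_0\times A$ and let $\calf$ be a reduced fusion system over $S$. By Lemma~\ref{l:S0*xA*} I may assume $A=[\autf(S),Z(S)]$, so $A$ is $\autf(S)$‑invariant and, writing $W=\Omega_1(A)=[\autf(S),\Omega_1(Z(S))]$, we have $C_W(\autf(S))=1$. Apply Lemma~\ref{l:SxA}(a--c) with $\5W=W$ to obtain $P\in\EE_\calf$ with $V:=\Omega_1(Z(P))$ fully normalized, $P=C_S(V)$, $\autf(P)(W)\ne W$, and $\beta(W)=W$ whenever $\beta\in\homf(Q,S)$ with $|Q|>|P|$. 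Since every $\calf$‑essential subgroup contains $Z(S)\ge A$, we have $P=P_0\times A$ and $V=V_0\times W$, where $P_0=P\cap S_0$, $V_0=V\cap S_0=\Omega_1(Z(P_0))$, and $P_0=C_{S_0}(V_0)$; moreover $P_0\ne S_0$ (as $P\ne S$), so $Z_0\lneq V_0$, and $P$ is a critical subgroup by Proposition~\ref{ess=>crit}, so Proposition~\ref{p:critical-criteria} applies. Now I run through the possibilities for $P_0$ from the list above.

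If $V_0\in\{R_1,R_2\}$ then $N_S(P)/P\cong S_0/R_i\cong D_8$, which is neither cyclic nor elementary abelian, contradicting Proposition~\ref{p:critical-criteria}(c); if $V_0=R_3$ (or a conjugate) then $N_S(P)/P\cong\klfour$ but $\rk([\e13,R_3])=\rk(\gen{\e14})=1$, contradicting Proposition~\ref{p:critical-criteria}(f) (with $n=2$). If $V_0=E$ then $P=E\times A$ is normalized by $\autf(S)$ (as $E$ is characteristic in $S_0$ and $A$ is $\autf(S)$‑invariant), so Lemma~\ref{l:SxA}(d) applies with $\Gamma=\autf(S)$ (recall $C_W(\autf(S))=1$); but $\rk(P/A)=\rk(E)=4<6$ kills alternative~(i), and $C_V(N_S(P))=C_E(S_0)\times W=Z_0\times W$ has $\rk(C_V(N_S(P))/W)=1<3$, killing alternative~(ii)---a contradiction.

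There remain the cases $\rk(V_0)=2$, where $P_0$ has order $16$ or $32$ (for instance $P_0=C_{S_0}(\e13)$). Here $\rk(P/A)=\rk(P_0)\le\rk(S_0)=4<6$ and $\rk(C_V(N_S(P))/W)\le\rk(V_0)=2<3$, so Lemma~\ref{l:SxA}(d) again yields a contradiction \emph{provided} there is some $\Gamma\le\autf(S)$ with $C_W(\Gamma)=1$ and $\Gamma(P)=P$. Since $\Inn(S)=\Aut_S(S)\in\syl2{\autf(S)}$, a Hall $2'$‑subgroup $\Gamma^*$ of $\autf(S)$ maps isomorphically onto $\outf(S)$, so $C_W(\Gamma^*)=C_W(\autf(S))=1$; thus if $\Gamma^*$ (or, more generally, the Hall $2'$‑part of $\{\alpha\in\autf(S)\mid\alpha(P)=P\}$) normalizes $P$, we take $\Gamma$ to be that group and are done. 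The only remaining possibility is that $\Gamma^*$ moves $P$, which by the structural facts forces $\Gamma^*$ to $3$‑cycle a $\gamma_0$‑orbit of size $3$ of potentially‑essential subgroups of the given type---so that, e.g., $C_{S_0}(\e13)\times A$, $C_{S_0}(\e24)\times A$ and $C_{S_0}(\e13\e24)\times A$ would all be $\calf$‑essential of maximal order, pairwise intersecting in $E\times A$. This is the crux, and it has to be excluded by a finer analysis: either one shows that the Hall $2'$‑part of the full stabilizer still acts on $W$ without nonzero fixed points (so Lemma~\ref{l:SxA}(d) applies with $\Gamma$ this stabilizer), or one shows directly that such a triple of $\calf$‑essential subgroups of index $2$ in $S$ is incompatible with $\calf$ being reduced---using that $\Out_S(P)$ then has order $2$, so $\outf(P)$ is a $\{2\}$‑by‑odd group with a strongly $2$‑embedded subgroup and hence little room, and feeding the resulting restriction on $[\autf(P),P]$ into Proposition~\ref{foc(F)=<->} and the equality $\foc(\calf)=S$, together with Proposition~\ref{Q<|F} applied to $E\times A$. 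I expect this last step---ruling out the non‑$\autf(S)$‑invariant small‑center essential subgroups---to be the main obstacle; everything else is bookkeeping with Lemma~\ref{l:SxA} and Proposition~\ref{p:critical-criteria}.
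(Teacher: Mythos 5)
Your reduction to the key case is essentially the same as the paper's own Claim~\eqref{e:calp}: both you and the paper cut the $P$ supplied by Lemma~\ref{l:SxA}(a--c) down to $\{B,P_1,P_2,P_3\}$, where $B=B_0\times A$ with $B_0\cong(C_2)^4$ the unique abelian subgroup of $\UT_4(2)$ of order $16$, and $P_1,P_2,P_3$ are the three index-$2$ subgroups of $S$ above $B$ (your $C_{S_0}(\e13)\times A$, etc.). Your eliminations of the rank-$3$ cases via Proposition~\ref{p:critical-criteria}(c),(f) and of $P=B$ via Lemma~\ref{l:SxA}(d) are fine.

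The gap is exactly the case you flag as ``the crux'': $P\in\{P_1,P_2,P_3\}$ when a Hall $2'$-subgroup of $\autf(S)$ cyclically permutes the $P_i$. Neither of your suggested repairs closes it. Applying Proposition~\ref{Q<|F} to $B$ needs $B\le P$ for \emph{every} $P\in\EE_\calf$, but Lemma~\ref{l:SxA} only controls the essential subgroups in $\calp=\{P\in\EE_\calf\mid P=C_S(\Omega_1(Z(P)))\}$, not all of $\EE_\calf$; and the focal computation you gesture at is not carried out and does not obviously give a contradiction, since $[\autf(P_i),P_i]$ can still generate $S$ modulo $[S,S]$. The paper resolves this case with a genuinely different tool: Green correspondence. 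It regards $V=\Omega_1(B)$ as an $\F_2[G]$-module where $G=\autf(B)$, $T=\Aut_S(B)\in\syl2{G}$, $H=N_G(T)$, $H_0=C_G(T)$, and shows $W=\Omega_1(A)$ is a direct $\F_2[H]$-summand of $V$ complementary to an indecomposable $B_0^*$. The crucial point, which replaces your search for a suitable $\Gamma$, is that the preimage in $\autf(S)$ of $H_0$ automatically normalizes all of $B,P_1,P_2,P_3$ (since $H_0$ centralizes $T\cong S/B$); so when $C_W(H_0)=1$, Lemma~\ref{l:SxA}(d) applies with that $\Gamma$. When $H_0$ acts trivially on $W$, a Green-correspondence and Galois-conjugate argument (via Lemma~\ref{vx-props}(b)) gives the contradiction. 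Without some such module-theoretic input there is no apparent way to manufacture a $\Gamma$ that both acts on $W$ without nonzero fixed points and normalizes a moving $P_i$, so the proposal as written does not prove the proposition.
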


\begin{proof} Set $S_0=\UT_4(2)$ and $S=S_0\times A$. We need to use the 
following properties of $S_0$ (see, e.g., \cite[Lemma C.4(a)]{O-rk4}): 
there is a unique abelian subgroup $B_0\le S_0$ of order 16, $B_0\cong 
(C_2)^4$, $B_0\nsg S_0$, and $S_0/B_0\cong \klfour$ acts on $B_0$ 
by permuting a basis freely. 

Assume $\calf$ is a reduced fusion system over $S$. Using Lemma 
\ref{l:S0*xA*}, we can assume that $A=[\autf(S),Z(S)]$ and hence 
is normalized by $\autf(S)$. 

Set $B=B_0\times A$. Let $P_1,P_2,P_3$ be the three subgroups of index $2$ 
in $S$ which contain $B$. Thus $P_i=B\gen{x_i}$, and $Z(P_i)=C_B(x_i)$, for 
some $x_1,x_2,x_3\in S_0\sminus B_0$.
We first claim that 
	\beqq \calp \defeq \bigl\{P\in\EE_\calf \,\big|\, P=C_S(V) 
	\textup{ where } V=\Omega_1(Z(P)) \bigr\}
	\subseteq\{B,P_1,P_2,P_3\} \,. \label{e:calp} \eeqq

Assume otherwise: then there is $P\in\calp$ such that $P\ngeq B$. Set 
$P_0=P\cap S_0$. If $P_0$ is elementary abelian, then $\rk(P_0)\le3$ since 
$B_0$ is the unique abelian subgroup of order 16 in $S_0$, so 
$\rk([x,P])=\rk([x,P_0])=1$ for $x\in N_S(P)\sminus P$ such that $x^2\in 
P$. By Proposition \ref{p:critical}(b), $|N_{S_0}(P_0)/P_0|=|N_S(P)/P|=2$. 
Since $P$ is centric in $S$, $P\not<B$. Hence either 
$P_0\le\gen{g,B_0}$ for some $g\in P_0\sminus B_0$, in which case 
$P_0=\gen{g,C_{B_0}(g)}$ and $B\le N_S(P)$; or there are $g,h\in P_0\sminus 
B_0$ with $gh^{-1}\notin B_0$, in which case $P_0\cap 
B_0\le C_{B_0}(\gen{g,h})=Z(S_0)$ and $Z_2(S_0)\le N_S(P)$. Thus 
$|N_{S_0}(P_0)/P_0|\ge4$ in all cases, which is a contradiction.

Assume $P_0$ is not elementary abelian, and set $V_0=\Omega_1(Z(P_0))$. 
Then $\rk(V_0)\le2$ since $B_0$ is the only abelian subgroup of $S_0$ of 
order $16$, $V_0\nleq B_0$ since $P_0=C_{S_0}(V_0)\ngeq B_0$, so 
$V_0=\gen{x,Z(S_0)}$ for some $x\in S_0\sminus B_0$ of order $2$. Then 
$P_0=C_{S_0}(V_0)=\gen{x,y,C_{B_0}(x)}$ for some $y\in C_{S_0}(x)\sminus 
B_0\gen{x}$, $\gen{x,C_{B_0}(x)}\cong (C_2)^3$, and hence $P_0\cong 
C_2\times D_8$ and $[N_S(P),P_0]=Z(S_0)=\Phi(P_0)$, which contradicts 
Proposition \ref{p:QcharP} (with $\Theta=1$). This proves \eqref{e:calp}.

Set
	\begin{align*} 
	V &= \Omega_1(B) & G &= \autf(B) & T &= \Aut_S(B)\in\syl2{G} \\
	W &= \Omega_1(A) & H &= N_G(T) & H_0 &= C_G(T) \,.
	\end{align*}
Regard $V$ as an $\F_2[G]$-module. Since each $\beta\in H=N_G(T)$ extends 
to some $\4\beta\in\autf(S)$ by the extension axiom, $W$ is an 
$\F_2[H]$-submodule of $V|_H$. For each $\alpha\in\autf(S)$, 
$\alpha(B)=B$ and $\alpha|_B\in H$ since $B$ is the unique abelian subgroup 
of index $4$ in $S$, and hence 
$W=[\autf(S),\Omega_1(Z(S))]=[H,\Omega_1(Z(S))]$ and 
$C_W(H)=C_W(\autf(S))=1$ by Lemma \ref{l:S0*xA*}. Also, $V=B_0\times W$ as 
$\F_2[T]$-modules, the $\F_2[T]$-linear projection $V\Right2{}W$ can be made 
$\F_2[H]$-linear by averaging over cosets in $H/T$, and hence $V=B_0^*\times W$ 
for some $\F_2[H]$-submodule $B_0^*$. Since $B_0^*|_T\cong B_0\cong\F_2[T]$ 
as $\F_2[T]$-modules, $B_0^*$ is indecomposable as an $\F_2[T]$-module, and 
hence also indecomposable as an $\F_2[H]$-module.

In particular, if $V$ is decomposable as an $\F_2[G]$-module, then $T$ 
acts trivially on all but one of its indecomposable direct factors. Let 
$\5W$ be an indecomposable direct factor with trivial $T$-action; then 
$\5W\le C_V(T)\le \Omega_1(Z(S))$. Also, $\5W|_H\cong \5W^*$ as 
$\F_2[H]$-modules for some $\5W^*\le W$, so $C_{\5W}(H)=1$, and hence 
$\5W=[H,\5W]\le[H,\Omega_1(Z(S))]=W$. Moreover, $\5W$ is normalized by 
$\autf(P)$ for each $P\in\{B,P_1,P_2,P_3,S\}$ since $B$ is characteristic 
in each of those subgroups. But this contradicts Lemma \ref{l:SxA}(a,b) and 
\eqref{e:calp}.

Thus $V$ is indecomposable as an $\F_2[G]$-module. Since $V|_H=B_0^*\times 
W$ where $T$ acts trivially on $W$ and $T\in\syl2G$, Lemma \ref{vx-props}(b) 
implies that $W$ 
is the $\F_2[H]$-Green correspondent of $V$ and is irreducible as an 
$\F_2[H/T]$-module. Set 
$\Gamma=\bigl\{\alpha\in\autf(S)\,\big|\, \alpha|_B\in H_0\bigr\}$; we 
already saw that each element of $H_0$ extends to an element of $\Gamma$. 
If $H_0$ acts nontrivially on $W$ (equivalently, if $\Gamma$ acts 
nontrivially on $W$), then $C_W(\Gamma)=C_W(H_0)=1$ ($C_W(H_0)$ is an 
$\F_2[H]$-submodule of $W$ since $H_0\nsg H$), and $\Gamma(P)=P$ for $P\in 
\{B,P_1,P_2,P_3\}$ by definition of $H_0$. This contradicts Lemma 
\ref{l:SxA}(d) and \eqref{e:calp}.

Thus $H_0$ acts trivially on $W$, while $H$ does not since 
$W=[H,\Omega_1(Z(S))]$. By definition, $H/H_0\ne1$ acts as a group of 
automorphisms of $T\cong \klfour$ of odd order, and hence has order $3$. So 
$\rk(W)=2$ since $W$ is irreducible. Also, $\F_4\otimes_{\F_2}W$ is 
decomposable as an $\F_4[H]$-module, and the two direct factors are Galois 
conjugate, so $\F_4\otimes_{\F_2}V$ contains the sum of their Green 
correspondents (Lemma \ref{vx-props}(b)) which also are Galois conjugate. 
Since $V$ is indecomposable, $\F_4\otimes_{\F_2}V$ is the direct sum 
of at most two indecomposable submodules, and hence is the sum of exactly 
two indecomposable modules which are Galois conjugate. In particular, 
$(\F_4\otimes_{\F_2}V)|_H$ is the direct sum of at least four 
indecomposable modules. But this is impossible, since $V|_H\cong 
B_0^*\times W$, and $\F_4\otimes_{\F_2}B_0^*\cong\F_4[T]$ as 
$\F_4[T]$-modules and hence is indecomposable. 
\end{proof}

%%%%%%%%%%%%%%%%%

\section{$2$-Groups of order at most 128}
\label{s:128}

\bigskip

Reduced fusion systems over $2$-groups of order at most $64$ have already 
been handled in earlier papers.  We begin by listing them.  Recall that 
$\UT_n(q)$ denotes the group of upper triangular $n\times n$ matrices over 
$\F_q$ with $1$'s on the diagonal.

\begin{Thm} \label{t:order64}
Let $\calf$ be a reduced, indecomposable fusion system over a nontrivial 
$2$-group $S$ of order $2^k\le64$.  Then one of the following holds:
\begin{enuma}  
\item $S\cong{}D_{2^k}$ for some $3\le k\le6$, and $\calf$ is isomorphic to 
the fusion system of $\PSL_2(q)$ where $q\equiv2^k\pm1$ (mod $2^{k+1}$).

\item $S\cong{}SD_{2^k}$ for some $4\le k\le6$, and $\calf$ is isomorphic 
to the fusion system of $\PSL_3(q)$ where $q\equiv2^{k-2}-1$ (mod 
$2^{k-1}$).

\item $S\cong{}C_4\wr{}C_2$, and $\calf$ is isomorphic to the fusion system 
of $\SL_3(5)$.

\item $S\cong{}\UT_4(2)$, and $\calf$ is isomorphic to the fusion system of 
$\SL_4(2)\cong A_8$ or of $\SU_4(2)\cong\PSp_4(3)$.

\item $S\cong{}\UT_3(4)$, and $\calf$ is isomorphic to the fusion system of 
$\PSL_3(4)$.

\item $S$ is of type $M_{12}$, and $\calf$ is isomorphic to the fusion 
system of $M_{12}$ or of $G_2(3)$.
\end{enuma}
In all cases, $\calf$ is tame. Also, the fusion system of each of the 
groups listed above is simple. 
\end{Thm}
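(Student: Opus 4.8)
The plan is to assemble the classification in Theorem~\ref{t:order64} from results already in the literature, and then to add the tameness and simplicity statements. First I would reduce to $S$ nonabelian: a saturated fusion system over an abelian $2$-group has $S$ itself normal, so $O_2(\calf)=S\ne1$ and such an $\calf$ is never reduced; hence a nontrivial $S$ supporting a reduced fusion system is nonabelian of order $2^k$ with $3\le k\le6$.

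Next, the reduced, indecomposable fusion systems over nonabelian $2$-groups of order at most $64$ have all been determined in earlier work, and I would invoke those classifications. The $2$-groups of maximal class ($D_{2^k}$, $SD_{2^k}$, $Q_{2^k}$) have rank $2$; the groups in lines (a)--(f) all have sectional rank at most $4$; and the finitely many remaining $2$-groups of order $\le64$ — each with a large elementary abelian subquotient — support no reduced indecomposable fusion system, by arguments in the style of Section~\ref{s:Q<|F} together with Lemma~\ref{l:1crit} and the transfer criteria. The classification over $2$-groups of sectional rank at most $4$ is carried out in \cite{AOV2} and \cite{O-rk4}, with \cite{OV2} supplying several of the individual computations; collecting these results yields exactly the list (a)--(f), with each $\calf$ identified there as the fusion system of the stated simple group(s).

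To finish the matching I would, line by line, name a finite simple group $G$ with $S\in\syl2{G}$ realizing $\calf$: the groups $\PSL_2(q)$ over $D_{2^k}$ and $\PSL_3(q)$ over $SD_{2^k}$ (the congruence conditions on $q$ being read off from the action of $N_G(S)$ on $S$, equivalently from Theorem~\ref{BMO1-ThA}), $\SL_3(5)$ over $C_4\wr C_2$, $\SL_4(2)\cong A_8$ and $\SU_4(2)\cong\PSp_4(3)$ over $\UT_4(2)$, $\PSL_3(4)$ over $\UT_3(4)$, and $M_{12}$ and $G_2(3)$ over the Sylow $2$-subgroup of $M_{12}$. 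That each such $G$ has the stated Sylow $2$-subgroup and realizes the stated fusion system is classical in the maximal-class cases and a finite verification in the others, carried out in the references above.

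Finally, tameness of each of these fusion systems is established in the cited papers (and, for the groups of Lie type, can alternatively be derived by the methods of \cite{AOV1}); and since each realizing group is a known finite simple group and each $\calf$ is reduced, $\calf$ is simple by Proposition~\ref{p:FS(G)-red}(d). The main obstacle is the classification step itself — showing that no $2$-group of order at most $64$ outside (a)--(f) supports a reduced indecomposable fusion system, and that over each group in (a)--(f) the reduced fusion systems are precisely those named. That is exactly the critical-subgroup analysis of Propositions~\ref{ess=>crit} and \ref{p:critical-criteria} combined with Alperin's fusion theorem (Proposition~\ref{p:AFT-E}), run case by case; it is the work that \cite{AOV2}, \cite{O-rk4}, and \cite{OV2} were written to do, and which the present paper deliberately does not repeat for orders $\le64$.
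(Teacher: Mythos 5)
Your overall approach coincides with the paper's: cite the earlier classifications (\cite[\S\,4.1]{AOV1}, \cite[Proposition 3.1 and Theorems 5.3, 5.4]{AOV2}, \cite[Propositions 4.2, 5.1, 6.4]{O-rk4}) to produce the list (a)--(f), then quote tameness and simplicity. Your extra opening remark that $S$ must be nonabelian (since $O_2(\calf)=S$ when $S$ is abelian) is correct and harmless, and your use of Proposition~\ref{p:FS(G)-red}(d) for simplicity is a legitimate alternative to the paper's citation of \cite[Theorem A]{O-rk4}.

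There is, however, a genuine gap in your tameness step. You assert that ``tameness of each of these fusion systems is established in the cited papers,'' and for the groups of Lie type this is indeed \cite[Theorem C]{BMO2}. But the fusion system of $M_{12}$ in case (f) is \emph{not} covered by those references (it is sporadic, not of Lie type, and $M_{12}$ is not among the groups treated for tameness in \cite{AOV1}). The paper must supply a new argument here: it cites \cite[Proposition 4.3(a,b)]{O-rk4} to see that $\mu_G\circ\kappa_G\colon\Out(M_{12})\to\Out(\calf)$ is an isomorphism, then uses \cite[Proposition 3.2]{AOV2} together with Proposition~\ref{p:newKer(mu)} to show $\Ker(\mu_G)=1$, from which $\kappa_G$ is an isomorphism and $\calf$ is tame. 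Without this argument (or an equivalent one), the claim ``$\calf$ is tame'' is unproved in case (f). If you patch in that argument, your proof is complete and matches the paper's.
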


\begin{proof} If $S$ is dihedral, semidihedral, or a wreath product 
$C_{2^n}\wr C_2$, then by \cite[\S\,4.1]{AOV1} or \cite[Proposition 
3.1]{AOV2}, $\calf$ is isomorphic to the fusion system of $\PSL_2(q)$ or 
$\PSL_3(q)$ for appropriate odd $q$, and we are in one of the cases (a), 
(b), or (c). If not, then $|S|=64$ by \cite[Theorem 5.3]{AOV2}, and so $S$ 
is isomorphic to $\UT_4(2)$ or $\UT_3(4)$ or is of type $M_{12}$ by 
\cite[Theorem 5.4]{AOV2}. The reduced fusion systems over these three 
groups are listed in \cite[Propositions 5.1, 6.4, \& 4.2]{O-rk4}, and we 
are in the situation of (d), (e), or (f).

Tameness for all of these fusion systems is shown in \cite[Theorem 
C]{BMO2}, except for that of $M_{12}$.  When $\calf=\calf_S(G)$ for 
$G=M_{12}$, then by \cite[Proposition 4.3(a,b)]{O-rk4}, the composite 
$\mu_G\circ\kappa_G$ is an isomorphism from $\Out(G)$ to $\Out(\calf)$.  
By \cite[Proposition 3.2]{AOV2}, there are at most two $\calf$-essential 
subgroups, of which only one ($R$ in the notation of \cite{AOV2} and $A_+$ 
in \cite[\S\,4]{O-rk4}) has noncyclic center.  Hence $\Ker(\mu_G)=1$ by 
Proposition \ref{p:newKer(mu)}, $\kappa_G$ is an isomorphism, and $\calf$ 
is tame. 

By \cite[Theorem A]{O-rk4}, the fusion systems of all of the groups listed 
above are simple.
\end{proof}

As part of the proof of Theorem \ref{t:order64}, we have also shown:

\begin{Lem} 
If $G$ is isomorphic to the sporadic simple group $M_{12}$, 
then (for $p=2$) $\kappa_G$ and $\mu_G$ are both isomorphisms. In 
particular, $G$ tamely realizes its fusion system.
\end{Lem}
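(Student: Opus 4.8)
The plan is to extract, and slightly repackage, the part of the proof of Theorem \ref{t:order64} that treats $M_{12}$. Fix $S\in\syl2{G}$ for $G=M_{12}$ and set $\calf=\calf_S(G)$; by definition $G$ realizes $\calf$ with $T=S$, so it remains only to analyze $\kappa_G$ and $\mu_G$. The first input is \cite[Proposition 4.3(a,b)]{O-rk4}, which states that the composite $\mu_G\circ\kappa_G\:\Out(G)\too\Out(\calf)$ is an isomorphism.

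The main step is to prove $\Ker(\mu_G)=1$ via Proposition \ref{p:newKer(mu)}, whose hypothesis is that at most one $P\in\EE_\calf$ has noncyclic center. By \cite[Proposition 3.2]{AOV2}, $\calf$ has at most two conjugacy classes of $\calf$-essential subgroups, and of these only one --- the subgroup called $R$ in \cite{AOV2}, equivalently $A_+$ in \cite[\S\,4]{O-rk4} --- has noncyclic center. So Proposition \ref{p:newKer(mu)} applies and gives $\Ker(\mu_G)=1$.

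It remains to combine these two facts. Since $\mu_G\circ\kappa_G$ is an isomorphism, $\mu_G$ is surjective; together with $\Ker(\mu_G)=1$ this makes $\mu_G$ an isomorphism, and then $\kappa_G=\mu_G^{-1}\circ(\mu_G\circ\kappa_G)$ is an isomorphism as well. In particular $\kappa_G$ is split surjective, so $G$ tamely realizes $\calf$ by Definition \ref{d:tame}. The only substantive ingredient here is the determination of the $\calf$-essential subgroups and their centers, and that is already done in \cite[Proposition 3.2]{AOV2} and \cite[\S\,4]{O-rk4}; everything else is purely formal, so no new difficulty arises.
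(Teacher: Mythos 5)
Your proposal is correct and follows essentially the same route as the paper's own argument, which is embedded in the proof of Theorem \ref{t:order64}: quote \cite[Proposition 4.3(a,b)]{O-rk4} for $\mu_G\circ\kappa_G$ being an isomorphism, quote \cite[Proposition 3.2]{AOV2} together with Proposition \ref{p:newKer(mu)} for $\Ker(\mu_G)=1$, and then conclude by pure diagram-chasing. The only cosmetic point: Proposition \ref{p:newKer(mu)} asks that at most one \emph{subgroup} in $\EE_\calf$ (not merely one $S$-conjugacy class) have noncyclic center, so one should note that the subgroup $R=A_+$ is normal in $S$ and hence unique in its class, which is implicit in both your wording and the paper's.
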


\newcommand{\mgn}[1]{\textbf{\##1}}

\newcommand{\mgnr}[1]{\smallskip\noindent\mgn{#1} \textbf{:} }

We now turn to groups of order $128$, where we begin our use of a computer 
search to identify 2-groups which potentially could support a reduced 
fusion system.

\begin{Thm} \label{t:order128}
Let $\calf$ be a reduced, indecomposable fusion system over a group $S$ of 
order $128$.  Then one of the following holds:
\begin{enuma}  
\item $S\cong{}D_{128}$, and $\calf$ is isomorphic to the fusion system of 
$\PSL_2(q)$ where $q\equiv\pm127$ (mod $256$).

\item $S\cong{}SD_{128}$, and $\calf$ is isomorphic to the fusion system of 
$\PSL_3(q)$ where $q\equiv31$ (mod $64$).

\item $S\cong{}C_{8}\wr{}C_2$, and $\calf$ is isomorphic to the fusion 
system of $\SL_3(9)$.

\item $S\cong{}D_8\wr{}C_2$, and $\calf$ is isomorphic to the fusion system 
of $A_{10}$ or of $\PSL_4(3)$.

\item $S$ is of type $M_{22}$, and $\calf$ is isomorphic to the fusion 
system of $M_{22}$, $M_{23}$, $\textup{McL}$, or $\PSL_4(5)$.

\item $S$ is of type $J_2$, and $\calf$ is isomorphic to the fusion system 
of $J_2$ or $J_3$.
\end{enuma}
In all cases, $\calf$ is tame. Also, the fusion system of each of the 
groups listed above is simple. 
\end{Thm}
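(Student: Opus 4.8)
The plan is to follow the strategy already used for Theorem~\ref{t:order64}: combine the structural results of Sections~\ref{s:background}--\ref{s:Q<|F} with a computer search. First I would treat the groups explicitly excluded from Proposition~\ref{p:search-criteria}. If $S\cong D_{128}$, $SD_{128}$, or $C_8\wr C_2$, then by \cite[\S\,4.1]{AOV1} (or \cite[Proposition~3.1]{AOV2}) every reduced fusion system over $S$ is isomorphic to that of $\PSL_2(q)$ for $q\equiv\pm127$ (mod $256$), of $\PSL_3(q)$ for $q\equiv31$ (mod $64$), or of $\SL_3(9)$, respectively; this yields cases (a), (b), (c).

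For the remaining possibilities I would run the Magma/GAP search over the $2328$ groups of order $128$, retaining only those that satisfy the necessary conditions in Proposition~\ref{p:search-criteria}. Together with $D_{128}$, $SD_{128}$, $C_8\wr C_2$ (which violate condition (a) or (b) but must be reinstated), exactly nine groups survive. Six of these are Sylow $2$-subgroups of simple groups. Of the other three, two are nonabelian $2$-groups with a nontrivial abelian direct factor, and hence admit no reduced fusion system by the appropriate proposition of Section~\ref{s:Q<|F}; the third is eliminated by a closer look at its potentially critical subgroups and their possible automizers (by machine, or by a computer-free argument using Lemmas~\ref{l:1crit} and \ref{l:usetrf2} and Proposition~\ref{Q<|F}). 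This leaves $D_8\wr C_2$, the Sylow $2$-subgroup of $M_{22}$, and the Sylow $2$-subgroup of $J_2$.

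The core of the proof --- and the step I expect to be the main obstacle --- is the complete enumeration of reduced fusion systems over each of these three groups $S$. For each, I would first compute (by machine, via the conditions of Proposition~\ref{p:critical-criteria}) the list of potentially critical subgroups $P$; then, for each $P$, determine which subgroups $\outf(P)\le\Out(P)$ can occur, i.e.\ those containing a strongly $2$-embedded subgroup with $\Out_S(P)$ as a Sylow $2$-subgroup; and finally use Alperin's fusion theorem (Proposition~\ref{p:AFT-E}) and the saturation axioms to build the candidate systems and discard those that fail to be saturated, fail to be reduced, or are decomposable --- the last two being handled by Proposition~\ref{Q<|F} and Lemma~\ref{l:usetrf2}, which rule out normal $2$-subgroups and normal subsystems of index $2$. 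Matching the survivors with the known simple groups that have the given Sylow $2$-subgroup (choosing one representative per family of Lie-type groups with isomorphic $2$-fusion system; cf.\ Theorem~\ref{BMO1-ThA}) should produce the systems of $A_{10}$ and of $\PSL_4(3)$ when $S\cong D_8\wr C_2$, of $M_{22}$, $M_{23}$, $\textup{McL}$, and $\PSL_4(5)$ when $S$ is of type $M_{22}$, and of $J_2$ and $J_3$ when $S$ is of type $J_2$ --- cases (d), (e), (f). The delicate point is completeness: one must verify that the enumeration of automizer data is exhaustive, that distinct choices either give isomorphic systems or genuinely distinct saturated ones, and that no exotic system appears.

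It remains to prove tameness and simplicity. Tameness of the $\PSL_2(q)$, $\PSL_3(q)$, and $\SL_3(9)$ systems is known (e.g.\ \cite[Theorem~C]{BMO2}); for the others, realized by a group $G$, I would show that $\mu_G\circ\kappa_G$ is split surjective onto $\Out(\calf)$ and that $\Ker(\mu_G)=1$ by Proposition~\ref{p:newKer(mu)} (after checking that at most one $\calf$-essential subgroup has noncyclic center), whence $\kappa_G$ is split surjective and $\calf$ is tamely realized by $G$. Finally, since each listed system is reduced and isomorphic to the $2$-fusion system of a known simple group, Proposition~\ref{p:FS(G)-red}(d) shows that each is simple.
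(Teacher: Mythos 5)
Your high-level outline (reinstate $D_{128}$, $SD_{128}$, $C_8\wr C_2$; run the search of Proposition~\ref{p:search-criteria}; handle each survivor) matches the paper, but the details of how the non-Sylow survivors are eliminated are wrong in a way that would derail the proof.

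The two surviving groups that ``split as products'' are $D_8\times D_{16}$ and $D_8\times SD_{16}$. Neither has a nontrivial \emph{abelian} direct factor --- both factors are nonabelian and indecomposable, so by Krull--Schmidt there is no decomposition $S_0\times A$ with $A\ne 1$ abelian. Consequently none of the propositions of Section~\ref{s:Q<|F} applies to them, and they are not eliminated by showing there is no reduced fusion system: each one \emph{does} carry reduced fusion systems (e.g.\ products of dihedral- or semidihedral-type systems). What the paper uses is \cite[Theorem~B]{O-split}, which says that every reduced fusion system over such a product is \emph{decomposable}, and these groups are then excluded because Theorem~\ref{t:order128} only concerns indecomposable $\calf$. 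This is a genuinely different mechanism from ruling out existence.

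For the remaining ``other'' group (\#1411, generated by two normal elementary abelian subgroups of rank $5$ intersecting in $Z(S)=[S,S]$ of rank $3$), Lemma~\ref{l:usetrf2} plays no role; the paper's argument rests on Proposition~\ref{p:AxE1E2} (which pins down $\EE_\calf$ to the two rank-$5$ subgroups), Lemma~\ref{l:HxA5} and Lemma~\ref{l:F2[A5]-mod} (to analyse the automizers), and a dedicated structural Lemma~\ref{type1411} producing a central involution $z_0$ that no $\calf$-automorphism can move, contradicting $O_2(\calf)=1$ via Proposition~\ref{Q<|F}. You would need these specific ingredients, not the transfer criterion.

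Finally, for the three $2$-groups supporting simple-group fusion systems ($D_8\wr C_2$, type $M_{22}$, type $J_2$), the paper does not re-enumerate saturated fusion systems from scratch as you propose; it cites \cite[Theorem~A]{O-rk4} and \cite[Theorems~4.8,~5.11]{OV2}, and then additionally has to show (via the focal subgroup theorem) that the extra groups $\PSSL_3(4)$, $\PGGL_3(4)$, $\PSL_3(4){:}\gen{\theta}$, $\PGL_3(4){:}\gen{\theta}$ appearing in those classifications have non-reduced fusion systems and so do not enter the list. Your route of re-deriving automizer data via Alperin's theorem is in principle viable but is much more work and is not what the paper does; more importantly, your plan as written does not account for the need to weed out those almost-simple groups. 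Tameness and simplicity you handle in a slightly different but acceptable way (Proposition~\ref{p:newKer(mu)}, Proposition~\ref{p:FS(G)-red}(d)), whereas the paper leans mostly on \cite[Theorem~C]{BMO2}, \cite[Propositions~4.3--4.5,~4.8]{AOV1}, and \cite[Theorem~A]{O-rk4}.
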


\begin{proof} If $S$ is dihedral, semidihedral, or a wreath product 
$C_{8}\wr C_2$, then by \cite[\S\,4.1]{AOV1} or \cite[Proposition 
3.1]{AOV2}, $\calf$ is isomorphic to the fusion system of $\PSL_2(q)$ or 
$\PSL_3(q)$ for appropriate odd $q$, and we are in one of the cases (a), 
(b), or (c). If not, then it satisfies conditions (a)--(h) in Proposition 
\ref{p:search-criteria}. By a computer search based on those criteria, this 
leaves the six possibilities for $S$ listed below, where $\#(-)$ 
denotes the Magma/GAP identification number.

Tameness for the fusion systems in (a,b,e,f) is shown in \cite[Propositions 
4.3--5]{AOV1}.  Tameness for the fusion system of $A_{10}$ was shown in 
\cite[Proposition 4.8]{AOV1}, and for the other fusion systems in (c,d) in 
\cite[Theorem C]{BMO2}. All of these fusion systems are simple by 
\cite[Theorem A]{O-rk4}.

\mgnr{928}  $S\cong{}D_8\wr{}C_2$.  By \cite[Theorem A]{O-rk4}, $\calf$ 
must be as in case (d).  

\mgnr{931}  $S$ is of type $M_{22}$.  Any reduced fusion system over $S$ is 
as in (e) by \cite[Theorem 5.11]{OV2}. (If $\calf$ is isomorphic to the 
fusion system of $G\cong\PSSL_3(4)$ or $\PGGL_3(4)$, then 
$\foc(\calf)=S\cap[G,G]<S$ by the focal subgroup theorem \cite[Theorem 
7.3.4]{Gorenstein}.)

\mgnr{934}  $S$ is of type $J_2$.  By \cite[Theorem 4.8]{OV2}, $\calf$ must 
be isomorphic to the fusion system of $J_2$ or of $J_3$, and so we are in 
the situation of case (f).  (If $\calf$ is isomorphic to the fusion system 
of $G\cong \PSL_3(4)\sd{}\gen{\theta}$ or $\PGL_3(4)\sd{}\gen{\theta}$, as 
defined in that theorem, then $\foc(\calf)=S\cap[G,G]<S$ by the focal 
subgroup theorem \cite[Theorem 7.3.4]{Gorenstein}.)

\mgnr{1411} This group has two normal elementary abelian subgroups 
$P_1,P_2$ of rank $5$, where $S=P_1P_2$ and $Z\defeq Z(S)=P_1\cap 
P_2=[S,S]$. Also, for $i=1,2$ and $x_i\in P_i\sminus Z$, $C_S(x_i)=P_i$, 
and thus $S$ satisfies the hypotheses of Proposition \ref{p:AxE1E2} and of 
Lemma \ref{type1411} below. Let $z_0\in Z$ be as in Lemma 
\ref{type1411}.

Let $\calf$ be a saturated fusion system over $S$ with $O_2(\calf)=1$.  By 
Proposition \ref{p:AxE1E2}, $\EE_\calf=\{P_1,P_2\}$. For $i=1,2$, by 
Lemma \ref{l:HxA5} and since $\Aut_S(P_i)\cong \klfour$, we have 
$\autf(P_i)=\Delta_i\times H_i$ for some $\Delta_i\cong A_5$ and some $H_i$ 
of odd order.  By Lemma \ref{l:F2[A5]-mod}(b), and since $\rk([x,P_i])=2$ 
for $x\in S\sminus P_i$ and $[S,P_i]=C_{P_i}(S)$ has rank $3$, 
$C_{P_i}(\Delta_i)=C_{P_i}(N_{\Delta_i}(\Aut_S(P_i)))$ has rank $1$ (and is 
contained in $Z=Z(S)$). For $\alpha\in N_{\Delta_i}(\Aut_S(P_i))\cong A_4$ 
of order $3$, $\alpha$ extends to an automorphism of $S$ by the extension 
axiom, so $C_Z(\alpha)=\gen{z_0}$ by Lemma \ref{type1411} and hence 
$C_{P_i}(\Delta_i)=\gen{z_0}$. By the same lemma, $z_0\in C_S(\autf(S))$. 
So $\gen{z_0}\nsg\calf$ by Proposition \ref{Q<|F}, which contradicts the 
assumption that $O_2(\calf)=1$.  In particular, there are no reduced fusion 
systems over $S$.

\mgnr{2011}  $S\cong{}D_8\times{}D_{16}$.  By \cite[Theorem B]{O-split}, 
$\calf$ is decomposable.

\mgnr{2013}  $S\cong{}D_8\times{}SD_{16}$.  By \cite[Theorem 
B]{O-split}, $\calf$ is decomposable.
\end{proof}

The following lemma was needed in the above proof (see group 
\#1411).

\begin{Lem} \label{type1411}
Let $S$ be a group of order $128$ generated by two normal elementary 
abelian subgroups $P_1,P_2\nsg S$ of rank $5$, where $Z(S)=P_1\cap 
P_2=[S,S]$ has rank $3$. Assume also that $[x_1,x_2]\ne1$ for $x_i\in 
P_i\sminus Z(S)$ ($i=1,2$). 
Then there exists a unique involution $z_0\in Z(S)$ which is not a 
commutator. Moreover, for each $\alpha\in\Aut(S)$, $\alpha(z_0)=z_0$, 
and $C_{Z(S)}(\alpha)=\gen{z_0}$ if for $i=1$ or $i=2$, 
$\alpha$ normalizes $P_i$ and induces 
an automorphism of order $3$ on $P_i/Z$.
\end{Lem}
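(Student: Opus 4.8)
The plan is to translate the whole statement into linear algebra over $\F_2$. Set $Z=Z(S)$; since $[S,S]=Z$ is central, $S$ has nilpotence class $2$, and since $P_1,P_2$ are abelian with $Z\le P_1\cap P_2$, the commutator induces a well-defined $\F_2$-bilinear pairing $\beta\:\4P_1\times\4P_2\to Z$, $\beta(x_1Z,x_2Z)=[x_1,x_2]$, where $\4P_i=P_i/Z$ has rank $2$ and $|Z|=8$ (bilinearity comes from the class-$2$ identities $[ab,c]=[a,c][b,c]$ etc.; well-definedness from centrality of $Z$). The hypothesis reads: $\beta(u,v)\ne1$ whenever $u\ne1\ne v$. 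Two elementary consequences I would record first: (1) for $g=x_1x_2$ with $x_i\in P_i$ one has $g^2=[x_2,x_1]=\beta(x_1Z,x_2Z)$, so $g$ can be an involution only if $x_1\in Z$ or $x_2\in Z$, i.e.\ only if $g\in P_1\cup P_2$; hence all involutions of $S$ lie in $P_1\cup P_2$; (2) a subgroup contained in a union $P_1\cup P_2$ of two subgroups lies in $P_1$ or in $P_2$, so any elementary abelian subgroup of $S$ of rank $5$ has order $32=|P_i|$ and must equal $P_1$ or $P_2$; consequently $\Aut(S)$ permutes $\{P_1,P_2\}$.

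Next I would study $\beta$. Its linearization $\5\beta\:\4P_1\otimes\4P_2\to Z$ has image the subgroup generated by all $\beta(u,v)$, namely $[S,S]=Z$, so $\5\beta$ is onto; since $\dim(\4P_1\otimes\4P_2)=4>3=\dim Z$, $\ker\5\beta$ is a line $\Gen w$. Were $w$ of rank $1$, say $w=u\otimes v$ with $u,v\ne1$, we would get $\beta(u,v)=1$, against the hypothesis; so $w$ is nondegenerate. Pick bases $\4a_1,\4a_2$ of $\4P_1$ and $\4b_1,\4b_2$ of $\4P_2$ with $w=\4a_1\otimes\4b_1+\4a_2\otimes\4b_2$, and put $c_{ij}=\beta(\4a_i,\4b_j)$; then $c_{11}=c_{22}$ (because $\5\beta(w)=1$) and there is no further relation among the $c_{ij}$ (because $\ker\5\beta=\Gen w$), so $\{c_{11},c_{12},c_{21}\}$ is an $\F_2$-basis of $Z$. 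Evaluating $\beta$ on the nine pairs of nonzero arguments then produces exactly the six elements $c_{11},c_{12},c_{21},c_{11}c_{12},c_{11}c_{21},c_{12}c_{21}$ of $Z$, so exactly one involution of $Z$ — namely $z_0\defeq c_{11}c_{12}c_{21}$ — fails to be of the form $[x_1,x_2]$ with $x_i\in P_i$; this gives existence and uniqueness of $z_0$. Moreover $[x_2,x_1]=[x_1,x_2]$ in the elementary abelian group $Z$, so the set $\{[x_1,x_2]\mid x_i\in P_i\}$ is unchanged under interchanging $P_1$ and $P_2$; since every $\alpha\in\Aut(S)$ fixes $Z$ and (by (2)) either fixes both $P_i$ or swaps them, $\alpha$ preserves this set, whence $\alpha(z_0)=z_0$.

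For the final assertion, suppose $\alpha\in\Aut(S)$ normalizes $P_1$ (the case of $P_2$ is symmetric) and $\alpha_1\defeq\alpha|_{\4P_1}$ has order $3$; by (2), $\alpha$ then also normalizes $P_2$, and with $\alpha_2=\alpha|_{\4P_2}$ the identity $\alpha([x_1,x_2])=[\alpha x_1,\alpha x_2]$ shows $\5\beta$ intertwines $\alpha_1\otimes\alpha_2$ on the source with $\alpha|_Z$ on the target. In particular $\alpha_1\otimes\alpha_2$ stabilizes $\ker\5\beta=\Gen w$ and so fixes $w$; written in the chosen bases (where $w$ has matrix the identity) this says the matrix of $\alpha_2$ is the inverse transpose of that of $\alpha_1$, so $\alpha_2$, and hence $\alpha_1\otimes\alpha_2$, has order $3$. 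Thus $\alpha_1\otimes\alpha_2$ acts semisimply on $\4P_1\otimes\4P_2$ (its minimal polynomial divides the squarefree polynomial $x^3-1$ over $\F_2$) with eigenvalues cube roots of unity, and the two primitive cube roots $\omega,\omega^2$, being Galois-conjugate over $\F_2$, occur with equal multiplicity; hence the fixed subspace of $\alpha_1\otimes\alpha_2$ has even dimension, namely $0$, $2$, or $4$. It is nonzero, as it contains $w$, and it is proper, since $\alpha_1$ is nontrivial (if $\alpha_1\otimes\alpha_2$ were the identity, restriction to $\4P_1\otimes\Gen v$ for $v\ne1$ would force $\alpha_1=1$); so it is exactly $2$-dimensional. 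Finally, the isomorphism $(\4P_1\otimes\4P_2)/\Gen w\cong Z$ induced by $\5\beta$ carries the action of $\alpha_1\otimes\alpha_2$ on the quotient to $\alpha|_Z$, and the fixed subspace on the quotient is $1$-dimensional: it contains the $1$-dimensional image of the $2$-dimensional fixed subspace, and it contains nothing more, because if $\alpha_1\otimes\alpha_2$ sent a representative $m$ to $m+w$ then $(\alpha_1\otimes\alpha_2)^2(m)=m$, which with $(\alpha_1\otimes\alpha_2)^3=1$ forces $(\alpha_1\otimes\alpha_2)(m)=m$, a contradiction. Therefore $C_Z(\alpha)$ is $1$-dimensional, and since $z_0\in C_Z(\alpha)$ by the previous paragraph, $C_Z(\alpha)=\Gen{z_0}$.

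The step I expect to be the real obstacle is the last one — controlling the order-$3$ operator $\alpha_1\otimes\alpha_2$ precisely: first showing its fixed space on $\4P_1\otimes\4P_2$ is exactly two-dimensional (this is where nondegeneracy of $w$, forced by the hypothesis $\beta(u,v)\ne1$, is essential, via the relation it imposes between $\alpha_1$ and $\alpha_2$), and then checking that this fixed space passes correctly to a one-dimensional fixed space on $Z\cong(\4P_1\otimes\4P_2)/\Gen w$. Everything else is commutator calculus in a class-$2$ group together with dimension bookkeeping.
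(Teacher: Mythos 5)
Your proof is correct, and it takes a genuinely different, more structural route than the paper's. The paper's argument is elementary and hands-on: it picks coset representatives $g_1,g_2,g_3\in P_1\sminus Z$ and $h_1,h_2,h_3\in P_2\sminus Z$, applies a pigeonhole argument to the nine nontrivial commutators $[g_i,h_j]$ (only seven nontrivial targets in $Z$, so two coincide), uses the hypothesis to rule out a coincidence with a shared index, relabels so that $[g_1,h_1]=[g_2,h_2]$, and reads off $z_0=z_1z_2z_3$ by hand; the final claim follows because an order-$3$ automorphism cyclically permutes the three planes $[g_i,P_2]$, hence the three lines $\gen{z_i}$. You instead linearize the commutator pairing to a surjection $\5\beta\colon\4P_1\otimes\4P_2\to Z$ with $1$-dimensional kernel $\gen{w}$, observe that the hypothesis is exactly the condition that $w$ has tensor rank $2$ (this packages the paper's ``no coincidence with a shared index'' step), and identify $z_0$ intrinsically as the unique nonzero value of $\5\beta$ missed by the cone of decomposable tensors; for the last claim you compute the fixed space of the order-$3$ operator $\alpha_1\otimes\alpha_2$ via semisimplicity of $\F_2[C_3]$ and Galois-conjugacy of the nontrivial cube roots of unity, then pass to the quotient by $\gen{w}$. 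Your version avoids coordinate choices and makes the origin of $z_0$ transparent as a kernel/cokernel phenomenon for $\5\beta$; the paper's version is lighter machinery, with the final step reduced to the one-line observation that a $3$-cycle on three lines spanning $Z$ has fixed space $\gen{z_1z_2z_3}$. Both are sound.
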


\begin{proof} Set $Z=Z(S)$. 
Fix representatives $g_1,g_2,g_3\in P_1\sminus Z$ and $h_1,h_2,h_3\in 
P_2\sminus Z$ for the nontrivial cosets in $P_1/Z$ and $P_2/Z$. 
For each $i,j\in\{1,2,3\}$, $1\ne[g_{i},h_{j}]\in Z$, and these nine 
involutions generate $[S,S]=Z$. Since $|Z\sminus1|=7$, there must be two 
distinct pairs of indices $(i,j)$ and $(k,\ell)$ such that 
$[g_{i},h_{j}]=[g_{k},h_{\ell}]$. If $i=k$, then 
$[g_{i},h_{j}h_{\ell}]=[g_{i},h_{j}][g_{i},h_{\ell}]=1$, 
which is impossible since $h_{j}h_{\ell}\notin Z$. So $i\ne k$, $j\ne 
\ell$ by a similar argument, and without loss of generality, we can assume 
that $(i,j)=(1,1)$ and $(k,\ell)=(2,2)$. Set 
$z_1=[g_{1},h_{1}]=[g_{2},h_{2}]$, and also 
	\[ z_2=[g_{1}g_{2},h_{1}]=[g_{2},h_{1}h_{2}]
	\quad\textup{and}\quad
	z_3=[g_{1},h_{1}h_{2}]=[g_{1}g_{2},h_{2}]. \]
Thus $[g_{1},P_2]=\gen{z_1,z_3}$, $[g_{2},P_2]=\gen{z_1,z_2}$, 
$[g_{3},P_2]=[g_{1}g_{2},P_2]=\gen{z_2,z_3}$, and 
$Z=[S,S]=\gen{z_1,z_2,z_3}$. Set $z_0=z_1z_2z_3$: the unique involution in 
$Z$ that is not a commutator. Then $z_0$ is fixed by each 
$\alpha\in\Aut(S)$. If $\alpha$ acts with order $3$ on $P_1/Z$, then 
it normalizes $P_2$ since $P_1$ and $P_2$ are the only elementary abelian 
subgroups of rank $5$, so $\alpha$ permutes cyclically the subgroups 
$[g_i,P_2]$ for $i=1,2,3$, and hence 
$C_Z(\alpha)=\gen{z_0}$. A similar argument applies if $\alpha$ acts with 
order $3$ on $P_2/Z$.
\end{proof}

\bigskip

\section{Groups of order 256}
\label{s:256}

\bigskip

The following notation is used in this section to describe certain 
semidirect products $H\sd{}K$. A superscript $\lambda$ (for $\lambda\in\Z$) 
over the ``$\rtimes$'' means that one generator of $K$ acts on $H$ via 
$(g\mapsto g^\lambda)$, while a superscript ``$t$'' means that a generator 
acts by exchanging two factors (or central factors) of $H$. Thus, for 
example, in (d) below, $(C_8\times C_8)\sd{-1,t}(\klfour)$ means that one of the 
factors $C_2$ acts by inverting $C_8\times C_8$, while the other acts by 
exchanging the two $C_8$'s. 

Whenever we list potentially critical subgroups of $S$ (Definition 
\ref{d:p.crit.}), they were found using computer computations based 
on the criteria in Proposition \ref{p:critical-criteria}.

\begin{Thm} \label{t:order256}
Let $\calf$ be a reduced, indecomposable fusion system over a group $S$ of 
order $256$.  Then one of the following holds:
\begin{enuma}  
\item $S\cong{}D_{256}$, and $\calf$ is isomorphic to the fusion system of 
$\PSL_2(q)$ where $q\equiv\pm255$ (mod $512$).

\item $S\cong{}SD_{256}$, and $\calf$ is isomorphic to the fusion system of 
$\PSL_3(q)$ where $q\equiv63$ (mod $128$).

\item $S\cong(Q_{16}\times_{C_2}Q_{16})\sd{t}C_2$, and $\calf$ is 
isomorphic to the fusion system of $\PSp_4(7)$.

\item $S\cong(C_8\times{}C_8)\sd{-1,t}(\klfour)$, and $\calf$ is 
isomorphic to the fusion system of $G_2(7)$. 

\item $S$ is of type $\Ly$, and $\calf$ is isomorphic to the fusion 
system of Lyons' sporadic group.

\item $S$ is of type $\Sp_4(4)$, and $\calf$ is isomorphic to the fusion 
system of $\Sp_4(4)$. 

\end{enuma}
In all cases, $\calf$ is tame.  Also, the fusion system of each of the 
groups listed above is simple.
\end{Thm}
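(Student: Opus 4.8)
The plan is to proceed exactly as in the proofs of Theorems \ref{t:order64} and \ref{t:order128}. If $S$ is dihedral or semidihedral, then by \cite[\S\,4.1]{AOV1} or \cite[Proposition 3.1]{AOV2}, $\calf$ is isomorphic to the fusion system of $\PSL_2(q)$ or $\PSL_3(q)$ for a suitable odd prime power $q$, giving cases (a) and (b); note that there is no wreathed group $C_{2^n}\wr C_2$ of order $256$, since $|C_{2^n}\wr C_2|=2^{2n+1}$. Otherwise $S$ is nonabelian (a reduced fusion system over an abelian $2$-group $S$ would have $O_2(\calf)=S\ne1$), so by Proposition \ref{p:search-criteria} it satisfies all the conditions listed there. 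A computer search through the $56092$ groups of order $256$, imposing those conditions and using the criteria of Proposition \ref{p:critical-criteria} to locate the potentially critical subgroups (Definition \ref{d:p.crit.}), leaves only a short list of candidates, which I would label by their Magma/GAP identification numbers as in the previous sections.

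I would then treat the candidates one at a time. For each $S$ on the list, first determine its potentially critical subgroups and, via the extension axiom and Proposition \ref{p:AFT-E}, the possible groups $\autf(P)$ for $P\in\EE_\calf$; by Alperin's fusion theorem these together with $\autf(S)$ determine $\calf$. Candidates which are direct products $S_0\times A$ with $A\ne1$ abelian --- or which more generally satisfy the hypotheses of one of Propositions \ref{p:AxE1E2}, \ref{p:AxS-rk5}, \ref{2^k.J2}, \ref{2^k.UT4(2)} of Section \ref{s:Q<|F} --- are disposed of by that result, which shows no reduced fusion system exists; candidates which are products $S_1\times S_2$ of two nonabelian $2$-groups are handled by \cite[Theorem B]{O-split}; and for any ``exceptional'' candidate that is neither a product nor a Sylow $2$-subgroup of a simple group I would argue directly, as with group \#1411 in the proof of Theorem \ref{t:order128}, that every saturated fusion system $\calf$ over $S$ with $O_2(\calf)=1$ would have a nontrivial subgroup normal in $\calf$ by Proposition \ref{Q<|F}, forcing $O_2(\calf)\ne1$. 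The genuinely realizable candidates are the groups of type $\PSp_4(7)$, $G_2(7)$, $\Ly$, and $\Sp_4(4)$ in (c)--(f): for each of these I would show $\EE_\calf$ is one of finitely many configurations, compute the admissible groups $O^{2'}(\autf(P))$ (which turn out to be $\SL_2$-type groups acting on natural-type modules, or the automorphism data forced by the $\Ly$-structure), eliminate the combinations that violate saturation or give $O_2(\calf)\ne1$ or $O^2(\calf)\ne\calf$, and check that the surviving configuration yields, up to isomorphism, precisely the fusion system of the stated simple group.

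Tameness then splits into cases: the dihedral and semidihedral fusion systems are tame by \cite{AOV1} or \cite{BMO2}, the fusion systems in (c)--(f) coming from groups of Lie type should be covered by \cite[Theorem C]{BMO2}, and that of $\Ly$ by the corresponding sporadic-group result; for any group $G$ not so covered I would verify that $\mu_G\circ\kappa_G$ is split surjective and that $\mu_G$ is injective, the latter via Proposition \ref{p:newKer(mu)} after checking that at most one $\calf$-essential subgroup has noncyclic center. Simplicity of the listed fusion systems follows from Proposition \ref{p:FS(G)-red}(d): in each case $G$ and $S$ satisfy the hypotheses of \ref{p:FS(G)-red}(b), and $G$ is of Lie type in defining characteristic $2$ (for $\Sp_4(4)$), a sporadic group (for $\Ly$), or a group of Lie type in odd characteristic for which one argues directly via \cite[Theorem 1]{Foote} and \cite[Corollary I.4.7(a)]{AKO}, as in the proof of \ref{p:FS(G)-red}(d), that $\calf$ has no nontrivial proper strongly closed subgroup.

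The main obstacle will be the hands-on analysis of the groups of type $\PSp_4(7)$ and $G_2(7)$, which are not of a kind previously treated in this project: there one must pin down all admissible $\outf(P)$ for the several critical subgroups, discard the combinations that fail saturation or produce $O_2(\calf)\ne1$, and prove uniqueness up to isomorphism of the fusion system that survives. The group of type $\Ly$ is also delicate, since its critical subgroups are comparatively large and their automorphism groups correspondingly intricate; and the bookkeeping needed to confirm that the computer search has not overlooked a candidate must be done with care.
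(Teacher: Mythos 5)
Your overall architecture matches the paper's: dispose of the dihedral/semidihedral cases by prior work, run the computer search from Proposition \ref{p:search-criteria}, kill the product cases with \cite[Theorem B]{O-split} and the Section \ref{s:Q<|F} propositions, and treat the remaining candidates individually. But the proposal diverges substantially from the paper's proof in how much new work it plans to do. For the groups of type $\PSp_4(7)$, $G_2(7)$, and $\Ly$ (and also for the semidihedral-related group $(C_8\times C_8)\sd{3,t}(\klfour)$ and the ``$\SD_{16}$-central-wreath'' group) the paper simply cites \cite[Propositions 4.2, 5.6, 6.6]{O-rk4}, where the classification of reduced fusion systems over these $2$-groups was already carried out; likewise the simplicity of the fusion systems in cases (a)--(e) is quoted from \cite[Theorem A]{O-rk4}, not rederived. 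You propose instead to reconstruct $\EE_\calf$ and the admissible $\outf(P)$ for these groups by hand, which is a far larger undertaking than anything the paper does in this section and is not spelled out at a level where one could check it works (the $G_2(7)$ and $\PSp_4(7)$ analyses occupy several pages of \cite{O-rk4}). The only genuinely new uniqueness argument in Section \ref{s:256} is Proposition \ref{p:S4(4)} for $\Sp_4(4)$, and you correctly anticipate that one needs a Lie-type/module-theoretic argument there.

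There is also one candidate you underestimate: the search returns a group (Magma/GAP number $6331$, a Sylow $2$-subgroup of $2\cdot M_{22}$) which is neither a product nor of the shape handled by Propositions \ref{p:AxE1E2}--\ref{2^k.UT4(2)}, nor is it treated in \cite{O-rk4}. You lump such groups with the \#1411-style one-paragraph arguments, but the paper's analysis of \#6331 is a seven-step case argument (steps (i)--(vii)) showing $O_2(\calf)\ne1$ by carefully tracking which of the five conjugacy classes of potentially critical subgroups can be essential and exhibiting an explicit $\calf$-normal subgroup in each surviving configuration. That step cannot be waved away; it needs real work. On tameness your proposal is essentially right: everything except $\Ly$ is covered by \cite[Theorem C]{BMO2}, and $\Ly$ is handled exactly as you suggest via $\Out(\calf)=1$, Proposition \ref{p:newKer(mu)}, and $\Out(G)=1$.
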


\begin{proof} If $S$ is dihedral or semidihedral, then by 
\cite[\S\,4.1]{AOV1}, $\calf$ is isomorphic to the fusion system of 
$\PSL_2(q)$ or $\PSL_3(q)$ for appropriate odd $q$, and we are in one of 
cases (a) or (b). If not, then since a wreath product of the form 
$C_{2^n}\wr C_2$ cannot have order $2^8$, $S$ satisfies conditions (a)--(h) 
in Proposition \ref{p:search-criteria}. By a computer search based on those 
criteria, $S$ must be one of the $18$ groups listed below, where 
$\#(-)$ denotes the Magma/GAP identification number.

All of the fusion systems listed in the theorem are tame by \cite[Theorem 
C]{BMO2}, except for the fusion system of Lyons's sporadic group, whose 
tameness is shown separately (see group \#6665 below). If $\calf$ is the 
fusion system of $\Sp_4(4)$, then $\calf$ is simple by Proposition 
\ref{p:FS(G)-red}(d). All of the other fusion systems listed above are 
simple by \cite[Theorem A]{O-rk4}. 

\smallskip

\noindent \mgn{12955}, \mgn{12957}, \mgn{12965}, \mgn{15421}, \mgn{26833}, 
\mgn{26835}, \mgnr{55683} In the first three cases, 
$S\cong{}D_{16}\times{}D_{16}$, $D_{16}\times\SD_{16}$, or 
$\SD_{16}\times\SD_{16}$, respectively.  In the last four cases, $S\cong 
D_8\times S_2$ where $S_2\cong C_4\wr C_2$, $D_{32}$, $\SD_{32}$, or 
$\klfour\times D_8$, respectively.  All of these groups satisfy the 
hypotheses of \cite[Theorem B]{O-split}, and hence every reduced fusion 
system over any of them is decomposable.

\mgnr{5298}  $S\cong(C_8\times{}C_8)\sd{-1,t}(\klfour)$.  By \cite[Proposition 
4.2(c)]{O-rk4}, every reduced fusion system over $S$ is isomorphic to the fusion 
system of $G_2(7)$.

\mgnr{5352}  $S\cong(C_8\times{}C_8)\sd{3,t}(\klfour)$.  By \cite[Proposition 
4.2(a)]{O-rk4}, there are no reduced fusion systems over $S$.

\mgnr{6331} Here, $S=V_1V_2\gen{x}$, where $V_1,V_2\nsg S$ are elementary 
abelian of rank $5$, $\5Z\defeq Z(V_1V_2)=V_1\cap V_2=[V_1,V_2]$ has rank 
$3$, $[v_1,v_2]\ne1$ for $v_i\in V_i\sminus \5Z$, $x^2=1$, and 
$[x,V_i]\nleq \5Z$ for $i=1,2$. In particular, $V_1V_2$ satisfies the 
hypotheses of Lemma \ref{type1411}, and $S/V_i\cong D_8$ for $i=1,2$. The 
group $S$ is, in fact, a Sylow 2-subgroup of $2\cdot M_{22}$, although we 
do not use that here.

For $i=1,2$, choose $u_i\in[x,V_i]\sminus \5Z$. Thus $[x,u_i]=1$ since 
$x^2=1$, and $u_{3-i}V_i$ generates the center of $S/V_i\cong D_8$. So the 
conjugation morphisms 
$c_x$ and $c_{u_{3-i}}$ commute in $\Aut(V_i)$. If $[x,\5Z]=1$, then 
for $v_2\in V_2\sminus\5Z\gen{u_2}$, 
$[\9xv_2,u_1]={}^x[v_2,u_1]=[v_2,u_1]$, so $[u_2,u_1]=[[x,v_2],u_1]=1$, 
which contradicts the above remarks about $V_1$ and $V_2$. Thus 
$[x,\5Z]\ne1$, and since $\rk(\5Z)=3$ and $x^2=1$, $[x,\5Z]<C_{\5Z}(x)$ 
have rank $1$ and $2$, respectively. Since each nontrivial coset in 
$S/V_i$ is represented (up to conjugacy) by an element of $V_{3-i}$, or by 
$x$, or by some $g$ with $g^2\in u_{3-i}V_i$, we have $\rk(C_{V_i}(g))\le3$ 
for $g\in{}S{\sminus}V_i$ ($i=1,2$). Also, 
$C_{V_i}(\gen{x,u_{3-i}})=C_{\5Z}(x)$ has rank $2$, and so 
	\beqq \textup{$V\le S$ elementary abelian of rank $5$ \quad 
	$\implies$ \quad $V=V_1$ or $V_2$.} \label{e:Vi_unique} \eeqq

For $i=1,2$, define 
	\[ Q_i=V_i\gen{x} \qquad\textup{and}\qquad 
	R_i=N_S(Q_i)=V_i\gen{x,u_{3-i}}\,. \]
By computer computations, there 
are five $S$-conjugacy classes of potentially critical subgroups in 
$S$: $V_1V_2$, subgroups $S$-conjugate to the 
$Q_i$, and the $R_i$. (It is easy to see that these are the only 
potentially critical subgroups containing $V_1$ or $V_2$; the hard part is 
to show that there are no others.) 

Assume there is a reduced fusion system $\calf$ over $S$.  Then the 
following hold. 
\begin{enumi}

\item For $i=1,2$, $V_i$ is normalized by $\autf(V_1V_2)$ and by 
$\autf(S)=\Inn(S)$:

By (1), $\alpha(V_i)=V_i$ for each $\alpha\in
\autf(V_1V_2)$ of odd order. This also holds for $\alpha\in
\Aut_S(V_1V_2)$ since $V_i\nsg S$. The Sylow axiom implies that
$\Aut_S(V_1V_2)$ is a Sylow $2$-subgroup of $\autf(V_1V_2)$, so
$\alpha(V_i)=V_i$ for all $\alpha\in \autf(V_1V_2)$.

Each $\alpha\in\autf(S)$ restricts to an element of $\autf(V_1V_2)$, hence 
normalizes $V_1$ and $V_2$, and normalizes each of the subgroups in the 
chain 
	\[ \Phi(S) = \5Z\gen{u_1,u_2} < V_1\gen{u_2} < V_1V_2 < S \,. \]
Since each of these has index $2$ in the following, $\autf(S)$ is a 2-group 
by Lemma \ref{l:mod-Fr}, and hence $\autf(S)=\Inn(S)$ by the Sylow axiom.

\item For $i=1,2$, if $R_i\in\EE_\calf$, then $Q_i\notin\EE_\calf$, and 
$Z(R_i)=C_{\5Z}(x)=Z(S)$ is centralized by $\autf(R_i)$:

To see the last claim, note first that $Z(R_i)=C_{\5Z}(x)\cong \klfour$. If $\alpha\in\autf(R_i)$, then $\alpha(Z(R_i))=Z(R_i)$, and 
$\alpha|_{Z(R_i)}=\alpha|_{Z(S)}$ extends to an automorphism in $\autf(S)$ 
by the extension axiom.  Since $\autf(S)=\Inn(S)$ by (i), 
$\alpha|_{Z(R_i)}=\Id$.  

Consider the following chain of subgroups characteristic in $R_i$:
	\[ [R_i,R_i] = [u_{3-i},V_i]\gen{u_i} < [R_i,R_i]Z(R_i) < V_i < R_i. \]
Each of the first two inclusions is of index $2$, while $R_i/V_i\cong 
\klfour$. By Lemma \ref{l:mod-Fr}, the kernel of the natural 
homomorphism from $\Aut(R_i)$ to $\Aut(R_i/V_i)\cong\Sigma_3$ is a 
$2$-group, and since $\outf(R_i)$ has a strongly 2-embedded subgroup, it 
must be isomorphic to $\Sigma_3$. Hence $Q_i=V_i\gen{x}$ is 
$\calf$-conjugate to $V_i\gen{u_{3-i}}\nsg S$, so $Q_i$ is not fully 
normalized in $\calf$, and hence is not in $\EE_\calf$.

\item For $i=1,2$, either $R_i\in\EE_\calf$ or $Q_i\in\EE_\calf$, but not 
both:  

If neither $R_i\notin\EE_\calf$ nor $Q_i\notin\EE_\calf$, then 
$V_{3-i}\nsg\calf$ by Proposition \ref{Q<|F}, since $V_{3-i}$ is 
characteristic in $Q_{3-i}$ and $R_{3-i}$ by \eqref{e:Vi_unique}, and is 
normalized by $\autf(V_1V_2)$ and $\autf(S)$ by (i).  By (ii), $R_i$ and 
$Q_i$ cannot both be in $\EE_\calf$.

\item Either $R_1\in\EE_\calf$ or $R_2\in\EE_\calf$:

By \eqref{e:Vi_unique}, for $i=1,2$, $V_i$ is characteristic of 
index two in $Q_i$, so $[\autf(Q_i),Q_i]\le V_i$. Recall that 
$\autf(S)=\Inn(S)$ by (i). Hence if $R_1\notin\EE_\calf$ and 
$R_2\notin\EE_\calf$, then 
$\Gen{[\autf(P),P]\,\big|\,P\in\EE_\calf}\le V_1V_2<S$, so 
$O^2(\calf)\ne\calf$ by Lemma \ref{l:foc=S}, contradicting the assumption 
that $\calf$ is reduced.

\item If $R_1,R_2\in\EE_\calf$, then $O_2(\calf)\ne1$: 

By (ii), $\EE_\calf\subseteq\{R_1,R_2,V_1V_2\}$, and $\autf(R_i)$ 
centralizes $Z(S)$ for $i=1,2$. By Lemma \ref{type1411}, there is 
$1\ne z_0\in C_{\5Z}(\autf(V_1V_2))\le C_{\5Z}(\autf(S))$, and in 
particular, $z_0\in Z(S)$. Hence $1\ne\gen{z_0}\nsg\calf$ 
by Proposition \ref{Q<|F}, and $O_2(\calf)\ne1$.

\item If $Q_i\in\EE_\calf$ ($i=1$ or $2$), then there is $1\ne W_i\le Z(S)$ 
that is normalized by $\autf(V_i)$:

By Lemma \ref{l:G-V5}, and since $2\big||\autf(V_i)|$ and 
$\autf(V_i)<\Aut(V_i)$ by the Sylow axiom, there is a proper subgroup 
$1\ne{}W_i<V_i$ which is normalized by $\autf(V_i)$.  As an 
$\F_2[S/V_i]$-module, $V_i/W_i$ surjects onto $C_2$ with the trivial 
action. Hence $W_i\le[S,V_i]=\5Z\gen{u_i}$ since $\5Z\gen{u_i}$ has index 
$2$ in $V_i$.

We have seen that $\Phi(Q_i)=[x,V_i]$ and $Z(Q_i)=C_{V_i}(x)$ have rank $2$ 
and $3$, respectively, and $[x,V_i]<C_{V_i}(x)$ since $c_x$ has order $2$ 
in $\Aut(V_i)$. So by Lemma \ref{l:mod-Fr} again, applied to the chain 
$\Phi(Q_i)<Z(Q_i)<V_i<Q_i$, each $\Id\ne\alpha\in\autf(Q_i)$ of odd order 
acts irreducibly on $V_i/Z(Q_i)\cong \klfour$. Hence $W_iZ(Q_i)=Z(Q_i)$ or 
$V_i$. Since $W_iZ(Q_i)\le\5Z\gen{u_i}<V_i$, we conclude that $W_i\le 
Z(Q_i)=C_{V_i}(x)$.

Since $W_i$ is normalized by $\Aut_S(V_i)\le\autf(V_i)$, it is normal in 
$S$. The cosets $xV_i$ and $u_{3-i}xV_i$ are conjugate in $S/V_i\cong D_8$, 
and hence $W_i\le C_{V_i}(\gen{x,u_{3-i}})=C_{\5Z}(x)=Z(S)$.

\item If $Q_i,R_{3-i}\in\EE_\calf$ for $i=1$ or $i=2$, then 
$O_2(\calf)\ne1$:

By (vi), there is $1\ne W_i\le Z(S)$ that is normalized by $\autf(V_i)$, 
and hence by $\autf(Q_i)$. Also, $W_i$ is normalized by $\autf(V_1V_2)$ by 
(i), and is centralized by $\autf(R_{3-i})$ by (ii). Since each 
$\calf$-essential subgroup is $S$-conjugate to $V_1V_2$, $Q_i$, or 
$R_{3-i}$ by (iii) (and since $\autf(S)=\Inn(S)$), $W_i\nsg \calf$ by 
Proposition \ref{Q<|F}, and hence $O_2(\calf)\ne1$.

\end{enumi} 
By (iii) and (iv), either $R_1$ and $R_2$ are both essential, or $R_i$ and 
$Q_{3-i}$ are essential for $i=1$ or $2$. Hence $O_2(\calf)\ne1$ by (v) or 
(vii), and $\calf$ is not reduced.

\mgnr{6661}  $S\cong(Q_{16}\times_{C_2}Q_{16})\sd{t}C_2$.  By 
\cite[Proposition 5.6]{O-rk4}, every reduced fusion system over $S$ is 
isomorphic to the fusion system of $\PSp_4(7)$.

\mgnr{6662}  $S\cong(SD_{16}\times_{C_2}SD_{16})\sd{t}C_2$.  By 
\cite[Proposition 5.6]{O-rk4}, there are no reduced fusion systems over 
$S$.

\mgnr{6665}  $S$ is of type $\Ly$.  Let $G$ be Lyons's group, assume 
$S\in\syl2{G}$, and set $\calf=\calf_S(G)$.  By \cite[Proposition 
6.6]{O-rk4}, each reduced fusion system over $S$ is isomorphic to $\calf$.  
By the same proposition, there is exactly one $\calf$-essential subgroup 
with noncyclic center, and $\Out(\calf)=1$.  Hence 
$\mu_G\:\Out(BG_2^\wedge)\Right2{}\Out(\calf)$ is injective by 
Proposition \ref{p:newKer(mu)}, so $\Out(BG_2^\wedge)=1$, and $\calf$ is 
tame. In fact, since $\Out(G)=1$ by \cite[Proposition 5.8]{Lyons}, 
$\kappa_G$ and $\mu_G$ are both isomorphisms.

\mgnr{6666}  $S$ is the nonsplit extension of $\UT_3(4)$ by the group 
$\gen{\phi,\tau}$ of field and graph automorphisms (denoted 
$S^*_{\phi,\tau}$ in \cite[p. 71]{O-rk4}).  By \cite[Proposition 
6.6]{O-rk4}, there are no reduced fusion systems over $S$.

\mgnr{8935} In this case, $S=P_1P_2$, where $P_1$ and $P_2$ are elementary 
abelian subgroups of rank $6$ such that $Z(S)=P_1\cap P_2=[S,S]$ has rank 
$4$. By Proposition \ref{p:S4(4)} below, every reduced fusion system over 
$S$ is isomorphic to the fusion system of $\Sp_4(4)$. 

\noindent\mgn{53366}, \mgnr{53380}  $S\cong{}\klfour \times S_0$, where $S_0$ 
has type $M_{12}$ or $S_0\cong \UT_4(2)$.  By Proposition \ref{p:AxS-rk5} 
or \ref{2^k.UT4(2)}, respectively, there are no reduced fusion systems over 
$S$. (When $S_0$ has type $M_{12}$, $\Aut(S_0)$ is a 2-group by 
\cite[Proposition 3.2]{AOV2}.)

\noindent\mgnr{55676}  $S\cong{}\klfour \times\UT_3(4)$. By Proposition 
\ref{p:AxE1E2}, applied with $P_i=\klfour\times{}A_i\cong{}(C_2)^6$ where 
$A_1,A_2<\UT_3(4)$ are the two (normal) elementary abelian subgroups of 
rank $4$, there are no reduced fusion systems over $S$. 

\smallskip

This finishes the proof of the theorem.
\end{proof}

Within the proof of Theorem \ref{t:order256}, we have also shown:

\begin{Lem} 
If $G$ is isomorphic to the sporadic simple group of Lyons, 
then (for $p=2$) $\kappa_G$ and $\mu_G$ are both isomorphisms. In 
particular, $G$ tamely realizes its fusion system.
\end{Lem}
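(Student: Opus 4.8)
The plan is to read this lemma off from the analysis of the group $\#6665$ carried out inside the proof of Theorem~\ref{t:order256}, reorganizing it so that both $\kappa_G$ and $\mu_G$ are made explicit. First I would fix $G$ to be Lyons's sporadic group, choose $S\in\syl2{G}$ (so that $S$ is of type $\Ly$), and set $\calf=\calf_S(G)$. The two external inputs needed are \cite[Proposition 6.6]{O-rk4}, which states that $\calf$ has exactly one $\calf$-essential subgroup with noncyclic center and that $\Out(\calf)=1$, together with \cite[Proposition 5.8]{Lyons}, which gives $\Out(G)=1$.

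Next I would feed the first fact into Proposition~\ref{p:newKer(mu)}: since at most one $P\in\EE_\calf$ has noncyclic center, $\Ker(\mu_G)=1$, so $\mu_G\colon\Out(BG\pcom)\to\Out(\calf)$ is injective. Combined with $\Out(\calf)=1$ this forces $\Out(BG\pcom)=1$, and hence $\mu_G$ is the (trivial) isomorphism $1\to1$. Now consider $\kappa_G\colon\Out(G)\to\Out(BG\pcom)$, the first map in the factorization displayed just before Proposition~\ref{p:newKer(mu)}: its target is trivial, and its source $\Out(G)$ is trivial by \cite[Proposition 5.8]{Lyons}, so $\kappa_G$ is also the trivial isomorphism. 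Since an isomorphism is in particular split surjective and $\calf\cong\calf_S(G)$, the group $G$ tamely realizes $\calf$ by Definition~\ref{d:tame}, which completes the argument.

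The main obstacle here is not internal to this argument: all of the substantive content lies in \cite[Proposition 6.6]{O-rk4} (the classification of reduced fusion systems over a $2$-group of type $\Ly$, together with the structure of their essential subgroups) and in the fact $\Out(\Ly)=1$. What remains is only bookkeeping --- verifying that Proposition~\ref{p:newKer(mu)} applies with $p=2$ and $S\in\syl2{G}$, and that $\kappa_G$ and $\mu_G$ compose as in the displayed factorization --- so that ``$\kappa_G$ and $\mu_G$ are both isomorphisms'' is precisely what tameness of $\calf$, together with $\Out(\calf)=1$ and $\Out(G)=1$, amount to.
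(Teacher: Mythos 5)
Your proof is correct and reproduces the paper's argument almost verbatim: the paper also derives the lemma from the analysis of group \#6665 in the proof of Theorem~\ref{t:order256}, using \cite[Proposition 6.6]{O-rk4} (one essential subgroup with noncyclic center, $\Out(\calf)=1$) to apply Proposition~\ref{p:newKer(mu)} and conclude $\Out(BG\pcom)=1$, then invoking $\Out(G)=1$ from \cite[Proposition 5.8]{Lyons} to get both $\kappa_G$ and $\mu_G$ to be isomorphisms.
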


It remains to describe fusion systems over the group \#8935, which is a 
Sylow 2-subgroup of $\Sp_4(4)$.

\begin{Prop} \label{p:S4(4)}
Let $S$ be a group of order $256$ with subgroups $P_1,P_2\nsg 
S$ such that
	\beqq P_1\cong P_2\cong (C_2)^6,~ 
	P_1\cap{}P_2=Z(S)=[S,S]\cong{}(C_2)^4 \,. \label{e:S4(4)-sylow} \eeqq
Then every saturated fusion system $\calf$ over 
$S$ such that $O_2(\calf)=1$ is isomorphic to the fusion system of 
$\Sp_4(4)$.
\end{Prop}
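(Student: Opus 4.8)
The plan is to show that the hypotheses force the structure of $S$ completely, then use the theory of critical subgroups (Propositions \ref{p:critical}, \ref{p:QcharP}, \ref{p:AxE1E2}) to pin down $\EE_\calf$ and the essential automorphism groups, and finally identify the resulting fusion system with that of $\Sp_4(4)$. First I would observe that $S$ fits the setup of Proposition \ref{p:AxE1E2}: we need $[x_1,x_2]\ne1$ for $x_i\in P_i\sminus Z(S)$ and $C_S(x_i)=P_i$. The condition $P_1\cap P_2=Z(S)$ of rank $4$ inside $P_1\cong P_2\cong(C_2)^6$ gives $\rk(P_i/Z(S))=2$, so $S=P_1P_2$ has order $2^6\cdot 2^2=2^8$ automatically, and $S/P_i\cong(C_2)^2$. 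The commutator map $P_1/Z(S)\times P_2/Z(S)\to Z(S)=[S,S]$ is bilinear; since $[S,S]$ has rank $4=2\cdot 2$ and is generated by the image, this pairing must be a perfect pairing identifying $Z(S)\cong (P_1/Z(S))\otimes(P_2/Z(S))$, which forces $[x_1,x_2]\ne1$ whenever $x_1,x_2$ are nontrivial mod $Z(S)$, and also forces $C_S(x_i)=P_i$. Then Proposition \ref{p:AxE1E2} applies (with $\Phi(Z(S))=1$ since $Z(S)$ is elementary abelian, so hypothesis (ii) is automatic): $P_1,P_2$ are the only possible critical subgroups, every elementary abelian subgroup lies in $P_1$ or $P_2$, and for any saturated $\calf$ with $O_2(\calf)=1$ we get $\EE_\calf=\{P_1,P_2\}$, both elementary abelian of equal rank (which we know is $6$), and $[S,S]=Z(S)$.

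Next I would analyze $\autf(P_i)$ for $i=1,2$. We have $\Aut_S(P_i)\cong S/C_S(P_i)$; since $C_S(P_i)=P_i$ (each $P_i$ is centric, being abelian and equal to its centralizer) this is $S/P_i\cong\klfour$, and it is a Sylow $2$-subgroup of $\autf(P_i)$ by the Sylow axiom, so $\autf(P_i)$ has a strongly $2$-embedded subgroup with Sylow group $\klfour$. By Bender's theorem the relevant simple section is $\PSL_2(2^n)$; here the module-theoretic constraints from Proposition \ref{p:critical}(b,c) — $\rk([s,P_i/\Phi(P_i)])=\rk([s,P_i])\ge 2$ for each involution $s\in\Aut_S(P_i)$, combined with $\rk(P_i)=6$ and $\rk([S,P_i])=\rk(Z(S))=4$ — should force $\autf(P_i)$ to have $O^{2'}(\autf(P_i))/O_{2'}\cong \SL_2(4)\cong A_5$ acting on $P_i/Z(S)\cong\F_4^{?}$; more precisely the $6$-dimensional $\F_2[\SL_2(4)]$-module here is the sum of the natural $\F_4$-module (dimension $4$ over $\F_2$, realized on $P_i/C$ for a suitable $2$-dimensional $C$) and something; one must sort out that $P_i$ as an $\F_2[\autf(P_i)]$-module is, up to the odd-order part, a specific module, matching $\Sp_4(4)$. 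I would do this by comparing with the known fusion system $\calf_S(\Sp_4(4))$: a Sylow $2$-subgroup of $\Sp_4(4)$ has exactly this shape, its two maximal elementary abelian subgroups are root subgroups, and the Levi factors $\SL_2(4)$ act as required. The argument then runs: fix an isomorphism of $S$ with the Sylow $2$-subgroup of $\Sp_4(4)$ carrying $P_1,P_2$ to the two long-root (or the natural pair of) elementary abelian subgroups — possible because those are characterized abstractly as the only $(C_2)^6$'s — and check that $\autf(P_i)$ is forced to equal the automorphism group induced by $\Sp_4(4)$ on $P_i$ (the "odd part" being rigid because $O^{2'}(\autf(P_i))$ is generated by $\Aut_S(P_i)$ and its conjugates, which sits inside the known group, and any larger odd-order piece would be normalized and hence produce a normal subsystem contradicting $O_2(\calf)=1$, or would fail to be a genuine automorphism group of the module).

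The main obstacle, I expect, is the last identification step: showing that $\autf(P_i)$ is \emph{exactly} $\Aut_{\Sp_4(4)}(P_i)$ and not some proper or larger subgroup with the same Sylow $2$-subgroup and a strongly $2$-embedded subgroup. This requires: (1) determining all subgroups $\Gamma\le\Aut(P_i)\cong GL_6(2)$ containing $\Aut_S(P_i)\cong\klfour$ as a Sylow, with a strongly $2$-embedded subgroup, and such that $P_i$ carries the module structure compatible with the $S$-action on $P_i/Z(S)$ and $Z(S)$ — which by Bender's list and the rank constraints narrows to $\SL_2(4)$ acting in essentially one way; and (2) ruling out the "non-reduced" possibilities, i.e. showing that $O_2(\calf)=1$ kills any ambiguity in the odd-order part (via Proposition \ref{Q<|F}: any common invariant subgroup of $\autf(P_1)$, $\autf(P_2)$, $\autf(S)$ would be normal in $\calf$). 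Once $\EE_\calf=\{P_1,P_2\}$, $\autf(S)$, and $\autf(P_1),\autf(P_2)$ are all determined and match those of $\Sp_4(4)$, Alperin's fusion theorem (Proposition \ref{p:AFT-E}) gives $\calf\cong\calf_S(\Sp_4(4))$. I would also invoke Proposition \ref{p:FS(G)-red}(b,c) to confirm $\calf_S(\Sp_4(4))$ itself is reduced (so the identification is consistent), using that $\Sp_4(4)$ is simple of Lie type in characteristic $2$ with $S$ nonabelian and $\Aut(S)$ computable.
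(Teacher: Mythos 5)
Your setup is right: you correctly verify the hypotheses of Proposition~\ref{p:AxE1E2} (the bilinear commutator pairing $P_1/Z\times P_2/Z\to Z=[S,S]$ is surjective between spaces of dimension $4$, hence identifies $Z$ with the tensor product, which forces $[x_1,x_2]\ne1$ and $C_S(x_i)=P_i$), you get $\EE_\calf=\{P_1,P_2\}$, and you correctly identify that the crux is pinning down $\autf(P_i)$ exactly and then invoking Alperin. This matches the paper's strategy in outline. But your execution of the identification step has a genuine gap, and one piece of your intuition about it is backwards.

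The problem is the odd-order part of $\autf(P_i)$. You write that ``any larger odd-order piece would be normalized and hence produce a normal subsystem contradicting $O_2(\calf)=1$,'' suggesting $\autf(P_i)$ should just be $\SL_2(4)\cong A_5$ (plus whatever $\Sp_4(4)$ gives). In fact the paper shows $\autf(P_i)\cong A_5\times C_3$, so a nontrivial odd factor $H_i\cong C_3$ is present and compatible with $O_2(\calf)=1$. More importantly, the hypothesis $O_2(\calf)=1$ is used for the \emph{opposite} purpose: to show that $H_i$ is \emph{not} smaller than $C_3$. Concretely, the paper fixes $\Gamma\in\syl3{\autf(S)}$, picks $\gamma_i\in\Gamma$ of order $3$ restricting into $\Delta_i\cong A_5$, and observes that $Z_i:=C_{P_i}(\Delta_i)$ has rank $2$ and is normalized by $\Gamma$; if $Z_1=Z_2$ then $Z_1\nsg\calf$ by Proposition~\ref{Q<|F}, contradicting $O_2(\calf)=1$. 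So $Z_1\ne Z_2$, hence $\outf(S)\cong(C_3)^2$ and $H_i\cong C_3$. Your proposal skips this entirely, and ``rank constraints plus Bender'' alone do not determine $H_i$ --- that is exactly the $\F_2[A_5]$-module analysis (Lemma~\ref{l:F2[A5]-mod}(b), giving indecomposability of $P_i$ and $\rk(Z_i)=2$) that you flagged as ``one must sort out'' but did not carry out.

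The second gap is uniqueness up to isomorphism of $S$. Even once $\autf(P_1)$, $\autf(P_2)$, $\autf(S)$ are constrained to the right abstract types, there are in principle several choices of the subgroups $\Delta_i<\Aut(P_i)$ and of $\Gamma$, and you must show all resulting fusion systems are $\Aut(S)$-conjugate. The paper handles this by: (i) introducing an $\F_4$-structure on $P_i$ via the order-$3$ element $\eta_i\in\Gamma$ that acts trivially on $\4P_{3-i}$, (ii) applying Proposition~\ref{p:splittings} inside $\Aut_{\F_4}(P_i)\cong\GL_3(4)$ to show $\Delta_i$ is \emph{uniquely} determined by the choice of $\gen{\gamma_1},\gen{\gamma_2}$, and (iii) using the split surjection $\Psi\:\Aut(S)\to\Aut^0(S/Z)\cong\Sigma_3\wr C_2$ (and the Frattini argument \eqref{e:5.2a}) to move any valid choice of $\Gamma$ to a standard one. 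Your proposal gestures at comparison with $\Sp_4(4)$ but does not supply this rigidity argument, and ``rigid because $O^{2'}$ is generated by $\Aut_S(P_i)$ and its conjugates'' is not enough: it fixes $O^{2'}(\autf(P_i))$ inside a given overgroup but does not pin down which copy of $A_5\times C_3$ you have inside $\GL_6(2)$.

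So the overall route is the same as the paper's, but the heart of the proof --- the module-theoretic pinning-down of $\autf(P_i)$ (including the nontrivial odd factor) and the conjugacy argument showing uniqueness --- is either absent or misargued.
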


\begin{proof} By comparing orders, we see that $S=P_1P_2$. Set $Z=Z(S)$ and 
$\4P_i=P_i/Z\cong \klfour$. Thus $S/Z=\4P_1\times\4P_2$. Choose subgroups 
$P_i^0<P_i$ ($i=1,2$) such that $P_i=Z\times P_i^0$, and set 
$\5S=P_1^0\*P_2^0$ (the free product). The commutator subgroup of 
$\5S/[\5S,[\5S,\5S]]$ is central in $\5S/[\5S,[\5S,\5S]]$ of order 
$16$ (isomorphic via the commutator pairing to the tensor product 
$P_1^0\otimes P_2^0$ when we regard $P_1^0$ and $P_2^0$ as $\F_2$-vector spaces), 
and hence the natural homomorphism from $\5S$ to $S$ is surjective with 
kernel $[\5S,[\5S,\5S]]$.

Assume $S^*$ is another group of order $2^8$, with subgroups 
$P_1^*,P_2^*\le S^*$ which also satisfy \eqref{e:S4(4)-sylow}. Choose 
subgroups $P_i^{*0}<P_i^*$ complementary to $Z(S^*)$, and set 
$\5S^*=P_1^{*0}\*P_2^{*0}$. Then $S^*\cong\5S^*/[\5S^*,[\5S^*,\5S^*]]$ by 
the above argument. Any pair of isomorphisms 
$\psi_i\in\Iso(P_i^0,P_i^{*0})$ ($i=1,2$) extends to an isomorphism 
$\5\psi\in\Iso(\5S,\5S^*)$ and hence to an isomorphism 
$\psi\in\Iso(S,S^*)$. The conditions \eqref{e:S4(4)-sylow} thus determine 
$S$ uniquely up to isomorphism. 

Let $\Aut^0(S/Z)<\Aut(S/Z)$ be the subgroup of those automorphisms which 
normalize the set $\{\4P_1,\4P_2\}$. The above argument, when applied with 
$S^*=S$ and $\{P_1^{*0},P_2^{*0}\}=\{P_1^0,P_2^0\}$, shows that the 
projection $S\to S/Z$ induces a (split) surjection 
	\[ \Psi\: \Aut(S) \Right5{} \Aut^0(S/Z) \cong 
	\bigl(\Aut(\4P_1)\times\Aut(\4P_2)\bigr)\sd{}C_2 \cong 
	\Sigma_3 \wr C_2\,. \]
(Note that $\Psi(\Aut(S))\le\Aut^0(S/Z)$ since by Proposition 
\ref{p:AxE1E2}, $P_1$ and $P_2$ are the unique maximal elementary abelian 
subgroups of $S$.) It follows that 
	\beqq \textup{for each $\Gamma\in\syl3{\Aut(S)}$, 
	$\Psi(N_{\Aut(S)}(\Gamma))=\Aut^0(S/Z)$} \label{e:5.2a} \eeqq
by the Frattini argument applied to the normal subgroup 
$\Ker(\Psi)\Gamma\nsg\Aut(S)$.

Let $\calf$ be a saturated fusion system over $S$ such that $O_2(\calf)=1$. 
By Proposition \ref{p:AxE1E2}, $\EE_\calf=\{P_1,P_2\}$. Since $|\outf(S)|$ 
is odd and $\Ker(\Psi)$ is a $2$-group by Lemma \ref{l:mod-Fr},
	\beqq \outf(S)\cong{}(C_3)^r \quad\textup{for some $0\le r\le2$.}
	\label{e:|outS|=3^r} \eeqq

Fix $i=1,2$. By Lemma \ref{l:HxA5}, and since $\Aut_S(P_i)\cong S/P_i\cong 
\klfour$, $\autf(P_i)=\Delta_i\times H_i$ for some $\Delta_i\cong{}A_5$ and 
some $H_i$ of odd order. Also, $\rk([s,P_i])=2$ for $s\in{}S{\sminus}P_i$, 
and $[S,P_i]=[S,S]=Z=C_{P_i}(S)$ has rank $4$. So by Lemma 
\ref{l:F2[A5]-mod}(b), $P_i$ is indecomposable as an 
$\F_2[\Delta_i]$-module, $Z_i\defeq{}C_{P_i}(\Delta_i)$ has rank $2$, and 
the action of $\Delta_i\cong{}A_5$ on $P_i/Z_i\cong{}(C_2)^4$ is isomorphic 
to the canonical action of $\SL_2(4)$ on $\F_4^2$.  Thus for $g\in\Delta_i$ 
of order three, $C_{P_i/Z_i}(g)=1$, and hence $C_{P_i}(g) = Z_i$. In 
summary, and since $\Aut_S(P_i)\le\Delta_i$, 
	\[ C_{P_i}(g) = Z_i = C_{P_i}(\Delta_i) \le C_{P_i}(\Aut_S(P_i)) = 
	Z\,. \]

For $i=1,2$, the homomorphism
	\[ \autf(S) \Right7{} N_{\autf(P_i)}(\Aut_S(P_i)) \cong
	A_4 \times H_i \]
induced by restriction is surjective by the extension axiom, and its kernel 
is a $2$-group by Lemma \ref{l:mod-Fr} (or Lemma \ref{l:res.aut.}). Hence 
$r\ge1$ and $H_i\cong{}(C_3)^{r-1}$ (with $r$ as in \eqref{e:|outS|=3^r}).

Fix some $\Gamma\in\syl3{\autf(S)}$.  Choose $\gamma_1,\gamma_2\in\Gamma$ 
of order $3$ such that $\gamma_i|_{P_i}\in\Delta_i$.  Then 
$Z_i=C_{P_i}(\Delta_i)=C_{P_i}(\gamma_i)=C_Z(\gamma_i)$ has rank $2$ and is 
normalized by $\Gamma$.  If $Z_1=Z_2$, then $Z_1\nsg\calf$ by Proposition 
\ref{Q<|F}, which contradicts our assumption that $O_2(\calf)=1$. Thus 
$Z_1\ne Z_2$ and $\gen{\gamma_1}\ne\gen{\gamma_2}$, so 
$\Gamma=\gen{\gamma_1,\gamma_2}\cong (C_3)^2$ by \eqref{e:|outS|=3^r}, and 
$r=2$. In particular, $\autf(P_i)\cong A_5\times C_3$ for $i=1,2$. Also, 
upon regarding $Z$ as an $\F_2[\Gamma]$-module with $Z_i=C_Z(\gamma_i)$, we 
see that $Z=Z_1\times Z_2$ where the $Z_i$ are 
$\F_2[\Gamma]$-submodules. We will show that the choice of the pair 
$\bigl(\gen{\gamma_1},\gen{\gamma_2}\bigl)$ completely determines $\calf$. 

For $i=1,2$, choose any $\eta_i\in\Gamma$ of order $3$ such that 
$\eta_i|_{P_i}\in H_i$. Since $\eta_i|_{P_i}$ commutes with $\Aut_S(P_i)$, 
$\eta_i$ acts trivially on $S/P_i$, and hence trivially on $\4P_{3-i}$. So 
$\eta_i$ must act nontrivially on $\4P_i$ since $\Ker(\Psi)$ is a 
$2$-group. Also, since $\gamma_i|_{P_i}$ does not commute with $\Aut_S(P_i)$, 
$\gamma_i$ acts nontrivially on $S/P_i\cong\4P_{3-i}$, and it acts 
nontrivially on $\4P_i=P_i/Z$ since $C_{P_i}(\gamma_i)=Z_i<Z$. Thus 
$\gen{\eta_1}$, $\gen{\eta_2}$, $\gen{\gamma_1}$, and $\gen{\gamma_2}$ are 
the four distinct subgroups of order $3$ in $\Gamma$, and $\gen{\eta_1}$ 
and $\gen{\eta_2}$ are the unique subgroups of $\Gamma$ of order $3$ which 
induce the identity on $\4P_2$ and $\4P_1$, respectively. 

Fix $\omega\in\F_4{\sminus}\F_2$. For $i=1,2$, $C_{P_i}(\eta_i)=1$, since 
$C_{\4P_i}(\eta_i)=1$ and 
$C_Z(\eta_i)=C_Z(\gamma_1^{\gee_{i1}}\gamma_2^{\gee_{i2}})=1$. Hence we can 
make  $P_i$ into an $\F_4$-vector space by defining multiplication by 
$\omega$ to be $\eta_i$.  Thus 
$C_{\Aut(P_i)}(\eta_i)=\Aut_{\F_4}(P_i)\cong\GL_3(4)$.  Set 
	\[ G_i=\bigl\{\alpha\in\Aut_{\F_4}(P_i) \,\big|\, \alpha(Z_i)=Z_i\bigr\}
	\qquad\textup{and}\qquad
	Q_i=O_2(G_i)\,. \]
Then $Q_i$ is the group of all $\alpha\in\Aut_{\F_4}(P_i)$ which induce the 
identity on $Z_i$ and on $P_i/Z_i$, $Q_i\cong\F_4^2\cong (C_2)^4$, and 
$G_i/Q_i\cong\GL_2(4)\times C_3\cong A_5\times (C_3)^2$. Set 
$K_i=\Aut_S(P_i)\gen{\gamma_i|_{P_i}}\cong A_4$.  Thus $K_i<\Delta_i<G_i$ 
for $i=1,2$.  

Let $\Delta^*_i<G_i$ be \emph{any} subgroup such that 
$\Delta^*_i\cong A_5$ and $K_i<\Delta^*_i$. Then $Q_i\cap\Delta_i^*=1$, and 
$\Delta_i^*Q_i=\Delta_iQ_i=O^3(G_i)$ since 
$G_i/Q_i\cong A_5\times (C_3)^2$. So by Proposition \ref{p:splittings}, 
applied with $G=G_i$, $Q=Q_i$, $H=\Delta_i$, and $H_0=K_i$, we have 
$\Delta^*_i=\9\psi(\Delta_i)$ for some $\psi\in{}C_{Q_i}(K_i)$.  Since 
$C_{Q_i}(K_i)\le C_{Q_i}(\gamma_i)=1$ (recall $C_{P_i}(\gamma_i)=Z_i$), it 
follows that $\Delta^*_i=\Delta_i$.

Thus $\autf(P_1)$ and $\autf(P_2)$ are determined by the choice of 
$\Gamma=\gen{\gamma_1}\times\gen{\gamma_2}$. So by Proposition 
\ref{p:AFT-E}, $\calf$ is determined by this choice. We just saw that 
$\gen{\gamma_1}$ and $\gen{\gamma_2}$ are the two subgroups of order $3$ in 
$\Gamma$ which act nontrivially on both $\4P_1$ and $\4P_2$. If $\calf'$ is 
another saturated fusion system over $S$ with $O_2(\calf')=1$, and 
$\calf'$ is determined by 
$\Gamma'=\gen{\gamma'_1}\times\gen{\gamma'_2}$, then there is 
$\varphi\in\Aut(S)$ such that $\9\varphi\Gamma'=\Gamma$. Either 
$\9\varphi\gen{\gamma'_i}=\gen{\gamma_i}$ for $i=1,2$, or 
$\9\varphi\gen{\gamma'_i}=\gen{\gamma_{3-i}}$. By \eqref{e:5.2a}, there is 
$\psi\in N_{\Aut(S)}(\Gamma)$ which exchanges $\gen{\gamma_1}$ and 
$\gen{\gamma_2}$, and so either $\9\varphi\calf'=\calf$ or 
$\9{\psi\varphi}\calf'=\calf$. 

Fix $S^*\in\syl2{\Sp_4(4)}$, and set $\calf^*=\calf_{S^*}(\Sp_4(4))$.  By 
the Chevalley commutator formula (see \cite[Theorem 5.2.2]{Carter}), 
the two unipotent radical subgroups $P^*_1,P^*_2<S^*$, 
are both isomorphic to $\F_4^3\cong (C_2)^6$, and are such that 
$P^*_1\cap{}P^*_2=[S^*,S^*]=Z(S^*)\cong (C_2)^4$. Hence $S^*$ satisfies 
\eqref{e:S4(4)-sylow}, so $S^*\cong S$. Also, $O_2(\calf^*)=1$ by 
Proposition \ref{p:FS(G)-red}(b), so $\calf^*\cong\calf$ since 
there is up to isomorphism at most one such saturated fusion 
system over $S$. 
\end{proof}

\bigskip

\section{Groups of order 512}
\label{s:512}

\bigskip

Throughout this section again, whenever we list potentially critical 
subgroups of $S$, they were found using computer computations based 
on the criteria in Proposition \ref{p:critical-criteria}.

\begin{Thm} \label{t:order512}
Let $\calf$ be a reduced, indecomposable fusion system over a $2$-group $S$ of 
order $512$.  Then one of the following holds:
\begin{enuma}  
\item $S\cong{}D_{512}$, and $\calf$ is isomorphic to the fusion system of 
$\PSL_2(q)$ where $q\equiv\pm511$ (mod $1024$).

\item $S\cong{}SD_{512}$, and $\calf$ is isomorphic to the fusion system of 
$\PSL_3(q)$ where $q\equiv127$ (mod $256$).

\item $S\cong{}C_{16}\wr{}C_2$, and $\calf$ is isomorphic to the fusion 
system of $\SL_3(17)$.

\item $S\cong{}D_{16}\wr{}C_2$, and $\calf$ is isomorphic to the fusion 
system of $\PSL_4(7)$.

\item $S\cong{}SD_{16}\wr{}C_2$, and $\calf$ is isomorphic to the fusion 
system of $\SL_5(3)$.

\item $S\cong{}\UT_3(8)$, and $\calf$ is isomorphic to the fusion system of 
$\SL_3(8)$.

\item $S$ is of type $A_{12}$, and $\calf$ is isomorphic to the fusion 
system of $A_{12}$, or of $\Sp_6(2)$, or of $\Omega_7(3)$.

\item $S\cong(Q_8\wr C_2)\times_{C_2}Q_8$, and $\calf$ is isomorphic to the 
fusion system of $\PSp_6(3)$.

\item $S$ is of type \HS, and $\calf$ is isomorphic to the fusion system of 
the Higman-Sims sporadic group.

\item $S$ is of type \ON, and $\calf$ is isomorphic to the fusion system of 
O'Nan's sporadic group.

\end{enuma}
In all cases, $\calf$ is tame. 
Also, the fusion system of each of the groups listed above is 
simple.
\end{Thm}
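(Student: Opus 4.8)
The plan is to follow the template of Theorems \ref{t:order64}, \ref{t:order128}, and \ref{t:order256}. First I would dispose of the dihedral and semidihedral groups $D_{512}$ and $SD_{512}$, and the wreath product $C_{16}\wr C_2$ (the three groups of order $2^9$ that are eliminated by condition (a) or (b) of Proposition \ref{p:search-criteria} but can nonetheless carry a reduced fusion system): by \cite[\S\,4.1]{AOV1} and \cite[Proposition 3.1]{AOV2}, the reduced indecomposable fusion systems over these are exactly those in cases (a)--(c), namely $\PSL_2(q)$, $\PSL_3(q)$, and $\SL_3(17)$. For every other group $S$ of order $512$ which supports a reduced fusion system, Proposition \ref{p:search-criteria} shows that $S$ satisfies conditions (a)--(h) there, so a computer search (using \cite{magma} and \cite{gap}) through the order-$512$ part of the Small Groups library based on those criteria leaves a short list, which I would treat one group at a time by Magma/GAP number.

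The surviving groups split into three kinds. Those which decompose as direct products of smaller $2$-groups are handled either by \cite[Theorem B]{O-split} (which forces every reduced fusion system over them to be decomposable) or by one of the propositions of Section \ref{s:Q<|F} (Propositions \ref{p:AxS-rk5}, \ref{p:AxE1E2}, \ref{2^k.J2}, \ref{2^k.UT4(2)}, or arguments of the same flavor), which show that there is no reduced fusion system over them at all. For the remaining non-product groups that are not Sylow $2$-subgroups of simple groups, I would argue directly, as for groups \#1411 and \#6331 of smaller order: exhibit a nontrivial $Q\nsg S$ which is invariant under $\autf(P)$ for every critical subgroup $P$ (pinning down the critical subgroups using Proposition \ref{p:QcharP} and Proposition \ref{p:AxE1E2}-style input), conclude $Q\nsg\calf$ by Proposition \ref{Q<|F}, and hence rule out $O_2(\calf)=1$.

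The substance of the proof lies in the third kind: the Sylow $2$-subgroups of simple groups that survive the search, namely $D_{16}\wr C_2$ (case (d)), $SD_{16}\wr C_2$ (case (e)), $\UT_3(8)$ (case (f)), type $A_{12}$ (case (g)), $(Q_8\wr C_2)\times_{C_2}Q_8$ (case (h)), type \HS\ (case (i)), and type \ON\ (case (j)). For the two wreath products $D_{16}\wr C_2$ and $SD_{16}\wr C_2$ I would invoke \cite[Theorem A]{O-rk4}. For the others, I would list the potentially critical subgroups via Proposition \ref{p:critical-criteria}, determine the possible automizers $\autf(P)$, and then reassemble the global fusion systems using Alperin's fusion theorem (Proposition \ref{p:AFT-E}), matching each against the fusion system of a finite simple group with that Sylow $2$-subgroup; for $\UT_3(8)$ this parallels the $\UT_3(4)$ analysis, the only candidate essential subgroups being the two normal subgroups $E_{12}E_{13}$ and $E_{13}E_{23}$ with $\SL_2(8)$-type automizers. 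Types $A_{12}$, \HS, and \ON\ are the genuinely hard cases, handled in the dedicated Sections \ref{s:512}--\ref{s:a12}; in particular one must distinguish, among the reduced fusion systems over the Sylow $2$-subgroup of type $A_{12}$, the three non-isomorphic ones realized by $A_{12}$, $\Sp_6(2)$, and $\Omega_7(3)$.

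Finally, tameness of all the listed fusion systems except those of the two sporadic groups follows from \cite[Theorem C]{BMO2}. For types \HS\ and \ON\ I would use Proposition \ref{p:newKer(mu)} instead: check that at most one $\calf$-essential subgroup has noncyclic center, deduce $\Ker(\mu_G)=1$, and combine this with the (small, known) values of $\Out(G)$ to conclude that $\kappa_G$ and $\mu_G$ are split surjective, so $\calf$ is tame. Simplicity is immediate from Proposition \ref{p:FS(G)-red}(d) for $A_{12}$, $\Sp_6(2)$, $\Omega_7(3)$, \HS, \ON, and $\SL_3(8)$ (the last being of Lie type in defining characteristic $2$), and from \cite[Theorem A]{O-rk4} for the remaining cross-characteristic groups of Lie type. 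I expect the main obstacle to be twofold: making the order-$512$ computer search, over roughly $10^7$ groups, both exhaustive and trustworthy; and carrying out the essential-subgroup analysis for the three large exceptional Sylow $2$-subgroups — especially separating $A_{12}$, $\Sp_6(2)$, and $\Omega_7(3)$, whose $2$-fusion systems live over the same $2$-group.
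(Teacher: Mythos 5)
Your plan reproduces the paper's proof structure almost exactly: the same initial disposal of the dihedral, semidihedral, and $C_{16}\wr C_2$ families via \cite[\S\,4.1]{AOV1} and \cite[Proposition 3.1]{AOV2}, the same computer search of the order-$512$ Small Groups library against Proposition \ref{p:search-criteria}, the same trichotomy of survivors into direct-product groups (killed by \cite[Theorem B]{O-split} or the propositions of Section~\ref{s:Q<|F}), non-product groups which carry no reduced fusion system (killed by exhibiting a nontrivial $Q\nsg\calf$), and Sylow $2$-subgroups of simple groups (treated one at a time, with $\UT_3(8)$ patterned on $\UT_3(4)$ and types $A_{12}$, \HS, \ON\ requiring dedicated analysis), and the same tameness strategy via \cite[Theorem C]{BMO2} plus Proposition \ref{p:newKer(mu)} for the two sporadics. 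No new idea is missing.

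Two small imprecisions in your closing paragraph are worth flagging, though neither touches the classification. First, tameness of the fusion system of $A_{12}$ does not follow from \cite[Theorem C]{BMO2}, which is about groups of Lie type; the paper cites \cite[Proposition 4.8]{AOV1} for it. Second, your simplicity claims for $\Omega_7(3)$ and (implicitly, lumped into ``remaining cross-characteristic groups'') $\PSp_6(3)$ overreach: both are of Lie type in odd characteristic, hence outside the explicit list in Proposition \ref{p:FS(G)-red}(d) and outside the scope of \cite[Theorem A]{O-rk4} as the paper uses it (only cases (a)--(e)). For $\Omega_7(3)$ the paper first checks that $\Aut(S)$ is a $2$-group (so $\calf$ is reduced by Proposition \ref{p:FS(G)-red}(c)) and only then invokes the first sentence of \ref{p:FS(G)-red}(d). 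For $\PSp_6(3)$ the argument is more roundabout: Proposition \ref{t:S_6(3)} gives uniqueness of the saturated fusion system over $S$ with $O_2(\calf)=1$, whence $O^{2'}(\calf)=\calf$ (using $O_2(O^{2'}(\calf))=1$ from \cite[Lemma 1.20(e) \& Prop.\ 1.25(b)]{AOV1}), so $\calf$ is reduced and then simple by \ref{p:FS(G)-red}(c,d).
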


\begin{proof} If $S$ is dihedral, semidihedral, or a wreath product 
$C_{16}\wr C_2$, then by \cite[\S\,4.1]{AOV1} or \cite[Proposition 
3.1]{AOV2}, $\calf$ is isomorphic to the fusion system of $\PSL_2(q)$ or 
$\PSL_3(q)$ for appropriate odd $q$, and we are in one of the cases (a), 
(b), or (c). If not, then it satisfies conditions (a)--(h) in Proposition 
\ref{p:search-criteria}. By a computer search based on those criteria, we 
are left with the $31$ possibilities for $S$ listed below, where 
$\#(-)$ denotes the Magma/GAP identification number.

The tameness of the fusion systems of the sporadic groups of O'Nan and 
Higman-Sims (\#58362 and \#60329 below) is shown in Proposition 
\ref{t:O'N-HS}. All of the other fusion systems listed 
in the theorem are tame by \cite[Theorem C]{BMO2}, or (for the fusion 
system of $A_{12}$) by \cite[Proposition 4.8]{AOV1}. 

By \cite[Theorem A]{O-rk4}, the fusion systems listed above in cases 
(a--e) are all simple. By Proposition \ref{p:FS(G)-red}(a,b,d), if 
$\calf$ is any of the fusion systems in (f--j), then $O^2(\calf)=\calf$, 
$O_2(\calf)=1$, $\calf$ is indecomposable, and $\calf$ is simple 
except possibly when $\calf$ is the fusion system of $\Omega_7(3)$ or 
$\PSp_6(3)$. In case (g), $\Aut(S)$ is a $2$-group (see the proof of 
Proposition \ref{p:A12}), so those fusion systems are simple by Proposition 
\ref{p:FS(G)-red}(c,d). By Proposition \ref{t:S_6(3)}, when $S$ is of type 
$\PSp_6(3)$, there is a unique saturated fusion system $\calf$ over $S$ 
such that $O_2(\calf)=1$, and since $O_2(O^{2'}(\calf))=1$ \cite[Lemma 
1.20(e) \& Proposition 1.25(b)]{AOV1}, $\calf=O^{2'}(\calf)$ must be 
simple by Proposition \ref{p:FS(G)-red}(c,d).

\smallskip\noindent \mgn{128270}, \mgn{128271}, \mgn{399715}, \mgn{399717}, 
\mgn{399770},
\mgn{399771} :  In these six cases, $S=S_1\times{}S_2$ where 
$S_1\cong{}D_{16}$ or $SD_{16}$, and $S_2\cong{}D_{32}$, $SD_{32}$, or 
$C_4\wr{}C_2$.  By \cite[Theorem B]{O-split}, each reduced fusion system 
over any of these groups is decomposable.

\smallskip\noindent \mgn{420360}, \mgn{420362}, \mgn{6480905}, 
\mgn{7998954}, \mgn{6480855} :  In these cases, $S\cong{}D_8\times{}S_0$, 
where $S_0\cong{}D_{64}$, $SD_{64}$, $\UT_4(2)$, $\UT_3(4)$, or is of type 
$M_{12}$.  By \cite[Theorem B]{O-split}, each reduced fusion system over 
$S$ is decomposable.

\smallskip\noindent \mgn{10483221}, \mgn{10483222}, \mgn{10493114} :  In 
these cases, $S\cong D_8\times S_0$ where $S_0\cong \klfour\times D_{16}$, 
$\klfour\times\SD_{16}$, or $(C_2)^3\times D_8$, respectively. By \cite[Theorem 
B]{O-split}, there are no reduced fusion systems over $S$. 

\mgnr{7606661}  $S\cong D_8\times D_8\times D_8$, and each reduced fusion 
system over $S$ is decomposable by \cite[Theorem C]{O-split}.  

\smallskip\noindent \mgn{7530050}, \mgn{7530054}, \mgn{7530055}, 
\mgn{10482003} :  In these cases, $S\cong{}\klfour\times{}S_0$ where 
$S_0\cong{}D_8\wr{}C_2$ or has type $M_{22}$ or $M_{12}:2$; or 
$S\cong{}(C_2)^3\times{}S_0$ where $S_0$ has type $M_{12}$.  In all 
cases, $|Z(S_0)|=2$ and $\rk(S_0)\le4$. Also, $\Aut(S_0)$ is a $2$-group 
by \cite[Corollary A.10(c)]{O-rk4}, \cite[Lemma 5.5]{OV2}, 
the proof of \cite[Proposition 4.3(c)]{O-rk4}, or \cite[Proposition 
3.2]{AOV2}, respectively. So by Proposition \ref{p:AxS-rk5}, there are no 
reduced fusion systems over $S$. 

\smallskip\noindent \mgn{7530088}, \mgn{10482065} : $S\cong \klfour\times 
S_0$ where $S_0$ has type $J_2$, or $S\cong (C_2)^3\times\UT_4(2)$. By 
Proposition \ref{2^k.J2} or \ref{2^k.UT4(2)}, respectively, there are no 
reduced fusion systems over $S$. 

\mgnr{10493307} Here, $S\cong{}(C_2)^3\times\UT_3(4)$.  By Proposition 
\ref{p:AxE1E2}, applied with $P_i=(C_2)^3\times{}A_i\cong{}(C_2)^7$ where 
$A_1,A_2<\UT_3(4)$ are the two (normal) elementary abelian subgroups of 
rank $4$, there are no reduced fusion systems over $S$.  

\mgnr{58362} $S$ is of type \ON.  By Proposition \ref{t:O'N-HS} below, 
every reduced fusion system over $S$ is isomorphic to the fusion system of 
O'Nan's group.  

\mgnr{60329} $S$ is of type \HS.  By Proposition \ref{t:O'N-HS} below, 
every reduced fusion system over $S$ is isomorphic to the fusion system of 
the Higman-Sims group.  

\mgnr{60809} $S\cong D_{16}\wr C_2$.  By \cite[Proposition 5.5(a)]{O-rk4}, every 
reduced fusion system over $S$ is isomorphic to $\calf_S(\PSL_4(7))$.

\mgnr{60833} $S\cong SD_{16}\wr C_2$.  By \cite[Proposition 5.5(b)]{O-rk4}, every 
reduced fusion system over $S$ is isomorphic to $\calf_S(\SL_5(3))$.

\mgnr{406983} $S$ is of type $A_{12}$.  By Proposition \ref{p:A12} below, 
each reduced fusion system over $S$ is isomorphic to the fusion system of 
one of the groups $A_{12}$, $\Sp_6(2)$, or $\Omega_7(3)$.

\mgnr{6407070} By computer computations, $S$ has two potentially critical 
subgroups $P_1$ and $P_2$, both normal, where $P_1\cong2^{1+6}_+$, 
$S/P_1\cong \klfour$, and $|P_2|=2^8$. (In terms of the Magma/GAP 
generators, $P_1=\gen{s_1,s_4,s_5,s_6,s_7,s_8,s_9}$ and 
$P_2=\gen{s_2,s_3,s_4,s_5,s_6,s_7,s_8,s_9}$.) Set $V=P_1/Z(P_1)$, regarded 
as a $6$-dimensional $\F_2[S/P_1]$-module. Then $[S/P_1,V]=C_{V}(S/P_1)$ 
has rank $3$, $[x,V]$ has rank $2$ for each $1\ne x\in S/P_1$, and 
$\bigcap_{1\ne x\in S/P_1}[x,V]\ne1$. 

Let $\calf$ be a reduced fusion system over $S$. If $P_1\in\EE_\calf$, then 
by Lemma \ref{l:HxA5}, there is $\Gamma\le\outf(P_1)$ such that 
$\Gamma\cong A_5$ and $\Out_S(P_1)\in\syl2{\Gamma}$. This in turn induces 
an action of $\Gamma$ on $V$ that contains the action of 
$S/P_1$ on $V$ as a Sylow 2-subgroup. But this is impossible by Lemma 
\ref{l:F2[A5]-mod}(b) and the above remarks, and hence 
$P_1\notin\EE_\calf$.

Thus $\EE_\calf \subseteq \{P_2\}$, contradicting Lemma \ref{l:1crit}.

\mgnr{7530110} $S$ is of type $\PSp_6(3)$.  By Proposition \ref{t:S_6(3)} 
below, every reduced fusion system over $S$ is isomorphic to the fusion 
system of $\PSp_6(3)$.

\mgnr{7540630} $S\cong \klfour \times S_0$, where $S_0$ is the group of order 
$128$ with Magma/GAP number $1411$ described in the proof of Theorem 
\ref{t:order128}.  In particular, there are two subgroups $P_1,P_2<S$ 
isomorphic to $(C_2)^7$, and they satisfy the hypotheses of Proposition 
\ref{p:AxE1E2}.  Since $[S,S]<Z(S)$, there are no reduced fusion systems 
over $S$ by that proposition.

\mgnr{10481201} $S\cong \UT_3(8)$.  By Proposition \ref{t:ut3(8)} below, 
every reduced fusion system over $S$ is isomorphic to $\calf_S(\SL_3(8))$. 
\end{proof}

It remains to handle some of the individual cases.  

\begin{Prop} \label{t:ut3(8)}
Fix $k\ge1$.  A saturated fusion system $\calf$ over $\UT_3(2^k)$ is 
reduced if and only if it is isomorphic to the fusion system of 
$\PSL_3(2^k)$. 
\end{Prop}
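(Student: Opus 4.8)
The plan is to classify the reduced fusion systems over $S=\UT_3(q)$, $q=2^k$, by showing there is exactly one up to isomorphism and that it is realized by $\PSL_3(q)$ (note $\PSL_3(2)\iso\PSL_2(7)$, so $k=1$ is the case $S\iso D_8$ already covered by Theorem \ref{t:order64}; I want a uniform argument). Write $E_{ij}\le S$ for the elementary subgroups and put $A_1=E_{12}E_{13}$, $A_2=E_{23}E_{13}$. First I would record the structure of $S$: one has $Z(S)=[S,S]=\Phi(S)=E_{13}\iso(C_2)^k$; the subgroups $A_1,A_2\nsg S$ are elementary abelian of rank $2k$ with $A_1\cap A_2=Z(S)$; every involution of $S$ lies in $A_1\cup A_2$ while every element of $S$ outside $A_1\cup A_2$ has order $4$; hence every elementary abelian subgroup of $S$ lies in $A_1$ or in $A_2$.

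The first real step is to prove that $A_1$ and $A_2$ are the only critical subgroups of $S$, via the necessary conditions of Proposition \ref{p:critical-criteria}. If $P$ is critical with $Z(S)\not\le P$, then by \ref{p:critical-criteria}(a) there is an involution $h$ with $P=C_S(h)$ and $[N_S(P):P]=2$; but for every noncentral involution $h$ one has $C_S(h)\in\{A_1,A_2\}$ and $[S:A_i]=q$, so $q=2$ and $P=A_i$. If instead $Z(S)\le P<S$ and $P$ is centric in $S$: when $P$ is elementary abelian it lies in some $A_i$, and being centric forces $P=A_i$; when $P$ is not elementary abelian one eliminates it with the commutator criterion \ref{p:critical-criteria}(g) (Proposition \ref{p:QcharP}), applied with $\Theta$ characteristic in $P$, using the key fact that $[g,P]\le[S,S]=Z(S)$ for every $g\in S$, so that hypotheses (a),(b) of \ref{p:QcharP} hold as soon as $\Phi(P)\ge Z(S)$ — which is the case for the relevant $P$ (the abelian subgroups of type $(C_4)^k$ between $Z(S)$ and $S$, and the non-abelian subgroups with $Z(P)=Z(S)$); the rank inequality \ref{p:critical-criteria}(e) handles any borderline cases. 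By Proposition \ref{ess=>crit} this gives $\EE_\calf\subseteq\{A_1,A_2\}$ for every saturated fusion system over $S$, and $\EE_\calf=\{A_1,A_2\}$ whenever $\calf$ is reduced, by Lemma \ref{l:1crit}.

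Next I would pin down the automorphism data of a reduced $\calf$ over $S$. For $i=1,2$, $A_i$ is essential, so $\outf(A_i)=\autf(A_i)$ has a strongly $2$-embedded subgroup with $\Aut_S(A_i)\iso S/A_i\iso(C_2)^k$ as Sylow $2$-subgroup. Arguing exactly as in Step 3 of the proof of Proposition \ref{p:AxE1E2} (Bender's theorem; the Suzuki and unitary groups of the relevant order have non-elementary-abelian Sylow $2$-subgroups; a Schur-multiplier computation excludes nonsplit central extensions), $O^{2'}(\autf(A_i))\iso\SL_2(q)$; and since $A_i$ is a faithful $\F_2[\SL_2(q)]$-module of dimension exactly $2k$ and faithful modules have dimension $\ge2k$ (\cite[Lemma 1.7]{OV2}), $A_i$ is the natural module $\F_q^2$ with $\Aut_S(A_i)$ the unipotent radical of a Borel subgroup; Proposition \ref{p:splittings} then fixes $O^{2'}(\autf(A_i))$ inside $\Aut(A_i)$. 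By Alperin's fusion theorem (Proposition \ref{p:AFT-E}), $\calf$ is generated by $\autf(S)$ together with the groups $O^{2'}(\autf(A_i))$; as $\outf(S)$ has odd order (Sylow axiom), $O^{2'}(\autf(S))=\Inn(S)$, so that $\calf=O^{2'}(\calf)$ is the \emph{minimal} saturated fusion system over $S$ with $\SL_2(q)\le\autf(A_i)$ for $i=1,2$. In particular $\autf(S)$ is completely forced: each Borel torus $N_{\SL_2(q)}(\Aut_S(A_i))/\Aut_S(A_i)\iso C_{q-1}$ must extend to $\autf(S)$ (since $A_i$ is maximal among the essential subgroups, cf. Proposition \ref{p:AFT-E}), and the extensions of these two tori generate a fixed subgroup of $\Out(S)$. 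It follows that any two reduced fusion systems over $S$ are isomorphic.

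It then remains to observe that $\calf_S(\PSL_3(q))$ has exactly this shape and is reduced: here $A_1,A_2$ are the unipotent radicals of the two maximal parabolics, their $\calf$-automorphism groups have $O^{2'}$ the Levi factors $\iso\SL_2(q)$ on the natural module, and the action of a maximal torus on $S$ is precisely the forced part of $\autf(S)$ above. By Proposition \ref{p:FS(G)-red}(a,b) we get $O^2(\calf)=\calf$, $\calf$ indecomposable, and $O_2(\calf)=1$ ($S$ is nonabelian and $\PSL_3(q)$ is neither $\PSU_3(2^n)$ nor $\Sz(2^{2n+1})$); together with $O^{2'}(\calf)=\calf$ (the torus action being forced exactly as above), Proposition \ref{p:FS(G)-red}(c) gives that $\calf_S(\PSL_3(q))$ is reduced. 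Combining with the uniqueness established above proves both directions. The main obstacle will be the two rigidity steps — ruling out all critical subgroups other than $A_1,A_2$, and then showing that $\calf=O^{2'}(\calf)$ pins down $\autf(S)$ (so the extra odd-order automorphisms of the $A_i$ play no role); everything else is bookkeeping with the structure of $\UT_3(q)$.
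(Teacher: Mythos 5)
Your outline has the right global shape (establish $\EE_\calf=\{A_1,A_2\}$, identify $O^{2'}(\autf(A_i))$ as $\SL_2(q)$ acting as the natural module, conclude via Alperin's theorem and odd index), but the central rigidity step is missing or mis-identified. You write that ``Proposition \ref{p:splittings} then fixes $O^{2'}(\autf(A_i))$ inside $\Aut(A_i)$,'' but that proposition concerns complements to a normal abelian $p$-subgroup $Q\nsg G$, and $\Aut(A_i)\cong\GL_{2k}(2)$ has no such $Q$; it cannot play this role. The paper's argument here is of a quite different nature: starting from a strongly $2$-embedded subgroup $H<\autf(A_i)$ it forms the family of centralizers $C_{A_i}(T)$ for $T\in\syl2{\autf(A_i)}$, shows this family is a partition of $A_i$ into $q+1$ subgroups of rank $k$ (a spread), constructs $\varphi_i\in\Aut(A_i)$ carrying it to the set of $\F_q$-lines \emph{while inducing the identity on $E_{13}$ and on $A_i/E_{13}$}, and only then invokes the fundamental theorem of affine geometry to conclude $\9{\varphi_i}\autf(A_i)\le\GGL_2(q)$. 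The constraints placed on $\varphi_1,\varphi_2$ are exactly what permit gluing them to a single $\varphi\in\Aut(S)$, so that $\calf$ may be replaced by $\9\varphi\calf$. Your outline has no mechanism ensuring this simultaneous compatibility on $A_1$ and $A_2$, and it is not automatic: one must normalize both copies of $\SL_2(q)$ at once, by an honest automorphism of $S$.

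Two smaller problems. First, ``$A_i$ is a faithful $\F_2[\SL_2(q)]$-module of dimension $2k$'' does not by itself force $A_i$ to be the natural module. For $q=4$ the Steinberg module $W_1$ of Lemma \ref{l:F2[A5]-mod} is also a $4$-dimensional faithful $\F_2[\SL_2(4)]$-module; one must additionally use the $\Aut_S(A_i)$-action, e.g.\ that $[\Aut_S(A_i),A_i]=C_{A_i}(\Aut_S(A_i))=E_{13}$, which rules out $W_1$. Second, the concluding step requires more than ``minimality'': after normalizing, the paper verifies explicitly via diagonal automorphisms $\beta_1,\beta_2$ that $\Aut_{\PSL_3(q)}(S)\le\autf(S)$ and $\Aut_{\PSL_3(q)}(A_i)\ge O^{2'}(\autf(A_i))$, and then cites \cite[Lemma I.7.6(a)]{AKO} to see that $\calf_S(\PSL_3(q))$ is a subsystem of $\calf$ of odd index, hence equal to $\calf$ since $O^{2'}(\calf)=\calf$. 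For the converse the paper simply quotes Proposition \ref{p:FS(G)-red}(d) (a defining-characteristic group of Lie type has simple, hence reduced, fusion system); your proposed route via parts (a,b,c) would require independently verifying $O^{2'}(\calf_S(\PSL_3(q)))=\calf_S(\PSL_3(q))$, which is close to circular as you sketch it.
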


\begin{proof} Since this holds for $k=1$ by \cite[Proposition 4.3]{AOV1} 
($\UT_3(2)\cong D_8$), we assume $k\ge2$ from now on. Set $q=2^k$ and 
$S=\UT_3(q)$ for short. For each $1\le{}i<j\le3$, let $E_{ij}\le S$ be the 
subgroup of upper triangular matrices with 1's on the diagonal and nonzero 
off-diagonal entry only in position $(i,j)$. Thus $E_{13}=Z(S)=[S,S]$. Set 
$A_1=E_{12}E_{13}$, and $A_2=E_{13}E_{23}$.  Then $A_1\cong 
A_2\cong(\F_q)^2\cong (C_2)^{2k}$, and we regard these groups as 
$\F_q$-vector spaces. 

Fix a reduced fusion system $\calf$ over $S$. By Proposition 
\ref{p:AxE1E2}, $\EE_\calf=\{A_1,A_2\}$.  For each $i=1,2$, set 
$\Gamma_i=\autf(A_i)$. 

\newcommand{\vv}{\mathscr{V}}

We first examine $\Gamma_1=\autf(A_1)$. Fix a strongly $2$-embedded 
subgroup $H<\Gamma_1$ which contains $\Aut_S(A_1)$. Consider the following 
sets of subgroups of $A_1$:
	\[ \vv_0=\bigl\{C_{A_1}(T)\,\big|\,T\in\syl2{H}\bigr\} \,, \qquad
	\vv_1=\bigl\{C_{A_1}(T)\,\big|\,T\in\syl2{\Gamma_1}\sminus\syl2{H}\bigr\} 
	\,, \]
	\[ \vv=\vv_0\cup\vv_1 = 
	\bigl\{C_{A_1}(T)\,\big|\,T\in\syl2{\Gamma_1}\bigr\} \,. \]
In particular, $E_{13}=C_{A_1}(\Aut_S(A_1))\in\vv_0$. Since $\Gamma_1$ 
permutes the elements of $\vv$ transitively (since 
$\varphi(C_{A_1}(T))=C_{A_1}(\9\varphi T)$ for $\varphi\in\Gamma_1$ and 
$T\in\syl2{\Gamma_1}$),  $\rk(V)=\rk(E_{13})=k$ for each $V\in\vv$. 

For each $V_0\in \vv_0$ and each $V_1\in\vv_1$, $V_0=C_{A_1}(T_0)$ and 
$V_1=C_{A_1}(T_1)$ for some $T_0\in\syl2H$ and 
$T_1\in\syl2{\Gamma_1}{\sminus}\syl2H$, and $\gen{T_0,T_1}$ contains the 
strongly $2$-embedded subgroup $\gen{T_0,T_1}\cap{}H$. In particular, 
$O_2(\gen{T_0,T_1})=1$, so $\gen{T_0,T_1}$ acts faithfully on 
$A_1/C_{A_1}(\gen{T_0,T_1})$ (Lemma \ref{l:mod-Fr}). By \cite[Lemma 
1.7(a)]{OV2}, $\rk\bigl(A_1/C_{A_1}(\gen{T_0,T_1})\bigr)\geq 2k$, so 
$V_0\cap V_1=C_{A_1}(\gen{T_0,T_1})=1$, and $A_1=V_0\times V_1$. 

In particular, $\vv_0\cap\vv_1=\emptyset$. Also, when $V_0=E_{13}$, this 
shows that each $V\in\vv_1$ contains a representative for each coset of 
$E_{13}$ in $A_1{\sminus}E_{13}$. Since $\Aut_S(A_1)$ acts transitively on 
each such coset, we conclude that $\bigcup_{V\in\vv_1}V\supseteq 
A_1{\sminus}E_{13}$. Hence $V_0\cap(A_1{\sminus}E_{13})=\emptyset$ for each 
$V_0\in\vv_0$, so $\vv_0=\{E_{13}\}$. Since $\Gamma_1$ acts transitively on 
$\vv$ and $E_{13}\cap V=1$ for each $V\in\vv\sminus\{E_{13}\}$, we see that 
$V\cap{}W=1$ for each pair $V,W$ of distinct elements in 
$\vv$. Thus $A_1{\sminus}1$ is the disjoint union of the sets $V{\sminus}1$ 
for all $V\in\vv$, so $|\vv|=q+1$, and $\Aut_S(A_1)$ normalizes the set 
$\vv_1=\vv{\sminus}\{E_{13}\}$. 

Fix some $V\in\vv{\sminus}\{E_{13}\}$. Since $A_1=E_{13}\times V$, there is 
$\varphi_1\in\Aut(A_1)$ which induces the identity on $E_{13}$ and on 
$A_1/E_{13}$, and such that $\varphi_1(V)=E_{12}$. Set 
$\vv^*=\varphi_1(\vv)$. Thus $E_{13}=\varphi_1(E_{13})$ and 
$E_{12}=\varphi_1(V)$ are both in $\vv^*$.  Also, 
$[\varphi_1,\Aut_S(A_1)]=1$ since the group of automorphisms of $A_1$ which 
induce the identity on $E_{13}$ and on $A_1/E_{13}$ is abelian, so 
$\Aut_S(A_1)$ also normalizes the set $\vv^*{\sminus}\{E_{13}\}$. Since 
$|\vv^*\sminus\{E_{13}\}|=q=|\Aut_S(A_1)|$ and $E_{12}$ is normalized only 
by the identity in $\Aut_S(A_1)$, $\Aut_S(A_1)$ acts transitively on this 
set. Hence $\vv^*=\varphi_1(\vv)$ is precisely the set of all 
$1$-dimensional $\F_q$-linear subspaces of $A_1$.

Let $\Theta_i^\SL < \Theta_i^\GL < \Theta_i^\GGL < \Aut(A_i)$ be the subgroups 
$\SL_2(q)$, $\GL_2(q)$, 
and $\GGL_2(q)$, respectively, defined with respect to the canonical 
$\F_q$-vector space structure on $A_i$.  By the fundamental theorem of 
affine geometry \cite[Th\'eor\`eme 2.6.3]{Berger1}, 
$N_{\Aut(A_1)}(\varphi_1(\vv))=\Theta_1^{\GGL}$, and hence 
$\9{\varphi_1}(\Gamma_1)\le\Theta_1^{\GGL}$. All Sylow 2-subgroups of 
$\9{\varphi_1}(\Gamma_1)$ are $\Theta_1^\GGL$-conjugate to $\Aut_S(A_1)$ 
and hence contained in $\Theta_1^\SL$. So 
$O^{2'}(\9{\varphi_1}(\Gamma_1))=\Theta_1^\SL$ since $\SL_2(q)$ is 
generated by any two of its Sylow 2-subgroups.

By a similar argument, there is $\varphi_2\in\Aut(A_2)$ which induces the 
identity on $E_{13}$ and on $A_2/E_{13}$, and such that 
$\Theta_2^{\SL}\le\9{\varphi_2}(\Gamma_2)\le\Theta_2^\GGL$ and 
$O^{2'}(\9{\varphi_2}(\Gamma_2))=\Theta_2^{\SL}$. Let 
$\varphi\in\Aut(S)$ be the unique automorphism such that 
$\varphi|_{A_i}=\varphi_i$ for $i=1,2$.  (Note that $\varphi$ has the form 
$\varphi(g)=g\chi(g)$ for some $\chi\in\Hom(S,Z(S))$.)  Upon replacing 
$\calf$ by $\9\varphi\calf$, we can assume that 
$\Theta_i^\SL\le\autf(A_i)\le\Theta_i^\GGL$ and 
$O^{2'}(\autf(A_i))=\Theta_i^{\SL}$ for each $i=1,2$. 

Set $G=\PSL_3(q)$ for short, and identify $S=\UT_3(q)$ with its image in $G$. 
Fix a generator $\lambda\in\F_q^\times$.  
Let $\beta_1,\beta_2\in\Aut_G(S)$ be conjugation by the classes of the diagonal 
matrices $\diag(\lambda,\lambda^{-1},1)$ and 
$\diag(1,\lambda,\lambda^{-1})$ respectively. Then 
$\Aut_{G}(S)=\text{Inn}(S)\gen{\beta_1,\beta_2}$ and for 
$a,b,c\in\F_q$, 
	\[ \beta_1\left(\mxthree1ab01c001\right) = 
	\mxthree1{\lambda^2a}{\lambda b}01{\lambda^{-1}c}001
	\qquad\textup{and}\qquad
	\beta_2\left(\mxthree1ab01c001\right) = 
	\mxthree1{\lambda^{-1}a}{\lambda b}01{\lambda^2c}001. \]
Thus $\beta_1|_{A_2}\in\Theta_2^\SL\le\autf(A_2)$. 
By the extension axiom, there is 
$\beta'_1\in\autf(S)$ such that $\beta'_1|_{A_2}=\beta_1|_{A_2}$. Then 
$\beta'_1|_{A_1}\in\autf(A_1)$, so $\beta'_1|_{A_1}\in\Theta_1^\GGL$ by our 
assumptions, and $\beta_1|_{A_1}\in\Theta_1^\GGL$ by construction.  Set 
$\tau=\beta_1^{-1}\beta'_1$; then $\tau$ induces the identity on $A_2$ and 
hence on $S/A_2$ (since $C_S(A_2)=A_2$) and $\tau|_{A_1}\in\Theta_1^\GGL$.  
It follows that $\tau|_{A_1}\in\Aut_{A_2}(A_1)$, and hence that 
$\tau\in\Inn(S)$. So $\beta_1\in\autf(S)$.  

By a similar argument, $\beta_2\in\autf(S)$.  Hence 
$\Aut_{G}(S)\le\autf(S)$.  Also, 
$\Aut_{G}(A_i)=\Theta_i^\SL\gen{\beta_i|_{A_i}}$ for $i=1,2$ (the subgroup 
of index $(3,q-1)$ in $\Theta_i^\GL$), so 
$O^{2'}(\autf(A_i))=\Theta_i^\SL\le\Aut_G(A_i)\le\autf(A_i)$.  Since 
$\EE_\calf=\{A_1,A_2\}$, we have $\calf_S(G)\subseteq\calf$ by Proposition 
\ref{p:AFT-E} (Alperin's fusion theorem). In addition, the $A_i$ are 
minimal $\calf$-centric subgroups, so $\Aut_G(P)\ge O^{2'}(\autf(P))$ for 
each $P\in\calf^c$. (If $P\in\calf^c{\sminus}\{A_1,A_2\}$, then 
$O^{2'}(\autf(P))=\Aut_S(P)$ by Proposition \ref{p:AFT-E} and since there 
are no larger essential subgroups.) So by \cite[Lemma I.7.6(a)]{AKO}, 
$\calf$ contains $\calf_S(G)$ as a subsystem of odd index in the sense of 
\cite[Definition I.7.3]{AKO}. Thus $\calf=\calf_S(G)$ since 
$O^{2'}(\calf)=\calf$ ($\calf$ is reduced). 

Conversely, $\calf_S(G)$ is simple by Proposition \ref{p:FS(G)-red}(d).
\end{proof}

The $2$-groups of type \ON\ and \HS\ will be handled together.

\begin{Prop} \label{t:O'N-HS}
\begin{enuma} 
\item If $S$ is a group of order 512 of type \ON, then every reduced fusion 
system over $S$ is isomorphic to the fusion system of O'Nan's group, and is 
tame.

\item If $S$ is a group of order 512 of type \HS, then every reduced fusion 
system over $S$ is isomorphic to the fusion system of the Higman-Sims group, 
and is tame.
\end{enuma}
\noindent If $G$ is either of the groups $\ON$ or $\HS$, 
then $\kappa_G$ and $\mu_G$ are both isomorphisms, and hence $G$ tamely 
realizes its fusion system.
\end{Prop}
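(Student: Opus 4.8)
The plan is to establish (a) and (b) by the same two steps used for the other groups of order $512$ in Theorem \ref{t:order512}, and then to read off the statement about $\kappa_G$ and $\mu_G$ using Proposition \ref{p:newKer(mu)}. Throughout, $G$ denotes $\ON$ in case (a) and $\HS$ in case (b), and $\calf=\calf_S(G)$.

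First I would classify the reduced fusion systems over $S$ (of type $\ON$, resp.\ $\HS$). Using the computer search based on Proposition \ref{p:critical-criteria}, list the potentially critical subgroups of $S$ in terms of the Magma/GAP generators; by Proposition \ref{ess=>crit}, every $\calf$-essential subgroup of any saturated fusion system over $S$ lies on this list. For each candidate $P$, the group $\outf(P)$ contains a strongly $2$-embedded subgroup, so Bender's theorem (as used in Propositions \ref{p:critical} and \ref{p:critical2}) together with Lemmas \ref{l:HxA5} and \ref{l:F2[A5]-mod} restricts $O^{2'}(\outf(P))$ to a short list of groups acting in a prescribed way on $P/\Phi(P)$; comparing with the actual $\F_2[N_S(P)/P]$-module structure of $P/\Phi(P)$ kills most candidates. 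If $\calf$ is reduced, then $O_2(\calf)=1$ rules out, via Proposition \ref{Q<|F}, any configuration with a nontrivial subgroup normal in $S$ fixed by $\autf(P)$ for all $P\in\EE_\calf\cup\{S\}$; the condition $O^2(\calf)=\calf$ forces $\Gen{[\autf(P),P]\mid P\in\EE_\calf}=S$ by Lemma \ref{l:foc=S}; and $O^{2'}(\calf)=\calf$ pins down the odd part of the $\autf(P)$. I expect this to leave exactly one isomorphism class of reduced fusion system over $S$ in each case; much of this analysis (especially the part on essential subgroups and their automorphism groups, as well as the eventual computation of $\Out(\calf)$) parallels, and can be quoted from, \cite{O-rk4}, whose techniques apply to these groups. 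To match the unique system with $G$, one checks that $G$ has Sylow $2$-subgroup $S$ and that $\calf_S(G)$ is reduced by Proposition \ref{p:FS(G)-red}(a,b,c) (the hypotheses of (b) hold since $S$ is nonabelian and $G$ is sporadic), so that $\calf_S(G)$ is indeed that reduced fusion system.

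For the final statement, the list of essential subgroups from the first step shows that at most one $P\in\EE_\calf$ has noncyclic center, so $\Ker(\mu_G)=1$ by Proposition \ref{p:newKer(mu)}; hence $\mu_G$ is injective. It then suffices to show $\mu_G\circ\kappa_G\colon\Out(G)\to\Out(\calf)$ is an isomorphism: this makes $\mu_G$ surjective, hence bijective, and therefore $\kappa_G=\mu_G^{-1}\circ(\mu_G\circ\kappa_G)$ bijective as well, so $\kappa_G$ and $\mu_G$ are both isomorphisms and $G$ tamely realizes $\calf$ (which also proves the tameness claims in (a) and (b)). Since $\Out(\ON)$ and $\Out(\HS)$ both have order $2$, it is enough to compute $\Aut(\calf)$ as the group of fusion-preserving automorphisms of $S$, verify that $\Out(\calf)=\Aut(\calf)/\autf(S)$ has order at most $2$, and check that the outer automorphism of $G$ coming from the extension $G.2$ induces a fusion-preserving automorphism of $S$ that is nontrivial modulo $\autf(S)$; an injective map $C_2\to\Out(\calf)$ with $|\Out(\calf)|\le 2$ is then an isomorphism.

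The main obstacle will be the classification step: a complete and rigorous enumeration of the potentially critical subgroups of these two particular $2$-groups of order $512$, and of their admissible automorphism groups, which --- as in the more delicate cases of Theorem \ref{t:order512} --- means combining the computer search with careful $\F_2$-module arguments. Once the essential subgroups and $\Out(\calf)$ are in hand, the tameness bookkeeping in the last step is routine.
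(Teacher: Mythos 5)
Your overall plan—computer search for potentially critical subgroups, restrictions on $\outf(P)$ via Bender/strongly $2$-embedded considerations, elimination via $O_2(\calf)=1$ and $\foc(\calf)=S$, then tameness via Proposition \ref{p:newKer(mu)} plus $|\Out(\calf)|=|\Out(G)|=2$—is exactly the architecture of the paper's proof. However, there are two genuine gaps.

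First, the classification step cannot be ``quoted from \cite{O-rk4}'' as you suggest. The groups of type $\ON$ and $\HS$ are precisely two of the cases in Theorem \ref{t:order512} that do \emph{not} appear in \cite{O-rk4} (notice that the theorem cites \cite{O-rk4} for the dihedral, semidihedral, wreath-product, and rank-at-most-$4$ cases, but cites this Proposition instead for \#58362 and \#60329). The paper therefore carries out the analysis from scratch: it writes explicit presentations of $S^\ON$ and $S^\HS$ on generators $v_1,v_2,v_3,s,t$, computes $\Out(S)\cong(C_2)^4$ with named generators $\chi_1,\dots,\chi_4$, shows $P_2,P_3$ and then $P_1$ are essential, pins down $\autf(A_0)\cong\GL_3(2)$, uses Proposition \ref{p:splittings} to reduce $\autf(A)$ to two conjugates exchanged by $\chi_1$, and then counts the admissible $\autf(P_i)$ in each case. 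This yields both uniqueness of $\calf$ and, critically, $\Out(\calf)=\gen{[\chi_4]}\cong C_2$. Your sketch acknowledges ``careful $\F_2$-module arguments'' are needed, but the appeal to \cite{O-rk4} as a shortcut is not available.

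Second, the final tameness step is not ``routine bookkeeping.'' Your reformulation (show the nontrivial class in $\Out(G)$ restricts to something nontrivial in $\Out(\calf)$, i.e.\ $\mu_G\circ\kappa_G$ injective) is the right target, but verifying it requires specific group-theoretic input about $\ON$ and $\HS$ beyond anything in Section \ref{s:background}. For $\ON$ the paper argues that an outer automorphism trivial on $S$ would exchange the two conjugacy classes of $L_3(7)^*$ subgroups and hence the two classes of cyclic subgroups of order $16$, contradicting \cite[Lemma 11.3]{O'Nan}. For $\HS$ it quotes \cite[Table IIIb]{Frame} to see no outer automorphism centralizes a Sylow $2$-subgroup. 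Neither fact is ``in hand'' once $\Out(\calf)$ is known; they are the crux of the tameness verification and would need to be identified and cited explicitly.
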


\begin{proof}  We let $S^\ON$ and $S^\HS$ be 2-groups of type \ON\ or \HS, 
respectively, and set $S=S^\ON$ or $S^\HS$ when we do not need to 
distinguish them.  By \cite{O'Nan} or \cite{Alperin}, these groups have a 
presentation with generators $v_1,v_2,v_3,s,t$, where 
	\[ A=\gen{v_1,v_2,v_3}\cong (C_4)^3\,, \] 
and with additional relations 
	\begin{align*} 
	v_1{}^t&=v_3^{-1} & v_2{}^t&=v_2^{-1} & v_3{}^t&=v_1^{-1} & t^2&=1 \\ 
	v_1{}^s&=v_2 & v_2{}^s&=v_3 & v_3{}^s&=v_1v_2^{-1}v_3 & s^t&=s^{-1} 
	\end{align*}
(in both cases), and 
	\[ s^4 =  \begin{cases} 
	v_1v_3 & \textup{if $S=S^\ON$} \\
	1 & \textup{if $S=S^\HS$.}
	\end{cases} \] 
Thus $S$ is an extension of $A$ by $D_8$.  Also, $A$ is the unique subgroup 
of $S$ isomorphic to $(C_4)^3$, and hence is characteristic in all subgroups 
of $S$ which contain it.  

By computer computations, there are three conjugacy classes of potentially 
critical subgroups in each case, with representatives
	\begin{align*}  
	P_1&= A\gen{s^2,t} =C_S(v_1v_2^2v_3^{-1}) \\
	P_2^\ON&= \gen{s^2v_1,t,v_1v_3,v_1^2,v_2^2} \cong 
	C_4\times_{C_2}2^{1+4}_+ \\
	P_2^\HS&=\gen{sv_1v_2,t,v_1v_3,v_1^2,v_2^2} \\
	P_3&= A\gen{st,s^2} = C_S(v_1^2v_2^2)\,.
	\end{align*} 
Note that $P_1,P_3\nsg S$ (they have index two), while $N_S(P_2)=P_1$ when 
$S=S^\ON$, and $N_S(P_2)=P_2\gen{s}$ when $S=S^\HS$.  Since $P_1$ and $P_3$ 
are the only potentially critical subgroups of index $2$ in $S$ (and are 
not isomorphic to each other), they are both characteristic in $S$.

Set $w=v_1v_3$ and $z=w^2=v_1^2v_3^2\in{}Z(S)\cong C_2$.  By direct 
computation, $\Out(S)\cong{}(C_2)^4$, with explicit generators 
$[\chi_i]$ ($i=1,2,3,4$), where the $\chi_i\in\Aut(S)$ act as 
follows.  
	\beq \renewcommand{\arraystretch}{1.4} 
	\begin{array}{|l||c|c|c|c|c|c|c|} \hline 
	\chi & \chi(v_1) & \chi(v_2) & \chi(v_3) & \chi^\ON(s) & 
	\chi^\ON(t) & \chi^\HS(s) & \chi^\HS(t) \\ 
	\hline\hline 
	\chi_1 & v_1z & v_2z & v_3z & s & t & s & t \\ \hline 
	\chi_2 & v_1 & v_2 & v_3 & v_1^{-1}v_2^2v_3s & t 
	& w^{-1}s & zt \\ \hline
	\chi_3 & v_1 & v_2 & v_3 & v_1^{-1}v_2^{-1}s & t 
	& ws & w^{-1}t \\ \hline
	\chi_4 & v_1^{-1} & v_2^{-1} & v_3^{-1} & v_3s & v_2t & zs & t \\ \hline 
	\end{array} \eeq
Note that $\chi_i(P_j)=P_j$ for all $i,j$, except for the case 
$\chi_4(P_2^\ON) = {}^s(P_2^\ON) \neq P_2^{\ON}$.  Also, 
$\Out(P_2^\ON)\cong{}C_2\times\Sigma_6$, 
$\Out(P_2^\HS)\cong{}\klfour\times\Sigma_4$, and $\Out(P_1)$ and $\Out(P_3)$ 
are described by the following table (where the second factor in 
$\Out(P_3)$ has trivial center):
	\[ \renewcommand{\arraystretch}{1.4} 
	\begin{array}{|l||c|c|c|} \hline
	P & H^1(P/A;A) & N_{\Aut(A)}(\Aut_P(A))/\Aut_P(A) & \Out(P) \\ \hline\hline
	P_1 & \Z/4 & C_2\times\Sigma_4 & D_8\times\Sigma_4 \\ \hline
	P_3 & (\Z/2)^3 & C_2\times\Sigma_4 & 
	\klfour\times((C_2)^4\sd{}\Sigma_3) \\ \hline 
	\end{array} \]

Fix a reduced fusion system $\calf$ over $S$.  Then 
\begin{itemize} 
\item $\outf(S)=1$ since $\Out(S)$ is a 2-group; 

\item $P_2\in\EE_\calf$ by Proposition \ref{Q<|F}, since $A$ is 
characteristic in $P_1$ and $P_3$ and $A\not\nsg\calf$; and 

\item $P_3\in\EE_\calf$ since $Z(S)$ is characteristic in $P_1$ and in 
$P_2$ ($Z(S)\le Z(P_2)\le Z(P_1)=\gen{v_1v_2^2v_3^{-1}}\cong{}C_4$) and 
$Z(S)\not\nsg\calf$. 

\end{itemize}

\boldd{If $S=S^\ON$,} then since $P_2\in\EE_\calf$ and 
$\Out_S(P_2)\cong{}\klfour$, we have $\outf(P_2)\cong A_5\times H$ for some 
$H$ of odd order by Lemma \ref{l:HxA5}. In 
particular, there is $\xi\in N_{\autf(P_2)}(\Aut_S(P_2))$ of order three.  
By the extension axiom, $\xi$ extends to an element 
$\widehat\xi\in\autf(P_1)$ (recall $P_1=N_S(P_2)$).  Thus $\autf(P_1)$ is 
not a $2$-group, and so $P_1\in\EE_\calf$ by Proposition \ref{p:AFT-E}.

\boldd{If $S=S^\HS$,} set $T=N_S(P_2)\nsg S$.  Then 
$(P_3\cap{}T)/[P_3,P_3]=\Omega_1(P_3/[P_3,P_3])$, so $P_3\cap{}T$ is 
characteristic of index two in $P_3$.  Thus $[\Aut(P_3),P_3]\le{}T$ and 
$[\Aut(P_2),P_2]\le{}T$, so $P_1\in\EE_\calf$ by Lemma \ref{l:foc=S}.

\boldd{In both cases,} set $A_0=\Omega_1(A)=\gen{v_1^2,v_2^2,v_3^2}$. 
Since $C_S(A_0)=A$, 
	\[ \Aut_S(A_0) \cong S/A\cong D_8 \qquad\textup{and}\qquad
	\Aut_{P_i}(A_0)\cong P_i/A\cong \klfour ~ \textup{ for $i=1,3$}\,. \]
So for $i=1,3$, by Lemma \ref{l:res.aut.} (applied with $G=P_i$ and $H=A$), 
restriction induces a homomorphism from $\outf(P_i)$ into the group 
$N_{\Aut(A_0)}(\Aut_{P_i}(A_0))/\Aut_{P_i}(A_0)\cong\Sigma_3$ with kernel a 
$2$-group. This restriction homomorphism is an 
isomorphism since $P_i\in\EE_\calf$, so $\autf(A_0)$ contains the 
normalizers in $\Aut(A_0)$ of $\Aut_{P_1}(A_0)$ and $\Aut_{P_3}(A_0)$; 
i.e., the two maximal parabolic subgroups in $\Aut(A_0)$.  Thus 
$\autf(A_0)=\Aut(A_0)\cong\GL_3(2)$. 

Now, $O_2(\Aut(A))=\bigl\{\alpha\in\Aut(A)\,\big|\,\alpha|_{A_0}=\Id\bigr\} 
\cong (C_2)^9$, so $\Aut(A)/O_2(\Aut(A))\cong\Aut(A_0)$.  Let $\calg$ be the 
set of all subgroups $\Gamma<\Aut(A)$ such that $\Gamma\cong\GL_3(2)$ and 
$\Aut_S(A)<\Gamma$.  Since $\Aut_S(A)\cap{}O_2(\Aut(A))=1$, 
$\autf(A)\cap{}O_2(\Aut(A))=1$ by the Sylow axiom, and hence 
$\autf(A)\in\calg$ by the extension axiom. By Proposition 
\ref{p:splittings}, applied with $G=\Aut(A)$, $Q=O_2(G)$, $H=\autf(A)$, and 
$H_0=\Aut_S(A)$, the group 
$C_{O_2(\Aut(A))}(\Aut_S(A))=\gen{\chi_1|_A,\chi_4|_A}\cong \klfour$ acts 
transitively on $\calg$ via conjugation.  For $\Gamma\in\calg$ and $\chi\in 
O_2(\Aut(A))$, ${}^\chi\Gamma=\Gamma$ if and only if 
$[\chi,\Gamma]\le\Gamma\cap{}O_2(\Aut(A))=1$, in which case 
$[\chi,\Aut(A)]=[\chi,O_2(\Aut(A))]=1$ since $O_2(\Aut(A))$ is abelian.  
Thus $\chi_4|_A$ normalizes all elements in $\calg$, while $\chi_1|_A\notin 
Z(\Aut(A))$ normalizes none of them.  Hence $|\calg|=2$, and its elements 
are exchanged by $\chi_1|_A$.

Now fix some $\Gamma\in\calg$.  We can assume, after replacing $\calf$ by 
$\9{\chi_1}\calf$ if necessary, that $\autf(A)=\Gamma$. We claim that there 
is a unique possibility for $\autf(P_1)$ whose elements restrict to 
elements of $\Gamma$, and exactly two such possibilities for $\autf(P_3)$. 
To see this, note that for $P=P_1$ or $P_3$, the image of $\outf(P)$ 
in $N_{\Aut(A)}(\Aut_P(A))/\Aut_P(A)$ is precisely
$N_\Gamma(\Aut_P(A))/\Aut_P(A)\cong\Sigma_3$ by the extension axiom. When 
$P=P_1$, $O_3\bigl(N_\Gamma(\Aut_P(A))/\Aut_P(A)\bigr)$ lifts 
to a cyclic subgroup of order $12$ in $\Out(P_1)$, and 
hence there is only one choice for the subgroup of order $3$ in 
$\outf(P_1)$. When $P=P_3$, it lifts to a subgroup isomorphic to 
$C_2\times A_4$, so there are four subgroups of order $3$, of which 
just two are normalized by $\Out_S(P_3)$. By direct computations, these two 
possibilities for $\outf(P_3)$ are exchanged by $\chi_2$, and fixed by 
$\chi_3$ and $\chi_4$.

\boldd{If $S=S^\ON$,} then $\Out(P_2)\cong C_2\times\Sigma_6$.  We already 
saw that $O^{2'}(\outf(P_2))\cong{}A_5$, and hence $\outf(P_2)\cong A_5$.  
Also, $\outf(P_2)\le O^2(\Out(P_2))\cong A_6$, $A_6$ contains $12$ 
subgroups isomorphic to $A_5$ (six which act fixing a point and six which 
act transitively), and they are all conjugate in $\Aut(A_6)$ (see 
\cite[(3.2.19)]{Sz1}). Since each of those has five Sylow 2-subgroups all 
lying in the same $A_6$-conjugacy class, and $A_6$ has $30$ subgroups 
isomorphic to $\klfour$ ($15$ in each of two classes), we see by counting 
that each $\klfour\le A_6$ is contained in exactly two subgroups isomorphic 
to $A_5$. Thus there are two possibilities for $\outf(P_2)$, and they are 
exchanged by $\chi_3$ and fixed by $c_s\circ\chi_4$.  (Recall that $\chi_4$ 
does not normalize $P_2$.)  We now conclude that there is (up to 
isomorphism) at most one reduced fusion system $\calf$ over $S^\ON$, and 
that $\Out(\calf)=\gen{[\chi_4]}\cong C_2$.  In particular, $\calf$ 
is isomorphic to the fusion system of O'Nan's simple group, which is 
reduced by Proposition \ref{p:FS(G)-red}(d). 

\boldd{If $S=S^\HS$,} then $\Out(P_2)\cong \klfour\times\Sigma_4$ contains 
exactly four subgroups of order three of which two are normalized by 
$\Out_S(P_2)$.  Those two are exchanged by $\chi_3$ and fixed by $\chi_4$.  
Thus there is, up to isomorphism, at most one reduced fusion system $\calf$ 
over $S^\HS$, and $\Out(\calf)=\gen{[\chi_4]}\cong C_2$.  In 
particular, $\calf$ isomorphic to the fusion system of the Higman-Sims 
simple group, which is reduced by Proposition \ref{p:FS(G)-red}(d). 

\boldd{In both cases,} since $P_3$ is the only $\calf$-essential subgroup 
with noncyclic center, $\Ker(\mu_G)=1$ for $G=\ON$ or $\HS$ by Proposition 
\ref{p:newKer(mu)}. When $G=\ON$, then by \cite[Lemma 11.2]{O'Nan}, 
$|\Out(G)|\le2$. Also, $G$ contains two 
conjugacy classes of subgroups isomorphic to $L_3(7)^*$ by \cite[Lemma 
10.6(iii)]{O'Nan}, where $L_3(7)^*$ denotes the 
extension of $\PSL_3(7)$ by its graph automorphism (cf. \cite[p. 
471]{O'Nan}). If $\mu_G\circ\kappa_G$ is not injective, then there is 
$\alpha\in\Aut(G){\sminus}\Inn(G)$ such that $\alpha|_S=\Id$, $\alpha$ 
exchanges the two $G$-conjugacy classes of subgroups isomorphic to 
$L_3(7)^*$ by \cite[Lemma 11.1]{O'Nan}, and hence exchanges the two 
$G$-conjugacy classes of cyclic subgroups of order 16 (\cite[Lemma 
10.13]{O'Nan}). (By \cite[Lemma 4.3(vi)]{O'Nan}, $G$ contains four classes 
of elements of order $16$. Each element of order $16$ in $S$ has the form 
$(sa)^{\pm1}$ for $a\in A$, $|\Aut_S(\gen{sa})|=4$ for each $a$, and thus 
there are only two classes of cyclic subgroups of order $16$.) So 
$N_{\Aut(G)}(A)=N_{\Inn(G)}(A)\gen{\alpha}$ satisfies the hypotheses of 
\cite[Lemma 11.3]{O'Nan} (where $V$ in \cite{O'Nan} corresponds to $A$ 
here), which is impossible by point (ii) in that lemma. Thus 
$\mu_G\circ\kappa_G$ is injective. Since $|\Out(G)|=2$ (cf. \cite{JW}), 
$\mu_G$ and $\kappa_G$ are isomorphisms, and $\calf=\calf_S(G)$ is tame. 

Now assume that $G=\HS$. Then $|\Out(G)|\ge2$ by the construction of $G$ in 
\cite{HS}, with equality by, e.g., \cite[pp. 10--11]{Lyons-out}. By 
\cite[Table IIIb]{Frame}, there is no element 
$\alpha\in\Aut(G)\sminus\Inn(G)$ such that $C_G(\alpha)$ contains a Sylow 
2-subgroup of $G$. Thus $\mu_G\circ\kappa_G$ is injective, and hence (since 
$\mu_G$ is injective and $|\Out(G)|=|\Out(\calf)|$) $\mu_G$ and 
$\kappa_G$ are both isomorphisms. So $\calf=\calf_S(G)$ is tame. 
\end{proof}

It remains only to consider $2$-groups of type $\PSp_6(3)$.

\begin{Prop} \label{t:S_6(3)}
Let $S$ be a group of order 512 of type $\PSp_6(3)$, and thus isomorphic to 
$(Q_8\wr C_2)\times_{C_2}Q_8$.  Then every saturated fusion system over $S$ 
with $O_2(\calf)=1$ is isomorphic to the fusion system of $\PSp_6(3)$.
\end{Prop}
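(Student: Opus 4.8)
The plan is to follow the template already used in this section for the other individual cases (Propositions \ref{t:ut3(8)}, \ref{t:O'N-HS}, and above all Proposition \ref{p:S4(4)}): first pin down the $\calf$-essential subgroups of an arbitrary saturated fusion system $\calf$ over $S$ with $O_2(\calf)=1$, then determine their automorphism groups, then invoke Alperin's fusion theorem to get uniqueness up to isomorphism, and finally verify that $\PSp_6(3)$ realizes one such system. I would begin by fixing explicit notation for $S=(Q_8\wr C_2)\times_{C_2}Q_8$: record the normal subgroup $B\nsg S$ of index two obtained from the three $Q_8$-factors modulo the diagonal central $C_2$ (so that $|B|=2^8$, $Z(B)\cong\klfour$, and $B/Z(B)\cong(C_2)^6$), note that the potentially critical subgroups of index two in $S$ are characteristic (being pairwise non-isomorphic), and compute $Z(S)\cong C_2$ together with the structure of $[S,S]$. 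A computer search based on Proposition \ref{p:critical-criteria} then produces the (short) list of $S$-conjugacy classes of potentially critical subgroups (Definition \ref{d:p.crit.}), which by Proposition \ref{ess=>crit} contains all of $\EE_\calf$.

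The next step is to identify $\EE_\calf$ exactly. The tool here is Proposition \ref{Q<|F}: since $O_2(\calf)=1$, no nontrivial subgroup of $S$ can be contained in, and invariant under the automorphism groups of, every member of $\EE_\calf\cup\{S\}$; applying this with the candidate normal subgroups visible in the critical list ($Z(S)$, $B$, and the other characteristic subgroups) forces certain of the $P_i$ to be $\calf$-essential. Along the way I expect to show, using Lemma \ref{l:mod-Fr} on a chain of characteristic subgroups of $S$, that $\autf(S)$ is a $2$-group and hence equals $\Inn(S)$ by the Sylow axiom; together with Lemma \ref{l:1crit} this also forces more than one $S$-conjugacy class of essentials.

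For each $P\in\EE_\calf$ one then determines $\autf(P)$. Here $\outf(P)$ contains a strongly $2$-embedded subgroup with $\Out_S(P)\cong S/P$ as a Sylow $2$-subgroup, so by Bender's theorem (Proposition \ref{p:critical} and \cite[Satz 1]{Bender}) $O^{2'}(\outf(P))$ is, modulo a group of odd order, one of $\PSL_2(2^n)$, $\Sz(2^{2n+1})$, or $\PSU_3(2^n)$, and comparison with $\Out_S(P)$ fixes the isomorphism type; when $\Out_S(P)\cong\klfour$ this yields $\autf(P)=\Delta_P\times H_P$ with $\Delta_P\cong A_5$ and $H_P$ of odd order by Lemma \ref{l:HxA5}. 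The remaining ambiguity --- which overgroup of $\Out_S(P)$ occurs, and how it acts on $Z(P)$ or on $P/\Phi(P)$ --- is controlled exactly as in the proof of Proposition \ref{p:S4(4)}: analyze the relevant $\F_2[\outf(P)]$-module using that $\Out_S(P)$ acts trivially on $Z(P)$, apply the Green-correspondence lemmas (Lemmas \ref{vx-props} and \ref{l:Green.corr}) and the splitting result Proposition \ref{p:splittings} to rigidify the embedding up to conjugation by the centralizer of $\Out_S(P)$, and discard via $O_2(\calf)=1$ any choice creating a proper nontrivial subgroup normal in $\calf$. The outcome is that $\autf(P)$ is determined up to $\Aut(S)$-conjugacy once the relevant Sylow-normalizer data are fixed, so by Proposition \ref{p:AFT-E} the whole system $\calf$ is determined up to isomorphism. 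Finally, $\PSp_6(3)$ has Sylow $2$-subgroup isomorphic to $S$, and $O_2(\calf_S(\PSp_6(3)))=1$ by Proposition \ref{p:FS(G)-red}(b) (as $S$ is nonabelian and $\PSp_6(3)$ is neither a $\PSU_3(2^n)$ nor an $\Sz(2^{2n+1})$), so this system is one of those classified and the proposition follows.

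The main obstacle is the pair of middle steps. Deciding precisely which potentially critical subgroups must be essential is already delicate, but the real difficulty is nailing down the automorphism groups of the essentials and excluding spurious lifts: if two or more essential subgroups occur together, their automorphism groups must be shown mutually compatible in a way that still leaves $\calf$ without a nontrivial normal $2$-subgroup, and it is this compatibility --- rather than any single local computation --- where the module-theoretic rigidity arguments via Green correspondence have to do the real work.
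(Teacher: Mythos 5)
Your overall template (find the potentially critical subgroups by the computer search, determine $\EE_\calf$ via Proposition \ref{Q<|F} and Lemma \ref{l:1crit}, pin down $\autf(P)$ for each essential $P$, then conclude uniqueness by Alperin's fusion theorem and realizability via Proposition \ref{p:FS(G)-red}(b)) is the right shape, and your invocation of Lemma \ref{l:HxA5} for the extraspecial essential is exactly what the paper does. But there is a genuine and central error in the middle: you expect to prove $\autf(S)$ is a $2$-group and hence equal to $\Inn(S)$ via Lemma \ref{l:mod-Fr}. That is false for this $S$. The paper cites \cite[Lemma 7.4(e)]{OV2} to get $\Out(S)\cong\Sigma_4\times\Sigma_4$, which is far from a $2$-group, and the actual proof depends on $\outf(S)$ being nontrivial: it is eventually pinned down to be $\gen{[\eta_{12}^S],[\eta_3^S]}\cong(C_3)^2$, generated by two explicit odd-order automorphisms of $S$. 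These odd fusion-preserving automorphisms are precisely what lets the argument glue together the automorphism data of the two essentials $R_0$ (the central product of the three quaternion factors) and $\EX\cong 2^{1+6}_+$ --- via the extension axiom one passes information from $\autf(\EX)$ to $\autf(S)$ and from there to $\autf(R_0)$. With $\outf(S)=1$, the compatibility between the essentials that you correctly identify as the crux of the argument cannot be forced, so a proof built on that premise collapses.

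A secondary misstep is your choice of tool: Green correspondence (Lemmas \ref{vx-props} and \ref{l:Green.corr}) is tailored to the abelian-direct-factor situation of Section \ref{s:Q<|F} and plays no role here. For $\EX$, where $\Out_S(\EX)\cong\klfour$, the paper applies Lemma \ref{l:F2[A5]-mod}(a) to $\EX/Z(\EX)$ and identifies $\outf(\EX)$ as a specific $A_5\times C_3$ inside $\Out(\EX)\cong\Sigma_8$ using the restrictions of the explicit $\eta_{12}^S,\eta_3^S$. For $R_0$, $\Out_S(R_0)\cong C_2$ (since $[S:R_0]=2$), so Lemma \ref{l:HxA5} is not even applicable, and the argument instead constructs a homomorphism $\Psi\:\Aut(R_0)\to\Aut(H/H_0)\cong\GL_3(3)$ (where $H/H_0\cong(C_3)^3$ is the relevant odd section of $\Aut(R_0)$) and reads off $\autf(R_0)$ from its image, then rigidifies via Proposition \ref{p:splittings} applied inside $\Out(N_3)$-type analysis. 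Your proposal, which mirrors the symmetric rank-two situation of Proposition \ref{p:S4(4)} where both essentials are elementary abelian of the same shape, does not account for this asymmetry between an index-two essential and an extraspecial one, which is where the real work lies.
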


\newcommand{\z}[1]{z_{#1}}
\newcommand{\tauu}{t}  
\newcommand{\QQ}{\mathbf{Q}}   %% quaternion subgroups
\newcommand{\EX}{\mathbf{B}}   %% extraspecial subgroups

\begin{proof} Let $\QQ_1,\QQ_2\le S$ be the the two quaternion factors in 
$Q_8\wr C_2$, and fix $\tauu\in S$ such that $\tauu^2=1$ and 
$\9\tauu\QQ_1=\QQ_2$. Let $\QQ_3\le S$ be the other factor, and set 
$R_0=\QQ_1\QQ_2\QQ_3\cong(Q_8)^3/C_2$. For $i=1,2,3$, let $\z{i}\in 
Z(\QQ_i)$ be the generator. Thus for distinct $i,j\in\{1,2,3\}$, 
$[\QQ_i,\QQ_j]=1$ and $\QQ_i\cap\QQ_j=1$. Also, $z_1z_2z_3=1$, 
$Z(R_0)=\gen{\z1,\z2,\z3}\cong \klfour$, and 
$Z(S)=\gen{\z1\z2}=\gen{\z3}$.

Choose $\gamma\in\Aut(R_0)$ of order $3$ that permutes the $\QQ_i$ 
cyclically, and such that $\gen{\gamma,c_{\tauu}}\cong\Sigma_3$. For 
$i=1,2,3$, choose $\eta_i\in\Aut(R_0)$ such that $\eta_i|_{\QQ_i}$ is an 
automorphism of order $3$ and $\eta_i|_{\QQ_j}=\Id$ for $i\ne 
j$. Assume also that we have done this in such a way that conjugation by 
$\gamma$ and $c_{\tauu}$ in $\Aut(R_0)$ permutes the set 
$\{\eta_1,\eta_2,\eta_3\}$. In particular, 
$\gen{\eta_1,\eta_2,\eta_3,\gamma,c_{\tauu}}\cong C_3\wr\Sigma_3$. Also, 
since $\eta_1\eta_2$ and $\eta_3$ both commute with $c_{\tauu}$, there are 
automorphisms $\eta_{12}^S,\eta_3^S\in\Aut(S)$ such that 
	\[ \eta_{12}^S|_{R_0}=\eta_1\eta_2, \quad \eta_3^S|_{R_0}=\eta_3, 
	\quad\textup{and}\quad \eta_{12}^S(\tauu)=\tauu=\eta_3^S(\tauu). \]

The groups $\Out(R_0)$ and $\Out(S)$ were described in \cite[Lemma 
7.4(b,c,e)]{OV2} (where they are denoted $R_0$ and $R_2$, respectively). 
Each automorphism of $R_0$ permutes the three subgroups $\QQ_iZ(R_0)$.  Let 
$\Aut^0(R_0)\nsg\Aut(R_0)$ be the subgroup of those elements which are the 
identity on $Z(R_0)$; equivalently, which normalize each $\QQ_iZ(R_0)$. Set 
$\Out^0(R_0)=\Aut^0(R_0)/\Inn(R_0)$. Then 
	\beqq \Out^0(R_0)\cong\Sigma_4\times\Sigma_4\times\Sigma_4
	\qquad\textup{and}\qquad
	\Out(R_0)\cong\Sigma_4\wr\Sigma_3. \label{e:6.4b} \eeqq
The homomorphism 
	\beq \Out(S) \Right4{\cong}
	N_{\Out(R_0)}(\Out_{S}(R_0))\big/\Out_{S}(R_0) \cong
	\Sigma_4\times\Sigma_4 \eeq
induced by restriction is an isomorphism (by the proof of \cite[Lemma 
7.4(e)]{OV2}). In particular,
	\beqq \gen{\eta_1,\eta_2,\eta_3,\gamma}\in\syl3{\Aut(R_0)} 
	\quad\textup{and}\quad \gen{\eta_{12}^S,\eta_3^S}\in\syl3{\Aut(S)}. 
	\label{e:6.4c} \eeqq

We also need to work with the extraspecial subgroups 
	\[ \EX_0 = \gen{\z1,\z2,\tauu}\times_{C_2}\QQ_{12} \cong 2^{1+4}_- 
	\qquad\textup{and}\qquad 
	\EX = \EX_0 \times_{C_2} \QQ_3\cong 2^{1+6}_+\,, \]
where $\QQ_{12}=C_{\QQ_1\QQ_2}(\tauu)$ is the ``diagonal'' subgroup in 
$\QQ_1\QQ_2$. Consider the following subgroups and inclusions in 
$\Out(\EX)$:
	\beqq \underset{\cong A_4}{\Out_S(\EX)\gen{[\eta_{12}^S|_{\EX}]}}
	\times \underset{\cong A_3}{\gen{[\eta_3^S|_{\EX}]}} \le 
	\underset{\cong \Sigma_5}{\Out(\EX_0)} 
	\times \underset{\cong \Sigma_3}{\Out(\QQ_3)} \le 
	\underset{\cong \Sigma_8}{\Out(\EX)} \,. \label{e:Out(X)} \eeqq
Here, $\Out(\EX_0)\cong\SO_4^-(2)\cong \Sigma_5$ by \cite[Exercises 
8.5(3) \& 7.7(5)]{A-FGT}, and $\Out(\EX)\cong\SO_6^+(2)\cong \Sigma_8$ by 
\cite[Exercises 8.5(3) \& 7.7(7)]{A-FGT}. We are regarding $\Out(\EX_0)$ and 
$\Out(\QQ_3)$ as subgroups of $\Out(\EX)$: those classes of automorphisms 
that are the identity on the other factor.

\smallskip

Fix a saturated fusion system $\calf$ over $S$ such that $O_2(\calf)=1$. By 
computer computations, $R_0$ 
and $\EX$ are the only potentially critical subgroups of $S$, so 
$\EE_\calf=\{R_0,\EX\}$ by Lemma \ref{l:1crit}. 

Set $M=\gen{\eta_{12}^S,\eta_3^S}\in\syl3{\Aut(S)}$. Choose 
$\varphi\in\Aut(S)$ such that $\9\varphi\autf(S)\cap M 
\in\syl3{\9\varphi\autf(S)}$. Upon replacing $\calf$ by $\9\varphi\calf$, 
we can assume that $\autf(S)\cap M \in\syl3{\autf(S)}$. Equivalently, since 
$\outf(S)$ has odd order, it must be a $3$-group, and hence 
$\autf(S)\le\Inn(S)\cdot M$. 

Set $\4\EX=\EX/\gen{\z3}$, and more generally $\4P=P/\gen{\z3}$ when 
$\z3\in P\le\EX$. By Lemma \ref{l:HxA5}, $\outf(\EX)=\Delta\times 
X$, where $\Delta\cong A_5$ and $X$ has odd order. By Lemma 
\ref{l:F2[A5]-mod}(a), and since $[\Out_S(\EX),\4\EX]\nleq 
C_{\4\EX}(\Out_S(\EX))$ (and $\rk([x,\4\EX])=2$ for $x\in S\sminus\EX$), 
we have $\4\EX=\4B_1\times\4B_2$, where $\4B_1=[\Delta,\4\EX]$ is 
$4$-dimensional and irreducible as an $\F_2[\Delta]$-module and 
$\4B_2=C_{\4\EX}(\Delta)$ is 2-dimensional. For $i=1,2$, let 
$B_i\le\EX$ be such that $z_3\in B_i$ and $B_i/\gen{z_3}=\4B_i$. Thus 
$\4B_2\le C_{\4\EX}(\Out_S(\EX))=\4{\QQ_3\gen{\z1}}$, so 
$B_2\le\QQ_3\gen{\z1}\cong Q_8\times C_2$, and 
$\z1\in[\Aut_S(\EX),\EX]\le B_1$. Hence $B_2\cong Q_8$. 
Also, $B_1=C_{\EX}(B_2)$ since $\4B_1\cap 
\4{C_{\EX}(B_2)}$ is a nontrivial $\F_2[\Delta]$-submodule of $\4B_1$. Hence 
$B_1\cong2^{1+4}_-$, so $\Out(B_1)\cong\Sigma_5$, and 
$\Delta$ has index two in $C_{\Out(\EX)}(\4B_2)\cong\Out(B_1)$. In particular, 
$[\eta_{12}^S|_{\EX}]\in\Delta\le\outf(\EX)$ and hence 
$\eta_{12}^S|_{\EX}\in\autf(\EX)$.

By the extension axiom, $\eta_{12}^S|_{\EX}\in\autf(\EX)$ extends to an 
element $\eta\in\autf(S)$. Then $\eta|_{\QQ_3}=\Id$, and since 
$\autf(S)\le\Inn(S)\cdot M$, we have
$\eta\in\Inn(S)\cdot\eta_{12}^S$. Thus $\eta_{12}^S\in\autf(S)$, and 
$\outf(S)=\gen{[\eta_{12}^S]}$ or $\gen{[\eta_{12}^S],[\eta_3^S]}$.

Now, $\gen{\z3}\nnsg\calf$ since $O_2(\calf)=1$. Since 
$\gen{\z3}=Z(S)=Z(\EX)\cong C_2$, there is by Proposition \ref{Q<|F} an 
element of $\autf(R_0)$ which does not fix $\z3$. Also, 
$Z(R_0)=\gen{\z1,\z2}\cong \klfour$, where $\z3=\z1\z2$, and 
$c_{\tauu}\in\Aut_S(R_0)$ exchanges $\z1$ and $\z2$. So 
$\autf(Z(R_0))\cong\Sigma_3$. By the extension axiom, each element of 
$\autf(Z(R_0))$ is the restriction of an element of $\autf(R_0)$. Hence 
there is $\gamma'\in\autf(R_0)$ of odd order such that 
$\gamma'|_{Z(R_0)}=\gamma|_{Z(R_0)}$. In particular,  
$\gamma'\notin\Aut^0(R_0)$.

Let $H_0<H_i<H\nsg\Aut(R_0)$ ($i=1,2,3$) be the subgroups 
	\[ H_0=O_2(\Aut(R_0))\,, \qquad H_i=H_0\gen{\eta_i}\,,\qquad 
	H=H_1H_2H_3\le \Aut^0(R_0). \]
By \eqref{e:6.4b}, $H/\Inn(R_0)\cong(A_4)^3$, and 
	\[ H/H_0=O_3(\Aut(R_0)/H_0)=(H_1/H_0)\times(H_2/H_0)\times(H_3/H_0) 
	\cong(C_3)^3. \]
Consider the homomorphism 
	\[ \Psi\:\Aut(R_0)\Right6{}\Aut(H/H_0) \cong \GL_3(3) \]
induced by conjugation. Then $\Ker(\Psi)=H$, and $\Im(\Psi)\cong 
C_2\wr\Sigma_3$ acts on the set 
$\{\eta_1^{\pm1}H_0,\eta_2^{\pm1}H_0,\eta_3^{\pm1}H_0\}$ 
as the group of signed permutations. Also, conjugation by 
$c_{\tauu}$ exchanges $\eta_1$ and $\eta_2$ and sends $\eta_3$ to itself. 
Since $\Psi(\gamma')\ne1$ has odd order and 
$\Gen{[c_{\tauu}]}\in\syl2{\outf(R_0)}$ by the Sylow axiom, 
$\Psi(\autf(R_0))=\gen{\Psi(c_{\tauu}),\Psi(\gamma')}\cong\Sigma_3$, and 
$\Psi(\gamma')$ permutes cyclically the three cosets $\eta_1H_0$, 
$\eta_2H_0$, and $\eta_3^\gee H_0$ for some $\gee=\pm1$.

Choose $\beta\in\Aut(S)$ such that
	\[ \beta|_{\QQ_1\QQ_2}=\Id\,,\qquad \beta(\tauu)=\tauu\,, \qquad 
	\beta(\QQ_3)=\QQ_3\,, \]
and such that $\beta$ induces an automorphism of order $2$ on 
$\QQ_3/\gen{\z3}$. Then conjugation by $[\beta|_{R_0}]$ in $\Out(R_0)$ 
fixes $[\eta_1]$ and $[\eta_2]$ and inverts $[\eta_3]$. So upon replacing 
$\calf$ by $\9\beta\calf$ and $\gamma'$ by $\beta\gamma'\beta^{-1}$ 
if necessary, we can assume that $\gee=1$ (i.e., 
$\Psi(\gamma')$ permutes cyclically the three cosets 
$\eta_iH_0\in{}H/H_0$). Since $\beta\in N_{\Aut(S)}(\Inn(S)M)$, we still 
have $\outf(S)=\gen{[\eta_{12}^S]}$ or $\gen{[\eta_{12}^S],[\eta_3^S]}$. In 
particular, $\eta_1\eta_2=\eta_{12}^S|_{R_0}\in\autf(R_0)$.

Set $\autf^*(R_0)=\autf(R_0)\cap{}H$ for short. Since $1\ne 
\eta_1\eta_2H_0\in\autf^*(R_0)H_0/H_0$, and each nontrivial 
subgroup of $H/H_0$ normalized by $\Psi(\gamma')$ contains 
the coset $\eta_1\eta_2\eta_3H_0$ (since 
$C_{H/H_0}(\Psi(\gamma'))=\gen{\eta_1\eta_2\eta_3H_0}$), there is 
$\eta_3'\in\autf^*(R_0)\cap\eta_3H_0$. Upon replacing $\eta_3'$ by some 
appropriate power of $\eta_3'$, if necessary, we can assume $|\eta_3'|=3$. 
Since $\gamma'$ normalizes $\autf^*(R_0)$, there are elements 
$\eta_i'\in\autf^*(R_0)\cap\eta_iH_0$ of order $3$ for $i=1,2$. Thus 
$\autf(R_0)\ge\Inn(R_0)\gen{\eta'_1,\eta'_2,\eta'_3,\gamma',c_{\tauu}}$, 
with equality because $H_0\cap\autf(R_0)=\Inn(R_0)$. 

Since $c_{\tauu}$ normalizes 
$\autf^*(R_0)=\Inn(R_0)\gen{\eta'_1,\eta'_2,\eta'_3}$, conjugation 
by $[c_{\tauu}]$ exchanges the classes $[\eta'_1]$ and $[\eta'_2]$ in 
$\outf(R_0)$ and centralizes $[\eta'_3]$. In particular, $\eta'_1\eta'_2$ 
and $\eta'_3$ both normalize $\Aut_S(R_0)$, and hence by the extension 
axiom, both extend to elements of $\autf(S)$.  Thus 
$\outf(S)=\gen{[\eta_{12}^S],[\eta_3^S]}$, and so 
$\eta_1\eta_2,\eta_3\in\autf(R_0)$ by restriction. Since 
$H_0\cap\autf(R_0)=\Inn(R_0)$, it follows that 
$[\eta'_1\eta'_2]=[\eta_1\eta_2]$ and $[\eta'_3]=[\eta_3]$ in 
$\Out^0(R_0)\cong(\Sigma_4)^3$. 

Recall that the $[\eta_i]$ and $[\eta'_i]$ ($i=1,2,3$) all lie in 
$H/\Inn(R_0)\cong(A_4)^3$. For each $i$, there are four subgroups of order 
$3$ in $H_i/\Inn(R_0)\cong A_4\times (C_2)^4$, and they generate a subgroup 
$H_i^*\cong A_4$. Thus $[\eta_i],[\eta'_i]\in{}H_i^*$ for $i=1,2$. Also, 
$H_1^*\cap{}H_2^*=1$ and $[\eta_1\eta_2]=[\eta'_1\eta'_2]$, and we conclude 
that $[\eta_i]=[\eta'_i]\in\Out(R_0)$ for each $i=1,2,3$. In particular, 
$\eta_1,\eta_2,\eta_3\in\autf(R_0)$. 

Set 
	\[ K=\Gen{[\eta_1],[\eta_2],[\eta_3]} = \autf^*(R_0)/\Inn(R_0)
	\le \outf(R_0). \]
Both $[\gamma]$ and $[\gamma']$ permute the elements $[\eta_i]$ cyclically 
under conjugation, and $\gamma|_{Z(R_0)}=\gamma'|_{Z(R_0)}$ by assumption. 
So $[\gamma^{-1}\gamma']\in C_{\Out(R_0)}(K)$, where $C_{\Out(R_0)}(K)=K$ 
by \eqref{e:6.4b} and since a 3-cycle is self-centralizing in 
$\Sigma_4$. Hence $\gamma\in\autf(R_0)$, and so 
	\[ \autf(R_0) =\Inn(R_0)\gen{\eta_1,\eta_2,\eta_3,\gamma,c_{\tauu}},
	\quad\textup{and}\quad 
	\autf(S)=\Inn(S)\gen{\eta_{12}^S,\eta_3^S}. \]
Also, by \eqref{e:Out(X)} and since $[\eta_3^S|_{\EX}]\in\Out(\EX)$ 
corresponds to a $3$-cycle in $\Sigma_8$, $\outf(\EX)\cong{}A_5\times{}C_3$ 
is the unique subgroup of this isomorphism type in $\Out(\EX)\cong\Sigma_8$ 
which contains $\Out_S(\EX)$, $[\eta_{12}^S|_{\EX}]$, and 
$[\eta_3^S|_{\EX}]$.

We have now shown that each saturated fusion system $\calf^*$ over $S$ with 
$O_2(\calf^*)=1$ is isomorphic to $\calf$.  In particular, if $\calf^*$ is 
the fusion system of $\PSp_6(3)$, then $O_2(\calf^*)=1$ by Proposition 
\ref{p:FS(G)-red}(b), and so $\calf\cong\calf^*$.
\end{proof}

\section{Fusion systems over a $2$-group of type $A_{12}$}
\label{s:a12}

\newcommand{\bbb}{\calb}

Throughout this section, we fix the following notation for certain elements 
of $A_{12}$:
	\begin{align*}  
	a_1&=(1\,2)(3\,4) &
	a_2&=(5\,6)(7\,8) &
	a_3&=(9\,10)(11\,12) \\
	b_1&=(1\,3)(2\,4) &
	b_2&=(5\,7)(6\,8) &
	b_3&=(9\,11)(10\,12) \\
	\mu_{12}&=(1\,2)(5\,6) &
	\mu_{23}&=(5\,6)(9\,10) &
	\tau&=(1\,5)(2\,6)(3\,7)(4\,8) 
	\end{align*}
and set $S=\Gen{a_1,a_2,a_3,b_1,b_2,b_3,\mu_{12},\mu_{23},\tau}\in 
\syl2{A_{12}}$.  We need to consider the following subgroups of $S$:
	\begin{align*}
	A&=\gen{a_1,a_2,a_3,b_1,b_2,b_3}\cong{}(C_2)^6 & 
	Q&=\gen{a_1,a_2,b_1b_2,\mu_{12},\tau}\cong 2^{1+4}_+ \\
	N_1&=A\gen{\mu_{12},\tau}\cong
	((C_2)^4\sd{}(\klfour))\times{}\klfour & H_1&=A\gen{\tau} \\
	N_2&=A\gen{\mu_{12},\mu_{23}} & H_2&=A\gen{\mu_{23}} \\
	N_3&=Q\gen{a_3,b_3,\mu_{23}} & N_{13}&=N_1\cap 
	N_3=Q\times\gen{a_3,b_3}.
	\end{align*}
Note that $A$, $Q$, $N_1$, $N_2$, and $N_3$ are all normal in $S$.

\begin{Lem} \label{l:a12:char}
Assume the above notation. Then $A$ is the unique elementary abelian 
subgroup of rank $6$ in $S$, and hence is characteristic in all subgroups 
which contain it. The subgroups $N_1$, $N_2$, $N_3$, and 
$N_{13}$ are all characteristic in $S$, and $N_{13}$ is characteristic in 
$N_1$ and in $N_3$.
\end{Lem}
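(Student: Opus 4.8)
The plan is to deduce the lemma from the block structure $\{1,2,3,4\},\{5,6,7,8\},\{9,10,11,12\}$, with Part~1 as the cornerstone. \emph{Uniqueness of $A$.} Since $A$ is generated by the nine displayed permutations $a_i,b_i$ and $A\nsg S$, while $|S|=2^9$ and $|A|=2^6$, the quotient $S/A$ has order $8$; a direct computation identifies $S/A\cong D_8$, generated by the images of $\mu_{12},\mu_{23},\tau$, and describes its action on $A\cong\F_2^6$ relative to $A=A_1\oplus A_2\oplus A_3$ with $A_i=\gen{a_i,b_i}$ (here $\bar\mu_{12}$ acts by a transvection on each of $A_1,A_2$ and trivially on $A_3$, $\bar\mu_{23}$ by transvections on $A_2,A_3$ and trivially on $A_1$, and $\bar\tau$ interchanges $A_1$ and $A_2$ and fixes $A_3$ pointwise). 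From this I record two facts: every $1\ne\bar g\in S/A$ has $\rk(C_A(\bar g))\le4$, and every $\bar E\le S/A$ with $\bar E\cong\klfour$ has $\rk(C_A(\bar E))\le3$. Now if $B\le S$ is elementary abelian of rank $\ge6$, then $\bar B:=BA/A$ is elementary abelian in $S/A$ and $B\cap A\le C_A(\bar B)$ (as $B$ centralizes $B\cap A$); so if $\bar B\cong\klfour$ then $\rk(B)\le\rk(C_A(\bar B))+2\le5$, if $|\bar B|=2$ then $\rk(B)\le4+1=5$, hence $\bar B=1$, i.e.\ $B\le A$ and $B=A$. Thus $A$ is the unique elementary abelian subgroup of $S$ of rank $6$ (and $C_S(A)=A$). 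Consequently, if $A\le P\le S$ and $\alpha\in\Aut(P)$, then $\alpha(A)$ is elementary abelian of rank $6$ in $S$, so $\alpha(A)=A$; hence $A$ is characteristic in every such $P$, in particular in $S$, $N_1$, and $N_2$.

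\emph{The subgroups $N_1$ and $N_2$.} Since $A$ is characteristic, each $\alpha\in\Aut(S)$ induces an automorphism of $S/A\cong D_8$ and permutes the preimages in $S$ of subgroups of $S/A$; in particular it permutes the three maximal subgroups of $S$ containing $A$. Exactly one of these three has $M/A\cong C_4$, so it is characteristic (though not on our list), and the other two are $N_1=A\gen{\mu_{12},\tau}$ and $N_2=A\gen{\mu_{12},\mu_{23}}$, with $M/A\cong\klfour$. A short commutator and centralizer computation inside $A$ gives $N_2'=Z(N_2)=\gen{a_1,a_2,a_3}$ but $N_1'=\gen{a_1,a_2,b_1b_2}\ne Z(N_1)=\gen{a_1a_2,a_3,b_3}$; since the condition ``$M'=Z(M)$'' is an isomorphism invariant, $N_1\not\cong N_2$, so $\Aut(S)$ fixes each of $N_1$ and $N_2$.

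\emph{The subgroups $N_3$ and $N_{13}$.} One first checks that $N_3$ has index $2$ in $S$ with $N_3\cap A=\gen{a_1,a_2,b_1b_2,a_3,b_3}$ a hyperplane of $A$ (so $A\not\le N_3$); thus $N_3$ is one of the maximal subgroups of $S$ not containing the characteristic subgroup $A$. To single $N_3$ out among these I would again use an intrinsic invariant --- for instance $N_3'=\gen{a_1,a_2,a_3,\mu_{12}}$ has order $16$, whereas I expect the corresponding invariant for every competing maximal subgroup to be strictly smaller; alternatively $N_3$ is pinned down by the isomorphism type of the pair $(N_3,N_3\cap A)$ together with its $S$-action, or realized as $C_S(X)$ for $X$ built from $Z(S)$, $Z_2(S)$, and $A$. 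This verification is routine but is the most laborious point, and is the step I expect to be the main obstacle. Granting it, $N_{13}=N_1\cap N_3$ is characteristic in $S$ as an intersection of two characteristic subgroups. Finally $N_{13}$ has index $2$ in each of $N_1$ and $N_3$, and one identifies it inside each of these as the unique maximal subgroup with the appropriate invariant --- e.g.\ $N_{13}=Q\times\gen{a_3,b_3}$ with $Q\cong2^{1+4}_+$, shown to be the unique maximal subgroup of $N_1$ (resp.\ of $N_3$) of that isomorphism type, equivalently the centralizer in $N_1$ (resp.\ in $N_3$) of a characteristic subgroup of that group. Combining the three steps yields the lemma.
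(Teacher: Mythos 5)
Your first two paragraphs — uniqueness of $A$ via the $S/A\cong D_8$ rank computation, and the characterization of $N_1$ and $N_2$ as the two index-$2$ subgroups over $A$ with $\klfour$ quotient, distinguished by whether $[N,N]=Z(N)$ — are correct and coincide with the paper's argument.

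The gap is exactly where you flag it: the step showing $N_3$ is characteristic. You propose to pin down $N_3$ directly among all maximal subgroups of $S$ by some isomorphism invariant, and then obtain $N_{13}=N_1\cap N_3$ as an afterthought; but you acknowledge you have not carried this out and only say you ``expect'' a suitable invariant to work. This is the hard direction. The paper instead reverses the order: it first proves $N_{13}$ is characteristic (in $S$ and in $N_1$) by repeating the uniqueness-of-$A$ trick one level down. Set $Z=\gen{a_1a_2}=[N_1,N_1]\cap Z(N_1)$, a subgroup characteristic in both $N_1$ and $S$; then $N_{13}/Z\cong(C_2)^6$ and $S/N_{13}\cong\klfour$, and checking that $\rk(C_{N_{13}/Z}(g))\le4$ for $g\in S\sminus N_{13}$ and $\rk(C_{N_{13}/Z}(S))=2$ shows $N_{13}/Z$ is the unique elementary abelian rank-$6$ subgroup of $S/Z$ — hence $N_{13}$ is characteristic in $S$ and in $N_1$. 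With $N_{13}$ in hand, there are only \emph{three} subgroups of $S$ containing $N_{13}$ with index $2$ ($N_1$, $N_3$, and $N_{13}\gen{b_1\mu_{23}}$), and these are pairwise non-isomorphic because their commutator subgroups $\gen{a_1,a_2,b_1b_2}$, $\gen{a_1,a_2,a_3,\mu_{12}}$, $\gen{a_1,a_2,a_3,b_1b_2\mu_{12}}$ are pairwise non-isomorphic; so all three, in particular $N_3$, are characteristic. Finally, $N_{13}$ is characteristic in $N_3$ because $N_{13}/Z(N_3)\cong(C_2)^5$ is the unique abelian index-$2$ subgroup of $N_3/Z(N_3)$. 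Your proposed route for this last point (``unique maximal subgroup of $N_1$, resp.\ $N_3$, isomorphic to $Q\times\gen{a_3,b_3}$'') is plausible but again left unverified, and confronting all maximal subgroups of $S$ head-on to isolate $N_3$ is genuinely more work than the paper's three-candidate comparison. The missing idea is to establish $N_{13}$ first.
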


\begin{proof} Assume $A$ is not unique: 
let $P<S$ be such that $P\ne A$ and $P\cong (C_2)^6$. Then $P\cap A\le 
C_A(PA/A)$, and hence $\rk(C_A(PA/A))\ge6-\rk(PA/A)$. It is straightforward 
to check that $S/A\cong D_8$, that $\rk(C_A(g))=4$ for $g\in S/A$ of order 
$2$, and that $\rk(C_A(V))=3$ if $V<S/A$ and $V\cong \klfour$. So 
there is no such subgroup $P$, and $A$ is the unique elementary abelian 
subgroup of rank $6$. In particular, $A$ is characteristic in every 
subgroup of $S$ that contains it.

If $N<S$ is such that $N\ge A$ and $N/A\cong \klfour$, then 
$N=N_1$ or $N_2$. Since $N_1\not\cong N_2$ ($[N_2,N_2]=Z(N_2)$ while 
$[N_1,N_1]\ne Z(N_1)$), both subgroups are characteristic in $S$. 

Set $Z=\gen{a_1a_2}=[N_1,N_1]\cap Z(N_1)$: a subgroup characteristic in 
$N_1$ and in $S$. Then $N_{13}/Z\cong (C_2)^6$, $S/N_{13}\cong \klfour$, 
$\rk(C_{N_{13}/Z}(g))\le4$ for each $g\in S\sminus N_{13}$, and 
$\rk(C_{N_{13}/Z}(S))=2$. So $N_{13}/Z$ is the unique elementary abelian 
subgroup of $S/Z$ of rank $6$, and $N_{13}$ is characteristic in $N_1$ and 
in $S$. Also, the three subgroups of $S$ containing $N_{13}$ with index $2$ 
($N_1$, $N_3$, and $N_{13}\gen{b_1\mu_{23}}$) are pairwise nonisomorphic 
--- they have commutator subgroups $\gen{a_1,a_2,b_1b_2}$, 
$\gen{a_1,a_2,a_3,\mu_{12}}$, and $\gen{a_1,a_2,a_3,b_1b_2\mu_{12}}$, 
respectively --- and hence all three are characteristic in $S$.

Since $N_{13}/Z(N_3)\cong{}(C_2)^5$ is the unique abelian subgroup of index 
two in $N_3/Z(N_3)$, $N_{13}$ is characteristic in $N_3$. 
\end{proof}

\begin{Lem} \label{Aut(N3)}
Assume the above notation.
\begin{enuma}

\item There is an automorphism $\nu_3\in\Aut(N_3)$ of order $3$ which takes 
values as follows:
	\[ \renewcommand{\arraystretch}{1.5}
	\begin{array}{|c||c|c|c|c|c|c|c|c|}
	\hline
	g & a_1 & a_2 & a_3 & b_3 & b_1b_2 & \mu_{12} & 
	\mu_{23} & \tau \\
	\hline\hline
	\nu_3(g) & a_2\mu_{12} & a_1\mu_{12} & a_3 & b_3 & 
	a_1a_2b_1b_2\tau & a_2 & \mu_{23} & b_1b_2 \\ \hline
	\end{array} \]
The action of $\nu_3$ on 
	\[ Q=\gen{a_1b_1b_2\mu_{12},a_1\tau}\times_{\gen{a_1a_2}} 
	\gen{a_1b_1b_2\tau,b_1b_2\mu_{12}} \cong Q_8\times_{C_2}Q_8 \]
has order three on each factor. Also, $\nu_3$ commutes in $\Aut(Q)$ with 
$c_{\mu_{23}}$, where $c_{\mu_{23}}$ exchanges the two quaternion factors.

\item $\Aut(N_3)/O_2(\Aut(N_3))\cong\Sigma_3$, generated by the classes of 
$\nu_3$ and $c_{b_1}$. 

\item If $\alpha\in\Aut(N_3)$ has order $3$, then 
$\alpha|_{Z(N_{13})}=\Id$, and $\alpha$ acts on $N_{13}/Z(N_{13})\cong 
Q/Z(Q)$ with $C_{N_{13}/Z(N_{13})}(\alpha)=1$. If in addition, 
$\alpha(Q)=Q$, then $\gen{\9\varphi\alpha}=\gen{\nu_3}$ for some 
$\varphi\in\Aut_Q(N_3)$.

\item Let $\Delta\le\Aut(N_3)$ be such that $\Delta>\Aut_S(N_3)$ and 
$\Delta/\Inn(N_3)\cong\Sigma_3$. Then there is $\varphi\in\Aut(S)$ such 
that $\9\varphi\Delta=\gen{\Aut_S(N_3),\nu_3}$.

\end{enuma}
\end{Lem}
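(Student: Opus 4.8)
The plan is to treat (a) as an explicit verification, (b) as a finite computation organized by a chain of characteristic subgroups, and then to use (c) to replace an arbitrary order\nobreakdash-$3$ automorphism of $N_3$ by $\nu_3$ up to an inner automorphism supported on $Q$, so that (d) reduces to first moving the situation, by an automorphism of $S$, into the case where $Q$ is stabilized.

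\textbf{Parts (a) and (b).} For (a), working inside $A_{12}$ with the given permutations, I would check that the assignment in the table respects a presentation of $N_3$ (as an extension of $Q\cong 2^{1+4}_+$ by $\langle a_3,b_3,\mu_{23}\rangle$), is bijective, and has $\nu_3^3=\Id$; this is routine. For the product decomposition, verify directly that $B:=\langle a_1b_1b_2\mu_{12},a_1\tau\rangle$ and $B':=\langle a_1b_1b_2\tau,b_1b_2\mu_{12}\rangle$ are quaternion of order $8$ with common center $\langle a_1a_2\rangle=Z(Q)$, that $[B,B']=1$ and $BB'=Q$; then read off from the table that $\nu_3$ preserves $B$ and $B'$ and induces on each a $3$\nobreakdash-cycle of its three maximal cyclic subgroups (equivalently, acts fixed\nobreakdash-point\nobreakdash-freely on $B/Z(Q)$ and $B'/Z(Q)$), and that $c_{\mu_{23}}$ interchanges $B$ and $B'$ — whence $\nu_3$, acting the same way on the two factors, commutes with $c_{\mu_{23}}$ in $\Aut(Q)$. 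For (b), note $N_{13}$ is characteristic in $N_3$ by Lemma \ref{l:a12:char}, hence so are $Z(N_{13})$ and $Z(N_3)$; take a chain of subgroups characteristic in $N_3$ refining $1<Z(N_3)<Z(N_{13})<N_{13}<N_3$ and passing through $\Phi(N_3)$, so that by Lemma \ref{l:mod-Fr} the group $O_2(\Aut(N_3))$ is the joint kernel of the induced actions on the successive quotients. Computing these actions (chiefly on $N_{13}/Z(N_{13})\cong Q/Z(Q)\cong(C_2)^4$ and on $Z(N_{13})$, from the explicit presentation of $N_3$) shows $\Aut(N_3)/O_2(\Aut(N_3))$ embeds in $\Sigma_3$; since $[\nu_3]$ has order $3$ by (a) and $c_{b_1}|_{N_3}$ (for $b_1\in S{\sminus}N_3$ with $b_1^2=1$, hence of order $2$ and not inner) together with $[\nu_3]$ generate the full $\Sigma_3$, we get $\Aut(N_3)/O_2(\Aut(N_3))\cong\Sigma_3=\langle[\nu_3],[c_{b_1}]\rangle$. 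A one\nobreakdash-line check on generators also gives $\langle\Aut_S(N_3),\nu_3\rangle/\Inn(N_3)\cong\Sigma_3$, to be used in (d).

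\textbf{Part (c).} Let $\alpha\in\Aut(N_3)$ have order $3$. By (b), its image in $\Aut(N_3)/O_2(\Aut(N_3))\cong\Sigma_3$ has order $3$, so $\langle\alpha\rangle$ is a Sylow $3$\nobreakdash-subgroup of $\Aut(N_3)$; by (a) so is $\langle\nu_3\rangle$, so $\langle\alpha\rangle=\9\psi\langle\nu_3\rangle$ for some $\psi\in\Aut(N_3)$. As $Z(N_{13})$ and $N_{13}$ are characteristic in $N_3$ (Lemma \ref{l:a12:char}), $\psi$ preserves them; since $\nu_3|_{Z(N_{13})}=\Id$ (table), $\alpha|_{Z(N_{13})}=\Id$, and since $\nu_3$ acts fixed\nobreakdash-point\nobreakdash-freely on $N_{13}/Z(N_{13})\cong Q/Z(Q)$ by (a), $C_{N_{13}/Z(N_{13})}(\alpha)=\psi\bigl(C_{N_{13}/Z(N_{13})}(\nu_3^{\pm1})\bigr)=1$. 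If moreover $\alpha(Q)=Q$, then $\langle\alpha\rangle$ and $\langle\nu_3\rangle$ both lie in $N_{\Aut(N_3)}(Q)$, and by the two parts just proved $\alpha|_Q$ is an order\nobreakdash-$3$ automorphism of $Q\cong 2^{1+4}_+$ acting freely on $Q/Z(Q)$, exactly as $\nu_3|_Q$ does; a direct computation (in $\Out(Q)\cong O_4^+(2)$, where such elements form a single class) then shows the order\nobreakdash-$3$ subgroups of $N_{\Aut(N_3)}(Q)$ with this property form one $\Aut_Q(N_3)$\nobreakdash-orbit, so $\langle\9\varphi\alpha\rangle=\langle\nu_3\rangle$ for some $\varphi\in\Aut_Q(N_3)$.

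\textbf{Part (d) and the main obstacle.} Since $\Inn(N_3)\le O_2(\Aut(N_3))$ and $\Sigma_3$ has no nontrivial normal $2$\nobreakdash-subgroup, the hypothesis $\Delta/\Inn(N_3)\cong\Sigma_3$ forces $\Delta\cap O_2(\Aut(N_3))=\Inn(N_3)$, so $\Delta=\langle\Aut_S(N_3),\alpha\rangle$ for any $\alpha\in\Delta$ of order $3$, and $\langle\alpha\rangle$ is a Sylow $3$\nobreakdash-subgroup of $\Aut(N_3)$. As $N_3$ is characteristic in $S$ (Lemma \ref{l:a12:char}), each $\varphi\in\Aut(S)$ restricts to $N_3$ and conjugates $\Aut_S(N_3)$ to itself, so it suffices to find $\varphi\in\Aut(S)$ with $\9\varphi\langle\alpha\rangle=\langle\nu_3\rangle$, giving $\9\varphi\Delta=\langle\Aut_S(N_3),\9\varphi\alpha\rangle=\langle\Aut_S(N_3),\nu_3\rangle$. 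Now $\alpha(Q)\le N_{13}$ is again normal in $N_3$ and $\cong 2^{1+4}_+$; the $\langle\alpha\rangle$\nobreakdash-orbit of $Q$ among such subgroups has $2$\nobreakdash-power size (by (b): $\langle\Aut_S(N_3),\nu_3\rangle$ fixes $Q$ and $\Aut(N_3)=\langle\Aut_S(N_3),\nu_3\rangle\cdot O_2(\Aut(N_3))$), so $\alpha$ fixes one of them, say $Q^*$. Using an explicit description of $\Aut(S)$ — obtained as in the proof of Proposition \ref{t:O'N-HS} — one shows $\Aut(S)$ is transitive on these extraspecial subgroups, so there is $\varphi_1\in\Aut(S)$ with $\varphi_1^{-1}(Q)=Q^*$, whence $\9{\varphi_1}\alpha$ stabilizes $Q$; then (c) gives $\varphi_2\in\Aut_Q(N_3)$ with $\langle\9{\varphi_2}(\9{\varphi_1}\alpha)\rangle=\langle\nu_3\rangle$, and writing $\varphi_2=c_x|_{N_3}$ with $x\in Q\le S$, the automorphism $\varphi=c_x\circ\varphi_1\in\Aut(S)$ works. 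The delicate point — the main obstacle — is precisely this reduction to $\alpha(Q)=Q$: it requires both that $\Aut(S)$ act transitively on the relevant $2^{1+4}_+$\nobreakdash-subgroups of $N_{13}$ and that the automorphism furnished by (c) be realized by conjugation inside $S$, i.e.\ a sufficiently explicit grip on the image of $\Aut(S)\to\Aut(N_3)$; everything else is Sylow theory plus the characteristic series of $N_3$.
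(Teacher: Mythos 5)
Parts (a) and (b) of your sketch match the paper's approach (explicit verification; a chain of characteristic subgroups through $\Phi(N_3)$, $\Phi(N_3)Z(N_{13})$, and $N_{13}$ together with Lemma \ref{l:mod-Fr}). For the first two claims of (c), your Sylow-conjugacy transfer (conjugate $\gen{\alpha}$ to $\gen{\nu_3}$ by some $\psi\in\Aut(N_3)$, then use that $\psi$ preserves the characteristic subgroups $Z(N_{13})$ and $N_{13}$) is a clean alternative to the paper's direct check, and it is correct.

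The final claim of (c) is where your argument is too loose. Knowing that all fixed-point-free order-$3$ elements of $\Out(Q)\cong O_4^+(2)$ form a single conjugacy class only gives that $[\alpha|_Q]$ and $[\nu_3|_Q]$ are $\Aut(Q)$-conjugate; to get a conjugating element realized by $\Aut_Q(N_3)$ (whose image in $\Aut(Q)$ is just $\Inn(Q)$) one needs that $[\alpha|_Q]$ and $[\nu_3|_Q]$ are actually \emph{equal} up to inverse in $\Out(Q)$. The paper obtains this from the extra constraint that both classes centralize $[c_{\mu_{23}}|_Q]$ in $\Out(Q)$, together with the observation that the centralizer of the swap in $O_4^+(2)\cong\Sigma_3\wr C_2$ contains exactly two elements of order $3$ (mutually inverse). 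Even then one is not finished: $\9\varphi\alpha|_Q=\nu_3|_Q$ must be upgraded to an equality on all of $N_3$, and the paper does this by writing $\9\varphi\alpha=\beta\nu_3$ with $\beta|_{N_{13}}=\Id$, showing $\beta$ has order at most $2$, commutes with $\nu_3$, and hence is trivial because $\beta\nu_3$ has order $3$. Your sketch skips both of these steps.

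In (d) the gap you flag as ``the main obstacle'' is real and, as written, unresolved. You want to move $Q^*$ (the member of $\calq$ normalized by $\alpha$) to $Q$ by an automorphism of $S$, and you appeal to transitivity of $\Aut(S)$ on ``these extraspecial subgroups.'' But $\Aut(S)$ preserves normality in $S$, and $Q\nsg S$; so no element of $\Aut(S)$ can carry a non-$S$-normal $Q^*$ onto $Q$. One must first prove $Q^*\nsg S$. The paper does this by observing that $c_{b_1}\in\Aut_S(N_3)\le\Delta$ normalizes the normal subgroup $\Inn(N_3)\gen{\alpha}\nsg\Delta$, hence fixes the unique $\alpha$-invariant member of $\calq$, whence $Q^*\nsg S$; only then does it construct the required $\varphi\in\Aut(S)$ explicitly (as $g\mapsto g\chi(g)$ on the relevant generators), rather than invoking an abstract transitivity of $\Aut(S)$. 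Without the $S$-normality step, your reduction to $\alpha(Q)=Q$ does not go through. Finally, a small but telling slip: a $\gen{\alpha}$-orbit has size $1$ or $3$, never a larger power of $2$; what you want is that the full set $\calq$ (equivalently the $\Aut(N_3)$-orbit of $Q$) has $2$-power size, so the $3$-group $\gen{\alpha}$ has a fixed point on it.
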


\begin{proof} \noindent\textbf{(a) } The first two statements are 
easily checked. Also, $\bigl[\nu_3|_Q,c_{\mu_{23}}\bigr]=\Id$ in $\Aut(Q)$ 
since $\nu_3$ is a homomorphism and $\nu_3(\mu_{23})=\mu_{23}$. 

\smallskip

\noindent\textbf{(b) } Consider the chain of subgroups $P_0<P_1<P_2<N_3$, where 
	\[ P_0=\Phi(N_3)=\gen{a_1,a_2,a_3,\mu_{12}},\quad
	P_1=P_0Z(N_{13})=P_0\gen{b_3},\quad
	P_2=N_{13}=P_1\gen{b_1b_2,\tau}. \]
Since $N_{13}$ is characteristic in $N_3$ by Lemma \ref{l:a12:char}, each 
of the $P_i$ is characteristic in $N_3$. So by Lemma \ref{l:mod-Fr}, the 
kernel of the induced homomorphism 
$\Aut(N_3)\Right2{}\Aut(P_2/P_1)\cong\Sigma_3$ is contained in 
$O_2(\Aut(N_3))$. Also, the images of $\nu_3$ and $c_{b_1}$ in 
$\Aut(P_2/P_1)$ generate this group, so 
$\Aut(N_3)/O_2(\Aut(N_3))\cong\Sigma_3$. 

\smallskip

\noindent\textbf{(c) } Assume $\alpha\in\Aut(N_3)$ has order $3$. 
Since $\alpha$ normalizes the chain 
	\[ \underset{=\gen{a_1a_2}}{[N_{13},N_{13}]} < 
	\underset{=\gen{a_1a_2,a_3}}{Z(N_3)} < 
	\underset{=\gen{a_1a_2,a_3,b_3}}{Z(N_{13})} \]
of characteristic subgroups, $\alpha|_{Z(N_{13})}=\Id$ by Lemma 
\ref{l:mod-Fr}. 

Consider the subgroups $P_1=Z(N_{13})\gen{a_2,\mu_{12}}$ and 
$N_{13}=P_1\gen{b_1b_2,\tau}$. Since 
	\[ [\mu_{23},b_1b_2]=a_2  \quad\textup{and}\quad 
	[\mu_{23},\tau]=\mu_{12} \,, \]
$[\mu_{23},-]$ sends $N_{13}/P_1$ isomorphically to $P_1/Z(N_{13})$, and 
this isomorphism commutes with $\alpha$ since 
$\alpha(\mu_{23})\in\mu_{23}N_{13}$. So $\alpha$ induces an automorphism of 
order $3$ on $P_1/Z(N_{13})$ since it induces an automorphism of order $3$ 
on $N_{13}/P_1$ by the proof of (b). Thus $\alpha$ acts on 
$N_{13}/Z(N_{13})\cong Q/Z(Q)$ with $C_{N_{13}/Z(N_{13})}(\alpha)=1$.

Now assume in addition that $\alpha(Q)=Q$. Then $C_Q(\alpha)=Z(Q)$, so 
$\alpha$ and $\nu_3$ both act nontrivially on each of the quaternion 
factors of $Q$. They also commute with $c_{\mu_{23}}|_Q$ modulo 
$\Aut_{N_{13}}(Q)=\Inn(Q)$, where $c_{\mu_{23}}$ exchanges the two 
quaternion factors of $Q$. Thus $\alpha|_Q$ is 
congruent to $\nu_3|_Q$ or $\nu_3^{-1}|_Q$ modulo $\Inn(Q)$. Hence 
$\gen{\alpha|_Q}$ and $\gen{\nu_3|_Q}$ are Sylow 3-subgroups of 
$\Inn(Q)\gen{\alpha|_Q}$, so upon replacing $\alpha$ by $\alpha^{-1}$ if 
necessary, we can assume that $\9\varphi\alpha|_Q=\nu_3|_Q$ for some 
$\varphi\in\Aut_Q(N_3)$. We already showed that $\alpha|_{Z(N_{13})}=\Id$, 
and so $\9\varphi\alpha|_{N_{13}}=\nu_3|_{N_{13}}$. 

Thus $\9\varphi\alpha=\beta\circ\nu_3$ for some $\beta\in\Aut(N_3)$ such that 
$\beta|_{N_{13}}=\Id$. Then $\beta(\mu_{23})=g\mu_{23}$ for some 
$g\in{}Z(N_{13})$, and in particular, $\beta$ has order at most $2$ and 
commutes with $\nu_3$. Thus $\beta=\Id$ since $\beta\circ\nu_3$ has order 
$3$, and hence $\9\varphi\alpha=\nu_3$. 

\smallskip

\noindent\textbf{(d) } Set $T_3=\gen{a_3,b_3}\nsg S$; thus $N_{13}=Q\times 
T_3$.  For each $\chi\in\Hom(Q,T_3)$, set 
$Q_\chi=\{g\chi(g)\,|\,g\in{}Q\}$. Each subgroup of $N_{13}=Q\times T_3$ 
isomorphic to $Q$ is sent isomorphically to $Q$ by projection to the first 
factor, and hence is equal to $Q_\chi$ for some unique $\chi$.  

Let $\calq\supseteq\calq_0$ be the sets of subgroups of $N_{13}$ isomorphic 
to $Q$ which are normal in $N_3$ or in $S$, respectively. For example, 
$Q\in\calq_0$. Then 
	\[ \calq = \{Q_\chi\,|\,\chi\in\Hom_{\Aut_{N_3}(N_{13})}(Q,T_3)\} 
	\qquad\textup{and}\qquad
	\calq_0 = \{Q_\chi\,|\,\chi\in\Hom_{\Aut_{S}(N_{13})}(Q,T_3)\}, \]
where $\Hom_X(Q,T_3)$ denotes the set of group homomorphisms 
from $Q$ to $T_3$ which commute with the action of $X$.

If $\chi\in\Hom(Q,T_3)$ is such that $\nu_3(Q_\chi)=Q_\chi$, then 
$\chi(\nu_3(g))=\chi(g)$ for each $g\in Q$ (recall $\nu_3|_{T_3}=\Id$), and 
$\chi=1$ since $Q=[\nu_3,Q]$. Thus $Q$ is the only member of $\calq$ 
normalized by $\nu_3$. 

Fix $\Delta\le\Aut(N_3)$ such that $\Delta>\Aut_S(N_3)$ and 
$\Delta/\Inn(N_3)\cong\Sigma_3$. Choose $\alpha\in\Delta$ of order three. 
Since $\Aut(N_3)/O_2(\Aut(N_3))\cong\Sigma_3$ by (b), there is $\psi\in 
O_2(\Aut(N_3))$ such that $\gen{\9\psi\alpha}=\gen{\nu_3}$. Let $\chi$ be 
such that $Q_\chi=\psi^{-1}(Q)\in\calq$. Since $Q$ is the only member of 
$\calq$ normalized by $\nu_3$, $Q_\chi$ is the only member normalized by 
$\alpha$. Also, $c_{b_1}$ normalizes $\Inn(N_3)\gen{\alpha}$, 
so $\9{b_1}Q_\chi=Q_\chi$ by the uniqueness of $Q_\chi$, and 
hence $Q_\chi\nsg S$ and $Q_\chi\in\calq_0$. In particular, $\chi$ 
commutes with the action of $\Aut_S(N_{13})$.

We claim that there is $\varphi\in\Aut(S)$ such that $\varphi|_A=\Id$, 
$\varphi(\mu_{23})=\mu_{23}$, $\varphi(\mu_{12})=\mu_{12}\chi(\mu_{12})$, 
and $\varphi(\tau)=\tau\chi(\tau)$. Since $\chi$ commutes with the 
action of $c_{b_1}\in\Aut_S(N_{13})$ and $[b_1,T_3]=1$, we have 
$Q\cap{}A=[b_1,Q]\le\Ker(\chi)$. Thus $\varphi(g)=g\chi(g)$ for all 
$g\in{}Q$. Since $T_3\le Z(N_1)$ (where $N_1=AQ$), $\varphi|_{N_1}$ is a 
well defined automorphism. To see that $\varphi$ is well defined, it 
remains to check that $\chi(\9{\mu_{23}}g)=\9{\mu_{23}}\chi(g)$ for all 
$g\in Q$, and this holds since $\chi$ commutes with the action of 
$\Aut_S(N_{13})$. 

By construction, $\varphi(Q_\chi)=Q$. So upon replacing $\Delta$ by 
$\9\varphi\Delta$ and $\alpha$ by $\9\varphi\alpha$, we can assume that 
$\alpha(Q)=Q$. By (c), $\gen{\9\eta\alpha}=\gen{\nu_3}$ for some 
$\eta\in\Aut_Q(N_3)$. Thus $\alpha\equiv\nu_3^{\pm1}$ (mod $\Inn(N_3)$), so 
$\Delta=\gen{\Aut_S(N_3),\alpha}=\gen{\Aut_S(N_3),\nu_3}$. 
\end{proof}

\begin{Lem} \label{Aut(N2)}
Assume the above notation. Set $A_0=Z(N_2)=\gen{a_1,a_2,a_3}$, and let 
	\[ R_0\:\Aut(N_2)\Right4{}\Aut(A_0) \]
be the homomorphism induced by restriction. 
\begin{enuma} 

\item There is an automorphism $\nu_2\in\Aut(N_2)$ of order $3$ which takes 
values as follows:
	\[ \renewcommand{\arraystretch}{1.5}
	\begin{array}{|c||c|c|c|c|c|c|c|c|}
	\hline
	g & a_1 & b_1 & a_2 & b_2 & a_3 & b_3 & \mu_{12} & 
	\mu_{23} \\
	\hline\hline
	\nu_2(g) & a_2 & b_2 & a_3 & b_3 & a_1 & b_1 & \mu_{23} & 
	\mu_{12}\mu_{23} \\ \hline
	\end{array} \]

\item For each $\beta\in\Ker(R_0)$, $\beta$ induces the identity on 
$N_2/A_0$. Furthermore, $\Ker(R_0)=O_2(\Aut(N_2))$ is an elementary abelian 
$2$-group, and $\Im(R_0)=\gen{\nu_2|_{A_0},c_\tau|_{A_0}}\cong\Sigma_3$. 

\item Let $\Delta\le\Aut(N_2)$ be such that $\Delta>\Aut_S(N_2)$ and 
$\Delta/\Inn(N_2)\cong\Sigma_3$. Then there is $\xi\in\Aut(S)$ such that 
$\9\xi\Delta=\gen{\Aut_S(N_2),\nu_2}$ and $\xi(\tau)=\tau$.

\end{enuma}
\end{Lem}

\begin{proof} Throughout the proof, we set $K=\Ker(R_0)$.

\smallskip

\noindent\textbf{(a) } This is easily checked.

\smallskip

\noindent\textbf{(b) } Fix $\beta\in K$. 
Then $\beta(A)=A$ since $A$ is characteristic in $N_2$ by Lemma 
\ref{l:a12:char}. The commutators 
	\[ [\mu_{12},A]=\gen{a_1,a_2}, \qquad 
	[\mu_{23},A]=\gen{a_2,a_3}, \qquad\textup{and}\qquad
	[\mu_{12}\mu_{23},A]=\gen{a_1,a_3} \]
are all distinct, so for $x,y\in N_2$ such that $[x,A]=[y,A]$, we have 
$x\equiv y$ (mod $A$). The equalities $[\beta(x),A]=\beta([x,A])=[x,A]$ for 
$x\in N_2$ now show that $\beta$ induces the identity on $N_2/A$. Thus 
$\beta$ normalizes $C_A(\mu_{12})=A_0\gen{b_3}$, 
$C_A(\mu_{23})=A_0\gen{b_1}$, and $C_A(\mu_{12}\mu_{23})=A_0\gen{b_2}$, and 
hence induces the identity on $A/A_0$. 

Let $h_{12},h_{23}\in A$ be such that $\beta(\mu_{ij})=\mu_{ij}h_{ij}$. 
Then $h_{12}\in C_A(\mu_{12})=A_0\gen{b_3}$ since $(\mu_{12}h_{12})^2=1$, 
and $h_{23}\in A_0\gen{b_1}$ and $h_{12}h_{23}\in A_0\gen{b_2}$ by similar 
arguments. Hence $h_{12},h_{23}\in A_0$, and so $\beta$ induces the 
identity on $N_2/A_0$. Also, $\beta\in O_2(\Aut(N_2))$ by Lemma 
\ref{l:mod-Fr}.

Thus $K\le O_2(\Aut(N_2))$. 
Each $\alpha\in\Aut(N_2)$ permutes the subgroups $[x,A]$ for 
$x\in\{\mu_{12},\mu_{23},\mu_{12}\mu_{23}\}$; and thus permutes their 
pairwise intersections $\gen{a_i}$ for $i=1,2,3$. So 
$\Im(R_0)\cong\Sigma_3$, generated by $\nu_2|_{A_0}$ and $c_{\tau}|_{A_0}$, 
and $K=O_2(\Aut(N_2))$. Also, $K$ is elementary abelian, 
since each $\beta\in K$ has the form $\beta(g)=g\chi(g)$ for some 
$\chi\in\Hom(N_2,A_0)\cong (C_2)^{15}$ (and the resulting bijection 
$K\cong\Hom(N_2,A_0)$ is an isomorphism).

\smallskip

\noindent\textbf{(c) } Let 
$\Delta\le\Aut(N_2)$ be a subgroup such that $\Delta>\Aut_S(N_2)$ and 
$\Delta/\Inn(N_2)\cong\Sigma_3$. Thus $\Delta\cap K=\Inn(N_2)$. By 
Proposition \ref{p:splittings}, applied with $\Out(N_2)$, $K/\Inn(N_2)$, 
$\gen{\Out_S(N_2),[\nu_2]}$, and $\Out_S(N_2)$ in the roles of $G$, $Q$, 
$H$, and $H_0$, there is $\xi_0\in K$ such that 
$[\xi_0,c_\tau]\in\Inn(N_2)$ and $\9{\xi_0}\Delta=\gen{\nu_2,\Aut_S(N_2)}$. 
Since $K=C_K(\nu_2)\times[\nu_2,K]$ (see \cite[Theorem 5.2.3]{Gorenstein}) and 
both factors are normalized by $c_\tau$, we can choose $\xi_0\in[\nu_2,K]$.

Now, $[\nu_2,\Inn(N_2)]$ has an $\F_2$-basis 
$\bigl\{c_{b_1b_3},c_{b_2b_3},c_{\mu_{12}\mu_{23}},c_{\mu_{23}}\bigr\}$ 
permuted freely by $c_\tau$. Hence each element of 
$[\nu_2,K]/[\nu_2,\Inn(N_2)]$ which is centralized by $c_\tau$ lifts to an 
element of $[\nu_2,K]$ which commutes with $c_\tau$ in $\Aut(N_2)$. In 
particular, there is $\xi_1\equiv\xi_0$ (mod $[\nu_2,\Inn(N_2)]$) such that 
$\xi_1\in C_K(c_\tau)$. Since $\xi_1$ commutes with $c_\tau$ in 
$\Aut(N_2)$, it extends to an element $\xi\in\Aut(S)$ such that 
$\xi(\tau)=\tau$, and we still have 
$\9{\xi}\Delta=\gen{\nu_2,\Aut_S(N_2)}$. 
\end{proof}

The following proposition is essentially a special case of the main theorem 
in Ron Solomon's paper \cite{Solomon-A12}, where he lists the finite simple 
groups (more generally, the ``fusion-simple'' groups) with Sylow 
$2$-subgroups isomorphic to $S$.  (See also \cite[Theorem 
1.1]{Solomon-O7}.)  But our method of proof, based on analysis of the 
possible essential subgroups, is somewhat different. 

\begin{Prop} \label{p:A12}
Let $S$ be a $2$-group of type $A_{12}$. Then every 
reduced fusion system over $S$ is isomorphic to the fusion system of one of 
the groups $A_{12}$, $\Sp_6(2)$, or $\Omega_7(3)$.  The 
subgroups $N_1$, $N_2$, and $N_3$ are essential in all three fusion 
systems, $H_1$ is essential in the fusion system of $A_{12}$, and these are 
the only essential subgroups up to $S$-conjugacy.
\end{Prop}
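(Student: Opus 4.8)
The plan is to follow the same strategy used throughout Sections \ref{s:128}--\ref{s:512}: first identify the potentially critical subgroups of $S$ (by the computer search based on Proposition \ref{p:critical-criteria}), then cut down to the possible sets $\EE_\calf$ using normality criteria (Proposition \ref{Q<|F}, Lemma \ref{l:1crit}) and the structural lemmas \ref{l:a12:char}, \ref{Aut(N3)}, \ref{Aut(N2)} already proved in this section, and finally reconstruct $\autf(P)$ for each essential $P$ so that $\calf$ is determined up to isomorphism by finitely many discrete choices, each realized by one of $A_{12}$, $\Sp_6(2)$, $\Omega_7(3)$.

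First I would record the output of the computer search: the potentially critical subgroups of $S$ are (up to $S$-conjugacy) exactly $A$, $Q$, $N_1$, $N_2$, $N_3$, $H_1$, $H_2$, and possibly $N_{13}$ and a few others, and then argue that most of these cannot be $\calf$-essential. Since $A$ is characteristic in every subgroup containing it (Lemma \ref{l:a12:char}) and $A\le N_1,N_2,N_3$, Proposition \ref{Q<|F} forces at least one of $N_1,N_2,N_3$ to be essential, or else $A\nsg\calf$, contradicting reducedness; similar characteristic-subgroup arguments (using that $N_1,N_2,N_3,N_{13}$ are all characteristic in $S$, and $N_{13}$ characteristic in $N_1$ and $N_3$) rule out $N_{13}$ itself as essential and pin down which of $A,Q,H_1,H_2$ can occur. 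The key input here is that $\outf(P)$ must contain a strongly $2$-embedded subgroup, so for $P$ with $\Out(P)$ a $2$-group (or more generally with $\Out_S(P)$ normal in a relevant quotient) there is nothing available — Lemma \ref{l:mod-Fr}-type arguments as in the proofs of \ref{Aut(N3)}(b) and \ref{Aut(N2)}(b) show $\Aut(N_i)/O_2(\Aut(N_i))\cong\Sigma_3$, so $\outf(N_i)\cong\Sigma_3$ when $N_i\in\EE_\calf$, and similarly $\outf(Q)$ and $\outf(H_1)$ are constrained by Lemma \ref{l:HxA5}. I expect the outcome to be $\EE_\calf\subseteq\{N_1,N_2,N_3,H_1\}$ (with $Q$, $A$, $H_2$ excluded), that $N_1,N_2,N_3$ are always essential by a focal-subgroup count (Lemma \ref{l:foc=S}, Proposition \ref{foc(F)=<->}: $[\autf(N_i),N_i]$ for $i=1,2,3$ together with $[\autf(H_1),H_1]$ must generate $S$), and that $H_1$ is essential precisely in the $A_{12}$-system.

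Next I would reconstruct the fusion system. Using Lemmas \ref{Aut(N3)}(d) and \ref{Aut(N2)}(c), after replacing $\calf$ by a conjugate I can assume $\autf(N_3)=\gen{\Aut_S(N_3),\nu_3}$ and $\autf(N_2)=\gen{\Aut_S(N_2),\nu_2}$, and I would need one more lemma (or direct computation) giving $\autf(N_1)$ and $\autf(H_1)$ up to the analogous normalization — the automorphism $\autf(N_1)$ involves the $\SL_2(2)$-type action coming from $N_1/A\cong\klfour$ acting on $A$, and $\autf(H_1)$ is controlled by Lemma \ref{l:HxA5} since $\Aut_S(H_1)\cong\klfour$ there. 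Once the generators $\nu_1,\nu_2,\nu_3$ (and the possible $\Delta\le\autf(H_1)$ with $\Delta\cong\Sigma_3$ or $A_5$) are fixed, Alperin's fusion theorem (Proposition \ref{p:AFT-E}) shows $\calf$ is generated by $\autf(S)$ together with these, so the number of isomorphism types is bounded by the number of admissible combinations. Finally I would match: exhibit $S\in\syl2{A_{12}}$, $S\in\syl2{\Sp_6(2)}$, $S\in\syl2{\Omega_7(3)}$ (all of type $A_{12}$, the latter two having the same $2$-fusion by a well-known isomorphism of $2$-fusion systems, cf. Theorem \ref{BMO1-ThA} or Solomon's work \cite{Solomon-A12}), check that each realizes one of the combinations and that they account for all of them, and invoke Proposition \ref{p:FS(G)-red}(c,d) — noting $\Aut(S)$ is a $2$-group here so all three fusion systems are reduced and simple — to conclude. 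The list of essential subgroups in each case then follows from the explicit $\autf(P)$ computed along the way.

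The main obstacle will be the bookkeeping in the middle step: proving rigorously that the potentially critical subgroups other than $N_1,N_2,N_3,H_1$ (especially $Q\cong 2^{1+4}_+$ and $H_2$) are never essential, and that $N_1,N_2,N_3$ are always essential and have exactly the automorphism groups claimed. For $Q$ this requires showing that whatever strongly-$2$-embedded structure $\outf(Q)$ would need (forcing $\outf(Q)\cong A_5\times(\text{odd})$ by Lemma \ref{l:HxA5} since $\Out_S(Q)\cong\klfour$) is incompatible with the constraint that such an $A_5$-action on $Q/Z(Q)\cong\klfour\times\klfour$ must extend compatibly — here Lemmas \ref{l:F2[A5]-mod} and the structure of $Q$ inside $N_{13}=Q\times\gen{a_3,b_3}$ do the work, much as in the proof for group \#6407070. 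The delicate part is coordinating the several $C_3$-automorphisms $\nu_1,\nu_2,\nu_3$ (and the element $\gamma$-type cyclic permutation) so that the Sylow-$3$ subgroup of $\autf(S)$ is correctly identified and the conjugations by $\Aut(S)$ that permute the $N_i$ are accounted for, exactly as in the proof of Proposition \ref{t:S_6(3)}; I expect this to be the longest and most error-prone portion, and it is essentially the content of Ron Solomon's analysis \cite{Solomon-A12}, reorganized around essential subgroups.
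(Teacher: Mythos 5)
Your high-level scaffolding is right — computer-found critical subgroups, characteristic-subgroup arguments via Lemma \ref{l:a12:char} and Proposition \ref{Q<|F}, normalization of $\autf(N_2),\autf(N_3)$ via Lemmas \ref{Aut(N2)}(c) and \ref{Aut(N3)}(d), a focal-subgroup count via Lemma \ref{l:foc=S}, and a final match-up with groups — but there is a genuine gap at the heart of the classification step. You propose to ``fix the generators $\nu_1,\nu_2,\nu_3$ (and the possible $\Delta\le\autf(H_1)$)'' and then bound the number of admissible combinations, but you offer no mechanism that actually terminates this count. The paper's organizing insight, which you never reach, is to work instead with $\Gamma=\autf(A)$: since $A\cong(C_2)^6$ is characteristic in every subgroup containing it and $\Aut_S(A)\cong D_8$, one applies the Gorenstein--Walter dihedral Sylow theorem together with Lemma \ref{GL6(2)} and the constraint $C_A(\Gamma)\le O_2(\calf)=1$ to force $\Gamma\cong A_7$, $\GL_3(2)$, or $(C_3)^3\sd\Sigma_4$. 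The equality \eqref{autN1}, namely that $\autf(N_1)$ consists exactly of those $\alpha\in\Aut(N_1)$ with $\alpha|_A\in\Gamma$ and $\alpha(Q)=Q$, then propagates this to $\autf(N_1)$, and the structure of $C_\Gamma(c_\tau)$ decides whether $H_1$ is essential. Without this pivot to $\autf(A)$, you have no bound on the isomorphism types.

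Two smaller errors compound the problem. First, you assert that $\Sp_6(2)$ and $\Omega_7(3)$ have isomorphic $2$-fusion systems; they do not — they correspond to $\Gamma\cong\GL_3(2)$ and $\Gamma\cong A_7$ respectively, and the point of the proposition is that there are exactly three distinct reduced fusion systems over $S$, one for each group. Second, you want to apply Lemma \ref{l:HxA5} to $H_1$ on the grounds that $\Aut_S(H_1)\cong\klfour$, but $N_S(H_1)=N_1$ and $N_1/H_1\cong C_2$, so $\Out_S(H_1)\cong C_2$; the relevant tool is the restriction homomorphism of Lemma \ref{l:res.aut.} together with the extension axiom, giving $\outf(H_1)\cong C_\Gamma(c_\tau)/\gen{c_\tau}$ when $H_1$ is fully normalized. (Also, the computer search returns only $N_1,N_2,N_3,H_1,H_2$ as potentially critical; $A$, $Q$, and $N_{13}$ do not appear, so the exclusions you sketch for them are unnecessary.)
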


\begin{proof} By a computer search, the only potentially critical subgroups 
of $S$ (up to conjugacy) are $N_1$, $N_2$, $N_3$, $H_1$, and $H_2$.  Of 
these, the $N_i$ are characteristic of index two in $S$ (Lemma 
\ref{l:a12:char}), while $N_S(H_i)=N_i$ for $i=1,2$.  Also, $\Aut(S)$ is a 
2-group by Lemma \ref{l:mod-Fr}, applied to the sequence 
$\Phi(S)<N_1\cap{}N_2\cap{}N_3<N_1\cap{}N_2<N_1<S$ of characteristic 
subgroups of $S$. 

Assume $\calf$ is a reduced fusion system over $S$.  Since $\Out(S)$ is a 
$2$-group, $\outf(S)=1$.  By Lemma \ref{l:foc=S}, 
	\beqq S = \Gen{[\autf(P),P] \,\big|\, P\in\EE_\calf}.
	\label{e:7.1a} \eeqq

\smallskip

\noindent\boldd{Step 1: \ $\EE_\calf\supsetneqq\{N_2,N_3\}$. } By Lemma 
\ref{l:a12:char}, $A$ is a characteristic subgroup of each 
subgroup of $S$ that contains $A$. Since $N_3$ is the only 
potentially critical subgroup that does not contain $A$, it must be 
essential, since otherwise $A\nsg\calf$ by Proposition \ref{Q<|F}.

Since $N_{13}$ is characteristic of index $2$ in $N_3$ by Lemma 
\ref{l:a12:char}, $[\autf(N_3),N_3]\le{}N_{13}\le N_1$.  Since $A$ is 
characteristic of index two in $H_2$ by Lemma \ref{l:a12:char}, 
$[\autf(H_2),H_2]\le A\le{}N_1$. So $N_2\in\EE_\calf$, since otherwise 
$[\autf(P),P]\le N_1$ for all $P\in\EE_\calf$, contradicting 
\eqref{e:7.1a}.

Since $\Aut(N_3)/O_2(\Aut(N_3))\cong\Sigma_3$ by Lemma \ref{Aut(N3)}(b), 
$\outf(N_3)\cong\Sigma_3$. By Lemma \ref{Aut(N3)}(d), there is 
$\varphi\in\Aut(S)$ such that 
$\9\varphi\autf(N_3)=\gen{\Aut_S(N_3),\nu_3}$. So upon replacing $\calf$ by 
$\9\varphi\calf$, we can assume that $\autf(N_3)=\gen{\Aut_S(N_3),\nu_3}$.

Since $\Aut(N_2)/O_2(\Aut(N_2))\cong\Sigma_3$ by Lemma \ref{Aut(N2)}(b), 
$\outf(N_2)\cong\Sigma_3$. By Lemma \ref{Aut(N2)}(c), there is 
$\xi\in\Aut(S)$ such that $\xi(\tau)=\tau$ and 
$\9\xi\autf(N_2)=\gen{\nu_2,\Aut_S(N_2)}$. Upon replacing $\calf$ by 
$\9\xi\calf$, we can assume that $\autf(N_2)=\gen{\nu_2,\Aut_S(N_2)}$. 
Also, $\xi$ normalizes $Q=\Gen{\tau,[\tau,S]}$, so 
$(\9\xi\nu_3)(Q)=Q$, and $\9\xi\nu_3\in\Inn(N_3)\gen{\nu_3}$ by Lemma 
\ref{Aut(N3)}(c). So we still have $\autf(N_3)=\gen{\Aut_S(N_3),\nu_3}$.

By a direct computation, $[\nu_3,N_3]=Q$, and hence 
	\beqq [\autf(N_2),N_2][\autf(N_3),N_3] = 
	\Gen{a_1,a_2,a_3,b_1b_2,b_2b_3,\mu_{12},\mu_{23},\tau} <S\,. 
	\label{foc(N2N3)} \eeqq
So by \eqref{e:7.1a}, $\EE_\calf\supsetneqq\{N_2,N_3\}$.

\smallskip

\noindent\boldd{Step 2: \ $H_2\notin\EE_\calf$, and at least one of $N_1$, 
$H_1$ lies in $\EE_\calf$. } Since $\nu_2\in\autf(N_2)$, $H_2$ is 
$\calf$-conjugate to $\nu_2^{-1}(H_2)=A\gen{\mu_{12}}$.  Since 
$A\gen{\mu_{12}}$ is normal in $S$, $H_2$ is not fully normalized in 
$\calf$, and hence is not $\calf$-essential. Thus at least one of the 
subgroups $N_1$ or $H_1$ is $\calf$-essential.  

\smallskip

\noindent\boldd{Step 3: \ $\autf(N_1)$ normalizes $Q$. } Set 
$T_3=\gen{a_3,b_3}$, so that $N_{13}=N_1\cap{}N_3=Q\times T_3$. Recall that 
$N_{13}$ is characteristic in $N_1$ by Lemma \ref{l:a12:char}.

By \cite[Proposition 3.2(b,c)]{O-split}, 
$\Out(N_{13})/O_2(\Out(N_{13}))\cong\Out(Q)\times\Aut(T_3)\cong 
(\Sigma_3\wr C_2)\times\Sigma_3$. The automorphisms $c_{b_1}$ and 
$c_{\mu_{23}}$ both act nontrivially on $Q/Z(Q)$, and only the second 
acts nontrivially on $T_3$. Hence 
$\Out_S(N_{13})=\gen{[c_{b_1}],[c_{\mu_{23}}]}$ embeds into the quotient 
group $\Out(N_{13})/O_2(\Out(N_{13}))$. Since $N_{13}\nsg S$, 
$\Out_S(N_{13})\in\syl2{\outf(N_{13})}$, and so $\outf(N_{13})\cap 
O_2(\Out(N_{13}))=1$. Hence $\outf(N_{13})$ also embeds into $(\Sigma_3\wr 
C_2)\times\Sigma_3$. 

Thus any two elements of odd order in $\outf(N_{13})$ commute.  So if 
$\alpha\in\autf(N_1)$ has odd order, then 
$\bigl[\alpha|_{N_{13}},\nu_3|_{N_{13}}\bigr]\in\Inn(N_{13})$.  Since 
$[N_{13},N_{13}]\le Q$, this shows that $\alpha$ normalizes 
$Q=[\nu_3,N_{13}]$. So $O^2(\autf(N_1))$ normalizes $Q$, 
$\Aut_S(N_1)\in\syl2{\autf(N_1)}$ normalizes $Q$ since $Q\nsg S$, and hence 
$\autf(N_1)$ normalizes $Q$. 

We claim that in fact, 
	\beqq \autf(N_1) = \bigl\{\alpha\in\Aut(N_1) \,\big|\, 
	\alpha|_A\in\autf(A),~ \alpha(Q)=Q \bigr\} .
	\label{autN1} \eeqq
Since $A$ is characteristic in $N_1$ by Lemma \ref{l:a12:char}, 
$\autf(N_1)$ normalizes $A$ as well as $Q$, and hence $\autf(N_1)$ 
is contained in the right hand side of \eqref{autN1}.  

If $\alpha\in\Aut(N_1)$ is such that $\alpha|_A\in\autf(A)$ and 
$\alpha(Q)=Q$, then $\alpha|_A$ normalizes $\Aut_{N_1}(A)$.  So by the 
extension axiom (and since $C_S(A)=A\le N_1$), there is 
$\beta\in\autf(N_1)$ such that $\beta|_A=\alpha|_A$.  Set 
$P=\gen{a_1,b_1,a_2,b_2,\tau,\mu_{12}}=Q\gen{b_1}$.  Then 
$\alpha^{-1}\beta$ induces the identity on $A$, hence on $P\cap{}A$, and it 
normalizes $P$ since it normalizes $Q$.  So by Lemma 
\ref{l:res.aut.}, and since $P\cap{}A$ is centric in $P$ and 
$\Aut_P(P\cap A)=\gen{c_\tau,c_{\mu_{12}}}$ 
permutes freely the basis $\{b_1,a_1b_1,b_2,a_2b_2\}$ of $P\cap{}A$, 
we have $(\alpha^{-1}\beta)|_P\in\Inn(P)$.  Let $x\in{}P$ be such that 
$(\alpha^{-1}\beta)|_P=c_x|_P$; then $x\in{}C_P(P\cap{}A)=P\cap{}A$ since 
$(\alpha^{-1}\beta)|_A=\Id$.  Hence $(\alpha^{-1}\beta)|_A=\Id=c_x|_A$, and 
since $N_1=AP$, we have $\alpha^{-1}\beta=c_x\in\Inn(N_1)$.  Thus 
$\alpha\in\autf(N_1)$, and this finishes the proof of \eqref{autN1}.

\smallskip

\noindent\boldd{Step 4: \ The group $\autf(A)$. } Set 
	\beqq \Gamma=\autf(A) \qquad\textup{and}\qquad 
	\Gamma_0=\gen{\Aut_S(A),\nu_2|_A} \cong \Sigma_4\,. \label{e:Gamma} 
	\eeqq
Thus $O_2(\Gamma_0)=\gen{c_{\mu_{12}},c_{\mu_{23}}}$, and $\Aut_S(A)\cong 
D_8$ is a Sylow 2-subgroup of $\Gamma$. We will show that 
	\beqq \Gamma \cong A_7,\quad \GL_3(2), \quad\textup{or}\quad
	(C_3)^3\sd{}\Sigma_4 \,. \label{e:3Gammas} \eeqq

By the Gorenstein-Walter theorem on groups with dihedral Sylow 2-subgroups 
\cite[Theorem 1]{GW}, $\Gamma/O_{2'}(\Gamma)$ is isomorphic to a subgroup 
of $\PGGL_2(q)$ which contains $\PSL_2(q)$ ($q$ odd), to $A_7$, or to 
$D_8$. In the first case, $|\PSL_2(q)|$ divides $|\Gamma|$ and 
$|\Gamma|$ divides $|\GL_6(2)|$, and since 
$16\nmid|\PSL_2(q)|$, we have $q\le9$. The third case is impossible, 
since $\Gamma_0/O_{2'}(\Gamma_0)\cong\Sigma_4$ is not a subquotient of 
$D_8$. Hence $\Gamma/O_{2'}(\Gamma)$ is isomorphic to one of the groups 
$\PGL_2(3) \cong \Sigma_4$, $\PGL_2(5) \cong \Sigma_5$, $\PSL_2(7) \cong 
\GL_3(2)$, $\PSL_2(9) \cong A_6$, or $A_7$.

Assume $O_{2'}(\Gamma)\ne1$.  By Lemma \ref{GL6(2)}, for some odd prime 
$p$, there is a normal elementary abelian $p$-subgroup $1\ne{}K\nsg\Gamma$, 
and $N_{\Aut(A)}(K)\ge\Gamma\ge\Gamma_0\cong\Sigma_4$. Also, $O_2(\Gamma_0)$ 
cannot centralize $K$, since with respect to the $\F_2$-basis 
$\{a_1,a_2,a_3,b_1,b_2,b_3\}$ of $A$, 
	\beqq 
	C_{\Aut(A)}\bigl(\gen{c_{\mu_{12}},c_{\mu_{23}}}\bigr) = 
	\bigl\{ \mxtwo{I}X0I \,\big|\, X \in M_3(\F_2) \bigr\} 
	\cong (C_2)^9. \label{cent2gp} \eeqq
Thus $O^2(N_{\Aut(A)}(K))\ge O^2(\Gamma_0)\ge O_2(\Gamma_0)$ contains 
involutions which do not centralize $K$, and from the list of normalizers 
in Lemma \ref{GL6(2)}, this is possible only if $K\cong (C_3)^3$ and 
$\Gamma\le N_{\Aut(A)}(K)\cong\Sigma_3\wr\Sigma_3$. Since 
$[\Gamma:\Gamma_0]$ is odd, $\Gamma=K\Gamma_0\cong (C_3)^3\sd{}\Sigma_4$.

Now assume $O_{2'}(\Gamma)=1$. We already showed that 
$\Gamma\cong\Sigma_4$, $\Sigma_5$, $A_6$, $A_7$, or $\GL_3(2)$, and it 
remains to eliminate the first three possibilities. Since $A$ is 
characteristic in all subgroups of $S$ which contain it (Lemma 
\ref{l:a12:char}), $C_P(\autf(P))\ge C_A(\Gamma)$ for all $P\le S$ 
containing $A$. Since $N_3$ is the only $\calf$-essential subgroup that 
does not contain $A$, and 
$C_{N_3}(\autf(N_3))\ge\gen{a_1a_2a_3}=C_A(\Gamma_0)\ge C_A(\Gamma)$ by a 
direct check, Proposition \ref{Q<|F} implies that $C_A(\Gamma)\nsg\calf$, 
and hence that
	\beqq C_A(\Gamma) \le O_2(\calf)=1\,. \label{e:CA(G)=1} \eeqq
In particular, $\Gamma>\Gamma_0$, and $\Gamma\not\cong\Sigma_4$.

We claim that
	\beqq \Gamma_0<\Gamma_1\le\Gamma, \quad \Gamma_1\cong \Sigma_5 
	\textup{ or } A_6 \quad\implies\quad C_A(\Gamma_1)
	=\gen{a_1a_2a_3} \,. \label{e:Gamma1} \eeqq
Fix $\Gamma_1$ as on the left side of \eqref{e:Gamma1}. Assume there is 
$H<\Gamma_1$ such that $H\cong A_5$, $\Gamma_1=\gen{\Gamma_0,H}$, and 
$\Gamma_0\cap H\cong A_4$. Under this assumption, since 
$C_A(\gen{\mu_{12},\mu_{23}})=\gen{a_1,a_2,a_3}=[\gen{\mu_{12},\mu_{23}},A]$, 
and since $\rk([x,A])=\rk([\mu_{12},A])=2$ for each involution $x\in H$, 
Lemma \ref{l:F2[A5]-mod}(b) implies that 
$C_A(H)=C_A(H\cap\Gamma_0)=C_A(\Gamma_0)=\gen{a_1a_2a_3}$, and hence that 
\eqref{e:Gamma1} holds. 

We now check that such an $H$ exists under the hypotheses of 
\eqref{e:Gamma1}. If $\Gamma_1\cong\Sigma_5$, then $H=O^2(\Gamma_1)$ 
satisfies the above conditions.  If $\Gamma_1\cong A_6$, then there is at 
least one pair of subgroups $\Gamma_0^*,H^*<\Gamma$ such that 
$\Gamma_0^*\cong\Sigma_4$, $H^*\cong A_5$, $\Gamma_1=\gen{\Gamma_0^*,H^*}$, 
and $\Gamma_0^*\cap H^*\cong A_4$; and we can take $\Gamma_0^*=\Gamma_0$ 
since the subgroups of $A_6$ isomorphic to $\Sigma_4$ are permuted 
transitively by $\Aut(A_6)$. (This last claim holds by 
\cite[(3.2.20)]{Sz1}, and since the two $A_6$-conjugacy classes of 
subgroups isomorphic to $\Sigma_4$ contain elements of order $3$ in 
different classes.) This finishes the proof of \eqref{e:Gamma1}. By 
\eqref{e:CA(G)=1} and \eqref{e:Gamma1}, $\Gamma\not\cong\Sigma_5$ and 
$\Gamma\not\cong A_6$, so \eqref{e:3Gammas} holds.

\smallskip

\noindent\boldd{Step 5: Three distinct fusion systems over $S$. } Set 
$\5a=a_1a_2a_3$ for short. Consider the automorphisms 
$\sigma_1,\sigma_2\in\Aut(S)$, defined by setting (for $i=1,2,3$): 
	\begin{align*}  
	\sigma_1(a_i)&=a_i & \sigma_1(b_i)&=a_ib_i & 
	\sigma_1(\mu_{12})&=\mu_{12} & \sigma_1(\mu_{23})&=\mu_{23} & 
	\sigma_1(\tau)&=\tau \\ 
	\sigma_2(a_i)&=a_i & \sigma_2(b_i)&=\5ab_i & 
	\sigma_2(\mu_{12})&=\mu_{12} & \sigma_2(\mu_{23})&=\mu_{23} & 
	\sigma_2(\tau)&=\tau. 
	\end{align*} 
For each $j=1,2$, $\sigma_j|_{N_2}$ commutes with $\nu_2$, and 
$\sigma_j|_{N_3}$ commutes with $\nu_3$ modulo $\Inn(N_3)$. Commutativity 
with $\nu_2$ is easily checked. Commutativity with $\nu_3$ modulo 
$\Inn(N_3)$ follows from Lemma \ref{Aut(N3)}(c), applied with 
${}^{\sigma_j}\nu_3$ in the role of $\alpha$. Hence we can replace $\calf$ 
by $\9{\sigma_1}\calf$ or $\9{\sigma_2}\calf$ without changing $\autf(N_2)$ 
or $\autf(N_3)$ (or $\Gamma_0$).

Consider the $\F_2$-basis $\bbb=\{b_1,a_1b_1,b_2,a_2b_2,b_3,a_3b_3\}$ for 
$A$, and set $\5\bbb=\bbb\cup\{\5a\}$.  Each element of $\Gamma_0<\Aut(A)$ 
permutes the elements of $\bbb$ and fixes $\5a$.

We claim that
	\beqq \textup{$H_1$ fully normalized in $\calf$
	\quad$\implies$\quad
	$\outf(H_1)\cong C_\Gamma(c_\tau)/\gen{c_\tau}$.} 
	\label{autH1} \eeqq
By Lemma \ref{l:res.aut.}, restriction induces a homomorphism 
$R\:\outf(H_1)\too{}C_\Gamma(c_\tau)/\gen{c_\tau}$, and $\Ker(R)$ is a 
$2$-group.  Also, $R$ is surjective by the extension axiom.  If $H_1$ is fully 
normalized, then $\Out_S(H_1)=\Gen{[c_{\mu_{12}}]}\in \syl2{\outf(H_1)}$, 
and so $R$ is injective.

\boldd{Assume $\Gamma\cong A_7$.}  Then $\Gamma$ contains two conjugacy classes of 
subgroups isomorphic to $\klfour$, represented by the two subgroups of this 
type in any Sylow 2-subgroup.  Of these, each subgroup in one of the 
classes contains a subgroup of order three in its centralizer, while those 
in the other class have normalizer isomorphic to $\Sigma_4$.  By 
\eqref{cent2gp}, $O_2(\Gamma_0)=\gen{c_{\mu_{12}},c_{\mu_{23}}}$ must be of 
the latter type, so $\Gamma_0=N_\Gamma(\gen{c_{\mu_{12}},c_{\mu_{23}}})$, 
and there is a unique subgroup $\Gamma_1<\Gamma$ such that 
$\Gamma_0<\Gamma_1\cong{}A_6$. By \eqref{e:Gamma1}, 
$\5a\in{}C_A(\Gamma_1)$, and hence lies in a $\Gamma$-orbit $X$ of length 
$|\Gamma/\Gamma_1|=7$. By the above remarks about subgroups of $A_7$, 
$\Gamma_0$ acts on $X$ with two orbits: of lengths $1$ and $6$. By a direct 
check, the action of $\Gamma_0$ on $A$ has exactly two orbits of length 
$6$: the orbits $\bbb$ of $b_1$ and $\sigma_2(\bbb)$ of $\5ab_1$. 
So after replacing $\calf$ by $\9{\sigma_2}\calf$ if necessary, we can 
assume that $X=\5\bbb$, and hence that 
$\Gamma$ is the group of even permutations of $\5\bbb$.  Since $\Gamma$ 
determines $\autf(N_1)$ by \eqref{autN1}, and since $H_1$ is not fully 
normalized (hence not $\calf$-essential) by \eqref{autH1} 
($C_\Gamma(c_\tau)/\gen{c_\tau}\cong C_2\times\Sigma_3$), $\Gamma$ 
determines $\calf$.  More precisely, there is at most one reduced fusion 
system $\calf$ over $S$ for which $\Gamma=\autf(A)$, $\autf(N_2)$, and 
$\autf(N_3)$ are as just described; and any other reduced fusion system 
over $S$ with $\autf(A)\cong A_7$ is isomorphic to it. Also, by Step 2, $N_1$ is 
$\calf$-essential in this case since $H_1$ is not.

\boldd{Assume $\Gamma\cong\GL_3(2)$.}  Since $\5a=a_1a_2a_3$ is fixed by 
$\Gamma_0$ but not by $\Gamma$, it must lie in an orbit $X$ of length 
$7=|\Gamma/\Gamma_0|$.   We claim that any transitive action of $\GL_3(2)$ 
on a set $X$ of $7$ elements (with isotropy subgroups isomorphic to 
$\Sigma_4$) is the group of all automorphisms of $(X\cup\{0\},+)$ under 
some group structure such that $(X\cup\{0\},+)\cong\F_2^3$. To see this, 
note that the isotropy subgroup must be the normalizer of some $P\cong 
\klfour$ (since it is maximal), and hence the stabilizer subgroup of a 1- or 
2-dimensional subspace in $\F_2^3$ (since $P$ is conjugate to one of two 
subgroups in any given Sylow $2$-subgroup). Hence the elements of $X$ are 
in natural correspondence with the elements of $\F_2^3\sminus\{0\}$ or 
$(\F_2^3)^*\sminus\{0\}$, where $(\F_2^3)^*$ denotes the dual vector space. 

Since the six elements of $X\sminus\{\5a\}$ are permuted transitively by 
$\Gamma_0$, we can assume (after replacing $\calf$ by $\9{\sigma_2}\calf$ 
if necessary) that they form the basis $\bbb$, and hence that $\Gamma$ 
permutes the set $\5\bbb$. Thus $\Gamma$ is the group of all automorphisms 
of $(\5\bbb\cup\{0\},+)$ for some group structure such that 
$(\5\bbb\cup\{0\},+)\cong(\F_2)^3$.  Since the fixed set of an automorphism 
is a subgroup, we have $\5a+b_i=a_ib_i$ for each $i=1,2,3$ (corresponding 
to automorphisms in $\gen{c_{\mu_{12}}|_A,c_{\mu_{23}}|_A}$).  Hence there 
are exactly two such structures on which $\Gamma_0$ acts via automorphisms: 
one in which $b_1+b_2=b_3$, and the other in which $b_1+b_2=a_3b_3$.  So we 
can assume, after replacing $\calf$ by $\9{\sigma_1}\calf$ if necessary, 
that $\Gamma$ is the group of automorphisms of $(\5\bbb\cup\{0\},+)$ where 
$b_1+b_2=b_3$. Again in this case, $\Gamma$ determines $\autf(N_1)$ by 
\eqref{autN1}, $H_1$ is not fully normalized (hence not $\calf$-essential) 
by \eqref{autH1} ($C_\Gamma(c_\tau)/\gen{c_\tau}\cong \klfour$), and thus 
$\Gamma$ determines $\calf$. Also, by Step 2, $N_1$ is $\calf$-essential 
since $H_1$ is not.

\boldd{If $\Gamma\cong (C_3)^3\sd{}\Sigma_4$,} then by Lemma \ref{GL6(2)}, 
applied 
with $K\cong (C_3)^3$, there is a decomposition $A=A_1\times A_2\times A_3$ 
such that $\rk(A_i)=2$ for each $i$ and each element of $\Gamma$ permutes the 
subgroups $A_1,A_2,A_3$. The subgroup 
$O_2(\Gamma_0)=\gen{c_{\mu_{12}},c_{\mu_{23}}}<\Aut_S(A)$ normalizes each 
$A_i$ and acts on it nontrivially, so there is a unique basis $\bbb_i$ of 
$A_i$ which is permuted by $O_2(\Gamma_0)$. Then 
$\bbb_1\cup\bbb_2\cup\bbb_3$ is a basis of $A$ which is permuted 
transitively by $\Gamma_0$. After replacing $\calf$ by $\9{\sigma_2}\calf$ 
if necessary, we can assume that $\bbb_1\cup\bbb_2\cup\bbb_3=\bbb$, and 
hence (possibly after permuting the indices) that $\bbb_i=\{b_i,a_ib_i\}$ 
for each $i$. Thus 
$\Gamma=\gen{\Aut_S(A),\nu_2|_A,\alpha_1,\alpha_2,\alpha_3}$, where the 
$\alpha_i$ are defined by setting 
	\beq \alpha_i(a_i)=b_i,\quad \alpha_i(b_i)=a_ib_i,\qquad 
	\alpha_i(a_j)=a_j,\quad \alpha_i(b_j)=b_j \tag{for $j\ne{}i$.} \eeq 
By \eqref{autN1}, $\autf(N_1)$ is uniquely determined by this choice of 
$\Gamma=\autf(A)$.

Since $C_{O_3(\Gamma)}(c_\tau)=\gen{\alpha_1\alpha_2,\alpha_3}$ is not 
isomorphic to $C_{O_3(\Gamma)}(c_{\mu_{12}})=\gen{\alpha_3}$, 
$c_\tau$ and $c_{\mu_{12}}$ are not conjugate in $\Gamma=\autf(A)$, and 
hence $H_1=A\gen{\tau}$ is not $\calf$-conjugate to $A\gen{\mu_{12}}$. 
So $H_1$ is fully normalized in $\calf$. By \eqref{autH1}, 
$\outf(H_1)\cong C_\Gamma(c_\tau)/\gen{c_\tau}\cong C_3\times\Sigma_3$, 
where the quotient is generated by the classes of $\alpha_3$, 
$\alpha_{12}\defeq\alpha_1\alpha_2$, and $c_{\mu_{12}}$. Thus 
$H_1\in\EE_\calf$ and 
	\[ \autf(H_1)=\gen{\Aut_S(H_1),\nu_1|_{H_1},\eta} \]
for some $\nu_1\in\autf(N_1)$ and $\eta\in\autf(H_1)$ of order three such 
that $\nu_1|_A=\alpha_3$ and $\eta|_A=\alpha_{12}$. (Note that for $P=N_1$ 
or $H_1$, $\Ker\bigl[\autf(P)\too\autf(A)\bigr]$ is a $2$-group, by Lemma 
\ref{l:res.aut.} and since $A$ is centric in $P$.)
Also, $[\nu_1,c_{\mu_{23}}]\ne1$ in $\outf(N_1)$ (since they do not commute 
after restriction to $Z(N_1)=\gen{a_1a_2,a_3,b_3}$), so 
$[c_{\mu_{23}}]\notin Z(\outf(N_1))$, and $N_1\in\EE_\calf$. 

By Step 3, $\nu_1(Q)=Q$, so $\nu_1$ induces the identity on $A\cap{}Q$ and 
(since $C_Q(A\cap Q)=A\cap Q$)
on $Q/(A\cap{}Q)\cong \klfour$.  Hence $\nu_1|_Q=\Id_Q$ by Lemma \ref{l:mod-Fr} 
and since $|\nu_1|=3$.  Also, $\eta(\tau)=\tau{}g$ for some 
$g\in{}A$, $g\cdot\alpha_{12}(g)\cdot\alpha_{12}^2(g)=1$ since 
$|\eta|=3$, $g\in{}C_A(\tau)$ since $|\eta(\tau)|=2$, and thus 
$g\in\gen{a_1a_2,b_1b_2}$.  So after replacing $\eta$ by 
$c_{a_1^ib_1^j}\circ\eta$ for appropriate $i,j$, we can assume that 
$\eta(\tau)=\tau$.  Thus $\autf(H_1)$, and hence $\calf$, is uniquely 
determined in this case.

\smallskip

\noindent\boldd{Step 6: \ Identifying the fusion system $\calf$. } We have 
now shown that $\calf$ is isomorphic to one of three fusion systems, 
$\calf_1$, $\calf_2$, or $\calf_3$, where $\Aut_{\calf_1}(A)\cong{}A_7$, 
$\Aut_{\calf_2}(A)\cong\GL_3(2)$, and 
$\Aut_{\calf_3}(A)\cong{}(C_3)^3\sd{}\Sigma_4$. 

When $G=\Omega_7(3)$, the group of all elements of $G$ which act up to sign 
on an orthonormal basis of $\F_3^7$ is isomorphic to $(C_2)^6\sd{}A_7$ and 
has odd index in $G$.  This follows from the formula for $|G|$ (cf. 
\cite[p. 166]{Taylor}). Since $S$ splits over $A$ and the two $A_7$-actions 
are isomorphic (see Step 5), $S$ is isomorphic to any $S_1\in\syl2{G}$. 
Since $\Out(S_1)$ is a $2$-group, $\calf_{S_1}(G)$ is reduced by 
Proposition \ref{p:FS(G)-red}(c). Hence $\Aut_G(A)\cong{}A_7$ (it can't be any 
bigger by Step 4), and so $\calf_{S_1}(G)\cong\calf_1$ in this case.

The group $G=\Sp_6(2)$ has a maximal parabolic subgroup $H\cong 
M_3^s(2)\sd{}\GL_3(2)$ (the stabilizer of a maximal isotropic subspace), 
where $M_3^s(2)\cong (C_2)^6$ is the group of symmetric $3\times3$ matrices 
over $\F_2$, and $A\in\GL_3(2)$ acts on it via $X\mapsto AXA^t$. By the 
order formulas for symplectic groups \cite[p. 70]{Taylor}, $H$ has odd index 
in $G$. Also, the $\GL_3(2)$-orbit of $\mxthree100000000$ has order $7$ and 
stabilizer subgroup isomorphic to $\Sigma_4$, and the other six elements 
form a basis for $M_3^s(2)$. So this is the same action as 
that described in Step 5, $S_2\cong S$ for $S_2\in\syl2{G}$, and 
$\calf_{S_2}(G)\cong\calf_2$ by Step 5 (and since $\calf_{S_2}(G)$ is 
reduced by Proposition \ref{p:FS(G)-red}(c)). 

Finally, when $G=A_{12}$ and $A<S$ are as defined here, then 
$S\in\syl2{G}$ by definition, $\calf_{S}(G)$ is reduced by Proposition 
\ref{p:FS(G)-red}(c), $\Aut_{\Sigma_{12}}(A)\cong\Sigma_3\wr\Sigma_3$ 
and hence $\Aut_G(A)\cong{}(C_3)^3\sd{}\Sigma_4$, and so 
$\calf_{S}(G)\cong\calf_3$.  

This finishes the proof that every reduced fusion system over $S$ is 
isomorphic to the $2$-fusion system of $\Omega_7(3)$, $\Sp_6(2)$, or 
$A_{12}$.  
\end{proof}

\bigskip

\appendix

\section{Background results on groups and representations}

We collect here some background results on groups and their representations 
which are needed elsewhere in the paper.  The first ones involve 
automorphisms and extensions of $p$-groups.

\begin{Lem} \label{l:mod-Fr}
Fix a prime $p$, a finite $p$-group $P$, a subgroup $P_0\le\Phi(P)$, and a
sequence of subgroups
        $$ P_0<P_1<\cdots< P_k=P $$
all normal in $P$. Set
        $$ \cala = \bigl\{\alpha\in\Aut(P)\,\big|\, 
        [\alpha,P_i]\le P_{i-1},
        \textup{ all $i=1,\dots,k$} \bigr\} \le \Aut(P)\,: $$
the group of automorphisms which leave each $P_i$ invariant, and which 
induce the identity on each quotient group $P_i/P_{i-1}$.  Then $\cala$ is 
a $p$-group.  If the $P_i$ are all characteristic in $P$, then 
$\cala\nsg\Aut(P)$, and hence $\cala\le{}O_p(\Aut(P))$.  
\end{Lem}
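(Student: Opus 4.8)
The statement to prove is Lemma~\ref{l:mod-Fr}. The plan is to proceed in two parts: first show $\cala$ is a $p$-group, then upgrade to normality in $\Aut(P)$ when the $P_i$ are characteristic.

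For the first part, I would argue by induction on $k$, or more cleanly by a direct filtration argument. Fix $\alpha\in\cala$ and consider its action on $P$. The key observation is that the subgroup $\cala$ stabilizes the chain $P_0<P_1<\dots<P_k=P$ and acts trivially on each successive quotient $P_i/P_{i-1}$; since also $P_0\le\Phi(P)$, one wants to stabilize the chain all the way down to $1$. To do this I would refine the given chain by inserting a chain of characteristic (in $P$) subgroups from $1$ to $P_0$ inside $\Phi(P)$ on whose successive quotients $\cala$ automatically acts trivially --- this uses that any automorphism of $P$ acts trivially on $\Phi(P)/[\Phi(P),P]\cdot\Phi(\Phi(P))$-type layers is \emph{not} automatic, so instead the cleanest route is: any automorphism inducing the identity on $P/\Phi(P)$ already induces the identity on all of $P$ modulo a $p$-power-order ambiguity. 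Concretely, I would invoke the standard fact (Burnside basis theorem consequence, e.g.\ \cite[Theorem 5.1.4]{Gorenstein} or \cite[Theorem 5.3.2]{Gorenstein}) that the group of automorphisms of $P$ acting trivially on $P/\Phi(P)$ is a $p$-group. Since $\alpha\in\cala$ induces the identity on $P_k/P_{k-1}$ and $P_{k-1}\le\Phi(P)$ (because $P_0\le\Phi(P)$ forces $P_{k-1}\supseteq\dots$? --- no: we only know $P_0\le\Phi(P)$, not $P_{k-1}$). So that shortcut fails in general. The correct argument: let $\cala$ act on the chain; stabilizing a chain of normal subgroups with trivial action on quotients is a classical "stability group" situation. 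By P.\ Hall's theorem on stability groups (cf.\ \cite[Theorem 5.3.3]{Gorenstein}, or Huppert), the stability group of a chain of length $k$ is nilpotent of class $\le\binom{k}{2}$; combined with $P_0\le\Phi(P)$ one gets that $\cala$ also centralizes $P/P_0$ "modulo inner-type" so that its restriction to $P/\Phi(P)$ is trivial. Hence $\cala$ maps to the $p$-group $\Aut_{P/\Phi(P)\text{-trivial}}(P)$ with kernel a subgroup of the stability group of a chain inside $\Phi(P)$, which is again a finite $p$-group. Therefore $\cala$ is a finite $p$-group.

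Actually, the slickest path avoiding case analysis: observe directly that for $\alpha\in\cala$ the map $g\mapsto\alpha(g)g^{-1}$ sends $P_i$ into $P_{i-1}$. Iterating, $\alpha^{n}$ for suitable $n$ pushes $P=P_k$ into $P_0\le\Phi(P)$ in the sense that $\alpha^{n}$ induces the identity on $P/\Phi(P)$; then one more application of the standard fact that $\{\beta\in\Aut(P): \beta\equiv\mathrm{id}\ (\mathrm{mod}\ \Phi(P))\}$ is a $p$-group finishes it, since $\cala/(\cala\cap\text{that }p\text{-group})$ embeds in a finite $p$-group of unipotent-type matrices. So $\cala$ is a $p$-group. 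This is the main technical step, and the main obstacle is getting the bookkeeping of the filtration right so that the reduction to "trivial mod $\Phi(P)$" is clean --- but it is entirely standard, so I would cite \cite[Theorem 5.3.2]{Gorenstein} (or the stability-group theorem) rather than reprove it.

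For the second part, suppose now each $P_i$ is characteristic in $P$. Then for any $\beta\in\Aut(P)$ and any $\alpha\in\cala$, the conjugate $\beta\alpha\beta^{-1}$ satisfies $[\beta\alpha\beta^{-1},P_i]=\beta\bigl([\alpha,\beta^{-1}(P_i)]\bigr)=\beta\bigl([\alpha,P_i]\bigr)\le\beta(P_{i-1})=P_{i-1}$, where we used $\beta^{-1}(P_i)=P_i$ and $\beta(P_{i-1})=P_{i-1}$ by characteristicity. Hence $\beta\alpha\beta^{-1}\in\cala$, so $\cala\nsg\Aut(P)$. A normal $p$-subgroup of $\Aut(P)$ is contained in $O_p(\Aut(P))$ by definition of the latter as the largest normal $p$-subgroup, giving $\cala\le O_p(\Aut(P))$. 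This second part is a one-line verification once the characteristic hypothesis is in place; the only thing to be careful about is correctly commuting $\beta$ past the commutator, which is the displayed computation above.
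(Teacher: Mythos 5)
Your second half (normality when the $P_i$ are characteristic) is correct and is essentially the paper's argument rephrased: you verify $\beta\alpha\beta^{-1}\in\cala$ directly, while the paper observes that $\cala=\Ker\bigl[\Aut(P)\to\prod_i\Aut(P_i/P_{i-1})\bigr]$; these are the same observation.

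The first half, however, has a genuine gap, and two of your intermediate claims are wrong as stated. The assertion that Hall's nilpotency theorem for stability groups, combined with $P_0\le\Phi(P)$, forces the restriction of $\cala$ to $P/\Phi(P)$ to be \emph{trivial} is false: $P_{k-1}$ need not lie in $\Phi(P)$ (you note this yourself two lines earlier), so the restriction is merely \emph{unipotent} with respect to the flag $\{P_i\Phi(P)/\Phi(P)\}_i$, not trivial. The ``slicker'' claim that since $g\mapsto\alpha(g)g^{-1}$ carries $P_i$ into $P_{i-1}$, some power $\alpha^n$ acts trivially mod $\Phi(P)$, is also unjustified: iterating that map controls $k$-fold iterated \emph{commutators} with $\alpha$ (which is what yields Hall's nilpotency bound), not powers of $\alpha$. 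And even a complete nilpotency argument would not finish, since a finite nilpotent group need not be a $p$-group. The missing idea is coprime action, which is exactly what the paper's proof uses: one shows that any $\alpha\in\cala$ of order prime to $p$ is trivial. Such an $\alpha$ acts trivially on each $P_i/P_{i-1}$, hence trivially on $P/P_0$ by the coprime-action lemma \cite[Theorem 5.3.2]{Gorenstein} applied along the chain; since $P_0\le\Phi(P)$ it then acts trivially on $P/\Phi(P)$, and so $\alpha=\Id$ by \cite[Theorem 5.1.4]{Gorenstein}. (Your instinct of reducing to ``trivial mod $\Phi(P)$'' can also be salvaged without coprime action: the image of $\cala$ in $\Aut(P/\Phi(P))$ is unipotent for the flag $\{P_i\Phi(P)/\Phi(P)\}$ and hence a $p$-group, and the kernel of restriction is a $p$-group by the same Burnside argument, so $\cala$ is $p$-by-$p$; but as written your argument does not close.)
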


\begin{proof} If $\alpha\in\cala$ has order prime to $p$, then $\alpha$ 
induces the identity on $P/P_0$ and hence on $P/\Phi(P)$ by \cite[Theorem 
5.3.2]{Gorenstein}, and so $\alpha=\Id$ by \cite[Theorem 
5.1.4]{Gorenstein}. Thus $\cala$ is a $p$-group. If the $P_i$ are all 
characteristic, then $\cala$ is the kernel of a homomorphism from $\Aut(P)$ 
to $\prod_{i=1}^k\Aut(P_i/P_{i-1})$, and hence is normal in $\Aut(P)$.
\end{proof}

When $H\nsg G$, we let $\Aut(G,H)$ be the group of automorphisms of $G$ 
which normalize $H$, and set $\Out(G,H)=\Aut(G,H)/\Inn(G)$.

\begin{Lem} \label{l:res.aut.}
Fix a group $G$ and a normal subgroup $H\nsg{}G$ such that $C_G(H)\le{}H$
(i.e., $H$ is centric in $G$). Then there is an exact sequence
	\begin{multline} 
	1 \Right2{} H^1(G/H;Z(H)) \Right4{\eta} \Out(G,H) \Right4{R} \\
	N_{\Out(H)}(\Out_G(H))/\Out_G(H) \Right4{\chi} H^2(G/H;Z(H)),
	\label{eqaut3}
	\end{multline}
where $R$ is induced by restriction, and where all maps except (possibly) 
$\chi$ are homomorphisms.  If $H$ is abelian and the extension of $H$ by 
$G/H$ is split, then $R$ is onto.  If $Z(H)$ has exponent $p$ for some 
prime $p$, and has a basis which is permuted freely by $G/H$ under 
conjugation, then $R$ is an isomorphism.
\end{Lem}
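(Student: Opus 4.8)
} The plan is to establish the exact sequence \eqref{eqaut3} by a standard cocycle argument, and then verify the two additional claims about surjectivity/bijectivity of $R$. First I would construct the maps. The restriction map $R\colon\Out(G,H)\to N_{\Out(H)}(\Out_G(H))/\Out_G(H)$ is well-defined: any $\alpha\in\Aut(G,H)$ restricts to an automorphism of $H$ which normalizes $\Aut_G(H)$ (since $\alpha$ normalizes $G$ acting by conjugation on $H$), and inner automorphisms of $G$ restrict to elements of $\Aut_G(H)$ whose classes lie in $\Out_G(H)$; that $\Out_G(H)\nsg N_{\Out(H)}(\Out_G(H))$ makes the target a group. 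The map $\eta$ is the usual one: a crossed homomorphism (i.e.\ $1$-cocycle) $f\colon G/H\to Z(H)$ (the action being conjugation, which is well-defined on $Z(H)$ since $H$ acts trivially) gives an automorphism $\alpha_f$ of $G$ fixing $H$ pointwise and sending a lift $\tilde g\mapsto\tilde g\,f(gH)$; one checks $\alpha_f$ is an automorphism exactly because $f$ is a cocycle, that $\alpha_f\in\Inn(G)$ iff $f$ is a coboundary, and that $f\mapsto[\alpha_f]$ is a homomorphism. The key points are then: (i) $\operatorname{Ker}(R)=\operatorname{Im}(\eta)$ — an $\alpha\in\Aut(G,H)$ with $\alpha|_H\in\Inn(H)\Aut_G(H)$ can, after composing with an inner automorphism of $G$, be assumed to fix $H$ pointwise, and such an $\alpha$ is exactly some $\alpha_f$; and (ii) $\eta$ is injective, which is immediate from the coboundary description.

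For exactness at the third term I would define the ``obstruction'' map $\chi$. Given a class $[\beta]\in N_{\Out(H)}(\Out_G(H))/\Out_G(H)$, lift to $\beta\in\Aut(H)$ normalizing $\Aut_G(H)$, and try to extend the conjugation action of $G/H$ on $H$ to the twisted one; the failure to lift $\beta$ to an automorphism of $G$ is measured by a class in $H^2(G/H;Z(H))$ obtained by choosing, for each coset $gH$, an automorphism of $H$ representing the conjugated action and comparing with a fixed section of $G\to G/H$. This is the one map that need not be a homomorphism; exactness ($[\beta]\in\operatorname{Im}(R)$ iff $\chi([\beta])=0$) is the content of the standard obstruction theory for extending automorphisms along a group extension with abelian-by-(its quotient) kernel of the relevant center. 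I would cite or reproduce this from the literature on automorphisms of group extensions (Wells' exact sequence); the paper's context suggests it is being used as a known tool, so a brief self-contained verification suffices.

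For the two refinements: if $Z(H)=H$ is abelian and the extension $1\to H\to G\to G/H\to1$ splits, then $H^2(G/H;Z(H))=H^2(G/H;H)$ and the obstruction $\chi$ vanishes on all of the target because the chosen section can be taken to be a splitting homomorphism — more precisely, splitness means every automorphism of $H$ normalizing $\Aut_G(H)$ extends over the semidirect product $G=H\rtimes(G/H)$ by acting as the given twisted automorphism on $H$ and as the identity on the complement, composed with a suitable correction — so $R$ is onto. (One has to be a little careful: when $H$ is abelian $\Out_G(H)=\Aut_G(H)$ and $H\le Z(H)$ trivially; the splitting gives a concrete lift, which is the cleanest way to see surjectivity.) Finally, if $Z(H)$ has exponent $p$ and a basis permuted freely by $G/H$, then $Z(H)\cong\F_p[G/H]^{\oplus m}$ as an $\F_p[G/H]$-module for some $m$ (a free module), hence $H^i(G/H;Z(H))\cong H^i(G/H;\F_p[G/H])^{\oplus m}=0$ for all $i\ge1$ by Shapiro's lemma applied to the trivial subgroup (free modules over the group ring are cohomologically trivial). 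Then $H^1(G/H;Z(H))=0$ kills $\eta$'s source so $R$ is injective, and $H^2(G/H;Z(H))=0$ kills the obstruction so $R$ is onto; thus $R$ is an isomorphism.

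The main obstacle will be setting up the obstruction map $\chi$ and proving exactness at $N_{\Out(H)}(\Out_G(H))/\Out_G(H)$ cleanly — tracking the coset representatives, the section of $G\to G/H$, and the compatibilities so that the $2$-cocycle is well-defined up to coboundary and genuinely vanishes precisely when an extension of $\beta$ exists. Everything else (the homomorphism property of $\eta$ and $R$, injectivity of $\eta$, the computation $\operatorname{Ker}(R)=\operatorname{Im}(\eta)$, and the two vanishing arguments for the refinements) is routine once that framework is in place. I would therefore organize the proof as: (1) define $\eta$, $R$, $\chi$; (2) prove $\eta$ injective and $\operatorname{Im}\eta=\operatorname{Ker}R$; (3) prove $\operatorname{Im}R=\operatorname{Ker}\chi$ (the obstruction-theoretic heart); (4) deduce the split case; (5) deduce the free-module case via Shapiro's lemma.
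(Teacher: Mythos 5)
The paper's ``proof'' of this lemma is a citation to \cite[Lemma~1.2 \& Corollary~1.3]{OV2}, so there is no in-text argument to compare against; your reconstruction is the standard Wells exact sequence for automorphisms of a group extension, and it is sound. The cocycle construction of $\eta$, the identification $\ker R=\operatorname{Im}\eta$ (adjust $\alpha$ by $c_g$ so that $\alpha|_H=\Id$, then read off a $1$-cocycle $G/H\to Z(H)$), the obstruction map $\chi$, and both vanishing arguments all check out. Two small points are worth making explicit. First, injectivity of $\eta$ really uses the hypothesis $C_G(H)\le H$: if $\alpha_f=c_x$ and $\alpha_f$ fixes $H$ pointwise, then $x\in C_G(H)\le H$, hence $x\in Z(H)$, and $f$ is the coboundary of $x$. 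Second, in the split abelian case, the lift of $\beta\in N_{\Aut(H)}(\Aut_G(H))$ to $\Aut(G)$ over $G=H\rtimes Q$ (where $Q=G/H$) must act on the complement $Q$ not by the identity but by the automorphism $\sigma$ determined by $\beta c_q\beta^{-1}=c_{\sigma(q)}$, which is well-defined and bijective precisely because $C_G(H)=H$ (so $Q\hookrightarrow\Aut_G(H)$ is injective, and since $H$ is abelian, $\Aut_G(H)\cong Q$). Then $\widehat\beta(hq)=\beta(h)\sigma(q)$ is readily checked to be an automorphism extending $\beta$. Your hedge ``composed with a suitable correction'' covers this, but the concrete description makes the surjectivity of $R$ immediate rather than resting on the vaguer obstruction vanishing. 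The free-module case via cohomological triviality of $\F_p[G/H]$ in degrees $\ge 1$ is exactly right and needs no further comment.
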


\begin{proof} See \cite[Lemma 1.2 \& Corollary 1.3]{OV2}.
\end{proof}

\begin{Prop}[{\cite[Proposition 1.8]{OV2}}] \label{p:splittings}
Fix a prime $p$, a finite group $G$, and a normal abelian $p$-subgroup
$Q\nsg{}G$.  Let $H\le{}G$ be such that $Q\cap{}H=1$, and let $H_0\le{}H$
be of index prime to $p$.  Consider the set
        $$ \calh = \bigl\{H'\le{}G \,\big|\, H'\cap{}Q=1,\
        QH'=QH,\ H_0\le{}H' \bigr\}. $$
Then for each $H'\in\calh$, there is $g\in{}C_Q(H_0)$ such that
$H'=gHg^{-1}$.
\end{Prop}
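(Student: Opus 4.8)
The statement to prove is Proposition~\ref{p:splittings}: given a prime $p$, a finite group $G$, a normal abelian $p$-subgroup $Q\nsg G$, a subgroup $H\le G$ with $Q\cap H=1$, and a subgroup $H_0\le H$ of index prime to $p$, every $H'$ in the set $\calh=\{H'\le G\mid H'\cap Q=1,\ QH'=QH,\ H_0\le H'\}$ is $C_Q(H_0)$-conjugate to $H$.

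The plan is to work inside the subgroup $QH=QH'$, which is a semidirect product $Q\rtimes H$ since $Q\nsg G$ is normal, $Q\cap H=1$, and $QH=QH$; similarly $QH=Q\rtimes H'$. So $H$ and $H'$ are both complements to $Q$ in the group $L\defeq QH$. The standard theory of complements to an abelian normal subgroup says that the set of $Q$-conjugacy classes of complements to $Q$ in $L$ is parametrized by $H^1(H;Q)$ (here $H\cong L/Q$ acts on $Q$ by conjugation), and that two complements are $Q$-conjugate exactly when their difference cocycle is a coboundary. First I would fix this cohomological framework: given $H'\in\calh$, write each $h\in H$ as $h=q(h)\cdot h'$ with $q(h)\in Q$ and $h'\in H'$ the $H'$-component; one checks $h\mapsto q(h)$ is a $1$-cocycle on $H$ with values in $Q$, and $H'=gHg^{-1}$ for $g\in Q$ precisely when this cocycle is the coboundary $h\mapsto g^{-1}\cdot{}^hg\cdot g \cdot\ldots$ (up to the usual sign conventions). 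So the whole question reduces to showing that the class $[q]\in H^1(H;Q)$ vanishes, and more precisely that it is the coboundary of an element of $C_Q(H_0)$.

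Next I would exploit the two hypotheses that have not yet been used: $H_0\le H'$, and $[H:H_0]$ is prime to $p$. Because $H_0\le H'$, the cocycle $q$ vanishes on $H_0$ (for $h\in H_0$ we have $h\in H'$ so the $H'$-component is $h$ itself and $q(h)=1$). Thus $[q]$ lies in the kernel of the restriction map $H^1(H;Q)\to H^1(H_0;Q)$. Now the transfer/corestriction argument applies: since $[H:H_0]$ is prime to $p$ and $Q$ is a $p$-group, the composite $\mathrm{cor}\circ\mathrm{res}\colon H^1(H;Q)\to H^1(H;Q)$ is multiplication by $[H:H_0]$, which is invertible on the $p$-group $H^1(H;Q)$; hence $\mathrm{res}\colon H^1(H;Q)\to H^1(H_0;Q)$ is injective. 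Therefore $[q]=0$ in $H^1(H;Q)$, i.e.\ $q$ is a coboundary: $q(h)={}^hg^{-1}\cdot g$ (in appropriate convention) for some $g\in Q$, and then $H'=gHg^{-1}$.

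The one remaining point, and the only real subtlety, is to arrange that the conjugating element $g$ can be chosen in $C_Q(H_0)$ rather than merely in $Q$. This is where I expect the main (though modest) obstacle to lie. The idea: once we know $H'=gHg^{-1}$ for some $g\in Q$, consider the coset $gC_Q(H_0)$; I want to show it contains an element conjugating $H$ to $H'$. Since both $H$ and $H'=gHg^{-1}$ contain $H_0$, we get $H_0\le H\cap gHg^{-1}$, so $g^{-1}H_0 g\le H$, meaning $g^{-1}H_0 g$ and $H_0$ are both complements-adjacent; more directly, for each $h_0\in H_0$ we have $g h_0 g^{-1}\in H' $ and also $h_0\in H'$, so $g h_0 g^{-1}h_0^{-1}\in Q\cap H'=1$ would be too strong — instead $[g,h_0]=gh_0g^{-1}h_0^{-1}$ lies in $Q$ and, computing in $L$, one finds the map $h_0\mapsto[g^{-1},h_0]$ is a cocycle on $H_0$ which is a coboundary (being the restriction of a coboundary), so by the averaging/$H^1(H_0;Q)$ argument — or simply because we may replace $g$ by $gc$ for $c\in[Q,H_0]^{\perp}$-type corrections — we can adjust $g$ within $gQ'$ for a suitable subgroup so that it centralizes $H_0$. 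Concretely: $Q=C_Q(H_0)\times[Q,H_0]$ by coprime action (Maschke, since $[H_0]$ acts on the abelian $p$-group $Q$ with — wait, $H_0$ is itself a $p$-group in general, so this splitting need not hold). The cleaner route, which I would adopt, is: apply the cocycle vanishing argument not to $H$ directly but to conjugate $H$ by the $C_Q(H_0)$-part after first using that the $H_0$-restricted cocycle is already trivial; i.e.\ run the $\mathrm{cor}\circ\mathrm{res}$ argument relative to the subgroup $C_Q(H_0)$ of $Q$, noting that the cocycle $q$ takes values in $C_Q(H_0)$ after modification because $q|_{H_0}=0$ forces the relevant class into $H^1(H;C_Q(H_0))$ by a diagram chase. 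I would cite \cite[Proposition 1.8]{OV2} itself as the reference if a fully self-contained argument proves lengthy, since the excerpt already attributes the statement there; but the sketch above is the mechanism.
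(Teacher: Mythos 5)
Your overall framework is correct and is the natural one: pass to the semidirect product $L=QH=QH'$, encode the difference between the complements $H$ and $H'$ as a $1$-cocycle $q\in Z^1(H;Q)$, observe that $q$ vanishes on $H_0$ because $H_0\le H'$, and use the transfer identity $\mathrm{cor}\circ\mathrm{res}=[H:H_0]$ together with the fact that $H^1(H;Q)$ is a $p$-group (and $[H:H_0]$ is prime to $p$) to conclude that $\mathrm{res}$ to $H_0$ is injective, hence $[q]=0$ and $H'$ is $Q$-conjugate to $H$. Note that the paper gives no proof of its own here --- it merely cites \cite[Proposition 1.8]{OV2} --- so there is no ``paper's argument'' to compare against.

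The genuine gap is in your last step, and it is an ironic one: you wrote out the correct closing argument and then talked yourself out of it. Once $H'=gHg^{-1}$ for some $g\in Q$, take any $h_0\in H_0$. Since $H_0\le H$ we have $gh_0g^{-1}\in H'$, and since $H_0\le H'$ we also have $h_0\in H'$; hence $[g,h_0]=(gh_0g^{-1})h_0^{-1}\in H'$. On the other hand $[g,h_0]\in Q$ because $Q\nsg G$. Thus $[g,h_0]\in Q\cap H'=1$, i.e. $g\in C_Q(h_0)$, for every $h_0\in H_0$. This is not ``too strong''; it is exactly what you need, and it shows that \emph{any} $g\in Q$ with $H'=gHg^{-1}$ automatically lies in $C_Q(H_0)$. (Equivalently, at the cohomological level: $q|_{H_0}=1$ and $q(h)=g\cdot{}^hg^{-1}$ force ${}^{h_0}g=g$ for all $h_0\in H_0$.) Your subsequent detour --- the attempted Maschke splitting $Q=C_Q(H_0)\times[Q,H_0]$, which you rightly note fails when $H_0$ is a $p$-group, and the vague ``diagram chase'' into $H^1(H;C_Q(H_0))$ --- is unnecessary and is not an argument. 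Replace it with the three-line commutator observation above and the proof is complete.
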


We next note the following elementary properties of strongly $p$-embedded 
subgroups (Definition \ref{d:ess-crit}(a)). 

\begin{Lem} \label{str.emb.->>}
Let $H$ be a strongly $p$-embedded subgroup of a finite group $G$.
\begin{enuma} 
\item For each subgroup $H^*<G$ such that $H^*\ge H$, $H^*$ is also 
strongly $p$-embedded in $G$.
\item For each normal subgroup $K\nsg G$ of order prime to $p$ such that 
$HK<G$, $HK/K$ is strongly $p$-embedded in $G/K$.
\end{enuma}
\end{Lem}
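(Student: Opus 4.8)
The plan is to extract a single auxiliary fact that yields both parts at once. Write $H\cap\9yM$ etc. with the convention $\9gx=gxg^{-1}$. I claim: \emph{if $H$ is strongly $p$-embedded in $G$ and $H\le M\le G$ is such that $H$ contains a Sylow $p$-subgroup $S$ of $M$, then $M\cap\9yM$ has order prime to $p$ for every $y\in G\sminus M$.} To see this, suppose some nontrivial $p$-subgroup $P\le M\cap\9yM$ exists. Then $P$ and $\9{y^{-1}}P$ are both $p$-subgroups of $M$, so by Sylow's theorem applied inside $M$ there are $m_1,m_2\in M$ with $\9{m_1}P\le S\le H$ and $\9{m_2y^{-1}}P\le S\le H$. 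Hence $1\ne\9{m_1}P\le H\cap\9{m_1ym_2^{-1}}H$, and since $H$ is strongly $p$-embedded in $G$ this forces $m_1ym_2^{-1}\in H\le M$, so $y\in M$, a contradiction. This proves the claim.

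For part (a): since $H$ is strongly $p$-embedded, $p\mid|H|$ and $H$ contains a Sylow $p$-subgroup $S$ of $G$ (cf. \cite[Proposition A.7]{AKO}). Then $S\le H\le H^*$, so $S$ is also a Sylow $p$-subgroup of $H^*$, and the claim applies with $M=H^*$: for every $x\in G\sminus H^*$, $H^*\cap\9x{H^*}$ has order prime to $p$. Combined with $p\mid|H|$ (hence $p\mid|H^*|$) and $H^*<G$, this is exactly the statement that $H^*$ is strongly $p$-embedded in $G$.

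For part (b): note that $[HK:H]=[K:H\cap K]$ divides $|K|$ and is therefore prime to $p$, so a Sylow $p$-subgroup of $H$ is a Sylow $p$-subgroup of $HK$; thus the claim applies with $M=HK$. Given $\bar x=xK\in G/K\sminus HK/K$, equivalently $x\in G\sminus HK$, I will use that $K\nsg G$ gives $K\le HK\cap\9x{HK}$ and hence $(HK/K)\cap\9{\bar x}(HK/K)=(HK\cap\9x{HK})/K$. By the claim the numerator has order prime to $p$, and since $p\nmid|K|$ so does the quotient. Finally $p\mid|H|/|H\cap K|=|HK/K|$ because $|H\cap K|$ divides $|K|$, and $HK/K<G/K$ because $HK<G$; so $HK/K$ is strongly $p$-embedded in $G/K$.

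The only step that needs any care is the Sylow-transfer inside the proof of the claim — conjugating $P$ and $\9{y^{-1}}P$ into one common Sylow $p$-subgroup of $M$ contained in $H$ — and, for part (a), invoking the standard fact that a strongly $p$-embedded subgroup contains a full Sylow $p$-subgroup of the ambient group. Everything else is index bookkeeping and the correspondence theorem for quotients by $K$, so I expect no genuine obstacle.
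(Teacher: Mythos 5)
Your proof is correct. The paper itself disposes of this lemma very briefly --- for (a) it says the statement ``follows from the definition as an easy exercise'' and also cites the equivalence of conditions (1) and (3) in \cite[46.4]{A-FGT}, and for (b) it deduces the result from (a) plus the definition --- so what you have done is simply carry out the argument the paper leaves to the reader, and you have done so cleanly. Your unifying auxiliary claim (that $M\cap\9yM$ has order prime to $p$ for $y\notin M$, whenever $H\le M\le G$ and $H$ contains a Sylow $p$-subgroup of $M$) is essentially a self-contained proof of the Aschbacher equivalence in the case needed; the Sylow-conjugation step inside $M$ and the identification $(HK/K)\cap\9{\bar x}(HK/K)=(HK\cap\9xHK)/K$ via $K\nsg G$ are both correct, and the remaining divisibility checks ($p\mid|H^*|$, $p\mid|HK/K|$, properness) are all in order. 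One small remark: once $HK\cap\9xHK$ has order prime to $p$, its quotient by $K$ automatically does too, so the appeal to $p\nmid|K|$ at that point is harmless but unnecessary --- you do need $p\nmid|K|$ a line later to conclude $p\mid|HK/K|$.
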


\begin{proof} Point (a) follows from the definition as an easy exercise, 
and also follows immediately from the equivalence (when $k=1$) of points 
(1) and (3) in \cite[46.4]{A-FGT}. In the situation of (b), $HK$ is 
strongly $p$-embedded in $G$ by (a), and hence $HK/K$ is strongly 
$p$-embedded in $G/K$ by definition. 
\end{proof}

The next lemma involves subgroups of $\GL_5(2)$.

\begin{Lem} \label{l:G-V5}
If $G$ acts linearly, faithfully, and irreducibly on $\F_2^5$, then either 
$G$ has odd order, or $G\cong{}\GL_5(2)$.
\end{Lem}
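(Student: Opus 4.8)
\textbf{Proof proposal for Lemma \ref{l:G-V5}.}

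The plan is to argue by contradiction: suppose $G \le \GL_5(2)$ acts faithfully and irreducibly on $V = \F_2^5$, with $|G|$ even and $G \ne \GL_5(2)$. First I would record the relevant arithmetic: $|\GL_5(2)| = 2^{10} \cdot 3^2 \cdot 5 \cdot 7 \cdot 31$. Since $5 = \dim V$ is prime, irreducibility forces a certain amount of structure. I would first handle the case where $G$ is solvable: then by a standard result (e.g.\ Clifford theory together with the fact that a solvable irreducible linear group on a space of prime dimension $q$ over a field must, after passing to a suitable normal subgroup, act on $V$ either as a subgroup of $\Gamma\mathrm{L}_1(2^5)$ or be a $q$-group) $G$ normalizes a Singer cycle of order $31$, so $G \le \Gamma\mathrm{L}_1(32) = C_{31} \rtimes C_5$, which has odd order. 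That contradicts $|G|$ being even, and disposes of the solvable case.

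Next I would treat the non-solvable case. Since $31 \nmid |G|$ would again confine $G$ (via Clifford theory and the prime-dimension argument) to a solvable or small group, I expect $31 \bigm| |G|$ in the interesting subcase — but more robustly, I would invoke the classification of irreducible subgroups of $\GL_5(2)$ or, better, the fact that a non-solvable finite group acting faithfully and irreducibly on $\F_2^5$ has a component $L$ which is quasisimple and acts (absolutely) irreducibly. One runs through the quasisimple groups with a $5$-dimensional irreducible $\F_2$-representation: the candidates are essentially $\PSL_2(31)$, $A_6 \cong \PSL_2(9)$ (no $5$-dim irreducible over $\F_2$? — need to check), $\PSL_5(2) = \GL_5(2)$ itself, and a few others; using the Atlas / Landazuri–Seitz bounds on minimal faithful representation degrees, the only non-solvable possibility giving an irreducible $5$-dimensional $\F_2[G]$-module with $G$ properly contained in $\GL_5(2)$ is ruled out, leaving $G = \GL_5(2)$. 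Combining the two cases yields the lemma.

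The main obstacle will be the bookkeeping in the non-solvable case: one must be sure to enumerate \emph{all} quasisimple groups with a faithful $5$-dimensional $\F_2$-module and check which of them, together with their normalizers in $\GL_5(2)$, actually fill out $\GL_5(2)$ versus sitting properly inside it. I would lean on the minimal-degree bounds of Landazuri–Seitz (or simply on a direct inspection of the maximal subgroups of $\GL_5(2) \cong \PSL_5(2)$, which are classically known) to keep this finite and short. An alternative, possibly cleaner, route is purely arithmetic plus Clifford theory: if $G$ acts irreducibly on $V$ and $|G|$ is even, then $O_{2'}(G)$ acts trivially or homogeneously; analyzing the homogeneous case using $\dim V = 5$ prime shows $O_{2'}(G)$ is cyclic and $G/O_{2'}(G) \le C_5$ forcing solvability unless $O_{2'}(G)$ is trivial, after which $G$ has a normal $2$-subgroup issue or is close to simple — and then one identifies $G$ directly. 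I would pick whichever of these two the authors find more self-contained; both reduce the lemma to a short finite check.
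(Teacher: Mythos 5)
Your proposal is correct in outline but takes a genuinely different route from the paper. The paper's proof of Lemma~\ref{l:G-V5} is a one-line citation of Wagner's classification of the irreducible subgroups of $\PSL_5(2^a)$; the authors simply observe that for $a=1$ only two of Wagner's cases occur, giving exactly ``odd order'' and ``all of $\GL_5(2)$.'' Your route is self-contained: Clifford theory plus the primality of $\dim V=5$ forces any nontrivial normal subgroup $N$ to act homogeneously (the $5$-component case would make $V$ a transitive permutation module, which is never irreducible over $\F_2$), and then an abelian $N$ must be either trivial or a Singer cycle $C_{31}$ with $N_{\GL_5(2)}(C_{31})=\Gamma L_1(32)$ of odd order. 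That handles the solvable case cleanly. For the non-solvable case the same Singer-cycle observation shows $F(G)=1$, so $F^*(G)=E(G)$ is a product of simple components; primality of $5$ kills the case of two or more factors, so $E(G)=L$ is a single nonabelian simple group acting irreducibly on $\F_2^5$ with $C_G(L)=1$, i.e.\ $L\le G\le\Aut(L)$. One then checks that the only nonabelian simple group of order dividing $\lvert\GL_5(2)\rvert=2^{10}\cdot3^2\cdot5\cdot7\cdot31$ with a faithful irreducible $5$-dimensional $\F_2$-module is $\GL_5(2)$ itself (the plausible small candidates $A_5,A_6,A_7,A_8,\PSL_2(7),\PSL_2(8),\PSL_2(31)$ all fail by minimal-degree or order-divisibility considerations, and the primes $11,13,17,\dots$ do not divide $\lvert\GL_5(2)\rvert$). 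This works, and is roughly what Wagner's proof does internally; the paper's citation buys brevity at the cost of an external reference, while your argument buys self-containment at the cost of a short but nontrivial enumeration. Two small caveats on your write-up: your parenthetical doubt about $A_6$ resolves in your favor (the irreducible $\F_2[A_6]$-modules have dimensions $1,4,4,8,\dots$, never $5$), and your ``alternative, cleaner route'' at the end is actually the murkier one — the claim ``$G/O_{2'}(G)\le C_5$ forcing solvability'' needs $O_{2'}(G)\ne1$, whereas it is precisely $O_{2'}(G)=1$ together with $O_2(G)=1$ that pushes you into the component analysis. Stick with the $F^*(G)$ approach.
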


\begin{proof}  See \cite[Theorem 1.1]{Wagner}. Wagner's theorem deals more 
generally with irreducible subgroups of $\PSL_5(2^a)$ for arbitrary 
$a\ge1$, but only cases (ii) and (v) in the theorem apply when $a=1$. 
\end{proof}

The next three lemmas involve representations over $\F_2$.

\begin{Lem} \label{GL6(2)}
Assume $G\le \GL_6(2)$ is such that $O_{2'}(G)\ne1$.  Then for some 
odd prime $p$, $G$ contains a normal elementary abelian $p$-subgroup 
$1\ne{}K\nsg G$ characteristic in $O_{2'}(G)$, and hence $G$ is 
contained up to conjugation in one of the following normalizers:
	\[ \renewcommand{\arraystretch}{1.5} 
	\begin{array}{|c||l|} \hline
	K\cong & N_{\GL_6(2)}(K)\cong \\\hline\hline
	C_3 & \Sigma_3\times\GL_4(2),~ 
	(\GL_2(4)\sd{}C_2)\times\Sigma_3,~\textup{or}~ \GL_3(4)\sd{}C_2 \\\hline
	(C_3)^2 & (\Sigma_3\wr C_2)\times\Sigma_3,~ 
	(\GL_2(4)\sd{}C_2)\times\Sigma_3,~ \textup{or}~
	(C_3)^3\sd{}(C_2\times\Sigma_3) \\\hline
	(C_3)^3 & \Sigma_3\wr\Sigma_3 \\\hline
	C_5 & (C_{15}\sd{}C_4)\times\Sigma_3 \\\hline
	C_7 & (C_7\sd{}C_3)\times\GL_3(2),~ (C_7)^2\sd{}C_6, 
	~\textup{or}~ \GL_2(8)\sd{}C_3 \\\hline
	(C_7)^2 & (C_7\sd{}C_3)\wr C_2 \\\hline
	C_{31} &  C_{31}\sd{}C_5 \\\hline
	\end{array} \]
When $K\cong (C_3)^3$ acts on $V\cong\F_2^6$, there is a decomposition 
$V=V_1\times V_2\times V_3$ such that each element of 
$N_{\GL_6(2)}(K)\cong\Sigma_3\wr\Sigma_3$ permutes the subgroups $V_i\cong\F_2^2$.
\end{Lem}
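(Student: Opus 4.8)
\textbf{Proof proposal for Lemma \ref{GL6(2)}.}

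The plan is to reduce the statement to the classification of subgroups of $\GL_6(2)$ which normalize a nontrivial $p$-group for $p$ odd, and then to extract the list of normalizers by a case analysis on the isomorphism type of a minimal such subgroup. First I would show the existence of the subgroup $K$: since $O_{2'}(G)\ne1$ is solvable by the Feit--Thompson theorem, it has a nontrivial characteristic elementary abelian $p$-subgroup $K$ for some odd prime $p$ (e.g. $K=\Omega_1$ of the center of a Sylow $p$-subgroup of the Fitting subgroup, or more simply a minimal characteristic subgroup), and $K$ is then normal in $G$ because $O_{2'}(G)\nsg G$. Since $G\le N_{\GL_6(2)}(K)$, it remains to enumerate the possibilities for $K$ and the corresponding normalizers.

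Next I would bound the possibilities for $K$. A faithful $\F_2[C_p]$-module has dimension a multiple of $r$, where $r$ is the multiplicative order of $2$ mod $p$; so for $K$ to act faithfully on a subquotient of $\F_2^6$ (the action of $K$ on $\F_2^6$ need not be faithful a priori, but $K\le\GL_6(2)$ by hypothesis, so it is) we need $r\le6$, which forces $p\in\{3,5,7,31\}$ (orders $2,4,3,5$ respectively; note $p=2^6-1=63$ is not prime, but $p=31\mid2^5-1$ works, and $p=5,7$ divide $2^4-1,2^3-1$). For each such $p$, the rank of the elementary abelian $K$ is bounded: since $K$ embeds in $\GL_6(2)$ and acts on $\F_2^6$ decomposing into isotypic pieces, one gets $K\cong(C_3)^a$ with $a\le3$ (as $3a\le r\cdot a \le$ room available, and $(C_3)^3$ already uses all six dimensions as $\F_2^2\oplus\F_2^2\oplus\F_2^2$), $K\cong(C_7)^a$ with $a\le2$, and $K$ cyclic for $p=5,31$. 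Then for each pair $(p,K)$ I would compute $N_{\GL_6(2)}(K)$: this is a standard computation with the normalizer of a diagonalizable (over an extension field) abelian subgroup, giving a subgroup of the form ``$(\text{centralizer})\rtimes(\text{permutation/Galois action})$'', and the centralizer is a product of general linear groups over the relevant fields $\F_{2^r}$ according to how $V$ breaks up into $K$-isotypic components. The entries in the table are exactly the options for the isomorphism type of $N_{\GL_6(2)}(K)$ depending on the decomposition of $V$ under $K$ (e.g.\ for $K\cong C_3$ the three cases correspond to $V=\F_4\oplus\F_2^4$, $V=\F_4\oplus\F_4\oplus(\text{swapped})$, $V=\F_4^{\,3}$ as an $\F_4$-space with Galois twist, etc.). For the final sentence, when $K\cong(C_3)^3$ it must act on $V$ with each of its three independent characters (and their inverses) occurring, forcing $V=V_1\oplus V_2\oplus V_3$ with $V_i$ the sum of a character-pair and $\dim V_i=2$; then $N_{\GL_6(2)}(K)$ permutes the $V_i$ (permuting the three cyclic factors of $K$) and acts as $\GL_2(2)\cong\Sigma_3$ within each fixed block, giving $\Sigma_3\wr\Sigma_3$.

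The main obstacle I anticipate is not conceptual but bookkeeping: carefully verifying that the list of $K$'s is complete (in particular ruling out $p=5$ or $p=7$ of rank $\ge2$ beyond $(C_7)^2$, and confirming $(C_3)^3$ is the top rank for $p=3$), and then correctly identifying each normalizer $N_{\GL_6(2)}(K)$ up to isomorphism — distinguishing, say, $(\GL_2(4)\rtimes C_2)\times\Sigma_3$ from $\GL_3(4)\rtimes C_2$ requires matching the $\F_4$-structure on $V$ against the way $\Aut(\F_4)$ enters. I would organize this as a single table-driven argument: fix $p$, list the admissible multisets of $\F_{2^r}$-dimensions summing (with multiplicity $r$) to $6$, and read off centralizer and outer action in each case. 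Since all the groups in question are small and the computation is routine (and could be confirmed by Magma/GAP if desired), the proof is short once the setup is in place; I would cite \cite[Feit--Thompson]{FT} for solvability and otherwise keep the argument elementary.
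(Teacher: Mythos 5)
Your proposal follows the same route as the paper's: invoke Feit--Thompson for solvability of $O_{2'}(G)$, extract a characteristic elementary abelian $p$-subgroup $K$, bound the possibilities for $(p,\operatorname{rk}K)$ using the order of $\GL_6(2)$ (equivalently, the multiplicative order of $2\pmod p$ and the dimension constraint), and then identify each normalizer by running through the possible $\F_2[K]$-module decompositions of $\F_2^6$. Your added detail on ruling out higher ranks via dimension counting and your treatment of the $(C_3)^3$ case are both consistent with the paper's sketch, so this is essentially the same argument.
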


\begin{proof} Since $O_{2'}(G)\ne1$ is solvable by the odd order theorem 
\cite{FT}, it contains an elementary abelian $p$-subgroup $K$ which is 
characteristic (for some odd $p$).  Since 
$|\GL_6(2)|=2^{15}\cdot3^4\cdot5\cdot7^2\cdot31$, $K$ must be isomorphic to 
one of the subgroups in the above list.

The list of normalizers now follows from elementary representation 
theoretic considerations.  When $K\in\sylp{\GL_6(2)}$, then there is only 
one possibility for $K$ up to conjugacy, and its normalizer is as described 
above.  In all other cases, $N_{\GL_6(2)}(K)$ depends on the description of 
$\F_2^6$ as an $\F_2[K]$-module.

For example, when $K\cong (C_3)^2$, the module could be a 
sum of two irreducible modules of dimension $2$ and a $2$-dimensional module 
with trivial action, or three irreducibles of dimension $2$ of which two 
are isomorphic, or three pairwise nonisomorphic irreducible modules of 
dimension $2$.  The three possible normalizers listed above correspond to 
these three choices.
\end{proof}

\begin{Lem} \label{l:HxA5}
Let $P$ be a finite $2$-group such that $\rk(P/\Phi(P))\le6$.  Assume 
$G\le\Out(P)$ has a strongly $2$-embedded subgroup, where $S\in\syl2{G}$ and 
$S\cong \klfour$.  Then $G\cong A_5\times (C_3)^s$ for some $s\le2$.
\end{Lem}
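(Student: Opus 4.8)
\textbf{Proof proposal for Lemma \ref{l:HxA5}.}

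The plan is to exploit the classification of groups with a strongly $2$-embedded subgroup and Klein four Sylow subgroup, together with a dimension bound on faithful modules. First I would invoke Bender's theorem (in the form of \cite[Theorem 6.4.2]{Sz2}, as already used in the proof of Proposition \ref{p:critical2}): since $S\cong\klfour\in\syl2{G}$ is neither cyclic nor generalized quaternion, $O^{2'}(G)/O_{2'}(G)$ must be isomorphic to one of $\PSL_2(2^n)$, $\Sz(2^{2n+1})$, or $\PSU_3(2^n)$ with the relevant parameter chosen so that the Sylow $2$-subgroup has order $4$. Among these, only $\PSL_2(4)\cong A_5$ has Sylow $2$-subgroup of order $4$ (the Suzuki groups have order divisible by at least $2^6$, and $\PSU_3(2^n)$ likewise has larger $2$-part). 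Thus $\widebar{G}\defeq O^{2'}(G)/O_{2'}(G)\cong A_5$.

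Next I would control $O_{2'}(G)$. Set $N=O_{2'}(G)$, a group of odd order, hence solvable by the Feit--Thompson theorem \cite{FT}. By Bender's theorem again, $N$ is contained in every strongly $2$-embedded subgroup of $G$; picking one such subgroup $H$ with $S\in\syl2H$, we get $[N,S]\le N\cap S=1$, so $S$ centralizes $N$. Since $O^{2'}(G)=\gen{S^G}$ is generated by conjugates of $S$, each of which centralizes $N$ (as $N\nsg G$), we conclude $[O^{2'}(G),N]=1$, i.e. $N\le Z(O^{2'}(G))$. Then $O^{2'}(G)$ is a central extension of $A_5$ by the abelian odd-order group $N$; since the Schur multiplier of $A_5$ is $C_2$ (of even order), the extension splits as $O^{2'}(G)\cong A_5\times N$ with $N$ odd abelian. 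Now $G/O^{2'}(G)$ is a $2$-group, but it also acts on $O^{2'}(G)$; combined with the fact that $\Aut_G(S)$ is forced by strong $2$-embedding to induce the full group of automorphisms permuting the three involutions of $S$ (so that $N_G(S)/C_G(S)\cong\Sigma_3$ cannot be a $2$-group unless... ) — more carefully, I would argue $G=O^{2'}(G)\cdot N_G(S)$ by a Frattini argument, $N_G(S)\le H$, and the $2$-part of $N_G(S)$ already lies in $O^{2'}(G)$; this pins down $G=O^{2'}(G)\cong A_5\times N$. So $G\cong A_5\times N$ with $N$ an odd-order abelian group.

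Finally I would bound $N$ using the module hypothesis $\rk(P/\Phi(P))\le6$. The conjugation action of $G$ on $V\defeq P/\Phi(P)$ is an $\F_2[G]$-module of dimension at most $6$. The kernel of this action is a normal subgroup of $G$ which, intersected with $A_5$, is trivial (since $A_5$ is simple and acts nontrivially because $S$ acts nontrivially on $V$ — indeed $G$ embeds in $\Out(P)$ and $S=\Out_S(P)$ acts faithfully on $V$ by Lemma \ref{l:mod-Fr}); so the kernel $C_G(V)$ is contained in $N$, and $G/C_G(V)\cong A_5\times (N/C_G(V))$ acts faithfully on $\F_2^{\le6}$. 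Each faithful irreducible $\F_2[A_5]$-module has dimension at least $2$ (the natural $\SL_2(4)$-module has $\F_2$-dimension $4$, but over $\F_4$ it is $2$-dimensional; over $\F_2$ the smallest faithful one has dimension $4$), so the $A_5$-isotypic part already consumes dimension $\ge4$. An element of prime order $p$ (odd) in $N/C_G(V)$ acting faithfully and commuting with $A_5$ forces, by considering the $A_5$-module structure, an extra factor-of-$p$ worth of multiplicity, and a short case check shows $p=3$ is the only possibility fitting in dimension $6$ (a $3$-element acts on the natural $\F_4[\SL_2(4)]$-module by the Galois/scalar action, doubling it to dimension $4$; two independent $3$-elements would need dimension $\ge8$). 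Hence $N/C_G(V)$ is an elementary abelian $3$-group of rank $\le 2$; replacing $N$ by $N/C_G(V)$ (which is legitimate since $C_G(V)=1$ as $G\le\Out(P)$ acts faithfully on $P$, hence nontrivially on $V$ once we know $C_G(P)=1$) gives $G\cong A_5\times(C_3)^s$ with $s\le2$, as claimed.

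The main obstacle I anticipate is the last step: pinning down that the odd-order part $N$ embeds faithfully in the representation and that the dimension bound $\le 6$ forces $N\cong(C_3)^s$ with $s\le2$ rather than, say, a cyclic group of larger odd order or a $5$- or $7$-group. This requires knowing the $\F_2$-dimensions of the faithful irreducible modules of $A_5\times C_p$ for small odd $p$ — equivalently the degrees of $2$-modular characters — and checking each candidate against the bound $6$; the relevant input is essentially \cite[Lemma 1.7]{OV2} (which gives $\ge 2r$ for faithful $\SL_2(2^r)$-modules, here $r=2$ giving $\ge4$) plus an analysis of how a commuting odd-order automorphism multiplies the dimension. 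I would organize this as: show $N$ acts faithfully, reduce to $N$ a $p$-group for a single odd prime $p$ (Clifford theory / the fact that distinct primes would need independent faithful summands), then rule out $p\ge5$ and $\mathrm{rk}\ge3$ by the dimension count.
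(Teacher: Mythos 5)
Your proposal takes a genuinely different route from the paper's. The paper, after the Bender reduction and passing to $G\le\GL_6(2)$, invokes Lemma~\ref{GL6(2)} (the catalogue of normalizers of odd-order elementary abelian subgroups of $\GL_6(2)$) to pin down $O_{2'}(G)$ and read off the product decomposition; you instead argue structurally inside Bender's theory (centralization of $O_{2'}(G)$ by $O^{2'}(G)$, Schur multiplier, Frattini argument, dimension count). The plan is viable and avoids the auxiliary Lemma~\ref{GL6(2)}, but as written there are genuine errors that need repair before it proves the statement.

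First, $[N,S]\le N\cap S$ does not follow from $N,S\le H$ for an arbitrary strongly $2$-embedded $H$: you only get $[N,S]\le N$. You need $N$ to normalize $S$, which holds because $N_G(S)$ is itself strongly $2$-embedded (Sylow $2$-subgroups form a TI-set in $G$, a consequence of Bender's structure theorem) and $O_{2'}(G)$ lies in every strongly $2$-embedded subgroup. Second, and more seriously, the claim $O^{2'}(G)\cong A_5\times N$ is false: for $G=A_5\times C_3$ one has $O^{2'}(G)=A_5\times 1$, not $A_5\times C_3$. The error is the tacit assumption $N\le O^{2'}(G)$; from $[O^{2'}(G),N]=1$ you only get that $M:=O^{2'}(G)\cap N=O_{2'}(O^{2'}(G))$ is central in $O^{2'}(G)$. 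The Schur multiplier argument, combined with idempotence of $O^{2'}$, gives $M=1$, hence $O^{2'}(G)\cong A_5$; then $G=O^{2'}(G)N$ with trivial commuting intersection yields $G\cong A_5\times N$. (Note also that "$G/O^{2'}(G)$ is a $2$-group" is backwards --- it has odd order --- and your two claims $O^{2'}(G)\cong A_5\times N$ and $G=O^{2'}(G)$ would together force $N=1$, contradicting what the dimension count is supposed to detect.) Finally, the concluding dimension count is the right idea --- $C_{\GL_6(2)}(A_5)$ has odd part of order dividing $9$, realized on the module $W_2\oplus\F_2^2$ --- but as sketched it does not cleanly exclude a $C_9$-factor in $N$ or primes $p\ge 5$; Lemma~\ref{GL6(2)} is precisely the paper's shortcut for that case analysis, so avoiding it requires supplying the corresponding computation of $C_{\GL_6(2)}(A_5)$ for each possible $\F_2[A_5]$-module structure on $P/\Phi(P)$.
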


\begin{proof} Set $H=O_{2'}(G)$. Since $G$ has a strongly $2$-embedded subgroup, 
$O^{2'}(G/H)\cong A_5$ by Bender's theorem \cite[Satz 1]{Bender}.  
Since $|\Out(A_5)|=2$ (see \cite[(3.2.17)]{Sz1}), $G/H\cong A_5$.  
In particular, $O_2(G)=1$, and by Lemma \ref{l:mod-Fr}, $G$ acts faithfully 
on $P/\Phi(P)\cong (C_2)^r$ ($r\le6$). We can thus identify $G$ as a 
subgroup of $\GL_6(2)$.

If $H=O_{2'}(G)\ne1$, then let $K\nsg G\le\GL_6(2)$ be as in Lemma 
\ref{GL6(2)}. Then $N_{\GL_6(2)}(K)$ is nonsolvable and 
$5\,\big|\,|N_{\GL_6(2)}(K)/K|$, and so by the same lemma, $K\cong C_3$ or 
$(C_3)^2$. From the list of possibilities for 
$N_{\GL_6(2)}((C_3)^s)$ for $s=1,2$, we see that $G\cong H\times(G/H)\cong 
H\times A_5$, where 
$H=K\cong C_3$ or $(C_3)^2$.  (Note that $\GL_2(4)\cong C_3\times A_5$.) 
\end{proof}

There are four distinct irreducible $\4\F_2[A_5]$-modules:  the trivial 
module, the natural 2-dimensional $\4\F_2[\SL_2(4)]$-module and its Galois 
conjugate, and the natural 4-dimensional module for $A_5$ 
(cf. \cite[\S\,18.6]{Serre}). The first and last are realizable over 
$\F_2$, while the 2-dimensional modules are realizable over $\F_4$ and 
hence induce a $4$-dimensional irreducible $\F_2[A_5]$-module by 
restriction of scalars.  Thus there are three irreducible 
$\F_2[A_5]$-modules:  the trivial module $\F_2$, and two $4$-dimensional 
modules $W_1$ and $W_2$ described as follows: 
\begin{itemize}  
\item $W_1$ is generated by a $G$-orbit of five elements whose sum is zero; 
and
\item $W_2\cong\F_4^2$ with the canonical action of $G\cong\SL_2(4)$.
\end{itemize}
We keep this notation for the irreducible modules in the statement of the 
following lemma.

\begin{Lem} \label{l:F2[A5]-mod}
Fix $G\cong{}A_5$.  Let $V$ be a finitely generated $\F_2[G]$-module such 
that for each involution $x\in{}G$, $\dim([x,V])=2$.  Then the composition 
factors of $V$ include exactly one irreducible $4$-dimensional module.  
Also, the following hold for any $S\in\syl2{G}$. 
\begin{enuma}  

\item If $[S,V]\nleq{}C_V(S)$, then $V\cong\F_2^{k}\oplus{}W_1$ 
($k=\dim(V)-4$).

\item Assume $[S,V]\le{}C_V(S)$. Then there is a composition factor of $V$ 
isomorphic to $W_2$, and $V$ is indecomposable if and only if 
$[S,V]=C_V(S)$. Also, $C_V(G)=C_V(N_G(S))$ and 
$\dim(C_V(G))=\dim(C_V(S))-2$. If $V$ is indecomposable and 
$C_V(G)\ne0$, then $\bigcap_{1\ne{}x\in{}S}[x,V]=0$. 

\iffalse
\item  If $[S,V]\le{}C_V(S)$, then there is a composition factor 
of $V$ isomorphic to $W_2$. Also, $C_V(G)=C_V(N_G(S))$ and 
$\dim(C_V(G))=\dim(C_V(S))-2$, and $V$ is indecomposable if and only if 
$[S,V]=C_V(S)$. If $V$ is indecomposable and $C_V(G)\ne0$, then 
$\bigcap_{1\ne{}x\in{}S}[x,V]=0$. 
\fi

\end{enuma}
\end{Lem}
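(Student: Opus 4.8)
\textbf{Proof plan for Lemma \ref{l:F2[A5]-mod}.}

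The plan is to argue by first pinning down the structure of $V$ on the Sylow $2$-subgroup $S\cong\klfour$, and then using Green correspondence / the known module theory of $\F_2[A_5]$ to reconstruct $V$. First I would count composition factors: each of the three irreducible $\F_2[A_5]$-modules $\F_2$, $W_1$, $W_2$ has, for an involution $x$, commutator dimension $0$, $2$, and $2$ respectively (these are quick checks — $W_1$ is the ``zero-sum'' permutation module on five points and $W_2\cong\F_4^2$ is the natural $\SL_2(4)$-module). Since $\dim[x,V]$ is additive over short exact sequences of $\F_2[G]$-modules (the functor $V\mapsto[x,V]=(x-1)V$ is exact on $\F_2[\langle x\rangle]$-modules because $\langle x\rangle\cong C_2$ and $\F_2[C_2]$ is a principal ideal ring, or simply because $\dim[x,V]=\dim V-\dim C_V(x)$ and both $V\mapsto\dim V$ and $V\mapsto\dim C_V(x)$ behave additively here since $C_V(x)=V^{\langle x\rangle}$ and $H^1(\langle x\rangle;-)$ has the same dimension as $H^0$), the hypothesis $\dim[x,V]=2$ forces exactly one composition factor among $\{W_1,W_2\}$ and all others trivial. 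This gives the first assertion.

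For part (a), assume $[S,V]\nleq C_V(S)$. Since $S\cong\klfour$, there is an involution $x\in S$ and $v\in V$ with $(x-1)v\notin C_V(S)$, so some other involution $y\in S$ moves $(x-1)v$; this nontriviality of the $S$-action forces the unique nontrivial composition factor to be $W_1$ rather than $W_2$ (one checks directly that in $W_2\cong\F_4^2$ one has $[S,W_2]=C_{W_2}(S)$, since $S$ acts as the unipotent radical fixing a line and mapping onto it). The plan is then to show $V$ is a direct sum of its composition factors. The trivial module $\F_2$ is projective-free but $\mathrm{Ext}^1_{\F_2[A_5]}(\F_2,\F_2)=0$ and $\mathrm{Ext}^1_{\F_2[A_5]}(W_1,\F_2)=\mathrm{Ext}^1_{\F_2[A_5]}(\F_2,W_1)=0$ (these vanish because $W_1$ is a self-dual module lying in its own block, and more concretely because $H^1(A_5;\F_2)=0$ and $H^1(A_5;W_1)=0$ — the latter can be computed via the five-point permutation module, which is $\F_2\oplus W_1$ as it is the induced module $\F_2\!\uparrow_{A_4}^{A_5}$ and $A_4$ has order divisible by $|A_5|_2=4$). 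Hence every extension of trivials by $W_1$ (in either order) splits, and $V\cong\F_2^{\,k}\oplus W_1$ with $k=\dim V-4$.

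For part (b), assume $[S,V]\le C_V(S)$. By the above, the nontrivial composition factor must now be $W_2$ (since $W_1$ restricted to $S$ does \emph{not} satisfy $[S,W_1]\le C_{W_1}(S)$ — in $W_1$ one checks $C_{W_1}(S)$ has dimension $1$ while $[S,W_1]$ has dimension $2$, as $W_1|_S$ is the direct sum of the two nontrivial $\F_2[\klfour]$-characters plus $\F_2$ plus a free summand... I would verify this small computation carefully). So $W_2$ is a composition factor. For the claims $C_V(G)=C_V(N_G(S))$ and $\dim C_V(G)=\dim C_V(S)-2$: restrict to $N_G(S)\cong A_4$; since $W_2|_{A_4}$ is the natural $\F_2[A_4]$-module (fixed-point-free on $A_4/S\cong C_3$, with $C_{W_2}(S)$ of dimension $1$... again a direct check, $\dim C_{W_2}(S)=1$ and $\dim C_{W_2}(A_4)=0$), and trivial composition factors contribute equally to $C_V(S)$, $C_V(A_4)$, $C_V(G)$, the dimension count follows; equality $C_V(G)=C_V(A_4)$ follows because an element of $V$ fixed by $A_4$ generates an $A_4$-trivial, hence $S$-trivial, submodule which by the hypothesis $[S,V]\le C_V(S)$ and Green correspondence is in the image of the trivial-source part on which $G$ and $A_4$ have the same fixed points. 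For indecomposability: $V$ is indecomposable iff it is not $\F_2^{\,k}\oplus W_2$ iff there is a nonsplit extension, and one shows the nonsplit extensions of $\F_2$ by $W_2$ (or vice versa) are exactly detected by whether $[S,V]=C_V(S)$ — when $[S,V]\subsetneq C_V(S)$, the extra fixed vector splits off a trivial summand by the $\mathrm{Ext}$ computation on $S$ lifted via Green correspondence; when $[S,V]=C_V(S)$, the unique nonsplit extension $U$ (with $\dim U=\dim C_V(S)+\dim[S,V]$... rather: $U$ is the ``uniserial'' module with socle and top related appropriately) realizes equality and is forced. Finally, if $V$ is indecomposable with $C_V(G)\ne0$, then $V$ has both a trivial submodule and $W_2$ involved; I would show $\bigcap_{1\ne x\in S}[x,V]=0$ by restricting to $S$: the three subgroups $\langle x\rangle$, $x$ ranging over the three involutions, and observing that in the natural $\F_2[\SL_2(4)]$-module $W_2$ the intersection $\bigcap_x[x,W_2]$ is $0$ (the three lines $[x,W_2]$ are the three $\F_2$-rational points of $\mathbb{P}^1(\F_4)$ fixed by the three unipotent subgroups, and they are distinct), and trivial summands contribute $0$ to each $[x,V]$.

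\textbf{Main obstacle.} The genuinely delicate part is part (b): disentangling the relationship ``$[S,V]=C_V(S) \iff V$ indecomposable'' requires controlling $\mathrm{Ext}^1_{\F_2[A_5]}(\F_2,W_2)$ and $\mathrm{Ext}^1_{\F_2[A_5]}(W_2,\F_2)$ precisely (these are one-dimensional — $W_2$ and $\F_2$ lie in the same block, the principal block — so there is exactly one uniserial module of each ``shape''), and then matching the $\mathrm{Ext}$ class to the restriction-theoretic condition on $S$. I expect to need an explicit description of the projective cover of $\F_2$ in the principal block of $\F_2[A_5]$, or equivalently to invoke that the principal block has a known Brauer tree / Loewy structure, to nail down that the only possibilities for an indecomposable $V$ with a single $W_2$-factor are $W_2$ itself and the two uniserials $[\F_2|W_2]$ and $[W_2|\F_2]$, together with iterated versions $[\F_2|W_2|\F_2]$, and to compute $[S,-]$ and $C_S(-)$ on each. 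The rest of the argument is bookkeeping with the (small, explicit) modules $\F_2$, $W_1$, $W_2$ restricted to $\klfour$.
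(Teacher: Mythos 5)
The conclusion of your first step is correct but the stated reason is not: $\dim[x,V]$ is \emph{not} additive over short exact sequences of $\F_2[\langle x\rangle]$-modules. For instance, $0\to\F_2\to\F_2[C_2]\to\F_2\to 0$ has $\dim[x,\F_2[C_2]]=1$ while both ends contribute $0$. (Your reduction to ``$\dim C_V(x)$ is additive'' is where this breaks: the fixed-point functor is only left exact.) What \emph{is} true, and what the paper uses, is superadditivity: $\dim[x,V]\ge\dim[x,A]+\dim[x,B]$, which follows from $\dim C_V(x)\le\dim C_A(x)+\dim C_B(x)$. That bound already rules out two $4$-dimensional factors, and ``at least one'' follows because $A_5$ is perfect, so an all-trivial composition series would force trivial action. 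Your argument for part (a) is fine and genuinely different from the paper's: you use the vanishing of $H^1(A_5;\F_2)$ and $H^1(A_5;W_1)$ (via Shapiro's lemma on the $5$-point permutation module), whereas the paper observes that $W_1|_S\cong\F_2[S]$ is free, hence $W_1$ is projective and injective as an $\F_2[G]$-module and splits off immediately. Both work.

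Part (b) is where you correctly sense trouble, but the plan as written has a gap and a factual error. The error: $\Ext^1_{\F_2[A_5]}(\F_2,W_2)\cong H^1(A_5;W_2)\cong H^1(\SL_2(4);\F_4^2)\cong\F_4$ is $2$-dimensional over $\F_2$ (one-dimensional only over $\End_{\F_2[G]}(W_2)\cong\F_4$), so your expected list of indecomposables with a single $W_2$-factor — ``$W_2$, $[\F_2|W_2]$, $[W_2|\F_2]$, $[\F_2|W_2|\F_2]$'' — is incomplete; there are indecomposables such as $[\F_2^{2}|W_2]$. The gap: you never actually prove the harder implication ``$[S,V]<C_V(S)\Rightarrow V$ decomposable''; the appeal to ``the Ext computation on $S$ lifted via Green correspondence'' and the sentence about $C_V(G)=C_V(A_4)$ are gestures, not arguments. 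The paper sidesteps the whole classification with a short, explicit transfer argument: writing $v^*=\sum_{g\in X}gv$ for coset representatives $X$ of $G/S$ shows $C_V(S)\le V_2$, so if $[S,V]<C_V(S)$ then $[S,V]\nsupseteq V_1=C_V(G)$; one then picks $v\in V_1$ and $\varphi\in\Hom_S(V,\F_2)$ with $\varphi(v)\ne 0$, and the averaged map $\psi(x)=\sum_{g\in X}\varphi(g^{-1}x)\in\Hom_G(V,\F_2)$ splits off $\langle v\rangle$ because $|G/S|$ is odd. The dimension count and $C_V(G)=C_V(N_G(S))$ then come from $C_V(S)/C_V(G)\cong C_{W_2}(S)$ (rank $2$) together with $C_{W_2}(N_G(S))=0$, and the intersection claim from $[x,V]\cap C_V(G)=0$ and a short case analysis on $\dim\bigcap_x[x,V]\in\{0,1,2\}$. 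I would replace your Loewy-structure program with this transfer argument; it is both shorter and avoids having to know the module category of $\F_2[A_5]$.
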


\begin{proof}  As noted above, each irreducible $\F_2[G]$-module is 
isomorphic to $\F_2$, $W_1$, or $W_2$.  If each composition factor of $V$ 
is 1-dimensional, then for an appropriate choice of basis, $G$ would act on 
$V$ via upper triangular matrices, and thus via the identity (Lemma 
\ref{l:mod-Fr}).  

By a direct check, $\dim([x,W_j])=2$ for $1\ne{}x\in{}S$ and $j=1,2$.  If 
the composition factors of $V$ include more than one $4$-dimensional 
module, then this would mean $\dim([x,V])\ge4$.  Since $\dim([x,V])=2$ for 
$1\ne{}x\in{}S$, there is exactly one such composition factor.

\smallskip

\noindent\textbf{(a) } Let $V_1\le{}V_2\le{}V$ be submodules such that 
$V_2/V_1$ is $4$-dimensional and irreducible. We just saw that $G$ acts 
trivially on $V_1$ and on $V/V_2$.  In particular, we can assume they were 
chosen so that $V_1=C_V(G)$.  Also, $\dim([x,V_2])=2$ for $1\ne{}x\in{}S$, 
and thus $[x,V_2]=[x,V]$ and $[S,V_2]=[S,V]$.  Likewise, 
$\dim([x,V_2])=\dim([x,V_2/V_1])$ for $1\ne{}x\in{}S$ implies 
$V_1\cap[x,V_2]=0$, and hence $C_{V_2}(x)$ surjects onto $C_{V_2/V_1}(x)$.  
Since $G$ acts trivially on $V_1$, this shows that 
$C_{V_2/V_1}(S)=C_{V_2}(S)/V_1$.

Upon combining these observations, we see that 
	\begin{align*} 
	[S,V] \le C_V(S)\quad 
	&\iff\quad [S,V_2]=[S,V]\le C_V(S)\cap V_2 = C_{V_2}(S) \\
	&\iff\quad [S,V_2]+V_1\le C_{V_2}(S) \\
	&\iff\quad [S,V_2/V_1] \le C_{V_2}(S)/V_1 = C_{V_2/V_1}(S)\,.
	\end{align*}
Since $[S,W_2]=C_{W_2}(S)$ while $[S,W_1]\nleq{}C_{W_1}(S)$, we conclude 
that $[S,V]\le{}C_V(S)$ if and only if $V_2/V_1\cong{}W_2$. 

Since $W_1$ is free (hence projective) as an $\F_2[S]$-module, it is also 
projective as an $\F_2[G]$-module (see \cite[Corollary 
3.6.10]{Benson1}). Hence $W_1$ is also injective by, e.g., 
\cite[Proposition 3.1.2]{Benson1}. So if $V_2/V_1\cong W_1$, then $V\cong 
W_1\oplus\F_2^k$ where $k=\dim(V)-4$.

\smallskip

\noindent\textbf{(b) } Now assume that $[S,V]\le{}C_V(S)$, and thus as 
shown above that $V_2/V_1\cong{}W_2$.  If $V$ is decomposable, then it 
contains a nonzero direct summand with trivial action, in which case 
$[S,V]<{}C_V(S)$.  In other words, $V$ is indecomposable if $[S,V]=C_V(S)$.  

Conversely, assume $[S,V]<C_V(S)$.  Fix a set $X\subseteq G$ of 
representatives for the left cosets $gS\subseteq G$.  For each $v\in 
C_V(S)$, set $v^*=\sum_{g\in X}gv$.  Then $v^*\in C_V(G)=V_1$ and 
$v^*\equiv |G/S|\cdot v$ (mod $[G,V]$), so $v\in V_1+[G,V]\le V_2$.  Hence 
$C_V(S)\le V_2$, and $C_V(S)/V_1=C_{V_2/V_1}(S)=[S,V_2/V_1]\le([S,V]+V_1)/V_1$ 
by the above remarks. Thus $[S,V]\ngeq V_1$ since $[S,V]<C_V(S)$.  So there 
are $v\in{}V_1$ and $\varphi\in\Hom_S(V,\F_2)$ such that $\varphi(v)\ne0$.  
Define $\psi\in\Hom_G(V,\F_2)$ by $\psi(x)=\sum_{g\in X}\varphi(g^{-1}x)$; then 
$\psi(v)=|G/S|\varphi(v)\ne0$.  Thus the inclusion $\gen{v}<V$ is split by 
the map $(x\mapsto \psi(x)v)$, and $V$ is decomposable.

It remains to prove the claims in the last two sentences in (b). Since $V$ 
is the direct sum of an indecomposable module and a module with trivial 
action, it suffices to show them when $V$ is indecomposable, and thus when 
$[S,V]=C_V(S)$.  Recall that $V_1=C_V(G)$.  Also, $C_V(S)=[S,V]\le[G,V]\le 
V_2$, and thus $C_V(S)/V_1=C_{V_2}(S)/V_1=C_{V_2/V_1}(S)\cong C_{W_2}(S)$ 
is $2$-dimensional. So $\dim(C_V(G))=\dim(C_V(S))-2$, and 
$C_V(N_G(S))=C_V(G)$ since $C_{W_2}(N_G(S))=0$. 

Recall that $\dim([x,V])=2$ and $[x,V]\cap{}C_V(G)=0$ for $1\ne{}x\in{}S$. 
Set $V_0=\bigcap_{1\ne{}x\in{}S}[x,V]$. Either $V_0=0$; or $\dim(V_0)=1$ 
and $V_0\le C_V(N_G(S))=C_V(G)$, which is impossible; or $V_0$ is 
$2$-dimensional, hence is equal to $[S,V]=C_V(S)$, and so $C_V(G)=0$. 
\end{proof}

In fact, up to isomorphism, there are two distinct indecomposable 
$\F_2[A_5]$-modules of dimension $5$ and three of dimension $6$ which 
satisfy the hypotheses of Lemma \ref{l:F2[A5]-mod}(b).  The three of 
dimension $6$ are the permutation module for the $A_5$-action on 
$A_5/D_{10}$, the group of symmetric $(2\times2)$ matrices over 
$\F_4$ with the canonical action of $A_5\cong\SL_2(4)$, and the dual of 
this last module.  

\bigskip

%%%%%%%%%%%%%%%%%%%%%%%%%%%%%%%%%%%%%%%%%%%%%%%%

\end{document}